\documentclass[11pt, reqno, oneside]{amsart}
\usepackage{amsmath,amsfonts,amssymb,amsthm}

\usepackage{enumerate}
\usepackage{esint}
\usepackage{xcolor}
\usepackage[normalem]{ulem}
\usepackage[pagebackref,colorlinks=true,linkcolor=blue,citecolor=blue,urlcolor=blue]{hyperref}
\usepackage{mathtools}  

\usepackage{mathtools}
\usepackage{setspace}

\usepackage{comment}

\usepackage{geometry}
\numberwithin{equation}{section}

\theoremstyle{plain}
\newtheorem{theorem}{Theorem}[section]
\newtheorem{corollary}[theorem]{Corollary}
\newtheorem{lemma}[theorem]{Lemma}
\newtheorem{proposition}[theorem]{Proposition}

\theoremstyle{definition}
\newtheorem{definition}[theorem]{Definition}

\newtheorem{example}[theorem]{Example}
\newtheorem{remark}[theorem]{Remark}

\makeatother
\makeatletter
\let\Enumerate=\enumerate
\renewcommand{\enumerate}{\Enumerate%
	\setlength{\@topsep}{0pt}
	\setlength{\itemsep}{0pt}%
	\setlength{\parskip}{0pt plus 1pt}%
	\renewcommand{\theenumi}{\textup{(\arabic{enumi})}}%
	\renewcommand{\labelenumi}{\theenumi}%
}
\let\endEnumerate=\endenumerate
\renewcommand{\endenumerate}{\endEnumerate\unskip}

\newcommand{\R}{{\mathbb R}}
\newcommand{\N}{{\mathbb N}}

\newcommand{\Q}{{\mathbb Q}}

\newcommand{\Om}{\Omega}

\providecommand{\vint}[1]{\mathchoice
	{\mathop{\vrule width 5pt height 3 pt depth -2.5pt
			\kern -9pt \kern 1pt\intop}\nolimits_{\kern -5pt{#1}}}
	{\mathop{\vrule width 5pt height 3 pt depth -2.6pt
			\kern -6pt \intop}\nolimits_{\kern -3pt{#1}}}
	{\mathop{\vrule width 5pt height 3 pt depth -2.6pt
			\kern -6pt \intop}\nolimits_{\kern -3pt{#1}}}
	{\mathop{\vrule width 5pt height 3 pt depth -2.6pt
			\kern -6pt \intop}\nolimits_{\kern -3pt{#1}}}}

\newcommand{\eps}{\varepsilon}
\newcommand{\loc}{\mathrm{loc}}

\newcommand{\BV}{\mathrm{BV}}

\newcommand{\liploc}{\mathrm{Lip}_{\mathrm{loc}}}

\newcommand{\ch}{\text{\raise 1.3pt \hbox{$\chi$}\kern-0.2pt}}

\newcommand{\mres}{\mathbin{\vrule height 2ex depth 2.2pt width
		0.12ex\vrule height -0.3ex depth 2.2pt width .5ex}}

\DeclareMathOperator{\Mod}{Mod}

\DeclareMathOperator{\capa}{Cap}
\DeclareMathOperator{\rcapa}{cap}

\DeclareMathOperator{\dist}{dist}
\DeclareMathOperator{\diam}{diam}

\DeclareMathOperator{\Lip}{Lip}

\DeclareMathOperator{\lip}{lip}
\DeclareMathOperator{\supp}{spt}
\DeclareMathOperator*{\aplim}{\mathrm{ap\,lim}}

\DeclareMathOperator{\osc}{osc}

\DeclareMathOperator{\fineint}{fine-int}
\DeclareMathOperator{\finelim}{fine-lim}

\begin{document}
\title[Lipschitz numbers and Fine Differentiability on Metric Measure Spaces]
    {Lipschitz numbers and Fine Differentiability on Metric Measure Spaces}
	\author{Panu Lahti \and Yuxin Li}
	\address{Panu Lahti,  Academy of Mathematics and Systems Science, Chinese Academy of Sciences,
		Beijing 100190, PR China, {\tt panulahti@amss.ac.cn}}
    \address{Yuxin Li,  Academy of Mathematics and Systems Science, Chinese Academy of Sciences,
		Beijing 100190, PR China, {\tt liyuxin@amss.ac.cn}}

	\subjclass[2020]{46E35, 31C40, 30C65}
	\keywords{Sobolev function, generalized Lipschitz number,
		fine differentiability, Rademacher's theorem}

\begin{abstract}
On a metric space equipped with a doubling measure supporting a Poincar\'e inequality, we introduce capacity-relaxed infinitesimal Lipschitz numbers and characterize Newtonian functions in $N^{1,p}$, $1\le p\le\infty$,  and functions of bounded variation. 
If $X$ is additionally $Q$-regular, then 
every BV function is $1$-finely differentiable at $\mu$-almost every point, and its fine differential agrees with its approximate Cheeger differential. 
We also extend a Sobolev regularity criterion from continuous to $p$-quasicontinuous functions.
\end{abstract}

	\date{\today}
	\maketitle

     \section{Introduction}

Rademacher's theorem \cite{Rad} states that every Lipschitz function on $\R^n$ is differentiable almost everywhere. 
A related pointwise result is Stepanoff's theorem \cite{Ste}: a function is differentiable almost everywhere in the set where its upper infinitesimal Lipschitz number
\[
\Lip_u(x)
\coloneqq
\limsup_{r\to0}\sup_{y\in B(x,r)}
\frac{|u(y)-u(x)|}{r}
\]
is finite. 
The corresponding lower infinitesimal Lipschitz number $\lip_u(x)$ is obtained by replacing $\limsup$
by $\liminf$. The smaller number $\lip_u$ does not satisfy a direct analog of Stepanoff's theorem,
but it can be shown that if $u$ is continuous,
$\lip_u<\infty$ outside a set of $\sigma$-finite $n-1$-dimensional Hausdorff measure, and also
$\lip_u\in L^p_{\loc}(\R^n)$, then $u\in W^{1,p}_{\loc}(\R^n)$,
with $1\le p\le \infty$, 
see Balogh--Cs\"ornyei \cite{BC}, Z\"urcher \cite{Zu}, and the metric-space extensions in \cite{WZ}.

The theory of differentiability and Sobolev functions extends beyond Euclidean spaces. 
Throughout this paper, $(X,d,\mu)$ is a complete metric measure space equipped with a doubling measure and supporting a $1$-Poincar\'e inequality. 
Newtonian spaces, defined using weak upper gradients, provide the natural metric counterpart of Sobolev spaces; see Shanmugalingam \cite{Shan} and the monographs \cite{BB,HKSTbook}. 
Functions of bounded variation defined on metric measure spaces were developed by Miranda \cite{Mir}.

Lipschitz numbers are very sensitive to the singularities of Sobolev and BV functions. 
For example, a Sobolev function can be discontinuous everywhere and have $\lip_u=\infty$ everywhere, and so a converse to the above result by Balogh--Cs\"ornyei \cite{BC} does not hold.
Instead of continuity, one can consider the relaxed notions of \emph{quasicontinuity} and \emph{fine continuity}. 

The main idea of this paper is to similarly relax the pointwise oscillation in the Lipschitz number by discarding a set of small relative $1$-capacity. 
A further normalization by a Radon measure makes it possible to treat the absolutely continuous
and singular parts of the variation measure of a BV function in a common framework. 
More precisely, let $w\colon\Om\to[-\infty,\infty]$, let $\nu$ be a positive Radon measure in the open set $\Om\subset X$, let $0\le\delta<1$, and let $M\ge1$.
Denote $\osc_{\emptyset}w =0$.
We define the generalized Lipschitz number
\begin{equation}
\label{eq:generalized Lip definition}
\begin{split}
\Lip_{w,\nu}^{\delta,M}(x)
&\coloneqq\inf\left\{
\limsup_{r\to0}\frac{\osc_{U_r}w}{2r}\frac{\mu(B(x,Mr))}{\nu(B(x,Mr))}\colon{}\right.\\
&\hspace{4em}\left.
\limsup_{r\to0}
\frac{\rcapa_1(B(x,r)\setminus U_r,B(x,2r))}{\rcapa_1(B(x,r),B(x,2r))}
\le\delta
	\right\},
\end{split}
\end{equation}
where the infimum is taken over collections of sets $U_r\subset B(x,r)$ defined for all sufficiently small $r>0$.
 If $\nu(B(x,Mr))=0$, we interpret the product in \eqref{eq:generalized Lip definition} as $0$ when $\osc_{U_r}w=0$ and as $\infty$ otherwise.

We fix the structural constant
\begin{equation}
\label{eq:cn definition}
\widetilde c_*
\coloneqq\min\left\{
\frac{c_*}{600C_S^4C_d^9},
\frac{1}{1600C_{\mathcal{M}} C_1C_S^2C_d^6}
\right\},
\end{equation}
where $C_d$, $C_S$, $c_*$, $C_{\mathcal{M}}$, and $C_1$ are the constants from \eqref{eq:doubling-def}, \eqref{eq:Sobolev-ineq.}, \eqref{eq:localized removability density}, Lemma \ref{lem:measures belonging to M}, and \eqref{eq:BV capa and 1-capa-relative}, respectively, and depend only on the doubling and Poincar\'e constants.

When $M=1$, we write $\Lip_{w,\nu}^{\delta}\coloneqq \Lip_{w,\nu}^{\delta,1}$.
If $d\nu=a\,d\mu$, we write $\Lip_{w,a}^{\delta,M}\coloneqq \Lip_{w,\nu}^{\delta,M}$, and if $a\equiv1$, we write simply $\Lip_w^{\delta,M}$.
The case $\delta=0$, denoted $\Lip_w^0(x)=\Lip_w^{0,M}(x)$, is closely related to differentiability with respect to the $1$-fine topology.

We denote by $\mathcal M(\Om,\cdot)$ the class of positive Radon measures introduced in Definition \ref{def:class of measures}. 
Throughout the paper, $u^*$ denotes the precise representative of $u$ defined in Subsection \ref{sec:BV functions}.
Our main results characterize both BV and Newtonian functions in terms of the generalized Lipschitz numbers.

\begin{theorem}\label{thm:main BV}
		Let $\Om\subset X$ be an open set and let $u\in L^1(\Om)$.
		Then the following are equivalent:
		\begin{enumerate}
			\item \label{item-1-thm 1.3}
            $u\in \BV(\Om)$;
			\item \label{item-2-thm 1.3}
            There exist $\nu\in\mathcal M\bigl(\Om,(800C_1C_S^2C_d^6\widetilde c_*)^{-1}\bigr)$ and $M\ge1$ such that $\nu(\Om)<\infty$ and $\Lip_{u^*,\nu}^{\widetilde c_*,M}(x)\le 1$ for every $x\in \Om$.
		\end{enumerate}
	\end{theorem}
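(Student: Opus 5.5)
For the implication \ref{item-1-thm 1.3}$\Rightarrow$\ref{item-2-thm 1.3}, I would take $\nu$ to be a suitable multiple of the variation measure, $\nu=C\|Du\|$, with possibly a small absolutely continuous correction such as $\eps\mu$ restricted to a finite-measure exhaustion so that $\nu(\Om)<\infty$ is kept. The constant $C$ is to be matched with the parameter $(800C_1C_S^2C_d^6\widetilde c_*)^{-1}$ so that $\nu\in\mathcal M(\Om,\cdot)$; this membership should follow from Lemma \ref{lem:measures belonging to M}. The heart of this direction is pointwise. Fix $x\in\Om$ and small $r$. Let $U_r$ be the set of points $y\in B(x,r)$ where $|u^*(y)-u_{B(x,r)}|\le \lambda\, r\,\|Du\|(B(x,Mr))/\mu(B(x,Mr))$, with $\lambda$ chosen below. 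Then $\osc_{U_r}u^*/(2r)\cdot \mu(B(x,Mr))/\nu(B(x,Mr))\le \lambda/C$. The complement $B(x,r)\setminus U_r$ is a superlevel set of $|u^*-u_{B(x,r)}|$. I would estimate its relative $1$-capacity by a BV-capacity weak-type estimate, namely \eqref{eq:BV capa and 1-capa-relative} combined with a boxing/coarea argument and the Sobolev inequality \eqref{eq:Sobolev-ineq.}. This should give
\[
\rcapa_1(B(x,r)\setminus U_r,B(x,2r))\le \frac{C_1C_S\,C_d^k\,\|Du\|(B(x,Mr))}{\lambda\,\|Du\|(B(x,Mr))/\mu(B(x,Mr))}\cdot\frac1r\lesssim \frac{1}{\lambda}\,\frac{\mu(B(x,r))}{r},
\]
which is comparable to $\lambda^{-1}\rcapa_1(B(x,r),B(x,2r))$. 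Choosing $\lambda\approx (\widetilde c_*)^{-1}$ then makes the capacity ratio at most $\widetilde c_*$. The leftover $\lambda/C$ is made $\le 1$ by the choice of $C$, and this is exactly where the specific constants in the statement come from. The points where $\nu(B(x,Mr))=0$ are handled separately: there $u$ is a.e. constant near $x$, so $u^*$ is constant and the oscillation vanishes. The enlargement $M$ takes care of the cutoff/Poincaré dilation.

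For \ref{item-2-thm 1.3}$\Rightarrow$\ref{item-1-thm 1.3}, I would argue by approximation with discrete convolutions. Fix a scale $r$ and a Whitney-type cover $\{B(x_i,r)\}$ of $\Om$ with bounded overlap. At each $x_i$ use the sets $U_r$, with oscillation at most about $2r\,\nu(B(x_i,Mr))/\mu(B(x_i,Mr))$, whose complement has small relative capacity. The key claim is that most of $B(x_i,r)$, in $\mu$-measure, lies in $U_r$. This follows from the localized removability/density estimate \eqref{eq:localized removability density} with constant $c_*$, because small relative capacity forces small relative measure. Hence $\vint_{B(x_i,r)}|u-c_i|\,d\mu\lesssim r\,\nu(B(x_i,Mr))/\mu(B(x_i,Mr))$ for some constant $c_i$, up to the exceptional part. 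The exceptional part must also be controlled, and this is what the defining property of $\mathcal M$ is designed to provide. The discrete convolution $u_r$ is then locally Lipschitz with upper gradient $\lesssim\sum_i \ch_{B(x_i,2r)}\nu(B(x_i,Mr))/\mu(B(x_i,r))$. Its total integral is $\lesssim\nu(\Om)$ by bounded overlap. Since $u_r\to u$ in $L^1_\loc$, lower semicontinuity gives $\|Du\|(\Om)\lesssim\nu(\Om)<\infty$.

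The main obstacle I expect is this converse step. The hypothesis only holds for sufficiently small $r$ depending on $x$, not uniformly, and the exceptional sets of small capacity can carry large values of $u$. I would try to handle both problems by a Vitali/Besicovitch-free covering. Use the sets $E_k=\{x:\text{the bound holds for all } r<1/k\}$, which increase to $\Om$. Work in the Newtonian/BV setting via a fine-topology argument: $u^*$ is $1$-finely continuous outside a set of $1$-capacity zero, and small relative capacity ($\le\widetilde c_*$) still guarantees a fat intersection with every large subset. This should let me pass to $L^1$ mean-oscillation bounds.
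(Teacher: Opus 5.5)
Your proof of (1)$\Rightarrow$(2) is correct in outline but takes a different route from Proposition~\ref{prop:necessity}.

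The paper fixes a median, selects a critical level $t_+$ via the maximal-function capacity estimate, and bounds $\|D(u-M)_\pm\|$ from below by the BV Maz'ya inequality (Lemma~\ref{lem:BV-Mazya}). You instead bound the capacity of $\{|u^*-c|>t\}\cap B(x,r)$ from above by a weak-type estimate. The clean way to do this is to test $\rcapa_{\BV}$ with $\eta\min\{1,(u-c)_\pm/t\}$, using the Leibniz rule, \eqref{eq: BV-Poincare}, $\rcapa_1\le C_1\rcapa_{\BV}$ and \eqref{eq:rcapa over ball-measure}. This is the estimate the paper itself uses inside Lemma~\ref{lem:zero-BV-blowup}. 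It is shorter and gives the same bound $\Lip^{\delta,2\lambda}_{u^*,\|Du\|}\le C\delta^{-1}$.

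Two details need fixing:
\begin{itemize}
\item The set $\{u^*-c>t\}$ is contained only in $\{u^\vee-c>t\}$, up to an $\mathcal H$-null set. Your test function has $f^\wedge\ge1$ only where $u^\wedge-c\ge t$, and the jump points in between are not capacity-negligible. So you must invoke Lemma~\ref{lem:wedge and vee capacity}.
\item The factor $C$ in $\nu=C\|Du\|$ plays no role in membership in $\mathcal M$. Membership comes from Lemma~\ref{lem:measures belonging to M} together with $2C_{\mathcal M}\le(800C_1C_S^2C_d^6\widetilde c_*)^{-1}$, see \eqref{eq:cn definition}.
\end{itemize}
No $\eps\mu$ correction is needed. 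Also, the specific constants in the statement are dictated by the converse, not by this direction.

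The converse, however, has a genuine gap.

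First, small relative capacity of $B(x_i,r)\setminus U_r$ does give small relative measure, by \eqref{eq:rcapa in small ball}, not by the removability statement. But $u$ is completely unconstrained on that set. So nothing bounds $\int_{B(x_i,r)}|u-c_i|\,d\mu$, or the differences of averages on neighbouring balls. The class $\mathcal M$ cannot supply this: it says nothing about values of $u$. It only says that sets which are capacitarily sparse at every point of an open set carry little $\nu$-mass. If the hypothesis held at one common scale at all centres, medians instead of averages would essentially repair this (the paper uses $\beta$-medians in Steps~4--5 of Proposition~\ref{prop:controlled BV criterion}).

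The real obstacle is the one you flag but do not resolve. The radius below which the bound holds at $x$ depends on $x$ arbitrarily. The sets $E_k$ need not be measurable, and nothing prevents $\Om\setminus E_k$ from being dense. So there is no single Whitney or discrete-convolution cover whose neighbouring centres are all good at comparable scales. Your proposed remedy does not help. $1$-fine continuity of $u^*$ off a capacity-zero set presupposes the regularity being proved. It is also false for BV functions: $\ch_E$ for a half-space $E\subset\R^n$, $n\ge2$, is not $1$-finely continuous at any point of $\partial E$, and $\partial E$ has positive capacity.

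What the paper does here is the core of the argument:
\begin{itemize}
\item It splits $\Om$ into Borel sets $D_j$, the closures of the sets $S_j$ where the bound holds below scale $1/j$. Each $D_j$ is covered by Whitney balls centred in $S_j$, and $U_r$ is replaced by its $1$-fine interior.
\item It iterates on the uncovered remainders $D_{j,k+1}^\ell=D_{j,k}^\ell\setminus\bigcup_lU_{j,k,l}^\ell$. Lemma~\ref{lem:choice of new R}, applied with the precise $\mathcal M$-constant, makes the $h\,d\nu$-mass of the open sets $W_{j,k}^\ell$ decay geometrically in $k$. Hence the oscillation costs sum to $\lesssim\int_\Om h\,d\nu$.
\item The residual set has lower capacitary density at most $90C_S^2C_d^6\widetilde c_*<c_*$ at its own points and is thin elsewhere. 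It is therefore isometrically removable by Proposition~\ref{prop:localized isometric removability}. This is where $c_*$ and $\widetilde c_*$ really enter.
\item Since the good sets are finely open rather than balls, the approximants are not discrete convolutions. They are path-distance functions built from a density $\rho$, which is an upper gradient along long curves by $1$-path openness.
\end{itemize}
Without some substitute for this iteration-plus-removability mechanism, your outline does not establish (2)$\Rightarrow$(1).
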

	
	\begin{theorem}\label{thm:main Sob}
		Let $\Om\subset X$ be an open set and let $u\in L^p(\Om)$, where $1\le p\le \infty$.
		Then the following are equivalent:
		\begin{enumerate}
			\item \label{item-1-thm 1.4}
            There exists $v\in N^{1,p}(\Om)$ such that $u=v$ $\mu$-almost everywhere in $\Om$;
			\item \label{item-2-thm 1.4}
			There exist a nonnegative $a\in L^p(\Om,\mu)$ and $M\ge1$ such that $\Lip_{u^*,a}^{\widetilde c_*,M}(x)\le 1$ for every $x\in \Om$.
		\end{enumerate}
	\end{theorem}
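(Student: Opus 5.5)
We will prove Theorem \ref{thm:main Sob} by reducing it to Theorem \ref{thm:main BV}, plus a separate argument for the higher integrability and absolute continuity of the upper gradient.

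For the direction \ref{item-1-thm 1.4}$\Rightarrow$\ref{item-2-thm 1.4}, we take $v\in N^{1,p}(\Om)$ with $u=v$ a.e. We may replace $v$ by its $p$-quasicontinuous (hence $1$-quasicontinuous) representative, which agrees with $u^*$ outside a set of $1$-capacity zero. We let $g$ be the minimal $p$-weak upper gradient and set $a\coloneqq C g^{\sharp}$ for a suitable localized maximal function of $g$ (or $g$ plus a small $L^p$ correction), so that $a\in L^p(\Om)$ when $p>1$. When $p=1$, or to avoid maximal functions, we use a fine-Lebesgue-point argument instead. At each $x$ we define $U_r$ as the set of points $y\in B(x,r)$ where $|u^*(y)-u^*(x)|\le C r\vint{B(x,Mr)}g\,d\mu$. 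The Poincar\'e inequality combined with the capacitary weak-type estimate (the boxing/Sobolev inequality \eqref{eq:Sobolev-ineq.} together with \eqref{eq:BV capa and 1-capa-relative}) bounds $\rcapa_1(B(x,r)\setminus U_r,B(x,2r))$ by a small multiple of $\rcapa_1(B(x,r),B(x,2r))\approx\mu(B(x,r))/r$, once the constant $C$ is chosen large compared to $\widetilde c_*^{-1}$. Then $\osc_{U_r}u^*/(2r)\le C\vint{B(x,Mr)}g$, which is at most $\vint{B(x,Mr)}a\,d\mu$ after absorbing constants into $a$. This gives $\Lip_{u^*,a}^{\widetilde c_*,M}(x)\le1$ everywhere; at points where $\vint g$ is infinite, the bound is trivial.

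For the converse \ref{item-2-thm 1.4}$\Rightarrow$\ref{item-1-thm 1.4}, we set $d\nu=a\,d\mu$. If $p=1$, $\nu$ is finite; for $p>1$ we work locally on $\Om'\Subset\Om$, where $a\in L^1$. We must check that $\nu\in\mathcal M(\Om,\cdot)$ via Lemma \ref{lem:measures belonging to M}, since absolutely continuous measures should satisfy the defining density condition. Theorem \ref{thm:main BV} then gives $u\in\BV_{\loc}(\Om)$, and its proof should in fact yield the quantitative bound $\Vert Du\Vert\le C\nu$ as measures. Hence $\Vert Du\Vert\ll\mu$ with density at most $Ca\in L^p$. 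On a space with a $1$-Poincar\'e inequality, a BV function whose variation measure is absolutely continuous with density $h\in L^p_{\loc}$ has a representative in $N^{1,p}_{\loc}$ with upper gradient comparable to $h$. This follows from the known equivalence $\BV$ with $\Vert Du\Vert\ll\mu$ $\Leftrightarrow N^{1,1}$ (Hakkarainen--Kinnunen--Lahti--Lehtel\"a) together with $L^p$ integrability of the density. Globalizing via $u\in L^p(\Om)$ and $a\in L^p(\Om)$ gives $v\in N^{1,p}(\Om)$. For $p=\infty$ the same reasoning yields a locally Lipschitz-type bound.

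The main obstacle is the quantitative step $\Vert Du\Vert\le C\nu$, equivalently that the capacity-relaxed oscillation controls the variation. We would try to extract it from the proof of Theorem \ref{thm:main BV}, which presumably uses a covering by balls $B(x,Mr)$ and a discrete convolution built only from values on $U_r$. Small capacity of the discarded set allows cutting it out with test functions of controlled total variation, via the constant $C_1$ and \eqref{eq:localized removability density}. If the theorem statement itself only provides membership in $\BV$, we would redo that covering estimate on arbitrary open $W\subset\Om$ to get $\Vert Du\Vert(W)\le C\nu(W)$.
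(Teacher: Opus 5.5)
The direction \ref{item-1-thm 1.4}$\Rightarrow$\ref{item-2-thm 1.4} has a genuine gap. Your sets $U_r=\{y\in B(x,r):|u^*(y)-u^*(x)|\le Cr\vint{B(x,Mr)}g\,d\mu\}$ are centered at the value $u^*(x)$. The claimed bound on $\rcapa_1(B(x,r)\setminus U_r,B(x,2r))$ then fails whenever $u^*(x)$ is not within $O(r\vint{B(x,Mr)}g\,d\mu)$ of most of the values of $u$ on $B(x,r)$. The Poincar\'e inequality only controls $u-u_{B(x,r)}$, whereas $|u^*(x)-u_{B(x,r)}|$ needs the whole telescoping sum $\sum_i r_i\vint{B(x,\lambda r_i)}g\,d\mu$. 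Since \ref{item-2-thm 1.4} demands the bound at every $x\in\Om$, this breaks the argument.

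Concretely, in $\R^2$ take $u(y)=\sin(\log\log(1/|y|))$ near $0$. Then $u\in W^{1,2}$ and $u^*(0)=1$. Along radii $r_k\to0$ with $\sin(\log\log(1/r_k))=-1$, we have $u\approx-1$ on $B(0,r_k)\setminus B(0,r_k/K)$, while $r\vint{B(0,Mr)}|\nabla u|\,d\mu\lesssim 1/\log(1/r)\to0$. So $B(0,r_k)\setminus U_{r_k}$ contains most of the ball, and by \eqref{eq:rcapa in small ball} its capacity ratio is at least $(2C_SC_d)^{-1}>\widetilde c_*$. For $u(y)=\log\log(1/|y|)$ one even has $u^*(0)=\infty$ and $U_r=\emptyset$. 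A maximal function in the weight does not help: in the first example $r\vint{B(0,Mr)}\mathcal Mg\,d\mu\to0$ as well, and $\mathcal Mg\notin L^1$ for $p=1$. No Lebesgue-point argument applies at $0$ either, since it is not a Lebesgue point.

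What makes the every-point statement true is that \eqref{eq:generalized Lip definition} does not require $x\in U_r$, so $u^*(x)$ should never enter. The paper's Proposition~\ref{prop:necessity} does the following:
\begin{enumerate}
\item it centers at a median $M$ of $u$ on $B(x,r)$;
\item it discards the $\mathcal H$-null set where \eqref{eq:pointwise-properties} fails;
\item it picks thresholds $t_\pm$ at which the relative capacity of $\{(u^\vee-M)_+>t\}$, resp.\ $\{(u^\wedge-M)_->t\}$, crosses $\frac{\delta}{2C_1}\rcapa_1(B(x,r),B(x,2r))$;
\item it bounds $t_\pm$ by the BV Maz'ya inequality.
\end{enumerate}
Then $a=C\widetilde c_*^{-1}g_u$ works for every $p$, with no maximal function in the weight.

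Your weak-type idea also works once re-centered. Let $c_r$ be a median of $u$ on $B(x,2r)$ and $t_r=C\widetilde c_*^{-1}r\|Du\|(B(x,2\lambda r))/\mu(B(x,r))$. The cutoff $\eta\min\{1,|u-c_r|/t_r\}$, together with Lemma~\ref{lem:wedge and vee capacity}, \eqref{eq:BV capa and 1-capa-relative} and the Poincar\'e inequality, bounds the capacity of $\{|u^*-c_r|>t_r\}\cap B(x,r)$ by $\widetilde c_*\rcapa_1(B(x,r),B(x,2r))$. This is the mechanism of Lemma~\ref{lem:zero-BV-blowup}, and it is a somewhat more direct route than the paper's.

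Your converse direction is organized as in the paper. The quantitative bound $\|Du\|\le Ca\,\mu$ you need is exactly Theorem~\ref{thm:main in text}, through which the paper proves Theorem~\ref{thm:main BV}. Membership of $a\,d\mu$ in $\mathcal M$ uses Lemma~\ref{lem:h nv is also M} in addition to Lemma~\ref{lem:measures belonging to M}. The passage to $N^{1,p}$ is Lemma~\ref{lem:BV-Sobolev density}. There, the $\BV$-with-$\|Du\|\ll\mu$ $\Rightarrow N^{1,1}$ result plus $L^p$-integrability of the density is not by itself enough, because a $1$-weak upper gradient need not be a $p$-weak one. A Poincar\'e-based representative argument is needed: Heikkinen's theorem for $p<\infty$ and a Morrey estimate for $p=\infty$.
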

When $X=\R^n$ equipped with the Euclidean metric and Lebesgue measure, a function belongs to the classical Sobolev space $W^{1,p}(\Om)$ if and only if it has a representative in $N^{1,p}(\Om)$; see \cite[Theorem~7.4.5]{HKSTbook} for $1\le p<\infty$ and \cite[Theorem 4.5]{EvGa} for $p=\infty$.
Condition \ref{item-1-thm 1.4} in Theorem \ref{thm:main Sob} is then simply $u\in W^{1,p}(\Om)$.
As a corollary of Theorem \ref{thm:main BV},
 we obtain a Federer-type characterization of sets of finite perimeter using a suitable capacitary boundary, see Corollary \ref{cor:finite perimeter char}.
This connects the present results with the metric space theory of sets of finite perimeter developed in \cite{A1,L-Fed,L-Fed-new}.

Our second result concerns the $1$-fine differentiability of BV functions. 
Cheeger \cite{Che} first proved that complete doubling metric measure spaces supporting a Poincar\'e inequality admit a countable measurable differentiable structure $\{(X_\alpha,\xi^\alpha)\}_\alpha$.
Keith subsequently developed this theory in two directions: in \cite{Kei} he weakened the assumptions 
by means of the Lip--lip condition, while in \cite{Kei-P} he considered the corresponding approximate differentiability framework, obtained by replacing the ordinary limit in the chartwise expansion by an approximate limit.
Ranjbar--Motlagh \cite[Theorems 4.5 and 4.6]{RM08} obtained related $L^p$-type differentiability results for Sobolev and BV functions on metric measure spaces. 
Later, Durand-Cartagena--Ihnatsyeva--Korte--Szuma\'nska \cite[Corollary 4.1]{DCIKS} proved, under the assumption of a Poincar\'e inequality,
that functions in $N^{1,p}$ and $\BV$ are approximately differentiable almost everywhere.
For Newtonian functions, the Sobolev differential is compatible with the Cheeger structure, and its intrinsic norm agrees with the minimal weak upper gradient; see \cite{ES24}.

Fine differentiability in Euclidean spaces and its connections with maximal functions and quasiconformal mappings have been studied in \cite{Gar10,L-FD,L-FQ,Lav08}. 
Under an Ahlfors $Q$-regularity assumption, we prove the following $1$-fine differentiability theorem, which can be viewed as a fine-topological strengthening of \cite[Corollary 4.1]{DCIKS}.

\begin{theorem}
\label{thm:BV-finely-diff-main theorem}
Let $(X,d,\mu)$ be an Ahlfors $Q$-regular metric measure space with $Q\ge1$, supporting a $1$-Poincar\'e inequality. 
Let $\Om\subset X$ be open and let $u\in\BV(\Om)$. 
Let $\{(X_\alpha,\xi^\alpha)\}_\alpha$, where $\xi^\alpha=(\xi^\alpha_1,\ldots,\xi^\alpha_{N_\alpha})$, be a Cheeger measurable differentiable structure.
Then, for $\mu$-a.e. $x\in\Om\cap X_\alpha$, the representative $u^*$ is $1$-finely differentiable at $x$, and
\[
d^\alpha_{\mathrm{fine}}u^*(x)
=d^\alpha u(x).
\]
\end{theorem}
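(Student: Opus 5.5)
The plan is to reduce to the approximate differentiability result of \cite[Corollary 4.1]{DCIKS}, or rather to a quantitative $L^1$-type version of it, and then to upgrade ``approximate'' to ``$1$-fine'' by a capacitary density argument available in the $Q$-regular setting. Write $\ell_x(y)\coloneqq u^*(x)+d^\alpha u(x)\cdot(\xi^\alpha(y)-\xi^\alpha(x))$ and $w_x\coloneqq u-\ell_x$. The function $w_x$ lies in $\BV$ near $x$, since $\xi^\alpha$ is Lipschitz. $1$-fine differentiability at $x$ amounts to showing that, for every $\eps>0$, the set
\[
E_\eps\coloneqq\{y\colon |w_x^*(y)|>\eps d(x,y)\}
\]
is $1$-thin at $x$. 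In the $Q$-regular case I would use the criterion that
\[
\sum_{j}\frac{\rcapa_1(E_\eps\cap B(x,2^{-j}),B(x,2^{-j+1}))}{\rcapa_1(B(x,2^{-j}),B(x,2^{-j+1}))}<\infty,
\]
where the denominator is comparable to $2^{-j(Q-1)}$.

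The key steps are as follows.

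\textbf{Step 1.} For $\mu$-a.e.\ $x\in\Om\cap X_\alpha$, establish three facts:
\begin{enumerate}
\item $x$ is a Lebesgue point of $u$ and of the absolutely continuous part $a=|Du|^a$ with density $d^\alpha u(x)$ (Cheeger differential of BV via \cite{DCIKS} / \cite{ES24});
\item $\limsup_{r\to0}|Du|^s(B(x,r))/\mu(B(x,r))=0$;
\item $L^1$-differentiability: $r^{-1}\vint_{B(x,r)}|w_x|\,d\mu\to0$. In fact I want the stronger statement $|Dw_x|(B(x,r))/\mu(B(x,r))\to0$, namely that $|D(u-d^\alpha u(x)\cdot\xi^\alpha)|$ has vanishing density at $x$. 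This follows by applying the Lebesgue differentiation theorem to a countable dense set of affine combinations of $\xi^\alpha$ and using $|D(u-c\cdot\xi)|^a=|\nabla(u-c\cdot\xi)|$ with the Cheeger norm.
\end{enumerate}

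\textbf{Step 2.} Convert these density estimates into capacity estimates on dyadic annuli. Apply the Poincar\'e inequality and the boxing/weak-type capacitary estimate for BV functions, \eqref{eq:BV capa and 1-capa-relative}: for a BV function $v$ on $B(x,2r)$,
\[
\rcapa_1(\{|v^*|>t\}\cap B(x,r),B(x,2r))\le \frac{C}{t}\bigl(|Dv|(B(x,2r))+r^{-1}\|v\|_{L^1(B(x,2r))}\bigr).
\]
Take $v=w_x$ and $t=\eps r$. Using $Q$-regularity, the ratio with $\rcapa_1(B(x,r),B(x,2r))\simeq r^{Q-1}$ is bounded by
\[
\frac{C}{\eps}\left(\frac{|Dw_x|(B(x,2r))}{r^Q}+\frac{1}{r}\vint_{B(x,2r)}|w_x|\right),
\]
and this tends to $0$ by Step 1.

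\textbf{Step 3 (main obstacle).} Vanishing of the ratio is not summability, so thinness does not follow directly. To get summability I would replace the single estimate by a telescoping one. Set $r_j=2^{-j}$ and $c_j=(w_x)_{B(x,r_j)}$. Apply the capacitary estimate to $w_x-c_j$ at threshold $\eps r_j/2$, and separately control $|c_j|\le\sum_{k\ge j}|c_{k+1}-c_k|$, which is summable by Poincar\'e. The real issue is to obtain a rate, or at least summability, of $\theta_j\coloneqq|Dw_x|(B(x,2r_j))/r_j^Q$. A generic vanishing density need not be summable, so I expect to need a set of full measure where $\sum_j\theta_j<\infty$ fails only on a null set. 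I would obtain this via a maximal-function/Wiener-type argument: the function
\[
x\mapsto\sum_j\frac{|D(u-c\cdot\xi)|(B(x,r_j))}{r_j^Q}\,\ch_{\{\cdots\}}
\]
is not integrable in general. So instead I would try to use the definition of $1$-fine limit in the form equivalent to $\lim_{r\to0}$ of the capacity ratio being $0$ for a $1$-finely open neighbourhood, using that in the $1$-capacity case thinness is characterized by density-zero of capacity ratios (a known feature of $p=1$, where the capacitary density zero criterion essentially suffices). If that characterization from earlier sections (the $\delta=0$ case of \eqref{eq:generalized Lip definition} and \eqref{eq:localized removability density}) is available, Step 2 finishes the proof directly. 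Then $u^*$ is $1$-finely differentiable at $x$ with differential $d^\alpha u(x)$, and uniqueness of the fine differential, since $1$-fine neighbourhoods have positive density, gives $d^\alpha_{\mathrm{fine}}u^*(x)=d^\alpha u(x)$.
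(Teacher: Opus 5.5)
Your reduction is correct as far as it goes. Since $w_x^*=u^*-\ell_x$, fine differentiability at $x$ is equivalent to $E_\eps$ being $1$-thin at $x$ for every $\eps>0$. In this paper $1$-thinness is \emph{defined} by vanishing of the capacity ratio (Definition~\ref{def:1 fine topology}), so the Wiener-type summability you worry about in Step~3 is not the issue.

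\textbf{The main gap.} Step~2 estimates the wrong set, so your closing claim that Step~2 ``finishes the proof directly'' is false. With $t=\eps r$ you control $\{|w_x^*|>\eps r\}\cap B(x,r)$. But $E_\eps\cap B(x,r)=\{y\in B(x,r)\colon |w_x^*(y)|>\eps d(x,y)\}$ also contains points $y$ near $x$ with $\eps d(x,y)<|w_x^*(y)|\le\eps r$. You must split into annuli $A_m=B(x,r_m)\setminus B(x,r_{m+1})$ and apply your estimate on $A_m$ at its own scale, with threshold $\eps r_m/2$. Write $\theta_m$ for your Step~2 quantity at scale $r_m$. Subadditivity, together with monotonicity of $\rcapa_1$ in the outer ball, then gives
\[
\frac{\rcapa_1(E_\eps\cap B(x,r_j),B(x,2r_j))}{\rcapa_1(B(x,r_j),B(x,2r_j))}
\le\frac{C}{\eps}\sum_{m\ge j}\theta_m\,\frac{\rcapa_1(B(x,r_m),B(x,2r_m))}{\rcapa_1(B(x,r_j),B(x,2r_j))}
\le\frac{C}{\eps}\sum_{m\ge j}\theta_m\,2^{-(m-j)(Q-1)}.
\]
For $Q>1$ this is at most $C\eps^{-1}\sup_{m\ge j}\theta_m\to0$, so mere vanishing of $\theta_m$ suffices. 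This is exactly how \eqref{eq: reverse doubling ineq.-s>1} is used in Lemma~\ref{lem:zero-BV-blowup}.

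For $Q=1$ the weights do not decay, and $\sum_{m\ge j}\theta_m$ need not tend to zero, so your argument fails. The missing idea is that in an Ahlfors $1$-regular space a single point has relative capacity comparable to that of the ball, \eqref{eq:FG-singleton-capacity}, which follows from quasiconvexity and $\mathcal H^1_d\le C\mu$. Hence once $C\eps^{-1}\theta_m<\delta_x^*$, the annular bad sets $E_\eps\cap A_m$ must be empty. Your proposal never separates the two cases and, as written, does not cover $Q=1$.

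\textbf{Step 1(3) is asserted, not proved.} Nothing available here identifies the Radon--Nikodym density of $\|D(u-c\cdot\xi^\alpha)\|$, for $u\in\BV$, with $|d^\alpha u-c|_C$. Even for $v\in N^{1,1}$ one only has $\|Dv\|\le g_v\mu$ and $g_v\le C\,d\|Dv\|/d\mu$. You need at least an upper bound, and it must be tied to the \emph{approximate} differential $d^\alpha u$. The paper obtains this as follows:
\begin{itemize}
\item off a small open set $W$ carrying $\|D^su\|$, it replaces $u$ by the discrete convolution $h\in N^{1,1}$ of Lemma~\ref{lem:localized-discrete-convolution};
\item it uses the Sobolev differential of \cite{ES24} for $h$;
\item it uses the vanishing variation density and $L^1$ blow-up of $h-u$ on $\Om\setminus W$, so the approximate differentials of $h$ and $u$ agree there.
\end{itemize}

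\textbf{A smaller point in Step 2.} Your cutoff $\eta\min\{1,2|w_x|/t\}$ has lower approximate limit $\ge1$ only on $\{|w_x|^\wedge>t/2\}$. Reaching $\{|w_x^*|>t\}$ therefore needs $|w_x^*|\le|w_x|^\vee$ $\mathcal H$-a.e. together with Lemma~\ref{lem:wedge and vee capacity}; \eqref{eq:BV capa and 1-capa-relative} alone does not give it.

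With these repairs, your route amounts to applying Lemma~\ref{lem:zero-BV-blowup} directly to $f=u-\ell_x$. That is a reasonable alternative to the paper's two-step argument ($N^{1,1}$ first, then BV). However, proving Step 1(3) is precisely where the paper's discrete-convolution reduction does its work.
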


Here $d^\alpha u(x)$ denotes the approximate Cheeger differential of $u^*$ at $x$, see the definition in Section \ref{sec:fine differentiability}.

Finally, we return to regularity criteria of Balogh--Cs\"ornyei \cite{BC}.
In \cite[Remark 3.15]{Zu}, Z\"urcher asked whether the continuity assumption in criteria involving the lower Lipschitz number $\lip_u$ could be replaced by $p$-quasicontinuity.
Proposition \ref{prop:three lip conditions} gives a related criterion for $\Lip_u^0$: if $X$ is Ahlfors $Q$-regular with $Q\ge1$, $1\le p<\infty$, $\Om$ is bounded, $u\colon\Om\to[-\infty,\infty]$ is $p$-quasicontinuous, $\Lip_u^0<\infty$ outside a set of $\sigma$-finite $\mathcal H$-measure, and $\Lip_u^0\in L^p(\Om)$, then $u\in D^p(\Om)$.


The paper is organized as follows. 
Section \ref{sec:prelis} recalls Newtonian and BV spaces, relative $1$-capacity, and the $1$-fine topology.
In Section \ref{sec:main theorems}, we prove the generalized Lipschitz characterizations
of Theorems \ref{thm:main BV} and \ref{thm:main Sob}, 
and the resulting finite perimeter criterion.
We then study fine differentiability with respect to Cheeger coordinates in Section \ref{sec:fine differentiability}, and conclude with the applications to quasicontinuous and continuous functions in Section \ref{sec: lipf}.
	
	\section{Preliminaries}\label{sec:prelis}
	\subsection{Metric measure space, doubling measure, and Poincar\'e inequality}
	
	Throughout this paper, we assume that $(X, d,\mu)$ is a complete metric measure space equipped with a doubling measure and supporting a $1$-Poincar\'e inequality. 
	We also assume throughout that $X$ contains at least two points and that $\Omega\subset X$ is open.
	Unless otherwise stated, $C$ denotes a positive constant whose value may change from one occurrence to another. Constants with subscripts are fixed within the context in which they are introduced.
	
	A triple $(X,d,\mu)$ is said to be a metric measure space if $(X,d)$
	is a nonempty separable metric space and $\mu$ is a locally finite Borel-regular
	outer measure on $X$.
	Hence for every set $A\subset X$, we have
	\[
	\mu(A)= \inf \{ \mu(O): A\subset O\subset X,\, O\  {\rm is\ open} \},
	\]
	see e.g. \cite[Proposition 3.3.37]{HKSTbook}.
If a property holds outside a set of measure zero with respect to $\mu$, we say that it holds almost everywhere, or ``$\mu$-a.e.''.
	
	We denote the characteristic function of a set $E\subset X$ by $\ch_E\colon X\to \{0,1\}$.
	We write $B(x,r)$ for an open ball in $X$ with center $x$
	and radius $r$, that is, $\{y \in X \colon d(x,y)<r\}$.
	We denote the closed ball with center $x$ and radius $r$ by $\overline{B}(x,r)$.
	If $B=B(x,r)$ and $\tau>0$, we write $\tau B\coloneqq B(x,\tau r)$.
	
	We say that $(X,d,\mu)$ is a \emph{metric measure space equipped with a doubling measure} if there exists a doubling constant $C_d>0$ (we assume that $C_d>1$)  such that
	\begin{equation}\label{eq:doubling-def}
		0<\mu(B(x,2r))\leq C_d \mu(B(x,r)) <\infty
	\end{equation}
	for all balls $B(x,r)$. 
	By iterating the doubling condition, we obtain for any $x\in X$ and any $y\in B(x,R)$ with $0<r\le R<\infty$ that
	\begin{equation}\label{eq: doubling-ball-diff-center}
		\frac{\mu(B(y,r))}{\mu(B(x,R))}\ge \frac{1}{C_d^2}\left(\frac{r}{R}\right)^{Q_d},
	\end{equation}
	where we choose $Q_d\coloneqq\log_2 C_d$; a smaller doubling exponent may work as well, see \cite[Lemma 3.3]{BB}.
    We say that $\mu$ (or $X$) is Ahlfors $Q$-regular if there is a constant $C_A\ge1$ such that
    \begin{equation}\label{eq: Q-regular-def}
        C_A^{-1} r^Q\le \mu(B(x,r))\le C_A r^Q
    \end{equation}
	   for all $x\in X$ and $0<r<2\diam X$.

	For any set $A\subset X$ and $0<R<\infty$, the restricted spherical Hausdorff content of
	codimension one is defined to be 
	\[
	\mathcal{H}_R(A)\coloneqq\inf\left\{\sum_{i}\frac{\mu(B(x_i,r_i))}{r_i}\colon A\subset \bigcup_{i}B(x_i,r_i),\, r_i\le R \right\},
	\]
	where the infimum is taken over finite and countable collection of balls.
	The \emph{codimension one Hausdorff measure} of $A\subset X$ is defined to be 
	\[
	\mathcal{H}(A)\coloneqq\lim_{R\to0}\mathcal{H}_R(A).
	\]
	
	A \emph{curve} is a continuous mapping from an interval,
	and a \emph{rectifiable} curve is a curve with finite length.
	A rectifiable curve can
	be parameterized by its arc length $ds$.
	We say that a Borel function $g:X\to[0,\infty]$ is an \emph{upper gradient} of $u$ in a set $A\subset X$ if
	\[
	|u(\gamma(0))-u(\gamma(\ell_{\gamma}))|\leq \int_\gamma g\, ds:=\int_{0}^{\ell_{\gamma}}g(\gamma(s))\,ds
	\]
	for every rectifiable curve $\gamma\colon [0,\ell_{\gamma}]\to A$.

	For $1\le p<\infty$, the $p$-modulus of a family $\Gamma$ of rectifiable curves is
	\[
	\operatorname{Mod}_p(\Gamma)\coloneqq\inf_{\rho}\int_X\rho^p\,d\mu,
	\]
	where the infimum is taken over all nonnegative Borel functions $\rho$ such that $\int_\gamma\rho\,ds\ge1$ for every $\gamma\in\Gamma$.
	A nonnegative Borel function $g$ is a $p$-weak upper gradient of $u$ if the upper gradient inequality holds outside a curve family of $p$-modulus zero.
	Given an open set $\Om\subset X$, we set
	\[
	\|u\|_{N^{1,p}(\Om)}\coloneqq\left\{\|u\|_{L^p(\Om)}^p+\inf_g\|g\|_{L^p(\Om)}^p\right\}^{1/p}
	\quad\text{and}\quad
	N^{1,p}(\Om)\coloneqq\{u:\|u\|_{N^{1,p}(\Om)}<\infty\},
	\]
	where the infimum is taken over all $p$-weak upper gradients of $u$ in $\Om$.
	We understand every Newtonian function to be defined at every $x\in \Om$.
	For $u\in N^{1,p}_{\loc}(\Om)$, the \emph{minimal} $p$-weak upper gradient is denoted by $g_u$.
	It satisfies $g_u\le g$ almost everywhere for every $p$-weak upper gradient $g\in L^p_{\loc}(\Om)$, see \cite[Theorem 2.25]{BB}.

	For $p=\infty$, we use the convention that $N^{1,\infty}(\Om)$ consists of functions $u\in L^\infty(\Om)$ admitting an upper gradient in $L^\infty(\Om)$.
	We write $u\in N^{1,\infty}_{\loc}(\Om)$ if $u|_V\in N^{1,\infty}(V)$ for every open $V\Subset\Om$;
	the latter means that $V$ is open and $\overline{V}$ is a compact subset of $\Om$.
	By the local pointwise estimate in \cite[Theorem 8.1.55]{HKSTbook} applied with $p=1$ and an essentially bounded upper gradient, every $u\in N^{1,\infty}_{\loc}(\Om)$ has a locally Lipschitz representative.
	In what follows, we always take such $u$ to be its locally Lipschitz representative,
	and we set $g_u=g_{u,\infty}\coloneqq\Lip_u$.

	We say that  $\mu$ supports a \emph{$1$-Poincar\'e inequality} if there exist  Poincar\'e constants $C_P>0$,  $\lambda\ge1$ such that
	\begin{equation}\label{eq: (1,1)-Poincare}
		\vint{B(x,r)} |u - u_{B(x,r)}|\,d\mu
		\leq C_P r \vint{B(x,\lambda r)} g\, d\mu
	\end{equation}
	for every ball $B(x,r)$, every function $u$ integrable on balls, and every upper gradient $g$ of $u$, where $u_B\coloneqq\vint{B}u\,d\mu=\mu(B)^{-1}\int_Bu\,d\mu$.
	We refer the reader to \cite{BB,Ha03,HK00,HKSTbook,Shan}
	for a discussion on upper gradients, Poincar\'e inequalities, and their applications in complete, doubling metric measure spaces. 
	
	We will need the following Vitali-Carath\'eodory theorem;
	for a proof see e.g. \cite[p. 108]{HKSTbook}.
	By a positive Radon measure on $\Om$ we mean a nonnegative Borel measure that is finite on compact subsets of $\Om$ and is both inner regular and outer regular.
	
	\begin{theorem}\label{thm:VitaliCar}
		Let $\mu$ be a positive Radon measure on $\Om$, and let 
		$\rho\in L^1(\Om,\mu)$, with values in $[0,\infty]$.
Then there exists a sequence $\{\rho_i\}_{i=1}^{\infty}$
		of lower semicontinuous functions
		on $\Om$ such that $\rho\le \rho_{i+1}\le\rho_i$ for all $i\in\N$, and
		$\rho_i\to\rho$ in $L^1(\Om,\mu)$.
	\end{theorem}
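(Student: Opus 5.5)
The statement to prove is the Vitali--Carath\'eodory theorem (Theorem \ref{thm:VitaliCar}): a nonnegative $\rho\in L^1(\Om,\mu)$ admits a pointwise decreasing sequence of lower semicontinuous majorants converging to $\rho$ in $L^1$. The plan is in two stages. First, for each $\eps>0$, construct a single lower semicontinuous $h\ge\rho$ with $\int_\Om(h-\rho)\,d\mu<\eps$. Then take minima to force monotonicity.

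\emph{Step 1: layer-cake decomposition.} Write $\rho$ as a countable sum of nonnegative multiples of characteristic functions of Borel sets,
\[
\rho=\sum_{j=1}^\infty c_j\ch_{E_j}, \qquad c_j>0,
\]
with $\sum_j c_j\mu(E_j)=\int_\Om\rho\,d\mu<\infty$. A standard construction uses dyadic truncations: $s_k$ increases to $\rho$ with $s_k-s_{k-1}$ a finite linear combination of characteristic functions. The set $\{\rho=\infty\}$ has measure zero and is absorbed by letting the series diverge there, e.g.\ with $c_j=1$ and $E_j=\{\rho>j2^{-k}\}$-type sets.

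\emph{Step 2: open enlargements.} By outer regularity of $\mu$, choose open sets $V_j\supset E_j$ with $c_j\mu(V_j\setminus E_j)<\eps 2^{-j}$. Since $\mu(E_j)<\infty$, each $\mu(V_j)<\infty$. Set
\[
h=\sum_j c_j\ch_{V_j}.
\]
This is lower semicontinuous, being a supremum of finite sums of lower semicontinuous functions. Moreover $h\ge\rho$ everywhere, including where $\rho=\infty$, and $\int_\Om(h-\rho)\,d\mu<\eps$.

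\emph{Step 3: monotone sequence.} Take $h_i$ from Step 2 with $\eps=1/i$. Define
\[
\rho_i\coloneqq\min\{h_1,\dots,h_i\}.
\]
A finite minimum of lower semicontinuous functions is lower semicontinuous, and $\rho\le\rho_{i+1}\le\rho_i$. Finally $0\le\rho_i-\rho\le h_i-\rho$ gives
\[
\|\rho_i-\rho\|_{L^1(\Om,\mu)}\le 1/i\to0.
\]

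The only delicate point is Step 1, namely arranging that the decomposition holds pointwise everywhere with $h\ge\rho$, including on $\{\rho=\infty\}$. This is handled by the dyadic construction together with the fact that $\mu(\{\rho=\infty\})=0$: the terms of the series can be enlarged so that it diverges on that set without affecting the integrals. Outer regularity is available because $\mu$ is a positive Radon measure on $\Om$, as assumed in the statement.
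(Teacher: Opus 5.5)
Your proof is correct. It is the standard argument: decompose $\rho$ as a series of positive multiples of characteristic functions, enlarge each set to an open set using outer regularity, then take finite minima. This is the proof in \cite[p.~108]{HKSTbook}, which the paper cites instead of proving the theorem.

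One small remark: no extra enlargement on $\{\rho=\infty\}$ is needed. The dyadic approximations $s_k\nearrow\rho$ already tend to $\infty$ there, so the series diverges on that set and $h\ge\rho$ holds everywhere automatically.
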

	
	
	\subsection{Functions of bounded variation}\label{sec:BV functions}
	
	We recall the definition and basic properties of functions of bounded variation on metric spaces, see 
	\cite{Mir}.
See also e.g. \cite{AFP,EvGa} for the classical theory in the Euclidean setting.
As before let $\Om\subset X$ be an open set.
Given a function $u\in L_{\loc}^1(\Om)$, we define the \emph{total variation of $u$} in $\Om$ by
	\begin{equation}\label{eq: def-BV-metric}
		\|Du\|(\Om)\coloneqq\inf\left\{\liminf_{i\to \infty} \int_{\Om}g_{u_i}\,d\mu\colon u_i\in \Lip_{\loc}(\Om),\,u_i\to u \text{ in } L_{\loc}^1(\Om) \right\},
	\end{equation}
	where each $g_{u_i}$ is an upper gradient of $u_i$ in $\Om$.
	If $u\in L^1(\Om)$ and $\|Du\|(\Om)<\infty$, we say that $u$ is a function of bounded variation and denote $u\in \BV(\Om)$.  
	The Dirichlet space $D^{\BV}(\Om)$ is defined in the same way, except that the integrability requirement for the function itself is relaxed to $u\in L_{\loc}^1(\Om)$.
	For an arbitrary set $A\subset\Om$, we define
	\[
	\|Du\|(A)\coloneqq\inf\{\|Du\|(W):A\subset W\subset\Om,\ W\text{ open}\}.
	\]
	If $u\in L_{\loc}^1(\Om)$ and $\|Du\|(\Om)<\infty$, $\|Du\|(\cdot)$ is a Radon measure on $\Om$ by \cite[Theorem 3.4]{Mir}.
A $\mu$-measurable set $E\subset X$ is said to be of finite perimeter if $\|D\ch_E\|(X)<\infty$.
The perimeter of $E$ in $\Om$ is also denoted by 
	\[
	P(E,\Om)\coloneqq\|D\ch_E\|(\Om).
	\]
	Similarly, if $P(E,\Om)<\infty$, then $P(E,\cdot)$ is a finite Radon measure on $\Om$.

	The measure-theoretic interior of a set $E\subset X$ is defined by 
	\[
	I_E\coloneqq\left\{x\in X\colon \lim_{r\to 0} \frac{\mu(B(x,r)\setminus E)}{\mu(B(x,r))}=0 \right\},
	\]
	and the measure-theoretic exterior by 
	\[
	O_E\coloneqq\left\{x\in X\colon \lim_{r\to 0} \frac{\mu(B(x,r)\cap E)}{\mu(B(x,r))}=0 \right\}.
	\]
	The measure-theoretic boundary $\partial^*E$ is defined as the set of points $x\in X$ at which both $E$ and its complement have nonzero upper density, i.e.
	\[
	\limsup_{r\to 0} \frac{\mu(B(x,r)\cap E)}{\mu(B(x,r))}> 0 
	\quad
	\textrm{and}
	\quad
	\limsup_{r\to 0}\frac{\mu(B(x,r)\setminus E)}{\mu(B(x,r))}>0.
	\]
	Note that the space $X$ is always partitioned into the disjoint sets $I_E, O_E$ and $\partial^*E$.
	By Lebesgue's differentiation theorem (see e.g. \cite[Section 3.4]{HKSTbook}), for a $\mu$-measurable set $E$ we have $\mu(E\Delta I_E)=0$, where $\Delta$ is the symmetric difference.
	
	Given a number $0<\gamma\le 1/2$, we also define the strong boundary
	\begin{equation}\label{eq:strong boundary}
		\Sigma_{\gamma}E\coloneqq\left\{
		x\in X\colon \liminf_{r\to 0}\frac{\mu(B(x,r)\cap E)}{\mu(B(x,r))}\ge \gamma
		\quad
		\textrm{and}
		\quad
		\liminf_{r\to 0} \frac{\mu(B(x,r)\setminus E)}{\mu(B(x,r))}\ge \gamma
		\right\}.
	\end{equation}
	For an open set $\Om\subset X$ and a $\mu$-measurable set $E\subset X$ with $P(E,\Om)<\infty$, we have
	\begin{equation}\label{eq: measure theoretic boundary-gamma}
		\mathcal{H}((\partial^*E\setminus \Sigma_{\gamma}E)\cap \Om)=0
	\end{equation}
	for  $\gamma\in (0,1/2]$ that depends on doubling and Poincar\'e constants, see \cite[Theorem 5.4]{A1}.
	Moreover, for any Borel set $A\subset \Om$, we have
	\begin{equation}\label{eq: P(E,A)}
	    P(E,A)=\int_{\partial^*E\cap A}\theta_E\,d\mathcal{H},
	\end{equation}
	where $\theta_E\colon \Om\to [\alpha,C_d]$ with $\alpha=\alpha(C_d,C_P,\lambda)>0$, see \cite[Theorem 5.3]{A1} and \cite[Theorem 4.6]{AMP}.

	This implies that
	\begin{equation}\label{eq:relation-Hausdorff-perimeter}
		\alpha \mathcal{H}(\partial^*E\cap A)\le P(E, A)\le C_d\mathcal{H}(\partial^*E\cap A) 
		\quad\textrm{for every Borel set } A\subset\Om.
	\end{equation}

	
	The coarea formula states that for a function $u\in \BV(\Om)$ and a Borel set $A\subset \Om$, we have
	\begin{equation}\label{eq:coarea}
		\|Du\|(A)=\int_{-\infty}^{\infty}\|D\ch_{\{u>t\}}\|(A)\,dt.
	\end{equation}
	Here we abbreviate $\{u>t\}\coloneqq \{x\in \Om\colon u(x)>t\}$.
	
    The $1$-Poincar\'e inequality \eqref{eq: (1,1)-Poincare} implies the following Sobolev inequality.
If $x\in X$, $0<r<\frac{1}{4}\diam X$, and $u\in N^{1,1}(X)$ with $u=0$ in $X\setminus B(x,r)$, then 
	\begin{equation}\label{eq:Sobolev-ineq.}
		\int_{B(x,r)}|u|\,d\mu\le C_S r\int_{B(x,r)}g_u\,d\mu
	\end{equation}
	for any upper gradient $g_u$ of $u$ and a constant $C_S=C_S(C_d,C_P,\lambda)\ge1$, see \cite[Theorem 5.51]{BB}. 

    Applying the Poincar\'e inequality \eqref{eq: (1,1)-Poincare} to sequences of approximating $N_{\loc}^{1,1}$-functions in the definition of the total variation, we get the following BV version: for every ball $B(x,r)$ and every $u\in L_{\loc}^1(X)$, we have
    \begin{equation}\label{eq: BV-Poincare}
    \int_{B(x,r)} |u-u_{B(x,r)}|\,d\mu\le C_Pr\|Du\|(B(x,\lambda r)).
    \end{equation}
	
	For every measurable set $E\subset X$ of finite perimeter and every ball $B(x,r)$, from the above we get the relative isoperimetric inequality
	\begin{equation}\label{eq:relative isoperimetric inequality}
		\min\{\mu(B(x,r)\cap E),\mu(B(x,r)\setminus E)\}
		\le 2C_P rP(E,B(x,\lambda r)).
	\end{equation}
	
	Let $u\in L^1_{\loc}(\Om)$.
	We will often consider a particular pointwise representative, which we call the precise representative,
	\[
	u^*(x)\coloneqq \limsup_{r\to 0}\vint{B(x,r)}u\,d\mu,\quad x\in \Om.
	\]
	This is easily seen to be a Borel function.
	It is worth noting that BV functions, usually denoted by $u$, 
	are understood to be defined only almost everywhere,
	but in many cases we need pointwise defined functions such as $u^*$. 
	The symbol $w$ will often denote a pointwise defined function, for example in the definition
	\eqref{eq:generalized Lip definition} of $\Lip_{w,\nu}^{\delta,M}$.
The oscillation of
	$w\colon U\to[-\infty,\infty]$ in a set $U\subset X$ is defined by
	\[
	\underset{U}{\osc}\,w\coloneqq \sup\{|w(x)-w(y)|,\,x,y\in U\}.
	\]
	If $|w(x)|=\infty$ for some $x\in U$, we interpret $\osc_Uw=\infty$.
	
	We say that $x\in\Om$ is a Lebesgue point of $u$ if
	\[
	\lim_{r\to 0}\,\vint{B(x,r)}|u(y)-\widetilde{u}(x)|\,d\mu(y)=0
	\]
	for some $\widetilde{u}(x)\in\R$. 
	
	
	The lower and upper approximate limits of a function $u\in\BV_{\loc}(\Om)$
	are defined respectively by
	\[
	u^{\wedge}(x)\coloneqq 
	\sup\left\{t\in\R\colon \lim_{r\to 0}\frac{\mu(B(x,r)\cap\{u<t\})}{\mu(B(x,r))}=0\right\}
	\]
	and
	\[
	u^{\vee}(x)\coloneqq 
	\inf\left\{t\in\R\colon \lim_{r\to 0}\frac{\mu(B(x,r)\cap\{u>t\})}{\mu(B(x,r))}=0\right\},
	\]
	for all $x\in\Om$.
	We interpret the supremum and infimum of an empty set to be $-\infty$ and $\infty$, respectively.

For a measurable function $u:\Om\to\R$, $x\in\Om$, and $L\in\R$, we say that $u$ has \emph{approximate limit} $L$ at $x$, and write $\aplim_{y\to x}u(y)=L$, if for every $\varepsilon>0$,
\begin{equation}\label{eq:def-approx-limit}
\lim_{r\to0}\frac{\mu(B(x,r)\cap\{|u-L|>\varepsilon\})}{\mu(B(x,r))}=0.
\end{equation}



	We need the following pointwise property of BV functions from \cite[Theorem 1.1]{KKST14}.
	\begin{lemma}\cite[Theorem 1.1 \& Lemma 3.2]{KKST14}
		If $u\in \BV(X)$, then for $\mathcal{H}$-almost every $x\in X$, we have
		\begin{equation}\label{eq:pointwise-properties}
			\begin{split}
				(1-\gamma)&u^\wedge(x)+\gamma u^\vee(x)\le \liminf_{r\to0}\vint{B(x,r)} u\,d\mu\\
				&\le \limsup_{r\to 0}\vint{B(x,r)}u\,d\mu\le \gamma u^\wedge(x)+(1-\gamma)u^\vee(x),
			\end{split}
		\end{equation}
	with all quantities finite,
	and	where $0<\gamma\le 1/2$ and $\gamma $ depends only on the doubling and Poincar\'e inequality constants.
	\end{lemma}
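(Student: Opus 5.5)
The plan is to reduce the statement to the boundary behaviour of the superlevel sets $E_t\coloneqq\{u>t\}$, using the coarea formula \eqref{eq:coarea}, \eqref{eq: measure theoretic boundary-gamma} and a layer-cake decomposition of ball averages. Let $\gamma$ be the constant in \eqref{eq: measure theoretic boundary-gamma}.

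First I would show that $u^\wedge(x)$ and $u^\vee(x)$ are finite for $\mathcal H$-a.e. $x$. The set where $u^\vee=\infty$ is contained in $\partial^*E_t\cup I_{E_t}$ for every $t$. By \eqref{eq:relation-Hausdorff-perimeter} and the coarea formula, $\mathcal H(\partial^*E_t)<\infty$ for a.e. $t$ and $\int_\R P(E_t,X)\,dt<\infty$. Combined with a capacity/Hausdorff-content estimate for the sets $I_{E_t}$, whose measure tends to $0$ as $t\to\infty$, this should give $\mathcal H(\{u^\vee=\infty\})=0$, and symmetrically for $u^\wedge=-\infty$. Alternatively, one can simply cite the fact that the BV capacity of $\{u^*=\pm\infty\}$ vanishes.

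Next fix $x$ with $u^\wedge(x),u^\vee(x)$ finite. Take a countable dense set $T\subset\R$ such that $P(E_t,X)<\infty$ for $t\in T$, and discard the $\mathcal H$-null set $\bigcup_{t\in T}(\partial^*E_t\setminus\Sigma_\gamma E_t)$. For $t\in T\cap(u^\wedge(x),u^\vee(x))$ both $E_t$ and its complement have positive upper density at $x$, so $x\in\partial^*E_t$ and hence $x\in\Sigma_\gamma E_t$. This gives
\[
\gamma\le\liminf_{r\to0}\frac{\mu(B(x,r)\cap E_t)}{\mu(B(x,r))}\le\limsup_{r\to0}\frac{\mu(B(x,r)\cap E_t)}{\mu(B(x,r))}\le1-\gamma .
\]
By monotonicity in $t$ and density of $T$, this extends to all $t\in(u^\wedge(x),u^\vee(x))$ up to endpoint effects. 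I would then write, with $B=B(x,r)$, $a=u^\wedge(x)$ and $b=u^\vee(x)$,
\[
\vint{B}u\,d\mu=a+\int_a^b\frac{\mu(B\cap E_t)}{\mu(B)}\,dt+\int_b^\infty\frac{\mu(B\cap E_t)}{\mu(B)}\,dt-\int_{-\infty}^a\frac{\mu(B\setminus E_t)}{\mu(B)}\,dt .
\]
By Fatou's lemma and reverse Fatou (the integrand is bounded by $1$ on $[a,b]$), the middle integral has $\liminf\ge\gamma(b-a)$ and $\limsup\le(1-\gamma)(b-a)$. These are exactly the two outer bounds in \eqref{eq:pointwise-properties}, provided the two tail integrals tend to $0$ as $r\to0$.

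The main obstacle is the vanishing of the tails, i.e.\ showing that $\vint{B(x,r)}(u-b)_+\,d\mu\to0$ and $\vint{B(x,r)}(a-u)_+\,d\mu\to0$ for $\mathcal H$-a.e. $x$. Pointwise each density in the tails tends to $0$, but no integrable majorant in $t$ is available. I would treat $v\coloneqq(u-b-s)_+$ for $s>0$ with $s$ in a countable set; this is harmless since the piece of the tail over $[b,b+s]$ contributes at most $s$. Since $x\in O_{\{v>0\}}$, the relative isoperimetric inequality \eqref{eq:relative isoperimetric inequality} and the Poincar\'e inequality \eqref{eq: BV-Poincare} give $\vint{B(x,r)}v\,d\mu\lesssim r\,\|Dv\|(B(x,\lambda r))/\mu(B(x,r))$ plus terms that vanish, after a telescoping argument over dyadic radii in which the medians are $0$. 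It then remains to show that
\[
\limsup_{r\to0}\frac{r\,\|Dv\|(B(x,r))}{\mu(B(x,r))}=0
\]
for $\mathcal H$-a.e. $x$ outside $\partial^*\{v>0\}$. This follows from a standard Vitali covering argument: the set where the $\limsup$ exceeds $\eps$ has $\mathcal H$-measure controlled by $\eps^{-1}\|Dv\|$ of a neighbourhood, while $\|Dv\|$ restricted to the complement of the jump part is absolutely continuous with respect to $\mathcal H$ only in a weak sense that suffices here. Running this over countably many $s$ and $\eps$ and taking the union of the exceptional sets gives the claim, and the lower tail is handled symmetrically.
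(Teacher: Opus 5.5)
The paper does not prove this lemma; it quotes it from \cite{KKST14}. Your layer-cake architecture is sound up to the tails. For $t\in(u^\wedge(x),u^\vee(x))$ you correctly get $x\in\Sigma_\gamma E_t$ off an $\mathcal H$-null set, and Fatou and reverse Fatou on $[a,b]$ produce exactly the weights $\gamma$ and $1-\gamma$. The gap is in the step you single out yourself, the vanishing of the tails, and it has two parts.

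First, the claim you reduce to is false as stated. Take $u=\ch_{B(0,10)}(1+\ch_{\{x_n>0\}})$ in $\R^n$ and $v=(u-\tfrac12)_+$. The points of $\{x_n=0\}\cap B(0,1)$ lie in $I_{\{v>0\}}$, hence outside $\partial^*\{v>0\}$. Yet $r\|Dv\|(B(x,r))/\mu(B(x,r))$ stays bounded away from zero there. You only need the claim on $O_{\{v>0\}}$.

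Second, and more seriously, let $A_\eps$ be the set where the $\limsup$ exceeds $\eps$. The Vitali covering argument only gives $\mathcal H(A_\eps)\le C\eps^{-1}\|Dv\|(W)$ for every open $W\supset A_\eps$. To conclude $\mathcal H(A_\eps)=0$ you need $\|Dv\|(A_\eps)=0$, i.e.\ that $\|Dv\|$ does \emph{not} charge a set that might a priori have positive, finite $\mathcal H$-measure. Absolute continuity of $\|Dv\|$ with respect to $\mathcal H$ runs the wrong way: it helps only once $\mathcal H(A_\eps)=0$ is already known. So your last sentence leaves the argument open.

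The missing input is elementary here: if $v\ge0$, then $\|Dv\|(O_{\{v>0\}})=0$. By \eqref{eq:coarea} and \eqref{eq: P(E,A)}, $\|Dv\|(O_{\{v>0\}})=\int_0^\infty P(\{v>t\},O_{\{v>0\}})\,dt$. For $t>0$ the set $\{v>t\}\subset\{v>0\}$ has density zero at every point of $O_{\{v>0\}}$, so $\partial^*\{v>t\}\cap O_{\{v>0\}}=\emptyset$. With $A_\eps\subset O_{\{v>0\}}$ and $\|Dv\|(A_\eps)$ read as an outer measure, this gives $\mathcal H(A_\eps)=0$.

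The upper tail then vanishes by the median estimate $\vint{B}v\,d\mu\le\vint{B}|v-v_B|\,d\mu\le C_Pr\|Dv\|(B(x,\lambda r))/\mu(B)$ for $B=B(x,r)$. This holds as soon as $0$ is a median of $v$ on $B$, and no telescoping is needed.

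Two further repairs are required.
\begin{enumerate}
\item The exceptional sets must come from a countable family of functions independent of $x$. The family $(u-u^\vee(x)-s)_+$ is uncountable, because $u^\vee(x)$ varies with $x$. Use $v_q=(u-q)_+$ with $q\in\mathbb Q$, together with $(u-b)_+\le(q-b)+(u-q)_+$ for rational $q>b=u^\vee(x)$; for such $q$ you have $x\in O_{\{u>q\}}$.
\item In the finiteness step, $\mu(I_{E_t})\to0$ says nothing about $\mathcal H$. Instead use $I_{E_t}\subset\{\mathcal M_R\ch_{E_t}>1/2\}$, Lemma \ref{lem:weak standard estimate maximal function}, $\mathcal H_R\le C\capa_1$, and $\mathcal H(\partial^*E_t)\le\alpha^{-1}P(E_t,X)$. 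Apply these along a sequence $t_k\to\infty$ with $P(E_{t_k},X)\to0$, which exists because $\int_\R P(E_t,X)\,dt<\infty$.
\end{enumerate}
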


	\subsection{Capacities and fine topology}
	
	We will often use the notion of capacity,
	and in this subsection we record relevant background facts and some elementary lemmas that will be needed.
	Recall that capacity is needed already
	in the definition \eqref{eq:generalized Lip definition} of $\Lip_{w,\nu}^{\delta,M}$.
	
	Consider $1\le p< \infty$.
	The (Sobolev) $p$-capacity of a set $A\subset X$ is defined by
	\[
	\capa_p(A)\coloneqq \inf \Vert u\Vert_{N^{1,p}(X)}^p,
	\]
	where the infimum is taken over all functions $u\in N^{1,p}(X)$ satisfying
	$u\ge 1$ on $A$.
We know that $\capa_p$ is an outer capacity, meaning that
	\[
	\capa_p(A)=\inf\{\capa_p(W)\colon W\supset A \text{ is open}\}
	\]
	for any $A\subset X$, see e.g. \cite[Theorem 5.31]{BB}.
If a property holds outside a set $A\subset X$ with $\capa_p(A)=0$, we say that it holds at $p$-quasi-every point.
	
	Given sets $A\subset W\subset X$, where $W$ is open, the relative $p$-capacity is defined by
	\[
	\rcapa_p(A,W)\coloneqq \inf \int_{X}g_u^p\,d\mu,
	\]
	where the infimum is taken over functions $u\in N^{1,p}(X)$ such that $u\ge 1$ on $A$ and $u=0$ on $X\setminus W$, and their upper gradients $g_u$.
For basic properties satisfied by capacities, such as monotonicity and countable subadditivity, see e.g. \cite{BB, HKSTbook}.

	The $1$-capacity and the Hausdorff contents are closely related: it follows from \cite[Theorem 4.3, Theorem 5.1]{HaKi} that 
	\begin{equation}\label{eq:capa and Hausdorff measure}
		\capa_1(A)=0 
		\quad \textrm{if and only if} \quad
		\mathcal{H}(A)=0.
	\end{equation}
	For any $R>0$ and $A\subset X$, from the proof of 
	\cite[Lemma 3.3]{KKST} it follows that
	\begin{equation}\label{eq: capacity-and-Hausdorff measure}
	    \capa_1(A)\le C(C_d,C_P,\lambda,R)\mathcal{H}_R(A),
	\end{equation}
	and by combining \cite[Theorem 4.3]{HaKi} and the proof of \cite[Theorem 5.1]{HaKi} we obtain that conversely 
	\[
	\mathcal{H}_R(A)\le C(C_d,C_P,\lambda, R)\capa_1(A).
	\]

    We also have a corresponding result with the relative capacity.
    \begin{lemma}
    For a ball $B(x,r)$, and $A\subset B(x,r)$ with $0<r<\frac{1}{8}\diam X$, we have 
    \begin{equation}\label{eq: relative capacity-Hausdorff}
    \rcapa_1(A,B(x,2r))\le C_d \mathcal{H}(A).
    \end{equation}
    \end{lemma}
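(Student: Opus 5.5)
We prove \eqref{eq: relative capacity-Hausdorff} by covering $A$ with small balls and using a sum of cutoff functions as a test function for $\rcapa_1(A,B(x,2r))$. We may assume $\mathcal H(A)<\infty$, since otherwise there is nothing to prove.

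\emph{The covering.} Fix $\eps>0$ and $0<R<r/2$. By the definition of $\mathcal H_R$ and the fact that $\mathcal H_R(A)\le \mathcal H(A)$, there are balls $B(x_i,r_i)$ with $r_i\le R$ covering $A$ and
\[
\sum_i \frac{\mu(B(x_i,r_i))}{r_i}\le \mathcal H(A)+\eps .
\]
We discard the balls that do not meet $A$. Then every remaining $x_i$ lies within $r_i$ of a point of $B(x,r)$. Hence
\[
B(x_i,2r_i)\subset B(x,r+3R)\subset B(x,2r)
\]
provided we take $R< r/3$.

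\emph{The test function.} For each $i$ let
\[
\phi_i(y)\coloneqq \max\{0,\min\{1,\,2-d(y,x_i)/r_i\}\}.
\]
This function equals $1$ on $B(x_i,r_i)$, vanishes outside $B(x_i,2r_i)$, and has upper gradient $r_i^{-1}\ch_{B(x_i,2r_i)}$. Set $u\coloneqq \sup_i\phi_i$. Then $u\ge1$ on $A$ and $u=0$ outside $\bigcup_i B(x_i,2r_i)\subset B(x,2r)$. Moreover,
\[
g\coloneqq \sup_i r_i^{-1}\ch_{B(x_i,2r_i)}\le \sum_i r_i^{-1}\ch_{B(x_i,2r_i)}
\]
is an upper gradient of $u$, because the pointwise supremum of countably many functions with upper gradients $g_i$ has upper gradient $\sup_i g_i$ (standard, see e.g. \cite{BB}). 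Using this together with doubling, we get
\[
\int_X g\,d\mu\le \sum_i \frac{\mu(B(x_i,2r_i))}{r_i}\le C_d\sum_i \frac{\mu(B(x_i,r_i))}{r_i}\le C_d(\mathcal H(A)+\eps).
\]
Also $u\in N^{1,1}(X)$, since $0\le u\le 1$, $u$ is supported in the bounded set $B(x,2r)$, and $g\in L^1$. So $u$ is admissible for $\rcapa_1(A,B(x,2r))$. Letting $\eps\to0$ gives the claim.

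\emph{The main technical point.} The delicate step is that $u$ is admissible and that $g$ is an upper gradient of the supremum of infinitely many $\phi_i$. The concern is possible lack of measurability, or $u$ failing to vanish outside $B(x,2r)$. The containment $B(x_i,2r_i)\subset B(x,2r)$ handles the vanishing. If needed, one can avoid the infinite supremum entirely by working with the finite maxima $u_N=\max_{i\le N}\phi_i$. These satisfy the same bound, and the monotone limit $u_N\nearrow u$ preserves both the upper gradient bound and the membership $u\in N^{1,1}$. The hypothesis $r<\frac18\diam X$ is not really used beyond ensuring the setting is nondegenerate.
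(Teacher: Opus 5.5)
Your proof is correct and follows essentially the same route as the paper: the same covering by small balls meeting $A$ with $B(x_i,2r_i)\subset B(x,2r)$, Lipschitz cutoffs with upper gradient $r_i^{-1}\ch_{B(x_i,2r_i)}$, and the doubling estimate $\sum_i \mu(B(x_i,2r_i))/r_i\le C_d(\mathcal H(A)+\eps)$. The only difference is that the paper invokes countable subadditivity of $\rcapa_1(\cdot,B(x,2r))$ over the balls $B(x_i,r_i)$, whereas you build the test function $\sup_i\phi_i$ by hand, which is exactly the argument underlying that subadditivity.
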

    \begin{proof}
    We may assume that $\mathcal{H}(A)<\infty$. 
	Let $\varepsilon>0$ and let $B(x_i,r_i)$, with $r_i<r/3$, be a covering of $A$ such that every $B(x_i,r_i)$ meets $A$ and
	\[
	\mathcal{H}(A)\ge\mathcal{H}_{r/3}(A)\ge\sum_i\frac{\mu(B(x_i,r_i))}{r_i}-\varepsilon.
	\]
	Then $B(x_i,2r_i)\subset B(x,2r)$.
	Setting 
    \[
    u_i(x)=\left(1-\frac{\dist(x,B(x_i,r_i))}{r_i} \right)_+,
    \]
    we have that
    \[
    g_i=\frac{1}{r_i}\chi_{B(x_i,2r_i)}
    \]
    is a weak upper gradient of $u_i$.
    It follows that 
    \begin{align*}
    \rcapa_1(A,B(x,2r))
    &\le \sum_i\rcapa_1(B(x_i,r_i),B(x,2r))\\
    &\le \sum_i \int_X g_i\,d\mu\le \sum_i \frac{\mu(B(x_i,2r_i))}{r_i}\\
    &\le C_d\sum_i\frac{\mu(B(x_i,r_i))}{r_i}\le C_d (\mathcal{H}(A)+\varepsilon).
    \end{align*}
    Letting $\varepsilon\to 0$, we complete the proof.
    \end{proof}
	
	The $\BV$-capacity of a set $A\subset X$ is defined by
	\[
	\capa_{\BV}(A)\coloneqq \inf\|u\|_{\BV(X)},
	\]
	where the infimum is taken over all $u\in \BV(X)$ with $u\ge 1$ in a neighborhood of $A$.
	By \cite[Theorem 4.3]{HaKi}, we know that for some constant $C(C_d,C_P,\lambda)\ge 1$ and any $A\subset X$, we have
	\begin{equation}\label{eq:BV capa and 1-capa}
		\capa_{\BV}(A)\le \capa_1(A)\le C\capa_{\BV}(A).
	\end{equation}
	
	Given sets $A\subset W\subset X$, where $W$ is open, we define a variational BV-capacity by
	\[
	\rcapa_{\BV}(A, W)\coloneqq \inf\|Du\|(X),
	\]
	where the infimum is taken over functions $u\in \BV(X)$ such that $u^\vee=u^\wedge=0$ $\mathcal{H}$-a.e. in $X\setminus W$ and $u^\wedge\ge 1$ $\mathcal{H}$-a.e. on $A$.
	
	Combining \cite[Eq. (4.2)]{L-20} with \cite[Theorem 5.7]{La-20} gives a constant $C_1$, depending only on $C_d,C_P,\lambda$, such that for any $A\subset B(x,r)$, we have
	\begin{equation}\label{eq:BV capa and 1-capa-relative}
		\rcapa_{\BV}(A,B(x,2r))\le \rcapa_1(A,B(x,2r))
		\le C_1\rcapa_{\BV}(A,B(x,2r)).
	\end{equation}
	From the equivalence of $1$-capacity and BV capacity, we have the following result.
	\begin{lemma} \label{lem:capacities continuity}
		Fix a ball $B(x,r)$ with $0<r<\frac{1}{8}\diam X$.
Let $A_1\subset A_2\subset\ldots$ be subsets of $X$,
		and denote $A\coloneqq \bigcup_{j=1}^{\infty}A_j$.
Then
		\begin{equation}\label{eq:capacities continuity-comparable}
			\begin{split}
				\lim_{j\to \infty}\rcapa_1(A_j\cap B(x,r), B(x,2r))
				&\le \rcapa_1(A\cap B(x,r), B(x,2r))\\
				&\le C_1\lim_{i\to \infty}\rcapa_1(A_i\cap B(x,r), B(x,2r)).
			\end{split}
		\end{equation}
        where $C_1$ is the constant in \eqref{eq:BV capa and 1-capa-relative}.
	\end{lemma}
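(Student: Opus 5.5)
The first inequality in \eqref{eq:capacities continuity-comparable} is immediate from monotonicity of $\rcapa_1(\cdot,B(x,2r))$: since $A_j\cap B(x,r)\subset A\cap B(x,r)$, the sequence $\rcapa_1(A_j\cap B(x,r),B(x,2r))$ is nondecreasing. Its limit therefore exists and is at most $\rcapa_1(A\cap B(x,r),B(x,2r))$. For the second inequality, the plan is to pass through the variational BV-capacity. By \eqref{eq:BV capa and 1-capa-relative} it suffices to show
\[
\rcapa_{\BV}(A\cap B(x,r),B(x,2r))\le \lim_{i\to\infty}\rcapa_{\BV}(A_i\cap B(x,r),B(x,2r)),
\]
that is, that $\rcapa_{\BV}(\cdot,B(x,2r))$ is continuous along increasing sequences. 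Indeed, this gives
\[
\rcapa_1(A\cap B(x,r),B(x,2r))\le C_1\rcapa_{\BV}(A\cap B(x,r),B(x,2r))\le C_1\lim_i\rcapa_{\BV}(A_i\cap B(x,r),B(x,2r)),
\]
and the right-hand side is at most $C_1\lim_i \rcapa_1(A_i\cap B(x,r),B(x,2r))$, again by \eqref{eq:BV capa and 1-capa-relative}.

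To prove the continuity of $\rcapa_{\BV}$, we may assume the limit $L$ is finite. For each $i$, choose an admissible $u_i\in\BV(X)$ with $\|Du_i\|(X)\le \rcapa_{\BV}(A_i\cap B(x,r),B(x,2r))+1/i$. By truncation we may take $0\le u_i\le 1$; truncation does not increase the variation and preserves the admissibility conditions on $u^\wedge$ and $u^\vee$. Each $u_i$ vanishes a.e. outside $B(x,2r)$, so the Sobolev/Poincar\'e inequality \eqref{eq: BV-Poincare}, or simply boundedness together with support in a fixed ball, gives a uniform $\BV$ bound. Here the hypothesis $r<\frac18\diam X$ ensures that the complement of $B(x,2r)$ is large enough for the Sobolev inequality to apply. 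By compactness of BV on bounded sets we extract a subsequence $u_i\to u$ in $L^1(X)$, and lower semicontinuity of the total variation gives $\|Du\|(X)\le L$. The function $u$ is supported in $\overline{B}(x,2r)$.

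\textbf{The main obstacle} is verifying that $u$ is admissible. We need $u^\wedge\ge1$ $\mathcal H$-a.e. on $A\cap B(x,r)$, and $u^\wedge=u^\vee=0$ $\mathcal H$-a.e. outside $B(x,2r)$; the latter includes the sphere $\partial B(x,2r)$, where $L^1$ convergence alone says nothing. Pointwise conditions on $u^\wedge$ are not preserved under $L^1$ limits. The approach I would try first is to replace the $u_i$ by the decreasing-in-$i$-insensitive combinations $v_i\coloneqq \max_{j\ge i}u_j$ (or use Mazur-type convex combinations), but maxima increase the variation. So instead I would use a fine-property argument: BV functions converging in BV-norm-bounded fashion have the property that, after passing to convex combinations $w_k$ of the $u_j$ with $j\ge k$ converging strongly, one has $w_k^\wedge\to u^\wedge$ outside a set of small $\capa_{\BV}$. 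Combined with $w_k^\wedge\ge 1$ on $A_k\cap B(x,r)$ (up to $\mathcal H$-null sets, using the concavity of $u\mapsto u^\wedge$ on convex combinations) and the exhaustion $A=\bigcup_k A_k$, this yields $u^\wedge\ge1$ quasi-everywhere on $A\cap B(x,r)$, hence $\mathcal H$-a.e. by \eqref{eq:capa and Hausdorff measure}.

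For the boundary condition, I would shrink slightly. Apply the argument on $B(x,2r)$ but compare with $\rcapa_{\BV}(\cdot,B(x,2sr))$ for $s>1$, and then let $s\to1$, which requires some outer-continuity in the second argument. Alternatively, I would simply cite the known result: by \cite{La-20} the BV-capacity $\rcapa_{\BV}$ is an outer capacity with the increasing-sets property, and this is exactly the ingredient packaged in \cite[Theorem 5.7]{La-20}. If that citation covers it, the lemma reduces to the three-line comparison above.
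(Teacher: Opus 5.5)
Your reduction matches the paper's: the first inequality is monotonicity, and the second follows from \eqref{eq:BV capa and 1-capa-relative} once $\rcapa_{\BV}(\cdot,B(x,2r))$ is known to be continuous along increasing sequences. The gap is in your proof of that continuity.

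\textbf{Why the compactness route fails.} Taking an $L^1$-limit of near-optimal functions cannot work, because minimizing sequences can concentrate and lose admissibility. Take $X=\R$, where $\mathcal H$ is twice counting measure, and $A_j=\{0\}$ for all $j$. For large $i$ the functions $u_i=\ch_{(-1/i,1/i)}$ satisfy $u_i^\wedge(0)=1$ and $\|Du_i\|(\R)=2=\rcapa_{\BV}(\{0\},(-2r,2r))$. So they are exactly optimal, yet $u_i\to0$ in $L^1$, and the limit is not admissible.

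Mazur's lemma does not repair this. $L^1$-convergence with bounded variation does not imply weak convergence in the Banach space $\BV(X)$, which Mazur's lemma needs. In the example, every convex combination $w$ of the $u_j$ still has $w^\wedge(0)=1$ and $\|Dw\|(\R)=2$. So no convex combinations converge to $0$ in BV norm, and no fine-topological argument can rescue the limit.

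The boundary condition on $\partial B(x,2r)$ is also left open: the outer continuity in the second argument that you invoke is neither proved nor obvious. Your fallback citation is speculative, since the paper uses \cite[Theorem 5.7]{La-20} only for the comparison \eqref{eq:BV capa and 1-capa-relative} itself.

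\textbf{The missing idea.} You need the lattice inequality $\|D\min\{u,v\}\|(X)+\|D\max\{u,v\}\|(X)\le\|Du\|(X)+\|Dv\|(X)$ from \cite[Theorem 2.7(iii)]{HaKi}. It makes exactly the maxima you dismissed affordable.

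Write $B=B(x,r)$, and take $0\le u_j\le1$ admissible for $A_j\cap B$ with $\|Du_j\|(X)\le\rcapa_{\BV}(A_j\cap B,2B)+\varepsilon2^{-j}$. Set $v_j=\max\{u_1,\dots,u_j\}$. Since $\min\{v_{j-1},u_j\}$ is admissible for $A_{j-1}\cap B$, the inequality telescopes to $\|Dv_j\|(X)\le\rcapa_{\BV}(A_j\cap B,2B)+\varepsilon$.

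Next fix $w$ with $0\le w\le1$, admissible for $A\cap B$, and $\|Dw\|(X)\le\rcapa_{\BV}(A\cap B,2B)+\varepsilon$. Let $z_j=\min\{v_j,w\}$. Since $\max\{v_j,w\}$ is admissible for $A\cap B$, one gets $\|Dz_j\|(X)\le\rcapa_{\BV}(A_j\cap B,2B)+2\varepsilon$.

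The $z_j$ increase to some $z$ in $L^1(X)$, and it is monotonicity, not compactness, that preserves admissibility:
\begin{itemize}
\item $z\ge z_j$ gives $z^\wedge\ge1$ $\mathcal H$-a.e. on every $A_j\cap B$;
\item $0\le z\le w$ gives $z^\wedge=z^\vee=0$ $\mathcal H$-a.e. on $X\setminus 2B$, sphere included.
\end{itemize}
Lower semicontinuity then gives $\rcapa_{\BV}(A\cap B,2B)\le\lim_j\rcapa_{\BV}(A_j\cap B,2B)+2\varepsilon$.
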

		
		\begin{proof}
			Write $B=B(x,r)$. The first inequality follows from monotonicity.
			For the second inequality, we first prove that
			\[
			\rcapa_{\BV}(A\cap B,2B)
			=\lim_{j\to\infty}\rcapa_{\BV}(A_j\cap B,2B).
			\]
			Fix $\varepsilon>0$. For each $j\ge1$, choose $u_j\in\BV(X)$ admissible for $\rcapa_{\BV}(A_j\cap B,2B)$ such that $0\le u_j\le1$ and
			\[
			\|Du_j\|(X)\le\rcapa_{\BV}(A_j\cap B,2B)+\varepsilon2^{-j}.
			\]
			By \cite[Theorem 2.7(iii)]{HaKi},
			\begin{equation}\label{eq:min and max}
			\|D\min\{u,v\}\|(X)+\|D\max\{u,v\}\|(X)
			\le\|Du\|(X)+\|Dv\|(X)
			\end{equation}
			for any $u,v\in\BV(X)$.
			Set $v_j=\max\{u_1,\ldots,u_j\}$. Then $v_j$ is admissible for $\rcapa_{\BV}(A_j\cap B,2B)$.
			For $j\ge2$, the function $\min\{v_{j-1},u_j\}$ is admissible for $\rcapa_{\BV}(A_{j-1}\cap B,2B)$, and hence by \eqref{eq:min and max},
			\[
			\begin{aligned}
				\|Dv_j\|(X)
				&\le\|Dv_{j-1}\|(X)+\|Du_j\|(X)
				-\rcapa_{\BV}(A_{j-1}\cap B,2B)\\
				&\le\|Dv_{j-1}\|(X)
				+\rcapa_{\BV}(A_j\cap B,2B)-\rcapa_{\BV}(A_{j-1}\cap B,2B)+\varepsilon2^{-j}.
			\end{aligned}
			\]
			By induction, we know
			\[
			\|Dv_j\|(X)\le\rcapa_{\BV}(A_j\cap B,2B)
			+\varepsilon\sum_{i=1}^j2^{-i}
			\le\rcapa_{\BV}(A_j\cap B,2B)+\varepsilon.
			\]
			
			Choose also $w\in\BV(X)$ admissible for $\rcapa_{\BV}(A\cap B,2B)$ such that $0\le w\le1$ and
			\[
			\|Dw\|(X)\le\rcapa_{\BV}(A\cap B,2B)+\varepsilon.
			\]
			Set $z_j=\min\{v_j,w\}$. Since $\max\{v_j,w\}$ is admissible for $\rcapa_{\BV}(A\cap B,2B)$, it follows that
			\[
			\begin{aligned}
				\|Dz_j\|(X)
				&\le\|Dv_j\|(X)+\|Dw\|(X)
				-\|D\max\{v_j,w\}\|(X)\\
				&\le\|Dv_j\|(X)+\|Dw\|(X)
				-\rcapa_{\BV}(A\cap B,2B)\\
				&\le\rcapa_{\BV}(A_j\cap B,2B)+2\varepsilon.
			\end{aligned}
			\]
			As $0\le z_j\le w$ and $w\in L^1(X)$, we know that $z_j\to z\coloneqq \sup_j z_j$ in $L^1(X)$.
			Also by the lower semicontinuity of the total variation, we know that $z\in\BV(X)$ and
			\[
			\|Dz\|(X)\le\liminf_{j\to\infty}\|Dz_j\|(X)
			\le\lim_{j\to\infty}\rcapa_{\BV}(A_j\cap B,2B)+2\varepsilon.
			\]
			Since $0\le z\le w$, we have $z^\wedge=z^\vee=0$ $\mathcal H$-a.e. on $X\setminus2B$.
			Moreover, since $z\ge z_j$ and $z_j$ is admissible for $\rcapa_{\BV}(A_j\cap B,2B)$, then $z^\wedge\ge1$ $\mathcal H$-a.e. on $A_j\cap B$ for every $j$.
			Since $A\cap B=\bigcup_{j=1}^\infty(A_j\cap B)$, we know $z$ is admissible for $\rcapa_{\BV}(A\cap B,2B)$.
			Thus
			\[
			\rcapa_{\BV}(A\cap B,2B)
			\le\|Dz\|(X)
			\le\lim_{j\to\infty}\rcapa_{\BV}(A_j\cap B,2B)+2\varepsilon.
			\]
			Letting $\varepsilon\to0$ we get the  continuity for the relative BV capacity.
			Finally, by \eqref{eq:BV capa and 1-capa-relative},
			\[
			\begin{aligned}
				\rcapa_1(A\cap B,2B)
				&\le C_1\rcapa_{\BV}(A\cap B,2B)\\
				&=C_1\lim_{j\to\infty}\rcapa_{\BV}(A_j\cap B,2B)\\
				&\le C_1\lim_{j\to\infty}\rcapa_1(A_j\cap B,2B),
			\end{aligned}
			\]
			which proves the second inequality as in \eqref{eq:capacities continuity-comparable}.
		\end{proof}

	
	We say that a set $U\subset X$ is $p$-quasiopen if for every $\eps>0$ there exists an open set $G\subset X$ such that $\capa_p(G)<\varepsilon$ and $U\cup G$ is open.
	We say that a function $w$ is $p$-quasicontinuous on $\Om$ if for every $\eps>0$ there exists
	an open set $G\subset \Om$ such that $\capa_p(G)<\eps$ and
	$w|_{\Om\setminus G}$ is finite and continuous.
We know that
	\begin{equation}\label{eq:quasicontinuous}
		\text{if }u\in N_{\loc}^{1,p}(\Om)\textrm{, then }u \textrm{ is }p\textrm{-quasicontinuous on }\Om,
	\end{equation}
	see e.g. \cite[Theorem 5.29]{BB}.
	
	For every open set $\Omega\subset X$ and every $u\in N^{1,1}(\Omega)$, $1$-quasievery point of $\Omega$ is a Lebesgue point of $u$; that is,
\begin{equation}\label{eq:Lebesgue points}
\lim_{r\to0}\,\vint{B(x,r)} |u(y)-u(x)|\,d\mu(y)=0
\quad\text{for $1$-q.e.\ }x\in\Omega;
\end{equation}
see \cite[Proposition~4.11 and Remark~4.8]{BBL-23}.

	By \cite[Proposition 6.16]{BB}, we know that for a ball $B(x,r)$ and $A\subset B(x,r)$ with $0<r<\tfrac{1}{8}\diam X$,
	and for all $1\le p<\infty$, we have
	\begin{equation}\label{eq:rcapa over ball-measure}
		\frac{\mu(A)}{C_{S,p}r^p}\le \rcapa_p(A,B(x,2r))\le \frac{C_d\mu(B(x,r))}{r^p}
	\end{equation}
	and
	\begin{equation}\label{eq:capa vs rcapa}
		\frac{\capa_p(A)}{C'_p(1+r^p)}
		\le \rcapa_p(A,B(x,2r))
		\le 2^p \left(1+\frac{1}{r^p}\right)\capa_p(A).
	\end{equation}
	Here $C_{S,p}$ and $C'_p$ depend only on $C_d,C_P,\lambda,p$.
	For $p=1$ we can take $C_{S,1}=C_S$, with $C_S$ fixed as specified after \eqref{eq:Sobolev-ineq.}.
	It follows that for $A\subset B(x,1)$ and $1\le p<q<\infty$, we have
	$\capa_p(A)^{1/p}\le C''\capa_q(A)^{1/q}$
	for another constant $C''$ depending only on $C_d,C_P,\lambda,p,q$,
	and so using a covering argument, we can show for any function $w$ that
	\begin{equation}\label{eq:pq quasicontinuity}
		\textrm{if }w\textrm{ is }q\textrm{-quasicontinuous on }\Om,\textrm{ then }w\textrm{ is }p\textrm{-quasicontinuous on }\Om.
	\end{equation}
	
	For every $A\subset B(x,r)$, $0<r<\tfrac18\diam X$, and $1\le p<\infty$, we then also have
	\begin{equation}\label{eq:rcapa in small ball}
	\begin{split}
	\frac{\mu(A)}{\mu(B(x,r))}
	&\le C_{S,p} r^p \frac{\rcapa_p(A,B(x,2r))}{\mu(B(x,r))}\\
	&\le C_{S,p} C_d \frac{\rcapa_p(A,B(x,2r))}{\rcapa_p(B(x,r),B(x,2r))} \textrm{\quad by \eqref{eq:rcapa over ball-measure}}.
	\end{split}
	\end{equation}
	
	We recall the following result in \cite[Lemma 11.22]{BB}. Note that the cutoff proof also applies when $p=1$.
	\begin{lemma}
		Let $x\in X$, $r>0$, and $A\subset B(x,r)$.
Then for every $1<s<t$ with $tr<\tfrac{1}{4}\diam X$, we have 
		\begin{equation}\label{eq: rcapa-compare-in-ball}
			\rcapa_1(A,B(x,tr))\le \rcapa_1(A,B(x,sr))\le C_S\left(1+\frac{t}{s-1}\right)\rcapa_1(A,B(x,tr)),
		\end{equation}
		where $C_S$ is the constant in \eqref{eq:Sobolev-ineq.}.
	\end{lemma}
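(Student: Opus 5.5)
The first inequality is just monotonicity of the relative capacity in the ambient open set. If $u\in N^{1,1}(X)$ is admissible for $\rcapa_1(A,B(x,sr))$, then $u=0$ outside $B(x,sr)\subset B(x,tr)$, so $u$ is also admissible for $\rcapa_1(A,B(x,tr))$. Taking the infimum over $u$ and its upper gradients gives $\rcapa_1(A,B(x,tr))\le\rcapa_1(A,B(x,sr))$.

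For the second inequality I will follow the cutoff argument of \cite[Lemma 11.22]{BB}. Fix $u\in N^{1,1}(X)$ admissible for $\rcapa_1(A,B(x,tr))$, with upper gradient $g$. Truncating, we may assume $0\le u\le1$, which does not increase $\int g\,d\mu$. Let
\[
\eta(y)\coloneqq\left(1-\frac{\dist(y,B(x,r))}{(s-1)r}\right)_+ .
\]
This function is $1/((s-1)r)$-Lipschitz, equals $1$ on $B(x,r)\supset A$, and vanishes outside $B(x,sr)$. Set $v\coloneqq\eta u$. Then $v\ge1$ on $A$ and $v=0$ outside $B(x,sr)$, so $v$ is admissible for $\rcapa_1(A,B(x,sr))$. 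By the Leibniz rule for upper gradients, and using $0\le u,\eta\le1$,
\[
g_v\le g+\frac{u}{(s-1)r}\,\ch_{B(x,sr)}.
\]

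It remains to control $\int u\,d\mu$. Since $u=0$ outside $B(x,tr)$ and $tr<\tfrac14\diam X$, the Sobolev inequality \eqref{eq:Sobolev-ineq.} applies in the ball $B(x,tr)$ and gives
\[
\int_{X}u\,d\mu\le C_S\,tr\int_{B(x,tr)}g\,d\mu.
\]
Inserting this into the bound for $g_v$ yields
\[
\int_X g_v\,d\mu\le\Big(1+\frac{C_S t}{s-1}\Big)\int_X g\,d\mu\le C_S\Big(1+\frac{t}{s-1}\Big)\int_X g\,d\mu,
\]
where the last step uses $C_S\ge1$. Taking the infimum over admissible $u$ and $g$ gives the second inequality. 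The argument makes no use of $p>1$, which is why the same cutoff proof works for $p=1$.

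The only delicate point is the Leibniz rule for upper gradients. It has to hold pointwise for the product $\eta u$ of a Lipschitz function and an $N^{1,1}$ function, and with $g$ a genuine upper gradient rather than only a weak one. This is standard; see \cite{BB}. If only $1$-weak upper gradients are available, one approximates them by upper gradients in $L^1$, or uses the fact that the relative capacity is the same whether it is defined with upper gradients or with weak upper gradients.
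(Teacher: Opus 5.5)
Your proposal is correct and is essentially the argument the paper intends: the paper gives no separate proof, only citing the cutoff proof of [BB, Lemma 11.22] and noting that it works for $p=1$. You reproduce that cutoff proof, multiplying a truncated admissible $u$ by a $1/((s-1)r)$-Lipschitz cutoff and controlling $\int u\,d\mu$ by the Sobolev inequality \eqref{eq:Sobolev-ineq.} in $B(x,tr)$, which yields exactly the constant $C_S\bigl(1+\frac{t}{s-1}\bigr)$.
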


	\begin{definition}\label{def:1 fine topology}
		We say that $A\subset X$ is $1$-thin at the point $x\in X$ if
		\[
		\lim_{r\to 0}\frac{\rcapa_1(A\cap B(x,r),B(x,2r))}{\rcapa_1(B(x,r),B(x,2r))}=0.
		\]
		We also say that a set $U\subset X$ is \emph{$1$-finely open} if $X\setminus U$ is $1$-thin at every $x\in U$.
Then we define the \emph{$1$-fine topology} as the collection of $1$-finely open sets on $X$.
		
		We denote the $1$-fine interior of a set $H\subset X$, i.e. the largest $1$-finely open set contained in $H$, by $\fineint H$.
We denote the $1$-fine closure of $H$, i.e. the smallest $1$-finely closed set
		containing $H$, by $\overline{H}^1$.
The $1$-fine boundary of $H$
		is $\partial^1 H\coloneqq \overline{H}^1\setminus \fineint H$.
		The $1$-base $b_1 H$ is defined as the set of points
		where $H$ is \emph{not} $1$-thin.
		
		For a function $w$, we then get the concepts of $1$-fine continuity
		and $1$-fine limit at a point $x\in X$.
The latter is denoted by $\finelim_{y\to x}w(y)$
		and defined using punctured neighborhoods.
	\end{definition}
	
	See \cite[Section 4]{L-FC} for discussion on Definition \ref{def:1 fine topology},
	and for a proof of the fact that the
	$1$-fine topology is indeed a topology.
	Using \eqref{eq:rcapa over ball-measure}, we see that a set $A\subset X$ is $1$-thin at $x\in X$ if and only if 
	\[
	\lim_{r\to 0} r\frac{\rcapa_1(A\cap B(x,r),B(x,2r))}{\mu(B(x,r))}=0,
	\]
	which was the formulation used in \cite{L-FC}.

	
	We note that the analogous concepts when $1<p<\infty$ are well known in the literature.
	In this case, the definition of $p$-thinness is formulated using a suitable integral,
	and in fact the $p$-fine topology is known to be the coarsest topology that makes all $p$-superharmonic
	functions defined on arbitrary open subsets of $X$ continuous; see \cite[Theorem 1.1]{BBL-WC15}.
	We will not discuss these concepts here,
	however see e.g. the monographs \cite{BB,MZ}.
	The case $p=1$ has been studied much less, but now we will record certain results that are known
	in this case.
	
	By \cite[Corollary 6.12]{L-CK}
	we know that for an arbitrary set $U\subset X$,
	\begin{equation}\label{eq:finely and quasiopen}
		U\textrm{ is }1\textrm{-quasiopen }\ \Longleftrightarrow\ \ U=V\cup N\
		\textrm{ where }\ V\textrm{ is 1-finely open and }\mathcal \capa_1(N)=0.
	\end{equation}
	According to \cite[Corollary 3.5]{L-Fed}, the $1$-fine closure
	of $A\subset X$ can be characterized as:
	\begin{equation}\label{eq:characterization of fine closure}
		\overline{A}^1=A\cup b_1 A.
	\end{equation}
	By \cite[Proposition 3.3]{L-Fed} we know that if $A\subset W$ for an open set $W\subset X$, then 
	\begin{equation}\label{eq:capa of fine int}
		\capa_1(\overline{A}^1)=\capa_1(A)\quad \textrm{and }\quad
		\rcapa_1(A,W)=\rcapa_1(\overline{A}^1\cap W,W) .
	\end{equation}
	
	\begin{lemma}\label{lem:wedge and vee capacity}
		Let $u\in\BV_{\loc}(\Om)$, let $x\in\Om$, and let $r>0$ be such that $B(x,2r)\Subset\Om$.
Then, for every $t\in\R$,
		\[
		\rcapa_1(\{u^{\wedge}>t\}\cap B(x,r),B(x,2r))=\rcapa_1(\{u^{\vee}>t\}\cap B(x,r),B(x,2r)).
		\]
	\end{lemma}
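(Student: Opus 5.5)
Since $u^\wedge\le u^\vee$, we have $\{u^\wedge>t\}\subset\{u^\vee>t\}$, and monotonicity gives the inequality ``$\le$''. The content of the lemma is the reverse inequality. The plan is to show that
\[
\{u^\vee>t\}\setminus\{u^\wedge>t\}=\{u^\wedge\le t<u^\vee\}
\]
is contained in the $1$-fine closure of $\{u^\wedge>t\}$, up to a set of zero $1$-capacity. Then \eqref{eq:capa of fine int} together with countable subadditivity finishes the proof.

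\textbf{Step 1: reduction to the jump set.} Points with $u^\wedge(y)\le t<u^\vee(y)$ lie in the jump set $S_u=\{u^\wedge<u^\vee\}$. Fix $y$ in this set and any $s\in(t,u^\vee(y))$. By the definition of $u^\vee$, the superlevel set $E_s=\{u>s\}$ has positive upper density at $y$. Moreover, $\mathcal H$-a.e. point of $\partial^*E_s$ lies in $\Sigma_\gamma E_s$ by \eqref{eq: measure theoretic boundary-gamma}, since $E_s$ has locally finite perimeter for a.e. $s$ by the coarea formula \eqref{eq:coarea}. I will take $s$ from a countable dense set of such good levels; this costs only a countable union of $\mathcal H$-null exceptional sets, which are $\capa_1$-null by \eqref{eq:capa and Hausdorff measure}. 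Off this null set, $y\in\Sigma_\gamma E_s$, so $E_s$ has lower density at least $\gamma$ at $y$.

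\textbf{Step 2: density implies non-thinness.} By \eqref{eq:rcapa in small ball} with $p=1$,
\[
\frac{\rcapa_1(E_s\cap B(y,\rho),B(y,2\rho))}{\rcapa_1(B(y,\rho),B(y,2\rho))}\ge \frac{\gamma}{C_SC_d}
\]
for all small $\rho$, so $E_s$ is not $1$-thin at $y$. Next, $\mu(E_s\setminus\{u^\wedge>t\})=0$: a.e. point of $E_s$ is a Lebesgue point with $u^\wedge=u>s>t$. Capacity of a set is not changed by removing a $\mu$-null set here only after passing to the fine closure. Since $\{u^\wedge>t\}\supset I_{E_s}$ (at density-one points of $E_s$ we get $u^\wedge\ge s>t$), I will use $I_{E_s}$ instead of $E_s$. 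Points of $I_{E_s}$ are density points, and a measure-density lower bound still gives non-thinness via \eqref{eq:rcapa in small ball}, because $\mu(I_{E_s}\cap B)=\mu(E_s\cap B)$. Hence $y\in b_1 I_{E_s}\subset b_1\{u^\wedge>t\}$.

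\textbf{Step 3: conclusion.} By \eqref{eq:characterization of fine closure},
\[
\{u^\vee>t\}\cap B(x,r)\subset \overline{\{u^\wedge>t\}\cap B(x,r')}^1\cup N
\]
with $\capa_1(N)=0$. I expect the localization to the ball to be the main obstacle. Points $y$ of $B(x,r)$ near $\partial B(x,r)$ use density of $\{u^\wedge>t\}$ in $B(y,\rho)$, which leaks outside $B(x,r)$. Since $B(x,r)$ is open, however, $B(y,\rho)\subset B(x,r)$ for small $\rho$, so non-thinness of $\{u^\wedge>t\}\cap B(x,r)$ at $y$ holds. Then \eqref{eq:capa of fine int} with $W=B(x,2r)$ gives
\[
\rcapa_1(\{u^\vee>t\}\cap B(x,r),B(x,2r))\le \rcapa_1(\{u^\wedge>t\}\cap B(x,r),B(x,2r)),
\]
using that $\capa_1(N)=0$ implies zero relative capacity by \eqref{eq:capa vs rcapa}.
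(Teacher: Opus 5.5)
Your proposal is correct and follows essentially the same route as the paper. Both arguments take superlevel sets $E_s$ at a countable dense set of good levels, get lower density at least $\gamma$ off an $\mathcal H$-null (hence $\capa_1$-null) set, deduce non-thinness of $\{u^\wedge>t\}$ via \eqref{eq:rcapa in small ball}, and conclude with \eqref{eq:capa of fine int}.

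Your use of the pointwise inclusion $I_{E_s}\subset\{u^\wedge>t\}$, and your explicit check that non-thinness passes to $\{u^\wedge>t\}\cap B(x,r)$ because $B(x,r)$ is open, are minor refinements of steps the paper treats quickly. The stray $r'$ in Step 3 should simply be $r$.
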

	\begin{proof}
		The inequality ``$\le$''
		follows from $u^\wedge\le u^\vee$.
		For the opposite inequality, without loss of generality, we may assume $u\in \BV(\Om)$.
		By the coarea formula \eqref{eq:coarea}, there is a countable dense set $T\subset\R$ such that $E_s\coloneqq\{u>s\}$ has finite perimeter in $\Om$ for every $s\in T$.
		Let $N\coloneqq\Om\cap \bigcup_{s\in T}(\partial^*E_s\setminus\Sigma_\gamma E_s)$; then $\mathcal H(N)=0$ by \eqref{eq: measure theoretic boundary-gamma}.

		Consider $z\in\Om\setminus N$ with $u^\vee(z)>t$, and choose $s\in T$ with $t<s<u^\vee(z)$.
		Then $z\notin O_{E_s}$.
		If $z\in I_{E_s}$, the set $E_s$ has density one at $z$; otherwise $z\in\partial^*E_s$, and the definition of $N$ gives $z\in\Sigma_\gamma E_s$.
		Consequently,
		\[
		\liminf_{\rho\to0}\frac{\mu(B(z,\rho)\cap E_s)}{\mu(B(z,\rho))}\ge\gamma.
		\]
		Since $u=u^\wedge$ almost everywhere, $E_s\subset\{u^\wedge>t\}$ up to a $\mu$-null set.
		By \eqref{eq:rcapa in small ball} we know that $\{u^\wedge>t\}$ is not $1$-thin at $z$.
		Hence $\{u^\vee>t\}\setminus N\subset\overline{\{u^\wedge>t\}}^1$.

		Using \eqref{eq:capa of fine int} with $\{u^\wedge>t\}\cap B(x,r)$, followed by $\capa_1(N)=0$, we obtain
		\begin{align*}
		\rcapa_1(\{u^\wedge>t\}\cap B(x,r),B(x,2r))
		&=\rcapa_1(\overline{\{u^\wedge>t\}}^1\cap B(x,r),B(x,2r))\\
		&\ge\rcapa_1((\{u^\vee>t\}\setminus N)\cap B(x,r),B(x,2r))\\
		&=\rcapa_1(\{u^\vee>t\}\cap B(x,r),B(x,2r)).
		\end{align*}
	\end{proof}

		
		
		
		
	
	For any $u\in N^{1,1}_{\loc}(X)$, and $B=B(x,r)$,
	we have the following Maz'ya's inequality from \cite[Theorem 6.21]{BB}:
	\begin{equation}\label{eq:Mazya inequality}
		\vint{2B}|u|\,d\mu\le \frac{C_{\textrm{maz}}}{\rcapa_1(B\cap \{u=0\},2B)}\int_{2\lambda B}g_u\,d\mu.
	\end{equation}
	Here $C_{\textrm{maz}}$ depends only on doubling and Poincar\'e constants.
	

	For BV functions, we have the following version.
	\begin{lemma}\label{lem:BV-Mazya}
		Let $u\in\BV_{\loc}(\Om)$ be nonnegative, and let $B=B(x,r)$ satisfy $B(x,2\lambda r)\Subset\Om$.
Then
		\begin{equation}\label{eq:Mazya inequality BV}
			\vint{B(x,2r)}u\,d\mu\le \frac{C_1C_{\textrm{maz}}}{\rcapa_1(B(x,r)\cap \{u^{\vee}=0\},B(x,2r))}\|Du\|(B(x,2\lambda r)),
		\end{equation}
        where $C_1$ is the constant in \eqref{eq:capacities continuity-comparable}.
	\end{lemma}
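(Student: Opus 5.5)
The plan is to reduce Lemma \ref{lem:BV-Mazya} to the Newtonian Maz'ya inequality \eqref{eq:Mazya inequality} by approximation, and to use the BV capacity to control the contact set. Since both sides are homogeneous and the right-hand side only sees $\|Du\|(B(x,2\lambda r))$, we may localize: multiply $u$ by a Lipschitz cutoff equal to $1$ on $B(x,2\lambda r)$ with support compactly in $\Om$. Alternatively, we work directly in the open set $V=B(x,2\lambda r)$. We may also assume that $\rcapa_1(B\cap\{u^\vee=0\},2B)>0$ and $\|Du\|(V)<\infty$; otherwise there is nothing to prove.

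\textbf{Step 1: truncation.} First treat bounded $u$, replacing $u$ by $u_k=\min\{u,k\}$. This is harmless because $\{u_k^\vee=0\}=\{u^\vee=0\}$ and $\|Du_k\|(V)\le\|Du\|(V)$. The general case then follows by monotone convergence on the left-hand side.

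\textbf{Step 2: reduction to a capacitary statement.} Since $\{u^\vee=0\}$ is not an a.e.-defined object, we cannot simply take Lipschitz approximants $u_i\to u$ and pass to the limit in \eqref{eq:Mazya inequality}, because their zero sets are uncontrolled. Instead, I would follow the standard proof of Maz'ya's inequality. Set $A\coloneqq B\cap\{u^\vee=0\}$ and write $m\coloneqq\vint{2B}u\,d\mu$. If $m=0$ the claim is trivial. Otherwise consider the function
\[
v\coloneqq\Big(1-\frac{u}{m}\Big)_+\eta,
\]
where $\eta$ is a cutoff which is $1$ on $B$ and supported in $2B$, with $|\nabla\eta|\le 1/r$ say (or the radii slightly adjusted). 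On $A$ we have $u^\vee=0$, hence $v^\wedge\ge1$ there $\mathcal H$-a.e. Also $v^\vee=v^\wedge=0$ outside $2B$. Thus $v$ is admissible for $\rcapa_{\BV}(A,2B)$, and by \eqref{eq:BV capa and 1-capa-relative},
\[
\rcapa_1(A,2B)\le C_1\|Dv\|(X)\le C_1\Big(\frac{1}{m}\|Du\|(2B)+\frac1r\int_{2B}\Big(1-\frac um\Big)_+\,d\mu\Big).
\]
The Leibniz-type bound here is a standard BV product rule with a Lipschitz function.

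\textbf{Step 3: the main obstacle.} The difficulty is the second term, which is not small in general; this is exactly where the classical proof uses the Poincaré inequality. Rather than redo this, the cleaner route is probably the following.
\begin{itemize}
\item Approximate $u$ by $u_i\in\Lip_{\loc}(V)$ with $u_i\to u$ in $L^1(V)$ and $\int_V g_{u_i}\,d\mu\to\|Du\|(V)$.
\item Apply \eqref{eq:Mazya inequality} to $w_i\coloneqq(u_i-\eps)_+$, which vanishes on $\{u_i\le\eps\}$.
\item Show that $\liminf_i\rcapa_1(B\cap\{u_i\le\eps\},2B)$ is essentially at least $\rcapa_{\BV}(A,2B)$ up to the factor $C_1$.
\end{itemize}
The last point is the hard part, and I would prove it by contradiction using lower semicontinuity. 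Take near-optimal capacity test functions $\phi_i$ for $B\cap\{u_i\le\eps\}$. Functions such as $\max\{\phi_i,(1-u_i/\eps)_+\eta\}$ are then admissible-like for $A$ in the BV sense. In the limit their variation is bounded by $\liminf\|D\phi_i\|+\eps^{-1}\|Du\|$-type terms. This yields an inequality of the form
\[
\rcapa_{\BV}(A,2B)\le\liminf_i \rcapa_1(B\cap\{u_i\le\eps\},2B)+\frac{C}{\eps}\|Du\|(V).
\]
Homogeneity then absorbs the extra term: scale $u$ by $t$, with $\eps$ fixed, and send $t\to 0$, noting that the desired inequality is invariant under scaling.

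\textbf{Step 4: conclusion.} Pass to the limit in $\vint{2B}w_i\le C_{\textrm{maz}}\int_{2\lambda B}g_{u_i}/\rcapa_1(\cdot)$ and let $\eps\to0$. Using \eqref{eq:BV capa and 1-capa-relative} to convert $\rcapa_{\BV}$ back to $\rcapa_1$ with the factor $C_1$ gives \eqref{eq:Mazya inequality BV}.
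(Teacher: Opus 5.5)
There is a genuine gap in Step 3. Your skeleton there (approximate by $u_i$, apply \eqref{eq:Mazya inequality} to $(u_i-\eps)_+$, and bound $\rcapa_1(B\cap\{u_i\le\eps\},2B)$ from below) is the paper's. However, the lower bound, which you rightly call the hard part, is not established, and the argument you offer for it fails in two places.

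First, the device $\max\{\phi_i,(1-u_i/\eps)_+\eta\}$ does not give your displayed inequality.
\begin{itemize}
\item At level $i$ the second function is $\ge1$ only where $u_i\le0$, and there $\phi_i\ge1$ already. So it adds nothing to admissibility.
\item In the limit, $(1-u/\eps)_+\eta$ is admissible for $A$ by itself. But that is just your Step 2 test function. Its variation contains the cutoff term $\int(1-u_i/\eps)_+g_\eta\,d\mu$, of size up to $Cr^{-1}\mu(2B\cap\{u_i<\eps\})$. This is in general comparable to $\rcapa_1(B,2B)$ and is not bounded by $C\eps^{-1}\|Du\|(V)$.
\end{itemize}
You have dropped exactly the term you identified as the obstacle.

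Second, even granting
\[
\rcapa_{\BV}(A,2B)\le\liminf_{i\to\infty}\rcapa_1(B\cap\{u_i\le\eps\},2B)+\frac{C}{\eps}\|Du\|(V),
\]
homogeneity cannot remove the error. This inequality is invariant under $(u,u_i,\eps)\mapsto(tu,tu_i,t\eps)$. So replacing $u$ by $tu$ with $\eps$ fixed is the same as keeping $u$ with threshold $\eps/t$. The Maz'ya step then only controls $\vint{2B}(u-\eps/t)_+\,d\mu$, which tends to $0$ as $t\to0$. Likewise, letting $\eps\to0$ in Step 4 makes $C\eps^{-1}\|Du\|(V)$ blow up. Choosing $\eps$ comparable to $\vint{2B}u\,d\mu$ would at best give a Maz'ya-type inequality with a constant worse than $C_1C_{\mathrm{maz}}$.

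What is needed is the error-free bound
\[
\liminf_{j\to\infty}\rcapa_1(B\cap\{u_j\le\eps\},2B)\ge C_1^{-1}\rcapa_1(B\cap\{u^\vee=0\},2B),
\]
along a subsequence. Passing to a subsequence is harmless, since $\int g_{u_j}\,d\mu\to\|Du\|(V)$ along every subsequence. The paper obtains this bound from a pointwise fact your proposal never uses: by \cite[Theorem 3.2]{L-SP}, after passing to a subsequence, $\limsup_j u_j\le u^\vee$ outside a set $N$ with $\mathcal H(N)=0$. The argument then runs as follows.
\begin{itemize}
\item The increasing sets $A_j\coloneqq\{y\in B\colon u_k(y)\le\eps\text{ for all }k\ge j\}\subset B\cap\{u_j\le\eps\}$ have union containing $(B\cap\{u^\vee=0\})\setminus N$.
\item Lemma \ref{lem:capacities continuity} (continuity from below up to the factor $C_1$), together with $\capa_1(N)=0$ from \eqref{eq:capa and Hausdorff measure}, gives the bound.
\item Fatou's lemma for $\vint{2B}(u_j-\eps)_+\,d\mu$ and $\eps\to0$ then yield \eqref{eq:Mazya inequality BV} with exactly $C_1C_{\mathrm{maz}}$.
\end{itemize}
A compactness argument could also replace your max device. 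An $L^1$-limit $\phi$ of near-optimal test functions $0\le\phi_i\le1$ is $\ge1$ a.e.\ on $B\cap\{u<\eps\}$. Hence $\phi^\wedge\ge1$ on all of $B\cap\{u^\vee=0\}$, with no additive error. But you would still have to check the exterior condition $\phi^\vee=\phi^\wedge=0$ $\mathcal H$-a.e.\ on $X\setminus2B$ in the definition of $\rcapa_{\BV}$, and this can fail on $\partial(2B)$ for such limits.
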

	\begin{proof}
		By \eqref{eq: def-BV-metric}, choose locally Lipschitz functions $u_j$ on
		$B(x,2\lambda r)$ and upper gradients $g_{u_j}$ such that
		\[
		u_j\to u\quad\text{in }L_{\loc}^1(B(x,2\lambda r))
		\quad\text{and}\quad
		\int_{B(x,2\lambda r)}g_{u_j}\,d\mu\to\|Du\|(B(x,2\lambda r)).
		\]
		By \cite[Theorem 3.2]{L-SP}, after passing to a subsequence, we have $\limsup_{j\to\infty}u_j(y)\le u^\vee(y)$ for every $y\in B(x,r)\setminus N$, where $\mathcal H(N)=0$.
		Fix $\eps>0$.
Denote
		\[
		A_j\coloneqq \{y\in B(x,r)\colon u_k(y)\le \eps \textrm{ for all }k\ge j\},\quad j\in\N.
		\]
		Then by Lemma \ref{lem:capacities continuity}, we have
		\begin{align*}
			\liminf_{j\to\infty}\rcapa_1(B(x,r)\cap \{u_j\le \eps\},B(x,2r))
			&\ge \liminf_{j\to\infty}\rcapa_1(A_j,B(x,2r))\\
			&\ge C_1^{-1}\rcapa_1(B(x,r)\cap \{u^{\vee}=0\}\setminus N,B(x,2r))\\
			&=C_1^{-1}\rcapa_1(B(x,r)\cap \{u^{\vee}=0\},B(x,2r)),
		\end{align*}
		using also \eqref{eq:capa and Hausdorff measure}.
		We denote the positive part of a function by $v_+\coloneqq\max\{v,0\}$. 
		Now we have
		\begin{equation}\label{eq:mazya}
			\begin{split}
				\vint{B(x,2r)}u\,d\mu-\eps
				&\le \vint{B(x,2r)}(u-\eps)_+\,d\mu
				\le \liminf_{j\to\infty}\,\vint{B(x,2r)}(u_j-\varepsilon)_+\,d\mu\\
				&\le \limsup_{j\to\infty}\,\frac{C_{\textrm{maz}}}{\rcapa_1(B(x,r)\cap \{u_j\le \eps\},B(x,2r))}\int_{B(x,2\lambda r)}g_{u_j}\,d\mu
				\quad\textrm{by }\eqref{eq:Mazya inequality}\\
				&\le \frac{C_1C_{\textrm{maz}}}{\rcapa_1(B(x,r)\cap \{u^{\vee}=0\},B(x,2r))}\|Du\|(B(x,2\lambda r)).
			\end{split}
		\end{equation}
		Letting $\eps\to 0$, we get the claim.
	\end{proof}
	
	It will be convenient to consider the following class of measures that are, in a suitable sense, absolutely
	continuous with respect to $\rcapa_1$.
	
	\begin{definition}\label{def:class of measures}
		Given $C_*\ge1$, let $\mathcal M(\Om,C_*)$ denote the class of positive Radon measures $\nu$ on $\Om$ with the following property: for every open set $W\subset\Om$ and every $\delta>0$, there exists $R=R(\nu,W,\delta)>0$ such that, for every $0<r\le R$ and every Borel set $A\subset X$, the condition
		\[
		\frac{\rcapa_1(A\cap B(x,r),B(x,2r))}{\rcapa_1(B(x,r),B(x,2r))}<\delta\quad\textrm{for every }x\in W
		\]
		implies $\nu(A\cap W)\le C_*\delta\nu(W)$. 
	\end{definition}
    
    

        

We conclude the preliminaries with a consequence of the BV--Poincar\'e inequality that will be used to obtain Newtonian representatives.

\begin{lemma}\label{lem:BV-Sobolev density}
Let $\Om\subset X$ be open, let $1\le p\le\infty$, and suppose that $u\in\BV_{\loc}(\Om)$
and $0\le a\in L^p(\Om)$ satisfy $d\|Du\|\le a\,d\mu$ in $\Om$.
Then there exists $v\in N^{1,p}_{\loc}(\Om)$ such that $v=u$ $\mu$-almost everywhere.

If $p<\infty$, then $v$ has a $p$-weak upper gradient $g\in L^p(\Om)$ satisfying $g\le Ca$ $\mu$-a.e.
If $p=\infty$, then $v$ can be chosen locally Lipschitz, with $\Lip_v\le Ca$ $\mu$-a.e.
Here $C$ depends only on the doubling and Poincar\'e constants.
\end{lemma}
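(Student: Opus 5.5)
We will take $v$ to be the (pointwise defined) function obtained from a Haj{\l}asz-type pointwise estimate built from the BV-Poincar\'e inequality \eqref{eq: BV-Poincare}, and verify that $C$ times a maximal function of $a$, suitably localized, is an upper gradient, before improving it to $Ca$. Since the statement is local, we work in a ball $B_0=B(x_0,R_0)$ with $B(x_0,10\lambda R_0)\Subset\Om$, and the representatives obtained on such balls agree a.e. on overlaps, so they patch together in $N^{1,p}_{\loc}(\Om)$.

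\textbf{Step 1 (pointwise estimate).} Applying \eqref{eq: BV-Poincare} on dyadic balls $B(y,2^{-k}r)$ and the telescoping argument gives, for Lebesgue points $y,z\in B_0$ of $u$ with $r=d(y,z)$ small,
\[
|u(y)-u(z)|\le C d(y,z)\bigl(\mathcal M_{R} a(y)+\mathcal M_{R} a(z)\bigr),
\]
where $\mathcal M_R a(y)=\sup_{0<\rho\le R}\rho^{-0}\vint{B(y,\rho)}a\,d\mu$ is the restricted maximal function; here we use $\|Du\|(B(y,\lambda\rho))\le\int_{B(y,\lambda\rho)}a\,d\mu$. Define $v(y)\coloneqq\limsup_{\rho\to0}u_{B(y,\rho)}$.

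\textbf{Step 2 (Newtonian representative).} For $1<p\le\infty$, $\mathcal M_Ra\in L^p$ by the maximal theorem, and the above estimate shows $u$ is in the Haj{\l}asz space, hence (by the standard result, e.g.\ \cite{HKSTbook}) has a representative in $N^{1,p}_{\loc}$, locally Lipschitz when $p=\infty$. For $p=1$ the maximal function argument fails, so instead I would use a discrete convolution / Lipschitz approximation: mollify $u$ at scale $\eps$ by a partition of unity subordinate to a Whitney-type cover by balls of radius $\eps$, obtaining $u_\eps$ locally Lipschitz with upper gradient $g_\eps\le C\sum_i\ch_{B_i}\vint{\lambda' B_i}a\,d\mu$, which converges in $L^1$ (as $\eps\to0$ it is dominated in the sense of uniform integrability by averages of $a$). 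Then by the lower semicontinuity / Fuglede-type lemma, $u_\eps\to u$ in $L^1_{\loc}$ and $g_\eps\to\tilde g$ weakly with $\tilde g\le Ca$, yielding a representative $v\in N^{1,1}_{\loc}$ with a $1$-weak upper gradient $\le Ca$ (using Mazur's lemma to pass to strong convergence of convex combinations). This discrete convolution approach actually works for all $p<\infty$ uniformly, so I would use it throughout for $p<\infty$.

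\textbf{Step 3 (sharp bound $g\le Ca$).} The estimate from Step 2 gives $g\le C a^\#$ where $a^\#=\lim_{\eps\to0}$ of the averaged functions; since averages $\vint{B(y,C\eps)}a\,d\mu\to a(y)$ at Lebesgue points of $a$, the weak limit is bounded by $Ca$ a.e. For $p=\infty$, the locally Lipschitz representative satisfies $\Lip_v(y)\le C\limsup_{\rho\to0}\vint{B(y,\rho)}a\,d\mu=Ca(y)$ at Lebesgue points of $a$, which follows directly from Step 1 combined with the $\rho$-local version of the estimate (replacing $\mathcal M_R$ by $\sup_{\rho\le Cd(y,z)}$). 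The main obstacle is Step 2 for $p=1$, where one must ensure the weak limit exists (equi-integrability of $g_\eps$, which follows since $g_\eps$ are bounded by averages of the fixed $L^1$ function $a$ and hence are uniformly integrable) and that the limit function is a genuine $1$-weak upper gradient of a pointwise representative, for which we rely on the standard closedness results in \cite{BB}.
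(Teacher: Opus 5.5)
\paragraph{The case $p<\infty$.} Your discrete-convolution argument works, and it takes a different route from the paper. The paper uses H\"older to turn $\vint{B}|u-u_B|\,d\mu\le Cr\vint{\lambda B}a\,d\mu$ into a $p$-Poincar\'e inequality with right-hand side $a$. It then quotes \cite[Theorem~1.2]{He12} ball by ball and patches the pieces by quasicontinuity. You instead prove that cited result by hand, via discrete convolutions and a Fuglede-type closure argument. Three small points:
\begin{itemize}
\item Weak compactness and Mazur are unnecessary. You have $g_\eps\le C\vint{B(\cdot,C\eps)}a\,d\mu$, and these averages converge to $Ca$ strongly in $L^p$, so they can serve directly as upper gradients.
\item For $p>1$ you still need $v\in L^p_{\loc}$. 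Keep this from your Step 1, which is how the paper uses \cite[Theorem~9.4]{Ha03}.
\item Local Newtonian representatives must be patched using $p$-q.e.\ agreement, not a.e.\ agreement; see \cite[Proposition~5.23]{BB}.
\end{itemize}

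\paragraph{The gap: the bound $\Lip_v\le Ca$ for $p=\infty$.} Even with scales restricted, your Step 1 estimate reads
\[
|v(y)-v(z)|\le Cd(y,z)\Bigl(\sup_{\rho\le Cd(y,z)}\vint{B(y,\rho)}a\,d\mu+\sup_{\rho\le Cd(y,z)}\vint{B(z,\rho)}a\,d\mu\Bigr).
\]
The second term involves arbitrarily small balls centred at the moving point $z$. As $z\to y$ it behaves like $a(z)$, not $a(y)$, so it is controlled only by the essential supremum of $a$ near $y$.

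That supremum can stay far above $a(y)$ even at a Lebesgue point $y$ of $a$. Take $a=1+100\ch_F$, where $F$ is a union of tiny balls $B(x_k,r_k)$ with $x_k\to y$ and $F$ of density zero at $y$. Then $a(y)=1$, but at $z=x_k$ the second term is at least $101$. So Step 1 gives at best $\Lip_v(y)\lesssim 102\,C$, and $100$ can be replaced by any constant. The lemma is still true; it is your inference that fails.

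Shifting $z$ to a nearby point where the maximal function is small, via the weak-type estimate, does not rescue a structural constant either. The bound obtained that way gets worse the closer the nearby point has to be.

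\paragraph{How to close it.} You need an oscillation bound that involves only balls centred at the base point, and this forces an exponent above the doubling dimension. By H\"older, $\vint{B}|u-u_B|\,d\mu\le Cr(\vint{\lambda B}a^q\,d\mu)^{1/q}$ with $q>Q_d$; the paper takes $q=2+\log_2C_d$. For $z\in B(x,r)$, \eqref{eq: doubling-ball-diff-center} gives $\mu(B(x,2\lambda r))\le C2^{kQ_d}\mu(B(z,2^{-k}\lambda r))$. Hence the telescoping sum at $z$ satisfies
\[
\sum_{k\ge0}2^{-k}r\Bigl(\vint{B(z,2^{-k}\lambda r)}a^q\,d\mu\Bigr)^{1/q}
\le Cr\sum_{k\ge0}2^{-k(1-Q_d/q)}\Bigl(\vint{B(x,2\lambda r)}a^q\,d\mu\Bigr)^{1/q}.
\]
The series converges precisely because $q>Q_d$; with $q=1$ this bound is useless once $Q_d\ge1$.

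This is the local Morrey estimate $\osc_{B(x,r)}v\le Cr\bigl(\vint{B(x,6\lambda r)}a^q\,d\mu\bigr)^{1/q}$ from \cite[Theorem~9.7(3)]{Ha03}, which the paper uses. The Lebesgue differentiation theorem applied to $a^q\in L^1_{\loc}(\Om)$ then gives $\Lip_v\le Ca$ $\mu$-a.e.
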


\begin{proof}
By the BV--Poincar\'e inequality \eqref{eq: BV-Poincare} and doubling, we have
\begin{equation}\label{eq:BV-density-Poincare}
\vint{B}|u-u_B|\,d\mu
\le Cr\vint{\lambda B}a\,d\mu
\end{equation}
whenever $B=B(x,r)$ and $\lambda B\Subset\Om$.

Suppose first that $p<\infty$.
We have $u\in L^p_{\loc}(\Om)$ by assumption if $p=1$.
For $p>1$, fix $B=B(x_0,r)$ with $8\lambda B\Subset\Om$.
By \cite[Theorem~9.4]{Ha03} applied to \eqref{eq:BV-density-Poincare}, we have 
\[
|u(x)-u(y)|\le Cd(x,y)\bigl(\mathcal M(a\ch_{8\lambda B})(x)+\mathcal M(a\ch_{8\lambda B})(y)\bigr)
\quad\text{for }x,y\in B\setminus Z,
\]
where $\mu(Z)=0$.
Here $\mathcal M$ is the Hardy--Littlewood maximal operator defined in \eqref{eq:HL maximal function} and $a\ch_{8\lambda B}$ is extended by zero to $X$.
Since $p>1$, by the $L^p$ boundedness of the maximal operator, we have
\[
\|\mathcal{M}(a\ch_{8\lambda B})\|_{L^p(B)}\le C_p \|a\|_{L^p(8\lambda B)}<\infty.
\]
Fixing $y\in B\setminus Z$ with $|u(y)|+\mathcal M(a\ch_{8\lambda B})(y)<\infty$, for $\mu$-a.e. $x\in B$, we have
\[
|u(x)|\le |u(y)| +2Cr\bigl(\mathcal{M}(a\ch_{8\lambda B})(x)+\mathcal{M}(a\ch_{8\lambda B})(y) \bigr).
\]
Then 
\[
\|u\|_{L^p(B)}\le \bigl(|u(y)|+2Cr \mathcal{M}(a\ch_{8\lambda B})(y) \bigr)\mu(B)^{1/p}+ 2Cr\|\mathcal{M}(a\ch_{8\lambda B})\|_{L^p(B)}<\infty.
\]
This implies $u\in L^p(B)$, and hence $u\in L^p_{\loc}(\Om)$.

Choose a countable cover $\{B_j\}$ of $\Om$ consisting of balls as above.
By H\"older's inequality, \eqref{eq:BV-density-Poincare} implies the $p$-Poincar\'e inequality.
Applying \cite[Theorem~1.2]{He12} with $\Phi(t)=t^p$ on each $B_j$ we find $v_j\in N^{1,p}(B_j)$ with $v_j=u$ $\mu$-a.e. and a $p$-weak upper gradient $g_{v_j}\le Ca$ $\mu$-a.e.
The functions $v_j$ are $p$-quasicontinuous by \eqref{eq:quasicontinuous}, and agree $p$-q.e. on overlaps by \cite[Proposition~5.23]{BB}.
After modifications on a set of zero $p$-capacity, we define a representative $v\in N^{1,p}_{\loc}(\Om)$.
Choose a nonnegative Borel representative of $a$.
Then $Ca$ is a $p$-weak upper gradient on each $B_j$ and also on $\Om$.
Thus we may take $g=Ca\in L^p(\Om)$.

For $p=\infty$, fix $q=2+\log_2 C_d$.
By H\"older's inequality and the local Morrey estimate in \cite[Theorem~9.7(3)]{Ha03}, there is a continuous representative $v$ satisfying
\[
\osc_{B(x,r)}v
\le Cr\left(\vint{B(x,6\lambda r)}a^q\,d\mu\right)^{1/q}
\]
whenever $B(x,6\lambda r)\Subset\Om$; local representatives agree on overlaps by continuity.
Since $a\in L^\infty(\Om)$, we have that $v$ is locally Lipschitz.
Applying the Lebesgue differentiation theorem to $a^q$, we have $\Lip_v\le Ca$ almost everywhere.
\end{proof}

	\section{Proof of Theorems \ref{thm:main BV} and \ref{thm:main Sob}}\label{sec:main theorems}

\begingroup
\emergencystretch=3em

	In this section we prove Theorems \ref{thm:main BV} and  \ref{thm:main Sob}.

	\subsection{``Only if'' direction}\label{sec:only if}
	
	In this subsection we prove the ``only if'' direction of Theorems
	\ref{thm:main BV} and \ref{thm:main Sob}, in a slightly stronger form.
	
	First we record the following ``Borel regularity''.
	
	\begin{lemma}\label{lem:weak Borel regularity}
		Let $D\subset X$ be $\mu$-measurable.
Then there exists a Borel set $D^*\supset D$ such that for every $x\in X$ and $r>0$,
		\[
		\mu(D\cap B(x,r))=\mu(D^*\cap B(x,r))
		\]
		and
		\[
		\rcapa_1(D^*\cap B(x,r),B(x,2r))\le C_1\rcapa_1(D\cap B(x,r),B(x,2r))
		\]
		
	\end{lemma}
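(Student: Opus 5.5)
The natural construction is to replace $D$ by the set of its density points together with $D$ itself, suitably Borel-ized, and to use the fine closure. Concretely, I would first take a Borel set $D_0\supset D$ with $\mu(D_0\setminus D)=0$ (possible since $\mu$ is Borel regular). This already gives the measure identity $\mu(D\cap B(x,r))=\mu(D_0\cap B(x,r))$ for every ball. The problem is that $D_0\setminus D$, although $\mu$-null, may carry large capacity. So we must cut $D_0$ down without destroying the Borel property, while staying inside a set whose capacity is controlled by that of $D$.

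For the capacitary control I would use outer regularity of the relative capacity in the form of capacity-minimizing open sets. Fix a countable dense set $\{x_k\}$ and rational radii $r_l$. For each pair $(k,l)$, choose open sets $G_{k,l,m}\supset D\cap B(x_k,r_l)$ with
\[
\rcapa_1(G_{k,l,m}\cap B(x_k,r_l),B(x_k,2r_l))\le\rcapa_1(D\cap B(x_k,r_l),B(x_k,2r_l))+1/m.
\]
This uses that $\rcapa_1$ is outer, which follows from the outer property of $\capa_1$ and the BV comparison \eqref{eq:BV capa and 1-capa-relative}; the constant $C_1$ enters here if one only has outerness up to the constant. Then define
\[
D^*\coloneqq D_0\cap\bigcap_{k,l}\Bigl(\bigcap_m G_{k,l,m}\cup (X\setminus B(x_k,r_l))\Bigr).
\]
This set is Borel and contains $D$. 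Since $D\subset D^*\subset D_0$, the measure identity persists. For each pair $(k,l)$, the set $D^*\cap B(x_k,r_l)$ lies in $\bigcap_m G_{k,l,m}$, so its relative capacity is at most $\rcapa_1(D\cap B(x_k,r_l),B(x_k,2r_l))$, possibly times $C_1$.

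The main obstacle is passing from the countable family of balls to arbitrary $B(x,r)$, where both the centre and the radius vary, so the capacity in $B(x,2r)$ must be controlled directly. I would try to avoid this by working pointwise in $x$: for $B(x,r)$, choose $x_k\to x$ and $r_l\uparrow r$ with $B(x_k,r_l)\subset B(x,r)$ and $B(x_k,r_l)\uparrow B(x,r)$. Then $D^*\cap B(x,r)=\bigcup(D^*\cap B(x_k,r_l))$ is an increasing union. Lemma \ref{lem:capacities continuity} (or its BV version) would give the capacity of this union, up to $C_1$, as the limit of the capacities of the pieces. The difficulty is that each piece is controlled relative to $B(x_k,2r_l)$, not $B(x,2r)$.

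To fix this, I would instead define $G$ using the sets $D\cap B(x_k,r_l)$ and take minimizers relative to many outer balls $B(y,2s)$ simultaneously. The cleanest way is probably to choose, for each rational ball $B'$ and each rational outer ball $W\supset B'$, open sets $G$ that almost minimize $\rcapa_1(D\cap B',W)$, and to intersect over all of them. Then, for $B(x,r)$, approximate $B(x,2r)$ from inside by rational balls $W_j\uparrow B(x,2r)$ and use \eqref{eq: rcapa-compare-in-ball}-type monotonicity in the outer set. I expect the bookkeeping to close with the single constant $C_1$ coming from Lemma \ref{lem:capacities continuity}. If it does not close this way, a fallback is to use \eqref{eq:capa of fine int} to replace the almost-minimizing open sets by fine closures, which are quasi-Borel.
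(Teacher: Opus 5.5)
Your first two steps are sound. The Borel hull $D_0$ (the paper's $D_\mu$) gives the measure identity. Intersecting with $\bigcap_m G_{k,l,m}\cup(X\setminus B(x_k,r_l))$ produces a Borel set $D^*\supset D$ whose capacity does not increase on those countably many balls.

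\textbf{The gap.} It is the step you flag and leave open: passing to an arbitrary ball $B(x,r)$, where the outer set $B(x,2r)$ ranges over uncountably many balls. Your refinement intersects over rational pairs $B'\subset W$ and approximates $B(x,r)$ from inside by $B'_j$ and $B(x,2r)$ from inside by a rational ball $W$. This requires bounding $\rcapa_1(D\cap B(x,r),W)$ above by $\rcapa_1(D\cap B(x,r),B(x,2r))$ for $W\subset B(x,2r)$. Monotonicity in the outer set goes the wrong way. The comparison \eqref{eq: rcapa-compare-in-ball} only gives the factor $C_S(1+t/(s-1))\to 3C_S$ as $s\uparrow t=2$, and only for $r<\frac18\diam X$. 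So the best your chain yields is
\[
\rcapa_1(D^*\cap B(x,r),B(x,2r))\le C_1\lim_{j}\rcapa_1(D^*\cap B'_j,W)\le C_1\rcapa_1(D\cap B(x,r),W)\le 3C_SC_1(1+o(1))\,\rcapa_1(D\cap B(x,r),B(x,2r)).
\]
Here Lemma~\ref{lem:capacities continuity} already costs one factor $C_1$, and the outer-domain approximation costs another factor of about $3C_S$. To reach the stated constant $C_1$ by this route you would need exact continuity $\rcapa_1(E,W_j)\to\rcapa_1(E,W)$ as $W_j\uparrow W$. You do not prove this, and it is not among the available tools; the cutoff argument behind \eqref{eq: rcapa-compare-in-ball} inherently loses a constant.

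Two smaller points:
\begin{itemize}
\item The fallback via \eqref{eq:capa of fine int} does not help, because fine closures are not known to be Borel and the lemma demands a Borel set.
\item Exact outer regularity of $\rcapa_1(\cdot,W)$ comes from quasicontinuity, not from \eqref{eq:BV capa and 1-capa-relative}. That comparison would only give outer regularity up to the factor $C_1$.
\end{itemize}

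\textbf{What the paper does.} It never builds such a hull for $\rcapa_1$. Arguing as in \cite[Lemma 4.3]{L-CK}, it takes a Borel set $D_{\BV}\supset D$ with $\rcapa_{\BV}(D_{\BV}\cap B(x,r),B(x,2r))=\rcapa_{\BV}(D\cap B(x,r),B(x,2r))$ for every ball simultaneously. It then sets $D^*=D_\mu\cap D_{\BV}$ and concludes
\[
\rcapa_1(D^*\cap B,2B)\le C_1\rcapa_{\BV}(D^*\cap B,2B)\le C_1\rcapa_{\BV}(D\cap B,2B)\le C_1\rcapa_1(D\cap B,2B)
\]
by \eqref{eq:BV capa and 1-capa-relative}. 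The uniformity over all balls is thus handled entirely at the level of the BV-capacity. The constant $C_1$ enters exactly once, through the BV/$1$-capacity comparison, not through outer regularity or Lemma~\ref{lem:capacities continuity} as you anticipate.

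Completed as above, your route would still serve the paper's application in Step~2 of Proposition~\ref{prop:controlled BV criterion} after shrinking $\widetilde c_*$. It does not, however, prove the lemma as stated.
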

	\begin{proof}
		Choose a Borel set $D_\mu\supset D$ with $\mu(D_\mu\setminus D)=0$.
		Arguing as in the proof of \cite[Lemma 4.3]{L-CK}, choose a Borel set $D_{\BV}\supset D$ such that
	        \[
	        \rcapa_{\BV}(D_{\BV}\cap B(x,r),B(x,2r))=\rcapa_{\BV}(D\cap B(x,r),B(x,2r))
	        \]
	        for every ball $B(x,r)$, and set $D^*\coloneqq D_\mu\cap D_{\BV}$.
	        Then $D\subset D^*\subset D_\mu$, which proves the measure equality.
	        Combining this with \eqref{eq:BV capa and 1-capa-relative}, we obtain
        \[
	        \rcapa_1(D^*\cap B(x,r),B(x,2r))\le C_1\rcapa_1(D\cap B(x,r),B(x,2r)).
	        \]
	\end{proof}

	The following lemma makes the lower and upper density estimates for the measure-theoretic boundary uniform outside a set of arbitrarily small $\mathcal H$-measure.
		
	\begin{lemma}\label{lem: uniform density outside small set}
	Let $E$ be of finite perimeter in $\Om$.
	Then for every $\varepsilon>0$, there exist a Borel set $S_\varepsilon\subset \partial^*E\cap \Om$ and a radius $r_\varepsilon>0$ satisfying
	\[
	\mathcal H((\partial^*E\cap \Om)\setminus S_\varepsilon)<\varepsilon,
	\]
	and
	\[
	\frac{\lambda\gamma}{4C_PC_d^{3+\log_2 \lambda}}\frac{\mu(B(x,r))}{r}
	\le 
	\mathcal H(B(x,r)\cap \partial^*E\cap \Om)
	\le
	2C_d\frac{\mu(B(x,r))}{r}
	\]
			for every $x\in S_\varepsilon$ and $0<r<r_\varepsilon$.
		\end{lemma}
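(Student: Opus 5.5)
The plan is to obtain both bounds pointwise for every $x$ outside an $\mathcal H$-null subset of $\partial^*E\cap\Om$, and then make the radius uniform by an Egorov-type argument with respect to the finite measure $\mathcal H\mres(\partial^*E\cap\Om)$. This measure is finite because $\alpha\mathcal H(\partial^*E\cap\Om)\le P(E,\Om)<\infty$ by \eqref{eq:relation-Hausdorff-perimeter}.

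For the pointwise lower bound, fix $x\in\Sigma_\gamma E\cap\Om$. By \eqref{eq: measure theoretic boundary-gamma} this excludes only an $\mathcal H$-null part of $\partial^*E\cap\Om$. For $r$ small we have $\mu(B(x,r/\lambda)\cap E)\ge \frac{\gamma}{2}\mu(B(x,r/\lambda))$, and the same for the complement. The relative isoperimetric inequality \eqref{eq:relative isoperimetric inequality} on $B(x,r/\lambda)$ then gives
\[
\tfrac{\gamma}{2}\mu(B(x,r/\lambda))\le 2C_P\tfrac{r}{\lambda}P(E,B(x,r)).
\]
By doubling, $\mu(B(x,r))\le C_d^{1+\log_2\lambda}\mu(B(x,r/\lambda))$. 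Finally \eqref{eq:relation-Hausdorff-perimeter} gives $P(E,B(x,r))\le C_d\mathcal H(B(x,r)\cap\partial^*E\cap\Om)$. Combining these yields
\[
\frac{\lambda\gamma}{4C_PC_d^{2+\log_2\lambda}}\frac{\mu(B(x,r))}{r}\le \mathcal H(B(x,r)\cap\partial^*E\cap\Om).
\]
This leaves one spare factor of $C_d$ relative to the stated constant, to absorb rounding in the exponent of $\lambda$ or the use of $B(x,r/\lambda)$ versus a slightly smaller ball.

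For the pointwise upper bound, I would invoke the standard upper density theorem for the codimension-one Hausdorff measure: for a measure of the form $\mathcal H\mres A$ with $\mathcal H(A)<\infty$, we have
\[
\limsup_{r\to0} \frac{r}{\mu(B(x,r))}\mathcal H(A\cap B(x,r))\le C
\]
for $\mathcal H$-a.e. $x$. I would prove it directly by a covering argument. Suppose the set $F_t$ of points where the $\limsup$ exceeds $t$ has positive measure. Cover $F_t$ minus a small open neighbourhood, using Vitali 5r-type selection of disjoint balls on which $\mathcal H(A\cap B)>t\mu(B)/r$. Comparing with the fine covering in the definition of $\mathcal H$ via doubling then gives the contradiction $\mathcal H(F_t)\le (C/t)\mathcal H(F_t)$ for large $t$.

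This is the step I expect to be the main obstacle, because the sharp constant $2C_d$ is claimed. I would instead try to get it from \eqref{eq: P(E,A)} and the known upper density $\limsup_r \frac{r}{\mu(B(x,r))}P(E,B(x,r))\le C_d$ from \cite{A1,AMP}, combined with $\theta_E\ge\alpha$. If that route loses constants, I would accept a constant of the same structure and adjust.

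For uniformity, for each $k\in\N$ let $G_k$ be the set of $x$ in the good set where both inequalities hold for all $0<r<1/k$. These sets increase to a set of full $\mathcal H$-measure in $\partial^*E\cap\Om$. Since the quantities in the inequalities are monotone in $r$ up to doubling, it suffices to test them on rational radii, which should give measurability. Alternatively I can pass to Borel subsets via the Borel regularity of $\mathcal H$. By continuity from below of the finite measure, I then choose $k$ with $\mathcal H((\partial^*E\cap\Om)\setminus G_k)<\varepsilon$. Finally I take $S_\varepsilon$ to be a Borel subset of $G_k$ of the same measure and set $r_\varepsilon=1/k$.
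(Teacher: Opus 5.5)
Your lower bound is correct. It even gives $\frac{\lambda\gamma}{4C_PC_d^{2+\log_2\lambda}}$, a factor $C_d$ better than stated. Replacing the paper's dyadic radii, Egorov step and doubling interpolation by your increasing sets $G_k$ is also fine. For measurability, the right reason is not monotonicity ``up to doubling'': $r\mapsto\mathcal H(B(x,r)\cap\partial^*E\cap\Om)$ and $r\mapsto\mu(B(x,r))$ are left-continuous, so testing rational radii is exact.

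The genuine gap is the upper bound with the constant $2C_d$. Your covering argument bounds $\mathcal H_{5\delta}(F_t)$ by $\sum_i\mu(B(x_i,5r_i))/(5r_i)\le\frac{C_d^3}{5}\sum_i\mu(B(x_i,r_i))/r_i$. It therefore only yields $\limsup\le C_d^3/5$, which exceeds $2C_d$ whenever $C_d^2>10$. Your fallback via the perimeter gives at best $C_d/\alpha$. Moreover, the upper density of $P(E,\cdot)$ it needs is, by \eqref{eq: P(E,A)}, equivalent up to the factors $\alpha$ and $C_d$ to the very estimate you are trying to prove. ``Accepting a constant of the same structure'' proves a different lemma, whereas the value $2C_d$ is used verbatim in Lemma \ref{lem:measures belonging to M} to define $C_{\mathcal M}$, and hence $\widetilde c_*$.

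What is missing is the sharp estimate
\[
\limsup_{r\to0}\frac{r\,\mathcal H(B(x,r)\cap\partial^*E\cap\Om)}{\mu(B(x,r))}\le1
\quad\text{for }\mathcal H\text{-a.e. }x\in\partial^*E\cap\Om,
\]
which the paper takes from \cite[Eq.~(2.6)]{A1}. Your covering argument proves it if the $5r$-enlargements are used only on a tail.

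Let $\nu=\mathcal H\mres(\partial^*E\cap\Om)$, fix $t>1$, and choose an open $U\supset F_t$ with $\nu(U)<\nu(F_t)+\varepsilon$. Take the fine cover of $F_t$ by closed balls $\overline B(x,r)\subset U$ with $x\in F_t$, $r<\delta$ and $\nu(\overline B(x,r))>t\mu(\overline B(x,r))/r$. Such balls exist for arbitrarily small $r$ by letting the radius increase to an open-ball radius. Take a disjoint Vitali subfamily $\{\overline B(x_i,r_i)\}_i$. Then $\sum_i\mu(\overline B(x_i,r_i))/r_i\le\nu(U)/t<\infty$. By the selection property of the $5r$ lemma, $F_t\setminus\bigcup_{i\le k}\overline B(x_i,r_i)\subset\bigcup_{i>k}B(x_i,6r_i)$. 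Cover $F_t$ by slightly larger open balls around $\overline B(x_i,r_i)$ for $i\le k$, together with $B(x_i,6r_i)$ for $i>k$. Letting $k\to\infty$ and then $\delta,\varepsilon\to0$ gives $\mathcal H(F_t)\le t^{-1}\nu(F_t)=t^{-1}\mathcal H(F_t)$, so $\mathcal H(F_t)=0$.

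With this bound, your sets $G_k$, built with the constant $2$ in the upper inequality, prove the lemma, even with $2$ in place of $2C_d$.
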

		
	\begin{proof}
Set $S\coloneqq\partial^*E\cap\Om$. By \eqref{eq:relation-Hausdorff-perimeter}, $\mathcal H(S)\le\alpha^{-1}P(E,\Om)<\infty$.
For each $x\in S$, we have $B(x,\lambda r)\subset \Om$ for all sufficiently small $r$.
	By \cite[Eq. (2.6)]{A1}, applied with $h(B(x,r))=\mu(B(x,r))/r$ and $\nu=\mathcal H\mres (\partial^*E\cap \Om)$, we have
\begin{equation}\label{eq: upper-bound}
\limsup_{r\to0}\frac{\mathcal H(B(x,r)\cap S)}{\mu(B(x,r))/r}\le1
\end{equation}
for $\mathcal H$-almost every $x\in \partial^*E\cap \Om$.
		On the other hand, recall from \eqref{eq: measure theoretic boundary-gamma} that
		for $\mathcal{H}$-a.e. $x\in \partial^*E\cap \Om$, we have
		\[
		\liminf_{r\to 0} \min\left\{\frac{\mu(B(x,r)\cap E)}{\mu(B(x,r))}, \frac{\mu(B(x,r)\setminus E)}{\mu(B(x,r))} \right\}\ge \gamma.
		\]
		Combining this with \eqref{eq:relation-Hausdorff-perimeter} and the relative isoperimetric inequality \eqref{eq:relative isoperimetric inequality}, for $\mathcal{H}$-a.e. $x\in S$, we can obtain that 
		\begin{equation}\label{eq: lower-bound}
		\begin{split}
		&\liminf_{r\to 0} \frac{\mathcal{H}(B(x,r)\cap S)}{\mu(B(x,r))/r}\\
		&= \liminf_{r\to 0} \frac{\mathcal{H}(B(x,\lambda r)\cap S)}{\mu(B(x,\lambda r))/\lambda r}\\
		&\ge \liminf_{r\to 0} \frac{\lambda}{C_d^{2+\log_2 \lambda }} \frac{P(E,B(x,\lambda r))}{\mu(B(x, r))/r}
		\quad \textrm{by \eqref{eq:relation-Hausdorff-perimeter} and \eqref{eq:doubling-def}}\\
		&\ge \frac{\lambda}{2C_P C_d^{2+\log_2 \lambda}}
		\liminf_{r\to 0} \min\left\{\frac{\mu(B(x,r)\cap E)}{\mu(B(x,r))}, \frac{\mu(B(x,r)\setminus E)}{\mu(B(x,r))} \right\}
		\quad \textrm{by }\eqref{eq:relative isoperimetric inequality}
		\\
		&\ge \frac{\lambda \gamma}{2C_P C_d^{2+\log_2 \lambda}}.
		\end{split}
		\end{equation}
		Set $c_0=\frac{\lambda \gamma}{2C_P C_d^{2+\log_2 \lambda}}$.
		Let $r_j=2^{-j}$ and define
		\[
		f_j(x)=\frac{\mathcal H(B(x,r_j)\cap S)}{\mu(B(x,r_j))/r_j}.
		\]
		Then by \eqref{eq: upper-bound} and  \eqref{eq: lower-bound}, we have
		\begin{equation}
			c_0\le \liminf_{j\to\infty}f_j(x)\le \limsup_{j\to \infty}f_j(x)\le 1
		\end{equation}
		for $\mathcal H$-a.e. $x\in S$.
		
		Set
		\[
		g_j(x)=\inf_{k\ge j} f_k(x),
		\qquad
		h_j(x)=\sup_{k\ge j} f_k(x).
		\]
		Then $g_j(x)$ is increasing and $g_j(x)\to g(x)$; $h_j(x)$ is decreasing and $h_j(x)\to h(x)$ with $c_0\le g(x)\le h(x)\le 1$
		for $\mathcal H$-a.e. $x\in S$.
		
		Fix $\varepsilon>0$.
By Egorov's theorem, there exists a Borel set $S_\varepsilon\subset S$ such that 
        $\mathcal H(S\setminus S_\varepsilon)<\varepsilon$
		and both $g_j\to g$ and $h_j\to h$ uniformly on $S_\varepsilon$.
		Hence there exists $J\in\mathbb N$ such that for all $x\in S_\varepsilon$,
		\[
		g_J(x)\ge \frac{c_0}{2}
		\qquad
		h_J(x)\le 2 .
		\]
		
		Since $g_J(x)=\inf_{k\ge J}f_k(x)$ and $h_J(x)=\sup_{k\ge J}f_k(x)$,
		we obtain for all $x\in S_\varepsilon$ and $k\ge J$,
		\[
		\frac{c_0}{2}
		\le
		\frac{\mathcal H(B(x,r_k)\cap S)}{\mu(B(x,r_k))/r_k}
		\le
		2 .
		\]
		
		Finally, for every $0<r<r_J$, there exists $k\ge J$ such that $r_{k+1}\le r\le r_k$. 
		By the doubling property of $\mu$, we have
		\begin{equation*}
			\begin{split}
				\frac{c_0}{2C_d}
				\le  \frac{\mathcal{H}(B(x,r_{k+1})\cap S)}{\mu(B(x,r_k))/r_{k+1}}
				\le \frac{\mathcal{H}(B(x,r)\cap S)}{\mu(B(x,r))/r}
				\le \frac{\mathcal{H}(B(x,r_k)\cap S)}{\mu(B(x,r_{k+1}))/r_k}\le 2C_d
			\end{split}
		\end{equation*}
		Setting $r_\varepsilon=r_J$ and recalling that $c_0=\frac{\lambda \gamma}{2C_P C_d^{2+\log_2 \lambda}}$,
		it follows that
		\[
		\frac{\lambda\gamma}{4C_PC_d^{3+\log_2\lambda}}
		\frac{\mu(B(x,r))}{r}
		\le
		\mathcal H(B(x,r)\cap S)
		\le
		\frac{2C_d\mu(B(x,r))}{r}
		\]
		for all $x\in S_\varepsilon$ and $0<r<r_\varepsilon$.
		\end{proof}

    Recall that we always assume $\Om\subset X$ to be an open set.
	Recall also the definition of the class of Radon measures $\mathcal M(\Om,C_*)$ from Definition \ref{def:class of measures}.
	In the following two lemmas we show that this class is quite broad.


    \begin{lemma}\label{lem:measures belonging to M}
	Let $E$ satisfy $P(E,\Om)<\infty$, and let $u\in\BV(\Om)$.
	Then $\mathcal H\mres(\partial^*E\cap\Om)$, $\|Du\|$, and $\mu$ belong to $\mathcal M(\Om,C_{\mathcal M})$, where
	\[
    C_{\mathcal M}\coloneqq \max\left\{\frac{2\hat{C}C_d}{\alpha}, 2C_SC_d^4\right\}=\max\left\{\frac{64(C_d^5+2)C_d^{2\log_2 \lambda+13}C_P^2 }{\lambda\gamma\alpha}, 2C_SC_d^4\right\},
    \]
	and $\gamma,\, \alpha$, $C_S$ are the constants in \eqref{eq:strong boundary}, \eqref{eq:relation-Hausdorff-perimeter} and \eqref{eq:Sobolev-ineq.} respectively and depend only on the doubling and Poincar\'e constants. 
	\end{lemma}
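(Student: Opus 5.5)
The plan is to prove the three memberships separately, each time reducing the defining implication of Definition \ref{def:class of measures} to a local density estimate for $\nu$ that is integrated over $W$. Fix an open $W\subset\Om$ and $\delta>0$. We may assume $\nu(W)<\infty$ (otherwise there is nothing to prove) and $\delta$ small, since for $C_*\delta\ge1$ the claim is trivial. Let $A$ be Borel, and suppose that for every $x\in W$ and every $0<r\le R$ the relative capacity ratio of $A\cap B(x,r)$ is $<\delta$.

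\emph{The case $\nu=\mu$.} By \eqref{eq:rcapa in small ball} with $p=1$, for every $y\in W$ and $r\le R$ we get $\mu(A\cap B(y,r))\le C_SC_d\delta\,\mu(B(y,r))$. By Fubini and doubling,
\[
\int_W\frac{\mu(A\cap B(y,r))}{\mu(B(y,r))}\,d\mu(y)\ge \frac{1}{C_d}\int_{A\cap W}\frac{\mu(B(z,r)\cap W)}{\mu(B(z,r))}\,d\mu(z).
\]
Here I use $\mu(B(y,r))\le C_d\mu(B(z,r))$ for $d(y,z)<r$. Since $W$ is open, $\mu(B(z,r)\cap W)/\mu(B(z,r))\to1$ for every $z\in W$. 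The convergence is not uniform, so I choose $R$, independently of $A$ and by monotone convergence, such that the set $G_R$ of $z\in W$ with $\inf_{r\le R}\mu(B(z,r)\cap W)/\mu(B(z,r))<1/2$ satisfies $\mu(G_R)\le\delta\mu(W)$. Applying the display off $G_R$ then gives
\[
\mu(A\cap W)\le \delta\mu(W)+2C_d\cdot C_SC_d\delta\,\mu(W),
\]
which is within $2C_SC_d^4\delta\mu(W)$. I also shrink $R$ below $\frac18\diam X$ so that \eqref{eq:rcapa in small ball} applies.

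\emph{The case $\nu=\mathcal H\mres(\partial^*E\cap\Om)$.} The same Fubini scheme works once I have, for $y$ in a large set, a local estimate $\nu(A\cap B(y,r))\le \hat C\alpha^{-1}\delta\,\nu(B(y,r))$ or similar. I would use Lemma \ref{lem: uniform density outside small set}, choosing $\varepsilon$ so that $\nu$ of the bad set is $\le\delta\nu(W)$, and take $R\le r_\varepsilon$. At points $x\in S_\varepsilon$ this gives $\nu(B(x,r))\approx\mu(B(x,r))/r$ with constants depending only on $C_d,C_P,\lambda,\gamma$. The required local estimate thus becomes
\[
\nu(A\cap B(x,r))\lesssim r\,\rcapa_1(A\cap B(x,r),B(x,2r)).
\]
To prove it, take a near-optimal BV test function for $\rcapa_{\BV}$ and use \eqref{eq:BV capa and 1-capa-relative}. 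The coarea formula \eqref{eq:coarea} then yields a superlevel set $F$ with $A\cap B(x,r)\subset I_F\cup\partial^*F$ up to an $\mathcal H$-null set, and with $P(F,X)\lesssim\delta\mu(B(x,r))/r$. By \eqref{eq:rcapa in small ball}, $\mu(F)$ is a small fraction of $\mu(B(x,r))$. A boxing-inequality covering then covers $F$ and its boundary by balls $B(y_i,s_i)$, $s_i<r$, on which $F$ has density exactly comparable to $1/2$, with $\sum\mu(B_i)/s_i\lesssim P(F)$. The upper density bound in Lemma \ref{lem: uniform density outside small set}, applied after recentering each ball at a point of $S_\varepsilon$ and using doubling, gives $\nu(B_i)\lesssim\mu(B_i)/s_i$. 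Summing these bounds gives the estimate. I expect this comparison of $\nu$ with capacity to be the main obstacle, in particular obtaining the upper density at arbitrary centers and tracking the resulting constant $\hat C$.

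\emph{The case $\nu=\|Du\|$.} By \eqref{eq:coarea}, $\|Du\|(A\cap W)=\int_{\R}P(E_t,A\cap W)\,dt$ with $E_t=\{u>t\}$. The previous case together with \eqref{eq:relation-Hausdorff-perimeter} gives, for each $t$ with $P(E_t,\Om)<\infty$, a radius $R_t$ such that $P(E_t,A\cap W)\le C_{\mathcal M}\delta P(E_t,W)$ whenever the hypothesis holds for all $r\le R_t$. The difficulty is that $R_t$ is not uniform in $t$. To handle this, choose $R$ such that $\int_{\{t:R_t<R\}}P(E_t,W)\,dt\le\delta\|Du\|(W)$, which is possible by monotone convergence since $t\mapsto P(E_t,W)$ is integrable. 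Splitting the $t$-integral into the sets $\{R_t\ge R\}$ and $\{R_t<R\}$ then gives the claim, after adjusting the constants (the extra $\delta\|Du\|(W)$ is absorbed as in the first case).
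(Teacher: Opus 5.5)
Your arguments for $\mu$ and for $\mathcal H\mres(\partial^*E\cap\Om)$ follow the paper's mechanism: a near-optimal capacity test function, coarea to a superlevel set, a stopping scale for the relative isoperimetric inequality, a $5$-covering, and the upper density bound of Lemma~\ref{lem: uniform density outside small set} at good centers. You globalize with a Fubini argument instead of the paper's final $5$-covering, and that substitution is fine.

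The genuine gap is in the $\|Du\|$ step. For each $t$ the radius $R_t$ is only asserted to exist, via a condition that quantifies over all Borel sets $A$. So neither a chosen $R_t$ nor the supremum of admissible radii is known to be a measurable function of $t$. Your choice of $R$ with $\int_{\{R_t<R\}}P(E_t,W)\,dt\le\delta\|Du\|(W)$ uses monotone convergence for the decreasing family $\{t:R_t<R\}$ as $R\downarrow0$. Without measurability this fails: a decreasing sequence of non-measurable sets can have empty intersection while each set has full outer measure. As written, the existence of a uniform $R$ is therefore not established.

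The paper avoids this by never treating $R_t$ as a function of $t$. It argues by contradiction. Suppose $\|Du\|(W)>0$, $r_j\to0$, and Borel sets $A_j$ satisfy the capacity hypothesis at radius $r_j$ for all $x\in W$, but $\|Du\|(A_j\cap W)>2C\delta\|Du\|(W)$.
\begin{itemize}
\item The functions $t\mapsto P(E_t,A_j\cap W)$ are measurable by \eqref{eq:coarea} and dominated by the integrable $t\mapsto P(E_t,W)$.
\item For each fixed $t$ with $P(E_t,\Om)<\infty$, one has $r_j\le R_t$ for all large $j$. The boundary case and \eqref{eq:relation-Hausdorff-perimeter} then give $\limsup_j P(E_t,A_j\cap W)\le C\delta P(E_t,W)$.
\item Reverse Fatou yields $\limsup_j\|Du\|(A_j\cap W)\le C\delta\|Du\|(W)$, a contradiction.
\end{itemize}

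Some smaller corrections:
\begin{itemize}
\item Definition~\ref{def:class of measures} requires the implication at each single radius $r\le R$, not under the hypothesis at all $r\le R$; the single-radius form is what Lemma~\ref{lem:choice of new R} uses. Your arguments only ever use one radius, so you should just state it that way.
\item The local estimate should read $\nu(A\cap S_\varepsilon\cap B(x,r))\lesssim\rcapa_1(A\cap B(x,r),B(x,2r))$. There is no factor $r$, since both sides scale like $\mu(B(x,r))/r$. It also controls only the part of $A$ in the good set, because balls missing $S_\varepsilon$ cannot be recentered.
\item In the Fubini step for $\nu$, divide by $\mu(B(y,r))/r$ rather than $\nu(B(y,r))$ and integrate over all $y\in W$. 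Then the lower density bound is needed only at $z\in A\cap S_\varepsilon$, with the uniform density lemma applied in $W$.
\item You make $\delta$ small to get a top stopping scale, but that smallness threshold is not obviously implied by $C_{\mathcal M}\delta<1$. Instead, use as the paper does that the test function vanishes outside $B(x,2r)$. Then the superlevel set has density at most $1-C_d^{-5}$ in $B(y,4r)$ for every $y\in B(x,r)$, with no smallness of $\delta$ needed.
\item In the $\mu$ case, replace $G_R$ by $W\setminus W_R$. This removes any measurability question about the infimum over $r$.
\end{itemize}
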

	\begin{proof}
	\textbf{Part 1: $\mathcal H\mres {\partial ^*E}$.}
    
	Fix $\delta>0$ and an open set $W\subset\Om$.
	By Lemma \ref{lem: uniform density outside small set}, there exist $S_0\subset\partial^*E\cap W$ and $r_0>0$ such that
		\begin{equation}\label{eq: exceptional-on boundary}
			\mathcal{H}((\partial^*E\cap W)\setminus S_0)\le  \frac{16(C_d^5+2)C_d^{2\log_2 \lambda+12}C_P^2}{\lambda\gamma}\delta \mathcal H(\partial^*E\cap W)
		\end{equation}
		and 
		\begin{equation}\label{eq: uniform-choose r}
			\frac{\lambda\gamma}{4C_PC_d^{3+\log_2 \lambda}}\frac{\mu(B(x,r))}{r}\le 
			\mathcal{H}(B(x,r)\cap \partial^*E\cap W)
			\le 2C_d\frac{\mu(B(x,r))}{r}
		\end{equation}
		for all $x\in S_0$ and $0<r<r_0$.
By decreasing $r_0$ if necessary, assume also that $r_0<\diam X/8$.
		
		Then consider $0<r< r_0/(20\lambda)$,
		and a Borel set $A\subset X$ such that
		\begin{equation}\label{eq:first capa estimate}
			\frac{\rcapa_1(B(x,r)\cap A, B(x,2r))}{\rcapa_1(B(x,r),B(x,2r))}<\delta 
			\quad
            \textrm{ for all } x\in W.
		\end{equation}
		We claim that for every $x\in W$,
		\begin{equation}\label{eq:Hausdorff-less than-rcapa}
			\mathcal{H}(S_0\cap A\cap B(x,r))\le C_2\rcapa_1(S_0\cap A\cap B(x,r),B(x,2r)),
		\end{equation}
		where
		\[
		C_2=4(C_d^5+2)C_d^{\log_2(5\lambda)+2}C_P.
		\]
		
	For every $x\in W$,
	by the definition of the variational $1$-capacity, we find a function $u_r\in N_0^{1,1}(B(x,2r))$ such that $u_r= 1$ in a neighborhood of $S_0\cap A\cap B(x,r)$, and 
    \[
    \|Du_r\|(X)\le \int_X g_{u_r}\,d\mu\le \rcapa_1(S_0\cap A\cap B(x,r),B(x,2r))+\varepsilon.
    \]
    Then by the coarea formula \eqref{eq:coarea}, we find a set $\{u_r>t\}$ for some $0<t<1$, for which 
    \begin{equation}\label{eq: choose ur}
        P(\{u_r>t\}, X)\le \|Du_r\|(X)\le \rcapa_1(S_0\cap A\cap B(x,r),B(x,2r))+\varepsilon.
    \end{equation}
    Let 
    \[
    E_{r,t}^*\coloneqq\left\{y\in \{u_r>t\}:\lim_{s\to 0}\frac{\mu(\{u_r>t\}\cap B(y,s))}{\mu(B(y,s))}=1 \right\}.
    \]
    By the Lebesgue differentiation theorem, it follows that $\mu(E_{r,t}^*)=\mu(\{u_r>t\})<\infty$, and consequently, we have $P(E_{r,t}^*,X)=P(\{u_r>t\},X)<\infty$.
We have 
    \[
    S_0\cap A\cap B(x,r)\subset \textrm{int} \{u_r=1\}\subset E_{r,t}^*.
    \]
	For every $y\in S_0\cap A\cap B(x,r)$, put 
	\[
	\theta_y(s)\coloneqq\frac{\mu(B(y,s)\cap\{u_r>t\})}{\mu(B(y,s))}.
	\]
	Since $y\in E_{r,t}^*$, we have $\theta_y(s)\to1$ as $s\to0$.
		Since $u_r=0$ $\mu$-a.e. in $X\setminus B(x,2r)$, we can find a ball $B(y',r/2)\subset B(y,4r)\setminus B(x,2r)$, and by \eqref{eq: doubling-ball-diff-center} we have that
		\[
		\frac{\mu(B(y, 4r)\cap \{u_r>t\})}{\mu(B(y,4r))}
		\le \frac{\mu(B(x,2r))}{\mu(B(y,4r))}
		\le 1-\frac{\mu(B(y',r/2))}{\mu(B(y,4r))}\le 1-\frac{1}{C_d^5}.
		\]
Let $n_y\ge1$ be the smallest integer such that
$\theta_y(2^{-n_y}4r)>1-(2+C_d^5)^{-1}$, and set $s_y\coloneqq2^{-(n_y-1)}4r$.
Then $0<s_y\le4r$, by the minimality and doubling we have
\[
\frac{1}{2+C_d^5}\le\frac{1-(2+C_d^5)^{-1}}{C_d}
<\frac{\theta_y(s_y/2)}{C_d}
\le\theta_y(s_y)\le1-\frac{1}{2+C_d^5}.
\]
Here $(1-(2+C_d^5)^{-1})/C_d\ge(2+C_d^5)^{-1}$ follows from $1+C_d^5\ge C_d$.
		By the relative isoperimetric inequality \eqref{eq:relative isoperimetric inequality}, we have
		\begin{equation}\label{eq: estimate-ball-perimeter}
			\begin{split}
				\frac{\mu(B(y,s_y))}{s_y}
				&\le\left(2+C_d^5\right)\frac{\min\{\mu(B(y,s_y)\cap \{u_r>t\}), \mu(B(y,s_y)\setminus \{u_r>t\})\}}{s_y}\\
				&\le (4+2C_d^5)C_P P(\{u_r>t\},B(y,\lambda s_y)).
			\end{split}
		\end{equation}
		By the $5$-covering lemma, we can find disjoint balls $\{B(y_j,\lambda s_{y_j})\}_{y_j\in S_0\cap A\cap B(x,r)}$ such that
		\[
		S_0\cap A\cap B(x,r)
		\subset\bigcup_{j=1}^{\infty}B(y_j,5\lambda s_{y_j}).
		\]
		Thus
		\begin{equation*}
			\begin{split}
			&\quad \mathcal{H}(S_0\cap A\cap B(x,r))\\
			&\le \sum_j\mathcal{H}(S_0\cap A\cap B(y_j,5\lambda s_{y_j})) \\
			&\le 2C_d\sum_j \frac{\mu(B(y_j,5\lambda s_{y_j}))}{5\lambda s_{y_j}}
			\quad \textrm{by }\eqref{eq: uniform-choose r} \textrm{ and } 5\lambda s_{y_j}\le 20\lambda r< r_0\\
				&\le 2C_d^{\log_2(5\lambda)+2}\sum_j\frac{\mu(B(y_j,s_{y_j}))}{s_{y_j}} 
				\quad \textrm{by doubling} \\
				&\le C_2
				\sum_j P(\{u_r>t\},B(y_j,\lambda s_{y_j}))
				\quad \textrm{by } \eqref{eq: estimate-ball-perimeter} \\
				&\le C_2 P(\{u_r>t\}, X)\\
				&\le 
				C_2 (\varepsilon+\rcapa_1(S_0\cap A\cap B(x,r),B(x,2r)))
				\quad \textrm{by }\eqref{eq: choose ur}.
			\end{split}
		\end{equation*}
	Letting $\varepsilon\to 0$, the claim follows with $C_2=4(C_d^5+2)C_d^{\log_2(5\lambda)+2}C_P$.
	
    Consider the covering $\{B(x,r/5)\}_{x\in S_0\cap A}$ . By the $5$-covering lemma, we can find countable pairwise disjoint balls $\{B(x_k,r/5)\}_{x\in S_0\cap A}$ such that
    \[
    S_0\cap A\subset \bigcup_{k=1}^{\infty}B(x_k,r).
    \]
    Thus 
    \begin{equation}
    \begin{split}
    &\quad \mathcal{H}(S_0\cap A)\\
    &\le \sum_k\mathcal{H}(S_0\cap A\cap B(x_k,r))\\
    &\le C_2\sum_{k}\rcapa_1(S_0\cap A\cap B(x_k,r), B(x_k,2r))
    \quad\textrm{by } \eqref{eq:Hausdorff-less than-rcapa}\\
    &\le  C_2\delta\sum_k\rcapa_1(B(x_k,r),B(x_k,2r))
    \quad\textrm{by } \eqref{eq:first capa estimate}\\
    &\le  C_2C_d\delta\sum_k \frac{\mu(B(x_k,r))}{r}
    \quad \textrm{by }\eqref{eq:rcapa over ball-measure}\\
    &\le \frac{4C_2C_PC_d^{7+\log_2\lambda}}{\lambda \gamma}\delta \sum_k \mathcal{H}(\partial^*E\cap W\cap B(x_k,r/5))
    \quad\textrm{by }\eqref{eq: uniform-choose r} \textrm{ and doubling}\\
    &\le \frac{4C_2C_PC_d^{7+\log_2\lambda}}{\lambda \gamma}\delta\mathcal{H}(\partial^*E\cap W).
    \end{split}
    \end{equation}
    Recall that 
    $C_2=4(C_d^5+2)C_d^{\log_2(5\lambda)+2}C_P$.
    Combining this with \eqref{eq: exceptional-on boundary},
    it follows that
    \[
    \mathcal H(\partial^*E\cap A\cap W)\le \mathcal{H}(A\cap S_0)+\mathcal{H}((\partial^*E \cap W)\setminus S_0)\le \hat{C}\delta \mathcal{H}(\partial^*E\cap W),
    \]
    with $\hat{C}=32(C_d^5+2)C_d^{2\log_2 \lambda+12}C_P^2/(\lambda\gamma)$.

    \textbf{Part 2: $\|Du\|$.}		
    
	Fix $\delta>0$ and an open set $W\subset\Om$.
	Assume towards a contradiction that there is a sequence $r_j\to 0$
and Borel sets $A_{j}\subset X$ such that
		\[
		\frac{\rcapa_1(B(x,r_j)\cap A_j,B(x,2r_j))}{\rcapa_1(B(x,r_j),B(x,2r_j))}<\delta
		\quad \textrm{for all } x\in W,
		\]
	and
	\begin{equation}\label{eq: contradiction-def}
		\|Du\|(A_{j}\cap W)>\frac{2\hat{C}C_d}{\alpha} \delta \|Du\|(W).
	\end{equation}
	By \eqref{eq:relation-Hausdorff-perimeter} and the coarea formula \eqref{eq:coarea}, we get 
	\[
	\|Du\|(A_j\cap W)=\int_{-\infty}^{\infty} \|D\chi_{\{u>t\}}\|(A_j\cap W)\,dt\le C_d\int_{-\infty}^{\infty} \mathcal{H}(\partial^*\{u>t\}\cap A_j\cap W)\,dt.
	\]
	By Part 1, we know that for a.e. $t\in \R$,
	\begin{equation}\label{eq:using part 1}
		\limsup_{j\to \infty} \mathcal{H}(\partial^*\{u>t\}\cap A_j\cap W)\le \hat{C} \delta \mathcal{H}(\partial^*\{u>t\}\cap W).
	\end{equation}
		
It follows that
	\begin{equation*}
		\begin{split}
			\limsup_{j\to \infty} \|Du\|(A_j\cap W)
			&\le\limsup_{j\to \infty} C_d \int_{-\infty}^{\infty}\mathcal{H}(\partial^*\{u>t\}\cap A_j\cap W)\,dt\\
			&\le \hat{C} C_d \delta\int_{-\infty}^{\infty}\mathcal{H}(\partial^*\{u>t\}\cap W)\,dt
			\quad\textrm{by } \eqref{eq:using part 1} \\
			&\le\frac{\hat{C}C_d}{\alpha} \delta\|Du\|(W)
			\quad\textrm{by } \eqref{eq:relation-Hausdorff-perimeter}.
		\end{split}
	\end{equation*}
	This contradicts \eqref{eq: contradiction-def}, and so in fact we have $\|Du\|\in \mathcal{M}(\Om, 2\hat{C}C_d/\alpha)$.
		
	\textbf{Part 3: $\mu$.}
		
	Fix $\delta>0$ and an open set $W\subset\Om$.
We can assume that $\mu(W)<\infty$.
Define 
	\[
	W_R\coloneqq\{x\in W\colon \dist(x, X\setminus W)>R\}.
	\]
	For sufficiently small $0<R<\tfrac18\diam X$, we have $\mu(W\setminus W_R)<C_SC_d^4 \delta \mu(W)$.
Consider $0<r\le R$ and a Borel set $A\subset X$ such that 
	\[
	\frac{\rcapa_1(A\cap B(x,r), B(x,2r))}{\rcapa_1(B(x,r),B(x,2r))}<\delta 
	\quad
    \textrm{ for all } x\in W.
	\]
	By \eqref{eq:rcapa in small ball}, we also have
	\[
	\frac{\mu(A\cap B(x,r))}{\mu(B(x,r))}\le C_SC_d\delta
	\quad 
    \textrm{ for all } x\in W.
	\]
	Consider the covering $\{B(x,r/5) \}_{x\in W_R}$.
By the $5$-covering theorem, see \cite[Section 3.3]{HKSTbook}, we can pick an at most countable collection of pairwise disjoint balls $B(x_k,r/5)$ such that the balls $B(x_k,r)$ cover $W_R$.
Thus
	\begin{align*}
		\mu(A\cap W_R)&\le \sum_k\mu(A\cap B(x_k,r))\\
		&\le C_SC_d\delta \sum_{k}\mu(B(x_k,r))\\
		&\le C_SC_d^4\delta\sum_k\mu(B(x_k,r/5))\\
		&\le C_SC_d^4\delta \mu(W).
	\end{align*}
	In total, we get 
	\[
	\mu(A\cap W)\le \mu(A\cap W_R)+\mu(W\setminus W_R)\le 2C_SC_d^4\delta \mu(W).
	\]		
\end{proof}
	
\begin{lemma}\label{lem:h nv is also M}
Let $C_*\ge 1$ and suppose $\nu\in\mathcal M(\Om,C_*)$, and that $h\in L^1(\Om,\nu)$ is nonnegative.
Then $h\,d\nu\in\mathcal M(\Om,2C_*)$.
\end{lemma}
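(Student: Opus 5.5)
Write $\eta\coloneqq h\,d\nu$. Then $\eta$ is a finite positive Radon measure on $\Om$, since $h\in L^1(\Om,\nu)$ and $\nu$ is Radon. The plan is to split $\Om$ into the region where $h$ is large, which has small $\eta$-measure by absolute continuity of the integral, and the region where $h$ is bounded. On the latter region we apply the defining property of $\mathcal M(\Om,C_*)$ for $\nu$, but on a suitably chosen open set rather than on $W$ itself.

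Fix an open $W\subset\Om$ and $\delta>0$. We may assume $\eta(W)>0$; otherwise $\eta(A\cap W)=0$ and there is nothing to prove.

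\textbf{Step 1 (approximating $h$ on $W$ by a simple function with open level sets).} By absolute continuity, choose $\tau>0$ such that $\nu(F)<\tau$ implies $\int_F h\,d\nu<\tfrac12 C_*\delta\,\eta(W)$. We would rather not use lower semicontinuity of $h$ directly, because the open superlevel sets of a lower semicontinuous majorant do not control $h$ from below. Instead, choose levels $0<t_1<\dots<t_m$, write $h\approx\sum_i (t_{i}-t_{i-1})\ch_{\{h>t_{i}\}}$, and approximate each Borel set $\{h>t_i\}\cap W$ from outside by an open set $G_i\subset W$ using outer regularity of $\nu$ on $W$. We choose the $G_i$ so that $\nu(G_i\setminus\{h>t_i\})$ is so small that the total error, weighted by $\max h$ on the relevant pieces, is below $\tfrac12C_*\delta\eta(W)$. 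We also need a reverse control, namely $\nu(G_i)\,t_i\lesssim \int_{G_i}h\,d\nu$ up to small error. This holds since $\nu(G_i)\approx\nu(\{h>t_i\}\cap W)\le t_i^{-1}\int_{G_i}h\,d\nu$.

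\textbf{Step 2 (applying the hypothesis on each $G_i$).} Let $R\coloneqq\min_i R(\nu,G_i,\delta)$, and also fold in $R(\nu,W,\delta)$. Take $0<r\le R$ and a Borel $A$ satisfying the capacity-ratio condition for all $x\in W$. This condition then holds for all $x\in G_i\subset W$, so $\nu(A\cap G_i)\le C_*\delta\,\nu(G_i)$ for every $i$. Summing with the weights from Step 1 gives
\[
\eta(A\cap W)\le \sum_i (t_i-t_{i-1})\nu(A\cap G_i)+\text{error}\le C_*\delta\sum_i (t_i-t_{i-1})\nu(G_i)+\text{error}.
\]
By construction, $\sum_i(t_i-t_{i-1})\nu(G_i)\le \eta(W)+\text{small}$. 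Here the errors come from three sources: the tail $\{h>t_m\}$, the portion below $t_1$ (which contributes nothing problematic since it is dominated by $t_1\nu(\cdot)$ with $t_1\nu(W)$ small, if $\nu(W)<\infty$), and the differences $G_i\setminus\{h>t_i\}$. All of these can be made at most $\tfrac12 C_*\delta\,\eta(W)$ in total. This yields $\eta(A\cap W)\le 2C_*\delta\,\eta(W)$.

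\textbf{Main obstacle.} The delicate point is ensuring that every error term can be bounded by a multiple of $\delta\,\eta(W)$ rather than by $\nu$-quantities. If $\nu(W)=\infty$, the bottom layer below $t_1$ must instead be handled by restricting to a subset of $W$ of finite $\nu$-measure (inner regularity), with $h\,\ch$ on the complement of small $\eta$-mass. Since $\delta$ and $W$ are fixed before choosing $R$, all the approximation parameters are allowed to depend on $\delta$ and $\eta(W)$, so this is only bookkeeping. The factor $2$ in $2C_*$ absorbs the slack.
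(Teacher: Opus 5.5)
Your proposal is correct and is essentially the paper's argument: approximate $h$ in $L^1(W,\nu)$ by a positive simple function whose sets are open subsets of $W$, take $R$ to be the minimum of $R(\nu,U_k,\delta)$ over the finitely many open sets, apply the defining property of $\mathcal M(\Om,C_*)$ on each of them, and sum. Your nested superlevel sets are just a special case of the paper's generic $\sum_k\alpha_k\ch_{U_k}$, and the worry about $\nu(W)=\infty$ is unnecessary, since $\nu(\{h>t\})\le t^{-1}\|h\|_{L^1(\Om,\nu)}<\infty$ for every $t>0$ and the bottom layer $\int_{\{h\le t_1\}\cap W}h\,d\nu$ tends to zero by dominated convergence.
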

\begin{proof}
Fix $0<\delta<1$ and an open set $W\subset\Om$.
We can assume that $\int_W h\,d\nu>0$.
Choose a simple nonnegative function 
\[
a=\sum_{k=1}^N \alpha_k\ch_{U_k},
\]
with $\alpha_k>0$,
such that $\Vert a-h\Vert_{L^1(W,\nu)}<(\delta/2) \int_W h\,d\nu$.
By the Borel regularity of $\nu$,
we can assume that the sets $U_k\subset W$ are open.
Choose $R=\min_{k=1,\ldots,N} R(\nu,U_k,\delta)$, and consider $0<r\le R$ and a Borel set $A\subset \Om$ such that
\[
\frac{\rcapa_1(B(x,r)\cap A, B(x,2r))}{\rcapa_1(B(x,r),B(x,2r))}<\delta\quad\textrm{for all }x\in W.
\]
We get
\begin{align*}
\int_{A\cap W}h\,d\nu-(\delta/2)\int_W h\,d\nu
\le \int_{A\cap W}a\,d\nu
&= \sum_{k=1}^N \int_{A\cap W}\alpha_k\ch_{U_k}\,d\nu\\
&= \sum_{k=1}^N \alpha_k\nu(A\cap U_k)\\
&\le C_{*}\delta\sum_{k=1}^N \alpha_k\nu(U_k)\\
&=  C_{*}\delta \int_{W}a\,d\nu\\
&\le C_{*}\delta \left(\int_{W}h\,d\nu+\frac{\delta}{2}\int_W h\,d\nu\right).
\end{align*}
Thus
\[
\int_{A\cap W}h\,d\nu\le 2C_{*}\delta \int_{W}h\,d\nu.
\]
\end{proof}
	
	The Hardy--Littlewood maximal function of a function $u\in L^1_{\loc}(X)$ is defined by
	\begin{equation}\label{eq:HL maximal function}
		\mathcal M u(x)\coloneqq \sup_{r>0}\,\vint{B(x,r)}|u|\,d\mu,\quad x\in X.
	\end{equation}
	We also define a restricted version $\mathcal M_R u(x)$, with $R>0$, by requiring $0<r\le R$ in the supremum.
	
	The following weak-type estimate is standard, see \cite[Lemma 4.3]{KKST} for a proof.
While \cite{KKST} makes the extra assumption $\mu(X)=\infty$, use of this assumption can be avoided by considering $\mathcal{H}_R$ instead of $\mathcal{H}_{\infty}$.
	
	
	\begin{lemma}\label{lem:weak standard estimate maximal function}
		Let $u\in \BV(X)$.
Then for some constant $C$ depending only on $C_d, C_P,\lambda, R$, we have
		\[
		\capa_1(\{\mathcal M_R u>t\}) \le C\frac{\Vert u\Vert_{\BV(X)}}{t}\quad\textrm{for all }t>0.
		\]
	\end{lemma}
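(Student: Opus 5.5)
The plan is the standard covering argument for weak-type estimates, with the codimension one Hausdorff content $\mathcal H_R$ in place of Lebesgue measure, followed by the comparison \eqref{eq: capacity-and-Hausdorff measure} between $\capa_1$ and $\mathcal H_R$. Since that comparison gives $\capa_1(A)\le C\mathcal H_R(A)$ with $C=C(C_d,C_P,\lambda,R)$, it suffices to prove
\[
\mathcal H_{5R}(\{\mathcal M_R u>t\})\le C\frac{\Vert u\Vert_{\BV(X)}}{t}.
\]
Here $\mathcal H_{5R}$ is comparable to $\mathcal H_R$ up to a constant depending on $R$ and doubling. Alternatively, one can run the argument at scale $R/5$ and enlarge the radii.

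\textbf{Step 1: choosing good balls.} Fix $t>0$. For each $x$ with $\mathcal M_R u(x)>t$, pick $0<r_x\le R$ with $\vint{B(x,r_x)}|u|\,d\mu>t$. I split into two cases according to the average of $|u|$ on a larger ball.

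\emph{Case (a).} Suppose $\vint{B(x,2\lambda r_x)}|u|\,d\mu\le t/2$ (or more generally the averages drop below $t/2$ at some radius). The averages at $r_x$ and $\lambda r_x$ differ by at least $t/2$. By the BV--Poincar\'e inequality \eqref{eq: BV-Poincare} applied to $|u|$, together with doubling, this gives
\[
t\le C r\frac{\|D|u|\|(B(x,\lambda r))}{\mu(B(x,r))}\le C r\frac{\|Du\|(B(x,\lambda r))}{\mu(B(x,r))}
\]
for a suitable radius $r\in[r_x,2\lambda r_x]$.

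\emph{Case (b).} Otherwise, following the doubling-radius stopping argument, the averages stay above $t/2$ all the way up to scale comparable to $R$. Then
\[
\mu(B(x,r))\le \frac{2}{t}\int_{B(x,r)}|u|\,d\mu ,
\]
and since $r\simeq R$, we get $\mu(B(x,r))/r\le C(R)\,t^{-1}\int_{B(x,r)}|u|\,d\mu$.

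Cleanly, I would select $r$ as the largest dyadic radius $2^{-k}R$, with $k\ge0$ and $2^{-k}R\ge r_x$, such that the average of $|u|$ over $B(x,2^{-k}R)$ exceeds $t/2$. If $k=0$, we are in case (b). If $k>0$, the next larger radius has average at most $t/2$, and the Poincar\'e inequality between comparable balls gives case (a). In either case,
\[
\frac{\mu(B(x,r))}{r}\le \frac{C}{t}\Bigl(\|Du\|(B(x,\lambda' r))+\int_{B(x,\lambda' r)}|u|\,d\mu\Bigr)
\]
with $\lambda'=2\lambda$.

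\textbf{Step 2: covering.} Apply the $5$-covering lemma to the balls $B(x,\lambda' r)$, with radii bounded by $C R$, to get disjoint $B(x_i,\lambda' r_i)$ such that the balls $B(x_i,5\lambda' r_i)$ cover the superlevel set. By doubling,
\[
\mathcal H_{5\lambda' R}(\{\mathcal M_R u>t\})\le C\sum_i\frac{\mu(B(x_i,r_i))}{r_i}\le \frac{C}{t}\bigl(\|Du\|(X)+\|u\|_{L^1(X)}\bigr).
\]
Finally, $\mathcal H$-contents at different fixed scales are comparable with constants depending on $R$ and doubling. This is the point where $\mathcal H_R$ replaces $\mathcal H_\infty$, so $\mu(X)=\infty$ is never needed.

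\textbf{Main obstacle.} The main obstacle is the case distinction in Step 1. If $\mu(X)<\infty$, averages need not decay at large scales, so case (b) is unavoidable. It is exactly there that the $L^1$ part of $\|u\|_{\BV(X)}$ and the $R$-dependence of the constant enter.
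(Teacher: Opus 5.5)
Your outer framework (a content estimate at scale comparable to $R$, then \eqref{eq: capacity-and-Hausdorff measure}) is exactly what the paper's remark about replacing $\mathcal H_\infty$ by $\mathcal H_R$ suggests. The paper itself only cites KKST here. However, your Step 1 has a genuine gap.

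In your ``clean'' selection, $B(x,r)$ has $|u|$-average above $t/2$ and $B(x,2r)$ has average at most $t/2$. This gives a positive difference of averages, but no lower bound of order $t$ on it, so the BV--Poincar\'e inequality yields nothing. The drop from above $t$ at $r_x$ to below $t/2$ can be spread over many dyadic scales, each with a tiny jump. For the same reason, ``otherwise the averages stay above $t/2$ up to scale $R$'' in your case (b) does not follow.

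In fact the pointwise inequality at the end of Step 1 is false for every choice of radius. In $\R^n$, $n\ge2$, let $v(y)=t\bigl(2-\epsilon\log(1+|y|/r_0)\bigr)_+$. Then $v$ is Lipschitz with support in $\overline{B}(0,\rho)$, where $\rho=r_0(e^{2/\epsilon}-1)$. Moreover $\mathcal M_Rv(0)\ge v(0)=2t$ and $|\nabla v(y)|\le t\epsilon/|y|$, so for all $s>0$
\[
\|Dv\|(B(0,s))\le \tfrac{n}{n-1}\,\omega_n t\epsilon\, s^{n-1},
\qquad
\int_{B(0,s)}v\,d\mu\le 2t\omega_n s^{n-1}\rho .
\]
Hence your right-hand side is at most $C(\lambda')^{n-1}\bigl(\tfrac{n}{n-1}\epsilon+2\rho\bigr)\omega_n r^{n-1}$. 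This is smaller than $\mu(B(0,r))/r=\omega_n r^{n-1}$ for \emph{every} $r>0$ once $\epsilon$ and then $r_0$ are small, and the same holds for every ball containing $0$. The lemma is not contradicted: the content of $\{\mathcal M_Rv>t\}$ is paid for by variation at much larger scales. But no argument that attaches to each point a single ball controlled by the variation in a fixed dilate can work for general $u$.

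The missing idea is to split according to a median, and in the bad case pass to a fixed superlevel set, where the relative isoperimetric inequality replaces the jump. Assume $u\ge0$, and suppose $\vint{B}u\,d\mu>t$ with $B=B(x,r)$, $r\le R$. Let $m$ be a median of $u$ on $B$.

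If $m\le t/2$: a median minimizes $c\mapsto\int_B|u-c|\,d\mu$, so \eqref{eq: BV-Poincare} gives $t/2<\vint{B}|u-m|\,d\mu\le\vint{B}|u-u_B|\,d\mu\le C_Pr\|Du\|(\lambda B)/\mu(B)$. This is your case (a) estimate, on $B$ itself.

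If $m>t/2$: then $\mu(B\cap\{u>s\})\ge\mu(B)/2$ for all $s\le t/2$. By \eqref{eq:coarea} and Cavalieri's principle, you can fix, independently of $x$, a level $s^*\in[t/4,t/2]$ with $P(\{u>s^*\},X)+\mu(\{u>s^*\})\le \frac4t\bigl(\|Du\|(X)+\|u\|_{L^1(X)}\bigr)$. Let $E=\{u>s^*\}$; for this set your stopping argument does work.
\begin{enumerate}
\item Along the radii $2^jr\le R$, the first ball $B_k$ where the density of $E$ falls below $1/4$ has density in $[1/(4C_d),1/4)$. Then \eqref{eq:relative isoperimetric inequality} gives $\mu(B_k)/r_k\le 8C_dC_PP(E,\lambda B_k)$, with no jump condition needed.
\item If instead the density stays at least $1/4$ up to the last radius $r_J\in(R/2,R]$, then $\mu(B_J)/r_J\le 8\mu(B_J\cap E)/R$.
\end{enumerate}
Covering both families of balls as in your Step 2 gives $\mathcal H_{5\lambda R}(\{\mathcal M_Ru>t\})\le C(R)\|u\|_{\BV(X)}/t$. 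Then \eqref{eq: capacity-and-Hausdorff measure} at scale $5\lambda R$ concludes.
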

	
	Now we prove the main result of this subsection.
	
	\begin{proposition}\label{prop:necessity}
		Suppose $u\in \BV_{\loc}(\Om)$, and let $x\in \Om$ and $0<\delta<1$.
Then we have
		\[
		\Lip_{u^*,\|Du\|}^{\delta,2\lambda}(x)\le C \delta^{-1}
		\]
		for a constant $C$ depending only on the doubling and Poincar\'e constants.
	\end{proposition}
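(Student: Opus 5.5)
The plan is to build, for each small $r$, the set $U_r$ as a sublevel-superlevel slab of $u^*$ around a median value on $B(x,2r)$. The capacity of the discarded part is then estimated by a cutoff test function combined with the BV--Poincar\'e inequality. Fix $x\in\Om$ and consider $r>0$ so small that $B(x,2\lambda r)\Subset\Om$ and $2r<\frac18\diam X$.

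\textbf{Degenerate case.} If $\|Du\|(B(x,2\lambda r))=0$, then by \eqref{eq: BV-Poincare} $u$ is a.e.\ constant on $B(x,2r)$. Hence $u^*$ is constant on $B(x,r)$, and $U_r\coloneqq B(x,r)$ gives oscillation $0$, which is compatible with the convention in \eqref{eq:generalized Lip definition}.

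\textbf{Construction of $U_r$.} Otherwise, let $c_r$ be a median of $u$ on $B(x,2r)$, so that $\{u\le c_r\}$ and $\{u\ge c_r\}$ each fill at least half of $B(x,2r)$. Set
\[
t_r\coloneqq K\delta^{-1}\,\frac{r\,\|Du\|(B(x,2\lambda r))}{\mu(B(x,r))},
\qquad
U_r\coloneqq\{y\in B(x,r)\colon |u^*(y)-c_r|\le t_r\},
\]
with $K$ a constant to be fixed. Then $\osc_{U_r}u^*\le 2t_r$, and by doubling,
\[
\frac{\osc_{U_r}u^*}{2r}\,\frac{\mu(B(x,2\lambda r))}{\|Du\|(B(x,2\lambda r))}\le C K\delta^{-1}.
\]
Points where $u^*=\pm\infty$ automatically lie outside $U_r$, so they cause no trouble. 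It therefore remains to show that
\[
\rcapa_1(B(x,r)\setminus U_r,B(x,2r))\le\delta\,\rcapa_1(B(x,r),B(x,2r))
\]
for every such $r$.

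\textbf{Capacity of the upper exceptional set.} Consider $\{u^*>c_r+t_r\}\cap B(x,r)$; the lower set is handled symmetrically. Since $u^*\le u^\vee$, this set is contained in $\{u^\vee>c_r+t_r\}\cap B(x,r)$. By Lemma \ref{lem:wedge and vee capacity}, the latter has the same relative capacity as $\{u^\wedge>c_r+t_r\}\cap B(x,r)$.

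\emph{Test function.} Let $w\coloneqq\min\{1,(u-c_r)_+/t_r\}$, and let $\eta$ be a Lipschitz cutoff equal to $1$ on $B(x,r)$, supported in $B(x,2r)$, with $\Lip\eta\le 1/r$. Then $w\eta$ is admissible for $\rcapa_{\BV}(\{u^\wedge>c_r+t_r\}\cap B(x,r),B(x,2r))$, and
\[
\|D(w\eta)\|(X)\le \|Dw\|(B(x,2r))+\frac1r\int_{B(x,2r)}w\,d\mu .
\]

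\emph{Bounding the integral.} The function $w$ vanishes on at least half of $B(x,2r)$ by the choice of $c_r$. Hence the BV--Poincar\'e inequality \eqref{eq: BV-Poincare} on $B(x,2r)$ gives
\[
\int_{B(x,2r)}w\,d\mu\le 2\int_{B(x,2r)}|w-w_{B(x,2r)}|\,d\mu\le C r\,\|Dw\|(B(x,2\lambda r)).
\]
The first inequality holds because $\mu(\{w=0\}\cap B(x,2r))\ge\frac12\mu(B(x,2r))$ forces $w_{B(x,2r)}\le \vint{B(x,2r)}|w-w_{B(x,2r)}|\,d\mu$.

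\emph{Conclusion.} Since $\|Dw\|\le\|Du\|/t_r$, we get $\|D(w\eta)\|(X)\le C\|Du\|(B(x,2\lambda r))/t_r$. Combining this with \eqref{eq:BV capa and 1-capa-relative} and the lower bound $\rcapa_1(B(x,r),B(x,2r))\ge \mu(B(x,r))/(C_Sr)$ from \eqref{eq:rcapa over ball-measure}, the capacity ratio of the upper exceptional set is at most $C C_1 C_S/K$. Choosing $K$ large makes this at most $\delta/2$. Together with the lower set and subadditivity, the ratio is at most $\delta$ for every small $r$, hence also in the $\limsup$.

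\textbf{Main obstacle.} The delicate step is making the test function legitimately admissible for the relative BV capacity. This requires $(w\eta)^\wedge\ge1$ $\mathcal H$-a.e.\ on $\{u^\wedge>c_r+t_r\}\cap B(x,r)$ and vanishing of the approximate limits off $B(x,2r)$. It also requires passing from $u^*$ to $u^\wedge$ correctly, which is where Lemma \ref{lem:wedge and vee capacity} is essential. Everything else is bookkeeping of constants depending only on $C_d,C_P,\lambda$.
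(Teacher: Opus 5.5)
Your argument is correct, but it runs in the opposite direction from the paper's proof.

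\emph{The paper's route.} The paper takes a median $M$ on $B(x,r)$ and lets the capacity select the level. It defines $t_+$ as the critical value at which $\rcapa_1(\{(u^\vee-M)_+\ge t\}\cap B(x,r),B(x,2r))$ drops below $\frac{\delta}{2C_1}\rcapa_1(B(x,r),B(x,2r))$. Showing that $t_+<\infty$ needs the weak-type capacitary estimate for the maximal function (Lemma \ref{lem:weak standard estimate maximal function}), applied to a $\BV(X)$ extension of $(u-M)_+$. The strict superlevel set at $t_+$ is then handled by Lemma \ref{lem:capacities continuity}. Only after that is $t_+$ bounded by $C\|D(u-M)_+\|(B(x,2\lambda r))/(\delta\,\rcapa_1(B(x,r),B(x,2r)))$, via the BV Maz'ya inequality (Lemma \ref{lem:BV-Mazya}) at levels just below $t_+$, where the capacity is still large.

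\emph{Your route.} You fix the level a priori at the natural scale $t_r$ and verify the capacity bound directly by a capacitary Chebyshev estimate. The function $\min\{1,(u-c_r)_+/t_r\}\eta$ is admissible for $\rcapa_{\BV}$, and its variation is controlled by BV--Poincar\'e because the median makes it vanish on half of $B(x,2r)$. In effect you prove by hand the dual form of Maz'ya's inequality in the one case needed. This avoids the maximal function, the BV extension, and continuity of capacity. Both routes still rely on Lemma \ref{lem:wedge and vee capacity} and \eqref{eq:BV capa and 1-capa-relative}. The paper's threshold is intrinsic (the smallest level at which the discarded set is small), yours is explicit, and the final bound $C\delta^{-1}$ is the same.

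\emph{Two small corrections.}
\begin{enumerate}
\item The inequalities $u^\wedge\le u^*\le u^\vee$ hold only outside an $\mathcal H$-negligible set, by \eqref{eq:pointwise-properties}; they can fail at individual points for BV functions. This is harmless, since such sets have zero relative capacity by \eqref{eq: relative capacity-Hausdorff}.
\item Since $\|Du\|(B(x,2\lambda r))/t_r=\delta\mu(B(x,r))/(Kr)$, the capacity ratio of the upper exceptional set is at most $CC_1C_S\delta/K$, not $CC_1C_S/K$. So $K=2CC_1C_S$ works independently of $\delta$. This independence is exactly what keeps your final bound at $C\delta^{-1}$ rather than $C\delta^{-2}$.
\end{enumerate}
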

    
	Recall that in the definition of the generalized Lipschitz number \eqref{eq:generalized Lip definition},
	we consider the quantity $\frac{\osc_{U_r} w}{2r}\,\frac{\mu(B(x,Mr))}{\nu(B(x,Mr))}$
	to be zero if $\osc_{U_r} w=0$, and $\infty$ if only $\nu(B(x,Mr))$ is zero.
	
	\begin{proof}
		Consider $0<r\le \min\{1,\tfrac18\diam X\}$ sufficiently small such that $\overline{B}(x,2\lambda r)\subset \Om$.
Let $M\in \R$ be a median of $u$ in $B(x,r)$, that is,
		\[
		\frac{\mu(B(x,r)\cap \{u\ge M\})}{\mu(B(x,r))}\ge \frac{1}{2}
		\quad\textrm{and}\quad
		\frac{\mu(B(x,r)\cap \{u\le M\})}{\mu(B(x,r))}\ge \frac{1}{2}.
		\]
		By \eqref{eq:pointwise-properties}, there is an $\mathcal{H}$-negligible and thus by \eqref{eq:capa and Hausdorff measure} also $\capa_1$-negligible set $N\subset \Om$ such that every $y\in \Om\setminus N$, we have
		$(1-\gamma) u^\wedge+\gamma u^\vee\le u^*(y)\le \gamma u^\wedge+(1-\gamma)u^\vee$;
		and 
		\begin{equation}\label{eq:wedge star vee}
			((u-M)_+)^*(y)\ge \gamma ((u-M)_+)^\vee (y).
		\end{equation}
		Let $v\in \BV(X)$ be an extension of $(u-M)_+$ from $B(x,r)$ to the whole space by a standard truncation and Lipschitz cut-off argument. 
		Fix $R>2r$.
		We have $\mathcal M_Rv\ge v^*$ everywhere, and then by \eqref{eq:wedge star vee} in fact
		\[
		\mathcal{M}_Rv\ge v^*=((u-M)_+)^*\ge \gamma((u-M)_+)^\vee=\gamma (u^\vee -M)_+ \quad \textrm{in } B(x,r)\setminus N.
		\]
		By Lemma \ref{lem:weak standard estimate maximal function}, combining with \eqref{eq:capa vs rcapa} we now see that for every $r>0$, 
		\[
		\lim_{t\to\infty}\rcapa_1(\{(u^{\vee}-M)_+\ge t\}\cap B(x,r),B(x,2r))=0.
		\]
		Thus we can choose $t_+\in [0,\infty)$ such that for all $t>t_+$, we have 
		\[
		\rcapa_1(\{(u^\vee -M)_+\ge t\}\cap B(x,r),B(x,2r))\le \frac{\delta}{2C_1}\rcapa_1(B(x,r),B(x,2r)),
		\]
		and for all $t<t_+$, we have 
		\[
		\rcapa_1(\{(u^\vee -M)_+\ge t\}\cap B(x,r),B(x,2r))\ge \frac{\delta}{2C_1}\rcapa_1(B(x,r),B(x,2r)),
		\]
		where $C_1$ is the constant in \eqref{eq:capacities continuity-comparable}.
		Since $\{(u^\vee-M)_+>t_+\}=\bigcup_{t>t_+}\{(u^\vee-M)_+\ge t\}$, by Lemma \ref{lem:capacities continuity}, we have
		\begin{equation}\label{eq:precise estimate union}
			\rcapa_1(\{(u^\vee -M)_+>t_+\}\cap B(x,r),B(x,2r))\le \frac{\delta}{2}\rcapa_1(B(x,r),B(x,2r)).
		\end{equation}
		And by Lemma \ref{lem:wedge and vee capacity}, for all $t<t_+$, we have 
		\begin{equation}\label{eq:estimate from vee to wedge}
			\rcapa_1(\{(u^{\wedge}-M)_+> t\}\cap B(x,r),B(x,2r))\ge \frac{\delta}{2C_1}\rcapa_1(B(x,r),B(x,2r)).
		\end{equation}
		By the fact that $M$ is the median, for any $t\ge 0$ we have
		\begin{equation}\label{eq:distance from t0}
			\begin{split}
				\vint{B(x,r)}(t-\min\{(u-M)_+,t\})\,d\mu
				&= t-\vint{B(x,r)}\min\{(u-M)_+,t\}\,d\mu\\
				&\ge \frac{t}{2}.
			\end{split}
		\end{equation}
		Note that for every $t\in\R$,
		\begin{equation}\label{eq:vee and wedge preliminary}
			(t-\min\{(u-M)_+,t\})^{\vee}=t-\min\{(u^{\wedge}-M)_+,t\}.
		\end{equation}
		By \eqref{eq:distance from t0}, we get
		\begin{equation}
			\begin{split}
				\frac{t_+}{2}
				&\le \vint{B(x,r)}(t_+-\min\{(u-M)_+,t_+\})\,d\mu
                \le C_d\vint{B(x,2r)}(t_+-\min\{(u-M)_+,t_+\})\,d\mu
                \\
				&=\lim_{t\nearrow t_+}C_d\vint{B(x,2r)}(t-\min\{(u-M)_+,t\})\,d\mu\\
				&\le \limsup_{t\nearrow t_+}\frac{C_1C_dC_{\textrm{maz}}}{\rcapa_1(\{(u^\wedge-M)_+\ge t\}\cap B(x,r),B(x,2r))}\|D(u-M)_+\|(B(x,2\lambda r))
				\\
				&\le \frac{2C_1^2C_dC_{\textrm{maz}}}{\delta \rcapa_1(B(x,r),B(x,2r))}\|D(u-M)_+\|(B(x,2\lambda r)) 
				\quad \textrm{by } \eqref{eq:estimate from vee to wedge}
			\end{split}
		\end{equation}
		where the third inequality follows from \eqref{eq:Mazya inequality BV}, \eqref{eq:vee and wedge preliminary}.
		
		Denote $V_+\coloneqq B(x,r)\setminus (\{(u^\vee-M)_+>t_+\}\cup N)$.
		Now 
		\[
		(u^\vee-M)_+\le t_+ \quad\textrm{in }V_+ ,
		\]
		and 
		\[
		\|D(u-M)_+\|(B(x,2\lambda r))\ge \frac{\delta}{4C_1^2C_dC_{\textrm{maz}}} t_+ \rcapa_1(B(x,r),B(x,2r)).
		\]
		By \eqref{eq:precise estimate union}, we also have
		\[
		\rcapa_1(B(x,r)\setminus V_+,B(x,2r))\le \frac{\delta}{2}\rcapa_1(B(x,r),B(x,2r)).
		\]
		
		We can perform the analogous reasoning with $(u^{\wedge}-M)_-$.
		Thus we find a number $ t_-\in [0,\infty)$
		such that the set $V_-\coloneqq  B(x,r)\setminus (\{(u^{\wedge}-M)_-> t_-\}\cup N)$ satisfies
		\[
		(u^\wedge-M)_-\le t_- \quad\textrm{in }V_- ,
		\]
		and 
		\[
		\|D(u-M)_-\|(B(x,2\lambda r))\ge \frac{\delta}{4C_1^2C_dC_{\textrm{maz}}} t_- \rcapa_1(B(x,r),B(x,2r)).
		\]
		By \eqref{eq:precise estimate union}, we also have
		\[
		\rcapa_1(B(x,r)\setminus V_-,B(x,2r))\le \frac{\delta}{2}\rcapa_1(B(x,r),B(x,2r)).
		\]
		
		Define $U_r\coloneqq V_+\cap V_-$. 
		Since $u^\wedge\le u^*\le u^\vee$ outside $N$,
		\[
		(u^*-M)_+ \le  (u^{\vee}-M)_+\le t_+
		\quad\textrm{and}\quad 
		(u^*-M)_-\le  (u^{\wedge}-M)_-\le t_-
		\quad\textrm{in }U_r,
		\]
		and so
		\[
		\underset{U_r}{\osc}\, u^*\le t_++t_-.
		\]
		Using the coarea formula \eqref{eq:coarea}, we see that
		\begin{align*}
			\|D u\|( B(x,2\lambda r))
			&= \|D (u-M)_+\|( B(x,2\lambda r))
			+\|D (u-M)_-\|( B(x,2\lambda r))\\
			&\ge \frac{\delta}{4C_1^2C_dC_{\textrm{maz}}} (t_++t_-) \rcapa_1(B(x,r),B(x,2r)),
		\end{align*}
		and we also have 
		\begin{align*}
			\rcapa_1(B(x,r)\setminus U_r,B(x,2r))
			&\le \rcapa_1(B(x,r)\setminus V_+,B(x,2r))+\rcapa_1(B(x,r)\setminus V_-,B(x,2r))\\
			&\le \delta \rcapa_1(B(x,r),B(x,2r)).
		\end{align*}
		
		If $t_++t_->0$, then we obtain
		\begin{equation}\label{eq:osc final estimate}
			\begin{split}
				\frac{\osc_{U_r}u^*}{2r}\frac{\mu(B(x,2\lambda r))}{\|Du\|(B(x,2\lambda r))}
				&\le \frac{t_++t_-}{r}\frac{\mu(B(x,r))}{\rcapa_1(B(x,r),B(x,2r))} \frac{2C_1^2C_d^{3+\log_2\lambda}C_{\textrm{maz}}}{\delta} (t_++t_-)^{-1}\\
				&\le  C \delta^{-1}
			\end{split}
		\end{equation}
		with $C=2C_1^2C_d^{3+\log_2\lambda}C_SC_{\textrm{maz}}$ by \eqref{eq:rcapa over ball-measure}.
		If $t_++t_-=0$, then $\osc_{U_r}u^*=0$ and recall that we interpret the left-hand side of
		\eqref{eq:osc final estimate} to be zero.
		Since we can do this for all sufficiently small $r>0$, we get
		\[
		\Lip_{u^*,\|Du\|}^{\delta,2\lambda}(x)\le C\delta^{-1}.
		\]
	\end{proof}
	
	\begin{proof}[Proof of $\ref{item-1-thm 1.3}\Rightarrow \ref{item-2-thm 1.3}$ in Theorem \ref{thm:main BV}]
		By assumption $u\in \BV(\Om)$, and now
		by Proposition \ref{prop:necessity} with the choice $\delta=\widetilde c_*<1$, for every $x\in \Om$ we have
		\[
		\Lip_{u^*,\|Du\|}^{\widetilde c_*,2\lambda}(x)\le C {\widetilde c_*}^{-1}.
		\]
		Then clearly
		\[
		\Lip_{u^*,\nu}^{\widetilde c_*,2\lambda}(x)\le 1
		\]
		with the choice $\nu=C \widetilde c_*^{-1}\|Du\|$, which belongs to $\mathcal M(\Om,(800C_1C_S^2C_d^6\widetilde c_*)^{-1})$ by Lemma \ref{lem:measures belonging to M}, \eqref{eq:cn definition}, and the fact that scalar multiplication leaves the $\mathcal M$-constant unchanged.
	\end{proof}
	
	\begin{proof}[Proof of $\ref{item-1-thm 1.4}\Rightarrow \ref{item-2-thm 1.4}$ in Theorem \ref{thm:main Sob}]
		Since $u^*$ is unchanged by modifications of $u$ on a $\mu$-null set,
			we may assume that $u\in N^{1,p}(\Om)$.
		And now by Proposition \ref{prop:necessity} with the choice $\delta=\widetilde c_*<1$, for every $x\in \Om$ we have
		\[
		\Lip_{u^*,g_u}^{\widetilde c_*,2\lambda}(x)\le C \widetilde c_*^{-1}.
		\]
		Then clearly
		\[
		\Lip_{u^*,a}^{\widetilde c_*,2\lambda}(x)\le 1
		\]
		with the choice $a=C \widetilde c_*^{-1}g_u\in L^p(\Om)$.
	\end{proof}

	\subsection{``If'' direction}
	
	In this subsection we prove the ``if'' direction of Theorems
	\ref{thm:main BV} and \ref{thm:main Sob}, again in a slightly stronger form.
	
	We will use Whitney-type coverings consisting of balls.
	
	\begin{lemma}\label{lem:Whitney}
		Let $A\subset D\subset W$, where $W\subset X$ is an open set and $A$ is dense in $D$.
		Given a scale $0<R<\infty$ and $M\ge1$, there exists a finite or countable Whitney-type covering $\{B_l=B(x_l,r_l)\}_l$ of $A$ in $W$,
		with $x_l\in A$, $r_l\le R$, and the following properties:
		\begin{enumerate}
			\item \label{Whitney-item-1} $MB_l\subset W$ and $D\subset \bigcup_{l}\tfrac 14 B_l$, and the balls $\tfrac{1}{40}B_l$
			are disjoint;
			\item \label{Whitney-item-2}  If $MB_l\cap MB_m\neq \emptyset$, then $r_{l}\le 2r_{m}$;
			\item \label{Whitney-item-3}  $\sum_l \chi_{MB_l}(x)\le C_M= C(C_d,M)$ for all $x\in W$;
			\item \label{Whitney-item-4}  If $y\in\tfrac 12 B_{l}$ for some $l$, then
			\begin{equation}\label{eq:rl and y estimate}
				\frac{r_{l}}{16} \le \min\{R/8,\dist(y,X\setminus W)/(64M)\}\le \frac{r_{l}}{4};
			\end{equation}
			if $y\in W$ is in no ball $\tfrac 12 B_{l}$, then
			\[
			B(y,\min\{R/8,\dist(y,X\setminus W)/(64M)\})\cap D=\emptyset.
			\]
		\end{enumerate}
	\end{lemma}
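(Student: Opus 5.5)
We construct the covering by a stopping-radius rule and then verify properties \ref{Whitney-item-1}--\ref{Whitney-item-4} with the standard Whitney bookkeeping. For $y\in W$ set
\[
\rho(y)\coloneqq\min\{R/8,\dist(y,X\setminus W)/(64M)\},
\]
with $\rho(y)=R/8$ if $W=X$. This function is positive on $W$ and $\tfrac{1}{64M}$-Lipschitz (the minimum of a constant and a $\tfrac{1}{64M}$-Lipschitz function), so $|\rho(y)-\rho(z)|\le d(y,z)/(64M)$. The plan is to take centres $x_l\in A$ and radii $r_l\coloneqq 8\rho(x_l)$. Then $r_l\le R$, and $MB_l\subset W$ because $Mr_l=8M\rho(x_l)\le \dist(x_l,X\setminus W)/8$.

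The centres are chosen by a maximality argument. Apply the $5$-covering lemma (or Zorn's lemma) to the family $\{B(y,\rho(y)/5)\}_{y\in A}$, or more directly choose a maximal subfamily of points $x_l\in A$ such that the balls $\tfrac1{40}B_l=B(x_l,\rho(x_l)/5)$ are pairwise disjoint. Doubling and the boundedness of radii make this family at most countable.

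For the covering $D\subset\bigcup_l\tfrac14 B_l$, take $y\in D$ and $\eps>0$. By density of $A$ in $D$ there is $a\in A$ with $d(a,y)<\eps$. By maximality, $B(a,\rho(a)/5)$ meets some $B(x_l,\rho(x_l)/5)$. The Lipschitz bound gives $\rho(a)\le \rho(x_l)+d(a,x_l)/(64M)$, and since $d(a,x_l)<(\rho(a)+\rho(x_l))/5$, the two radii are comparable: $\rho(a)\le (1+c)\rho(x_l)$ for a small $c$. Hence $d(y,x_l)\le \eps+\tfrac{2+c}{5}\rho(x_l)<2\rho(x_l)=\tfrac14 r_l$ once $\eps$ is small relative to $\rho(y)\approx\rho(a)$. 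This proves \ref{Whitney-item-1}.

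Property \ref{Whitney-item-2} follows from the same comparability. If $MB_l\cap MB_m\ne\emptyset$, then $d(x_l,x_m)\le 8M(\rho(x_l)+\rho(x_m))$, so
\[
\rho(x_l)\le\rho(x_m)+\tfrac{1}{8}(\rho(x_l)+\rho(x_m)),
\]
which gives $\rho(x_l)\le \tfrac97\rho(x_m)\le 2\rho(x_m)$. For \ref{Whitney-item-3}, every ball $MB_l$ containing a fixed $x$ has radius comparable to a common $r$ (by \ref{Whitney-item-2}, all such radii are within a factor $2$ of each other) and lies in $B(x,2Mr)$. The disjoint balls $\tfrac1{40}B_l$ have measure at least $C(C_d,M)^{-1}\mu(B(x,2Mr))$ by \eqref{eq: doubling-ball-diff-center}, which bounds their number.

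For \ref{Whitney-item-4}, if $y\in\tfrac12B_l$ then $d(y,x_l)<4\rho(x_l)$. The Lipschitz bound gives $|\rho(y)-\rho(x_l)|\le \rho(x_l)/(16M)$, so $\rho(y)\in[\tfrac12\rho(x_l),2\rho(x_l)]=[r_l/16,r_l/4]$. If instead $y$ lies in no $\tfrac12B_l$, suppose some $z\in D\cap B(y,\rho(y))$ exists. By the covering just proved, $z\in\tfrac14B_l$ for some $l$, so $\rho(z)\le 2\rho(x_l)$ by the same comparison. Also $\rho(y)\le\rho(z)+\rho(y)/(64M)$, so $\rho(y)\lesssim \rho(x_l)$, and then
\[
d(y,x_l)<\rho(y)+2\rho(x_l)<4\rho(x_l),
\]
that is, $y\in\tfrac12B_l$, a contradiction.

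The main obstacle is tracking constants in these last comparisons, in particular in the covering step, where $\rho(y)$ must be compared with $\rho(a)$ for the approximating point $a\in A$. We expect the factor $1/(64M)$ to give ample room throughout. If some inequality turns out too tight, we would adjust the disjointness radius, rather than the radii $r_l$, which are fixed by the statement.
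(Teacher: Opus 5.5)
Your proof is correct and follows essentially the paper's route. You use the same radii $r_l=\min\{R,\dist(x_l,X\setminus W)/(8M)\}$, a maximal family with pairwise disjoint $\tfrac1{40}B_l$, approximation of points of $D$ by points of $A$, and the same overlap count; the only difference is that you derive every radius comparison from the $\tfrac{1}{64M}$-Lipschitz continuity of $\rho$, where the paper uses the $5$-covering lemma and a case split ($r_l=R$ versus $r_l=\dist(x_l,X\setminus W)/(8M)$). One constant in the second part of property (4) needs its sharp form, $\rho(z)\le(1+\tfrac{1}{32M})\rho(x_l)$. The crude bound $\rho(z)\le 2\rho(x_l)$ only gives $\rho(y)+2\rho(x_l)\le\bigl(2+\tfrac{128M}{64M-1}\bigr)\rho(x_l)$, which exceeds $4\rho(x_l)$, whereas the sharp bound gives $\rho(y)<\tfrac{64M+2}{64M-1}\rho(x_l)<2\rho(x_l)$, as required.
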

	
	\begin{proof}
		For every $x\in A$, let $r_x\coloneqq \min\{R,\tfrac{1}{8M} \dist(x,X\setminus W)\}$.
		Consider the covering $\{B(x,\frac{1}{40}r_x)\}_{x\in A}$.
		By the $5$-covering theorem (see e.g. \cite[Theorem 1.24]{EvGa} and \cite[Section 3.3]{HKSTbook}),
		we can pick an at most countable
		collection of pairwise disjoint balls $B(x_l,\tfrac{1}{40}r_l)$ such that the balls
		$B(x_l,\tfrac{1}{8} r_l)$ cover $A$.
        For every $y\in D$, approximating $y$ by points of $A$ and using the preceding $\tfrac18 B_l$-cover therefore gives $y\in\tfrac14B_l$ for some $l$.
Thus the balls $B(x_l,\frac{1}{4}r_l)$ cover $D$.
		Denote $B_l=B(x_l,r_l)$.
We have established property \ref{Whitney-item-1}.
		
		Suppose $MB_l\cap MB_m\neq \emptyset$.
If $r_m=\tfrac{1}{8M} \dist(x_m,X\setminus W)$, then
		\begin{align*}
			8Mr_m
			&=\dist(x_m,X\setminus W)\\
			&\ge \dist(x_l,X\setminus W)-Mr_m-Mr_l\\
			&\ge 8Mr_l-Mr_m-Mr_l
			= 7Mr_l-Mr_m,
		\end{align*}
		and so we get $2r_m\ge r_l$.
		If $r_m=R$, then $r_l\le R=r_m$.
Thus we get property \ref{Whitney-item-2}.
		
		If $x\in MB_l$ for some $l$, denote by $I\coloneqq\{m:x\in MB_m\}$ the set of all indices of enlarged balls containing $x$.
		For every $m\in I$, applying \ref{Whitney-item-2} twice shows that $r_l/2\le r_m\le2r_l$.
Consequently, the pairwise disjoint balls $\tfrac1{40}B_m$ lie in $7MB_l$, and doubling gives
		\[
		\# I\,\mu(\tfrac1{40}B_l)
		\le C_d^{\log_2(280M)+1}\sum_{m\in I}\mu(\tfrac1{40}B_m)
		\le C_d^{2\log_2(280M)+2}\mu(\tfrac1{40}B_l).
		\]
		Thus \ref{Whitney-item-3} holds with $C_M\coloneqq C_d^{2\log_2(280M)+2}$.
		
		To prove the first part of \ref{Whitney-item-4}, suppose $y\in \tfrac 12 B_l$ for some $l$.
		Recall that
		\[
		r_l=\min\{R,\tfrac{1}{8M} \dist(x_l,X\setminus W)\}.
		\]
		We estimate
		\begin{align*}
			\dist(y,X\setminus W)
			\ge \dist(x_l,X\setminus W)-r_l
			&\ge \dist(x_l,X\setminus W)-\tfrac{1}{8M}\dist(x_l,X\setminus W)\\
			&\ge \dist(x_l,X\setminus W)/2.
		\end{align*}
		Analogously, we have $\dist(y,X\setminus W)\le 2\dist(x_l,X\setminus W)$.
		From these, \eqref{eq:rl and y estimate} easily follows.
		
		To prove the second part of \ref{Whitney-item-4}, suppose instead that
		$B(y,\min\{R/8,\dist(y,X\setminus W)/(64M)\})\cap D\neq \emptyset$.
		Thus there exists a point $x\in D$ with
		\[
		d(x,y)< \min\{R/8,\dist(y,X\setminus W)/(64M)\}.
		\]
		By \ref{Whitney-item-1}, we have $x\in \tfrac 14 B_l$ for some $l$, and so we have
		\begin{equation}\label{eq: x y and Bj}
			d(x_l,y)< \min\{R/8,\dist(y,X\setminus W)/(64M)\}+\frac{1}{4}r_l.
		\end{equation}
		We have either
		\[
		r_l=R\ge 8\min\{R/8,\dist(y,X\setminus W)/(64M)\}> 8d(x_l,y)-2r_l,
		\]
		so that $y\in \tfrac 12 B_l$,
		or
		\begin{align*}
			r_l=\tfrac{1}{8M} \dist(x_l,X\setminus W)
			&\ge \tfrac{1}{8M} (\dist(y,X\setminus W)-d(x_l,y))\\
			&\ge \tfrac{1}{8M} (64Md(x_l,y)-16Mr_l-d(x_l,y))\quad\textrm{by }\eqref{eq: x y and Bj}\\
			&\ge  \tfrac{63}{8}d(x_l,y)-2r_l.
		\end{align*}
		We conclude that $y\in \tfrac 12 B_l$.
	\end{proof}
	
	\begin{definition}
		Let $A\subset X$ with $0<\mu(A)<\infty$, let $u$ be a function on $A$ and let $0<\beta<1$.
We say that
		$a\in\R$  is a $\beta$-median of $u$ in $A$ if 
		\[
		\mu(\{u\ge a\})\ge (1-\beta)\mu(A)
		\quad\textrm{and}\quad
		\mu(\{u\le a\})\ge \beta\mu(A).
		\]
		We denote $m_u(\beta, A):=a$.
	\end{definition}
	
	\begin{lemma}\label{lem:medians}
		Let $A\subset X$ be a bounded measurable set, $\mu(A)>0$, and let $u$ be a $\mu$-a.e. finite measurable function
		on $A$.
Then we find a point $x\in A$ such that $u(x)$ is a $\beta$-median of $u$ in $A$,
		with $2/5\le \beta\le 3/5$.
	\end{lemma}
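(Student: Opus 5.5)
We need a point $x\in A$ with $\mu(\{u\ge u(x)\})\ge(1-\beta)\mu(A)$ and $\mu(\{u\le u(x)\})\ge\beta\mu(A)$ for some $\beta\in[2/5,3/5]$. Here the sets are understood inside $A$. The plan is to work with the distribution function $F(t)\coloneqq\mu(\{y\in A\colon u(y)\le t\})/\mu(A)$. This function is nondecreasing and right-continuous, and since $u$ is finite a.e., it tends to $0$ as $t\to-\infty$ and to $1$ as $t\to\infty$. Writing $F(t^-)\coloneqq\mu(\{u<t\})/\mu(A)$, the point $x$ is admissible with the choice $\beta=F(u(x))$ clamped to $[2/5,3/5]$, provided that
\[
F(u(x))\ge \tfrac25
\quad\text{and}\quad
F(u(x)^-)\le \tfrac35 .
\]
Indeed, in that case we choose $\beta\coloneqq\min\{F(u(x)),3/5\}\in[2/5,3/5]$. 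Then $\mu(\{u\le u(x)\})=F(u(x))\mu(A)\ge\beta\mu(A)$, and $\mu(\{u\ge u(x)\})=(1-F(u(x)^-))\mu(A)\ge(1-\beta)\mu(A)$, where the last inequality uses $F(u(x)^-)\le\min\{F(u(x)),3/5\}$.

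So it suffices to show that the set
\[
G\coloneqq\{y\in A\colon F(u(y))\ge 2/5,\ F(u(y)^-)\le 3/5\}
\]
is nonempty; in fact we will show $\mu(G)>0$. Let $t_1\coloneqq\inf\{t\colon F(t)\ge 2/5\}$ and $t_2\coloneqq\sup\{t\colon F(t^-)\le 3/5\}$. Both are finite by the limits of $F$, and $t_1\le t_2$. By right-continuity, $F(t_1)\ge2/5$, and left-limit considerations give $F(t_2^-)\le 3/5$. For $y$ with $u(y)\in[t_1,t_2]$, monotonicity gives $F(u(y))\ge F(t_1)\ge 2/5$ and $F(u(y)^-)\le F(t_2^-)\le3/5$. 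Hence $G\supset\{t_1\le u\le t_2\}$, and
\[
\mu(\{t_1\le u\le t_2\})=\bigl(F(t_2)-F(t_1^-)\bigr)\mu(A).
\]
By the definitions, $F(t_1^-)\le 2/5$. For $t_2$: if $F(t_2)<3/5$, then since $F(t)\to1$ and $F$ is right-continuous, the points slightly above $t_2$ still satisfy $F(t^-)\le 3/5$, contradicting the definition of $t_2$. Hence $F(t_2)\ge 3/5$, and
\[
\mu(\{t_1\le u\le t_2\})\ge \tfrac15\mu(A)>0.
\]

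Since $G$ has positive measure, it is nonempty (we may also remove the null set where $u$ is infinite), and any $x\in G$ works. The only delicate step is the careful handling of left and right limits at the thresholds $t_1$ and $t_2$, in particular the possibility of atoms where $u$ is constant on a set of positive measure. That is precisely why $\beta$ is allowed to vary in $[2/5,3/5]$ rather than being fixed at $1/2$. Boundedness of $A$ is needed only to guarantee $\mu(A)<\infty$.
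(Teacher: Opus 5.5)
Your proof is correct and takes essentially the same route as the paper. Both arguments isolate the band of values between the $2/5$- and $3/5$-quantiles, show it carries mass at least $\mu(A)/5$, pick $x$ with $u(x)$ in that band, and clamp $\beta$; your $\beta=\min\{F(u(x)),3/5\}$ is exactly the right endpoint of the paper's interval $[\max\{2/5,1-q\},\min\{3/5,p\}]$. The only difference is that you work directly with the distribution function $F$ and its one-sided limits, whereas the paper uses the decreasing rearrangement $u^\#$ and proves the equimeasurability identities $|\{u^\#\ge a\}|=\mu(\{u\ge a\})$, which makes your version slightly more self-contained.
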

	\begin{proof}
		Define the decreasing rearrangement
		\[
		u^\#(t)\coloneqq \inf\{a\in\mathbb R : \mu(\{x\in A:u(x)>a\})\le t\},\quad 0<t<\mu(A).
		\] 
		By \cite[Section 3]{CSS}, the function $u^\#$ is finite, non-increasing, and right-continuous on $(0,\mu(A))$.
		Thus $u^\#$ has at most countably many discontinuity points.
		
		We first show that if $t_0=\beta_0\mu(A)$ is a discontinuity point of $u^\#$ with $\beta_0\in[\frac{2}{5},\frac{3}{5}]$, then every $a\in [u^\#(t_0),u^\#(t_0^-)]$ is a $(1-\beta_0)$-median of $u$.
		
		Indeed, by definition of $u^\#$, for every $\varepsilon>0$,
		\[
		\mu(\{x\in A\colon u(x)>u^\#(t_0)+\varepsilon\})\le \beta_0\mu(A).
		\]
		Letting $\varepsilon\to 0$ gives that
		\begin{equation}\label{eq-median-1}
			\mu(\{x\in A:u(x)\le u^\#(t_0)\})\ge (1-\beta_0)\mu(A).
		\end{equation}
		On the other hand, for every sufficiently small $\varepsilon>0$,
		\[
		\mu(\{x\in A: u(x)\ge u^\#(t_0-\varepsilon)\})\ge t_0-\varepsilon.
		\]
		Letting $\varepsilon\to0$,
		we obtain
		\begin{equation}\label{eq-median-2}
			\mu(\{x\in A: u(x)\ge u^\#(t_0^-)\})\ge \beta_0\mu(A).
		\end{equation}
		Thus every $a\in[u^\#(t_0),u^\#(t_0^-)]$ satisfies the median inequalities.
		
		For every $a<u^\#(t)$, we have $\mu(\{x\in A:u(x)>a\})>t$.
Conversely, if $t<\mu(\{x\in A:u(x)>a\})$ and $u^\#(t)\le a$, then $\mu(\{x\in A:u(x)>a\})\le \mu(\{x\in A:u(x)>u^\#(t)\})\le t$, a contradiction.
		It follows that for all $a\in \R$, we have that
		\[
		|\{0<t<\mu(A):u^\#(t)>a\}|=\mu(\{x\in A:u(x)>a\}).
		\]
		
		Since $\mu(A)<\infty$ and $u$ is $\mu$-a.e. finite on $A$,
		choosing a sequence $(a_n)$ with $a-1<a_n<a_{n+1}<a$
		and $\lim_{n\to \infty}a_n=a$, we have
		\[
		|\{0<t<\mu(A):u^\#(t)>a_n\}|=\mu(\{x\in A:u(x)>a_n\})<\infty,
		\]
		and then
		\[
		|\{0<t<\mu(A):u^\#(t)\ge a\}|=\mu(\{x\in A:u(x)\ge a\}).
		\]
		Combining the above inequalities, we have that
		\begin{align*}
			\frac{1}{5}\mu(A)
			&\le|\{0<t<\mu(A):u^\#(\tfrac{3}{5}\mu(A))\le u^\#(t)\le u^\#(\tfrac{2}{5}\mu(A)) \}| \\
			&=\mu(\{x\in A: u^\#(\tfrac{3}{5}\mu(A))\le u(x)\le u^\#(\tfrac{2}{5}\mu(A))\}).
		\end{align*}
		Hence there exists $x\in A$ such that
		\[
		u^\#(\tfrac{3}{5}\mu(A))\le u(x)\le u^\#(\tfrac{2}{5}\mu(A)).
		\]
		Set $p\coloneqq\mu(\{u\le u(x)\})/\mu(A)$ and $q\coloneqq\mu(\{u\ge u(x)\})/\mu(A)$. 
		Then $p,q\ge2/5$, while $p+q\ge1$.
Hence the interval $[\max\{2/5,1-q\},\min\{3/5,p\}]$ is nonempty.
Any $\beta$ in this interval makes $u(x)$ a $\beta$-median of $u$ in $A$.
	\end{proof}

\begin{lemma}\label{lem:choice of new R}
Let $C_*\ge 1$, $M\ge1$ and let $\nu \in \mathcal M(\Om,C_*)$.
Let $\delta>0$, and let $W\subset \Om$ be an open set.
There exists $R_*=R_*(\nu,W,\delta)>0$ such that the following holds.
If $A\subset W$ is Borel, $0<r\le R_*$, and denoting $r_x\coloneqq \min \{r,\dist(x,X\setminus W)/(64M)\}$
we have
\begin{equation}\label{eq:rx assumption}
\frac{\rcapa_1(A\cap B(x,r_x),B(x,2r_x))}{\rcapa_1(B(x,r_x),B(x,2r_x))}<\delta\quad\textrm{for all }x\in W,
\end{equation}
then $\nu(A)\le 2C_{*}\delta \nu(W)$.
\end{lemma}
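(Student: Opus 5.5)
We reduce the statement to Definition \ref{def:class of measures} by splitting $W$ into the part far from $X\setminus W$, where the radii $r_x$ all equal $r$, and a thin boundary layer of small $\nu$-measure. For $R>0$ set
\[
W_R\coloneqq\{x\in W\colon \dist(x,X\setminus W)>64MR\},
\]
an open set with $W_R\nearrow W$ as $R\to 0$. Since $\nu$ is Radon and we may assume $\nu(W)<\infty$, we have $\nu(W\setminus W_R)\to 0$ as $R\to0$.

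First I would choose $R_0>0$ so small that $\nu(W\setminus W_{R_0})\le C_*\delta\,\nu(W)$. If $\nu(W)=0$ the claim is trivial, so assume $\nu(W)>0$. Then I apply Definition \ref{def:class of measures} to the open set $W_{R_0}\subset\Om$ and the given $\delta$, obtaining $R(\nu,W_{R_0},\delta)$. I set
\[
R_*\coloneqq\min\{R_0,R(\nu,W_{R_0},\delta)\}.
\]

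Now take $0<r\le R_*$ and a Borel set $A\subset W$ satisfying \eqref{eq:rx assumption}. For every $x\in W_{R_0}$ we have $\dist(x,X\setminus W)/(64M)>R_0\ge r$, hence $r_x=r$. Therefore \eqref{eq:rx assumption} yields, for all $x\in W_{R_0}$,
\[
\frac{\rcapa_1(A\cap B(x,r),B(x,2r))}{\rcapa_1(B(x,r),B(x,2r))}<\delta.
\]
This is exactly the hypothesis of Definition \ref{def:class of measures} with $W_{R_0}$ in place of $W$. It gives $\nu(A\cap W_{R_0})\le C_*\delta\,\nu(W_{R_0})\le C_*\delta\,\nu(W)$. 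Combining with the boundary layer estimate,
\[
\nu(A)\le \nu(A\cap W_{R_0})+\nu(W\setminus W_{R_0})\le 2C_*\delta\,\nu(W).
\]

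The only point needing care is the case $\nu(W)=\infty$. There the conclusion is vacuous, so no finiteness assumption is actually required. When $\nu(W)<\infty$, the layer estimate follows from continuity of $\nu$ along the increasing union $W=\bigcup_R W_R$. I expect no real obstacle. The main subtlety is that $R_*$ must depend only on $\nu,W,\delta$ (and the fixed $M$), not on $A$, and this holds by construction.
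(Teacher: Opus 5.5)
Your proposal is correct and matches the paper's proof: the paper also removes a boundary layer $W\setminus W_\eps$ of small $\nu$-measure (its $\eps$ plays the role of your $64MR_0$), takes $R_*=\min\{\eps/(64M),R(\nu,W_\eps,\delta)\}$ so that $r_x=r$ on $W_\eps$, and applies Definition~\ref{def:class of measures} there. The only cosmetic difference is that the paper bounds the layer by $\delta\nu(W)$ and then uses $C_*\ge1$, whereas you bound it directly by $C_*\delta\nu(W)$.
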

\begin{proof}
We can assume that $0<\nu(W)<\infty$.
For sufficiently small $\eps>0$, we have $\nu(W\setminus W_{\eps})<\delta\nu(W)$;
recall that
\[
W_{\eps}\coloneqq \{x\in W\colon \dist(x,X\setminus W)>\eps\}.
\]
Choose $R_*=\min\{\eps/(64M),R(\nu,W_{\eps},\delta)\}$.
If $A\subset W$ is Borel,
$0<r\le R_*$, and \eqref{eq:rx assumption} holds,
then in fact
\[
\frac{\rcapa_1(A\cap B(x,r),B(x,2r))}{\rcapa_1(B(x,r),B(x,2r))}<\delta\quad\textrm{for all }x\in W_{\eps}.
\]
It follows that $\nu(A\cap W_{\eps})\le C_{*}\delta \nu(W_\eps)$.
In total, we get
\[
\nu(A)\le \nu(A\cap W_{\eps})+\nu(W\setminus W_{\eps})\le 2C_{*}\delta \nu(W).
\]
\end{proof}

We will need the following removability theorem that is essentially from \cite[Theorem~4.12]{L-CD}.
In that paper the result is formulated in the whole space $X$,
but we prove the following localized version.

\begin{proposition}\label{prop:localized isometric removability}
Let $\Om\subset X$ be open as usual, and let $A\subset\Om$.
Suppose that
\begin{equation}\label{eq:localized removability density}
\liminf_{r\to0}
\frac{\rcapa_1(A\cap B(x,r),B(x,2r))}
{\rcapa_1(B(x,r),B(x,2r))}
<c_*
\end{equation}
for $1$-q.e.\ $x\in\Om$.
Then $\mu(A)=0$, and $A$ is isometrically removable for $N^{1,1}(\Om)$;
that is, for every $u\in N^{1,1}(\Om\setminus A)$ there exists $\widetilde u\in N^{1,1}(\Om)$ such that $\widetilde u=u$ $\mu$-a.e.\ in $\Om\setminus A$ and
\begin{equation}\label{eq:localized isometric gradient}
g_{\widetilde u,\Om}=g_{u,\Om\setminus A}
\quad\text{$\mu$-a.e.\ in $\Om$},
\end{equation}
where $g_{u,\Om\setminus A}$ is extended by zero to $A$.
If $\Om\setminus A$ is $1$-quasiopen, then the pointwise representative $\widetilde u$ can be chosen so that $\widetilde u=u$ at every point in $\Om\setminus A$.
\end{proposition}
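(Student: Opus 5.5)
The plan is to reduce to the global result \cite[Theorem~4.12]{L-CD}, applied on balls well inside $\Om$ and then patched together. The first step is $\mu(A)=0$. By \eqref{eq:rcapa in small ball}, at $1$-q.e.\ $x\in\Om$ we have
\[
\liminf_{r\to0}\frac{\mu(A\cap B(x,r))}{\mu(B(x,r))}\le C_SC_dc_*<1,
\]
once $c_*$ is chosen small. Next, replace $A$ by a Borel set $A^*$ from Lemma~\ref{lem:weak Borel regularity}; this costs at most the factor $C_1$ in the capacity density, which is absorbed into the choice of $c_*$. The lower density of $A^*$ is then $<1$ at a.e.\ point. By Lebesgue's differentiation theorem, a.e.\ point of $A^*$ has density one, so $\mu(A)\le\mu(A^*)=0$.

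For removability, cover $\Om$ by countably many balls $B_j$ with $4\lambda B_j\Subset\Om$, for instance via Lemma~\ref{lem:Whitney}, and take a Lipschitz partition of unity or cutoffs $\eta_j$ supported in $2B_j$. Given $u\in N^{1,1}(\Om\setminus A)$, the function $\eta_ju$, extended by zero, belongs to $N^{1,1}(X\setminus A_j)$ with $A_j\coloneqq A\cap 2B_j$. The density condition \eqref{eq:localized removability density} holds at $1$-q.e.\ point of $X$ for $A_j$: inside $\Om$ by assumption and monotonicity, and outside $\overline{2B_j}$ trivially, since there the density is zero. Hence the global theorem gives an extension $w_j\in N^{1,1}(X)$ of $\eta_j u$ with $g_{w_j}=g_{\eta_ju,X\setminus A_j}$ a.e. If the global theorem really gives the stronger statement that the extension is obtained by assigning values on $A_j$ to $1$-q.e.\ points (through fine limits or Lebesgue values), I will use that directly. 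In that case a single rule defines $\widetilde u$ on $A$ independently of $j$, for example
\[
\widetilde u(x)\coloneqq\limsup_{r\to0}\vint{B(x,r)}u\,d\mu ,
\]
which is legitimate because $\mu(A)=0$. The local pieces are compatible because on the overlap $2B_j\cap 2B_k$ they agree a.e. and are quasicontinuous, so they agree $1$-q.e.\ by \cite[Proposition~5.23]{BB}.

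The isometric identity \eqref{eq:localized isometric gradient} is local, since minimal weak upper gradients are local \cite{BB}. On each $B_j$ with $\eta_j=1$ there, we get $g_{\widetilde u}=g_{w_j}=g_{u,\Om\setminus A}$ a.e.\ in $B_j$, because $\mu(A)=0$ and $g$ is extended by zero. The inequality $g_{\widetilde u,\Om}\ge g_{u,\Om\setminus A}$ a.e.\ is automatic by restriction, so the content of the identity is the reverse inequality, which comes from the global theorem. Integrability then gives $\widetilde u\in N^{1,1}(\Om)$. For the final claim, assume $\Om\setminus A$ is $1$-quasiopen. The function $u$ is $1$-quasicontinuous on $\Om\setminus A$ by \eqref{eq:quasicontinuous}, and $\widetilde u$ is $1$-quasicontinuous on $\Om$. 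Since they agree a.e.\ on a quasiopen set, they agree $1$-q.e.\ on $\Om\setminus A$, by the quasiopen version of \cite[Proposition~5.23]{BB} (see \cite{BBL-23}). Redefining $\widetilde u\coloneqq u$ on $\Om\setminus A$ changes it only on a set of zero capacity, which keeps it in $N^{1,1}(\Om)$ with the same minimal weak upper gradient.

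The main obstacle is the reverse gradient inequality near $A$, that is, showing that curves passing through $A$ do not force a larger upper gradient. This is exactly where \cite[Theorem~4.12]{L-CD} enters, so the real difficulty is checking that the localized cutoff functions satisfy its hypotheses, including its $1$-q.e.\ density condition with the constant $c_*$ and membership in $N^{1,1}(X\setminus A_j)$. If the cutoff introduces the term $u\,g_{\eta_j}$, I will avoid it by using only the region where $\eta_j\equiv1$ when comparing gradients.
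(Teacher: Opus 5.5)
Your overall route is the paper's. You localize with Lipschitz cutoffs equal to $1$ on a smaller set, check that the density hypothesis holds for $A_j$ at $1$-q.e.\ point of $X$, and apply \cite[Theorem~4.12]{L-CD} to $\eta_ju$. You then glue the extensions by quasicontinuity where the cutoffs equal one, and obtain \eqref{eq:localized isometric gradient} from locality of minimal weak upper gradients. The paper uses an exhaustion $U_1\Subset U_2\Subset\cdots$ of $\Om$ instead of a ball cover, which makes no difference.

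\textbf{The proof of $\mu(A)=0$ is not justified.} The constant $c_*$ is not at your disposal: it is the fixed constant of \cite[Theorem~4.12]{L-CD}. Nothing in the statement gives $C_SC_dc_*<1$, let alone $C_1C_SC_dc_*<1$, so ``once $c_*$ is chosen small'' and ``absorbed into the choice of $c_*$'' are not available. In addition, Lemma~\ref{lem:weak Borel regularity} assumes measurability, which is not given for $A$. Discard this argument. Applying \cite[Theorem~4.12]{L-CD} to each $A_j$ already gives $\mu(A_j)=0$, which is how the paper obtains it. Since the balls $2B_j$ cover $\Om$, we have $A=\bigcup_jA_j$, hence $\mu(A)=0$. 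Do this before you use $\mu(A)=0$ in the gluing and in the gradient comparison.

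\textbf{Smaller points.}

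Compatibility of the local extensions must be checked on $B_j\cap B_k$, where $\eta_j=\eta_k=1$ and hence $w_j=u=w_k$ a.e. On $2B_j\cap 2B_k$ you only have $w_j=\eta_ju$ and $w_k=\eta_ku$ a.e., and these differ in general.

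Your ``single rule'' needs no stronger form of the global theorem. By \eqref{eq:Lebesgue points}, $w_j(x)=\lim_{r\to0}\vint{B(x,r)}u\,d\mu$ for $1$-q.e.\ $x\in B_j$. So the $\limsup$-of-averages representative agrees $1$-q.e.\ with each $w_j$ on $B_j$.

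Finally, \eqref{eq:quasicontinuous} is stated for open sets. For the last claim you need two further facts: that $u\in N^{1,1}(\Om\setminus A)$ is quasicontinuous on the $1$-quasiopen set $\Om\setminus A$, and that quasicontinuous functions agreeing a.e.\ on a quasiopen set agree q.e. The paper's one-line argument relies on the same input implicitly.
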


\begin{proof}
Since $X$ is proper, we can choose bounded open sets $U_1\Subset U_2\Subset\cdots\Subset\Om$ such that $\Om=\bigcup_{j=1}^{\infty}U_j$.
For every $j\in\N$, choose $\eta_j\in\operatorname{Lip}_c(\Om)$ such that $0\le\eta_j\le1$ and $\eta_j=1$ in a neighborhood of $\overline{U_j}$, and set $K_j\coloneqq \supp\eta_j\Subset\Om$ and $A_j\coloneqq A\cap K_j$.
Then by \eqref{eq:localized removability density}, we have
\[
\liminf_{r\to0}
\frac{\rcapa_1(A_j\cap B(x,r),B(x,2r))}
{\rcapa_1(B(x,r),B(x,2r))}
<c_* 
\quad \textrm{for 1-q.e. } x\in \Om.
\]
If $x\in X\setminus K_j$, then $A_j\cap B(x,r)=\emptyset$ for all sufficiently small $r>0$, and hence the corresponding quotient is eventually zero.
Thus the density hypothesis holds for $A_j$ at $1$-q.e.\ $x\in X$, and \cite[Theorem~4.12]{L-CD} shows that $A_j$ is isometrically removable for $N^{1,1}(X)$.
In particular, $\mu(A_j)=0$ and consequently $\mu(A)=0$.

Now let $u\in N^{1,1}(\Om\setminus A)$.
For every $j\in\N$, define $v_j$ on $X\setminus A_j$ by
\[
v_j(x)\coloneqq 
\begin{cases}
\eta_j(x)u(x),&x\in\Om\setminus A,\\
0,&x\in (X\setminus A_j)\setminus(\Om\setminus A).
\end{cases}
\]
This is well defined because $\eta_j=0$ on $A\setminus A_j$.
It is easy to check $v_j\in N^{1,1}(X\setminus A_j)$.
By the isometric removability of $A_j$, there exists $u_j\in N^{1,1}(X)$ such that $u_j=v_j$ $\mu$-a.e.\ in $X\setminus A_j$ and
\begin{equation}\label{eq:localized removability uj}
g_{u_j,X}=g_{v_j,X\setminus A_j}
\quad\text{$\mu$-a.e.\ in $X$}.
\end{equation}
Since $\eta_j=1$ in a neighborhood of $\overline{U_j}$, by \eqref{eq:localized removability uj} we know $u_j=u$ $\mu$-a.e.\ in $U_j$ and
\begin{equation}\label{eq:localized removability gradient Uj}
g_{u_j,U_j}=g_{u,\Om\setminus A}
\quad\text{$\mu$-a.e.\ in $U_j$}.
\end{equation}

Replace each $u_j$ by its $1$-quasicontinuous representative.
The uniqueness of such representatives implies that $u_j=u_{j+1}$ $1$-q.e.\ in $U_j$.
After removing a set $Z\subset\Om$ with $\capa_1(Z)=0$, we may therefore define $\widetilde u=u_j$ on $U_j\setminus Z$ and $\widetilde u=0$ on $Z$.
Then $\widetilde u\in N^{1,1}_{\loc}(\Om)$ and by \eqref{eq:localized removability gradient Uj} we have 
\begin{equation}\label{eq:localized removability local final}
g_{\widetilde u,U_j}=g_{u,\Om\setminus A}
\quad\text{$\mu$-a.e.\ in $U_j$}
\end{equation}
for every $j\in\N$.

Let $g$ equal $g_{u,\Om\setminus A}$ on $\Om\setminus A$ and zero on $A$.
By \eqref{eq:localized removability local final}, the fact that $\Om=\bigcup_jU_j$, and $\mu(A)=0$, we have $\widetilde u\in N^{1,1}(\Om)$ and $g_{\widetilde u,\Om}= g$ $\mu$-a.e.
By the construction we have $\widetilde u=u$ $\mu$-a.e. in $\Om\setminus A$.
If $\Om\setminus A$ is $1$-quasiopen, quasicontinuity gives $\widetilde u=u$ $1$-q.e. in $\Om\setminus A$, and redefining $\widetilde u$ on the exceptional set of zero $1$-capacity implies pointwise equality.
\end{proof}

The next definition separates an exceptional set into an open part controlled by a barrier and a ball-covered part controlled by local oscillations.
The associated cost records the total additional $L^1$-mass introduced by these two mechanisms.

\begin{definition}\label{def:admissible control}
Let $E\subset\Om$, $0<\ell\le1$, and $w\colon\Om\to[-\infty,\infty]$ be pointwise defined.
An $(\ell,w)$-control of $E$ is a triple $\mathcal S=(G,\{B_i\}_i,b)$, where $G\subset\Om$ is open,
$\{B_i=B(z_i,r_i)\}_i\text{ is a finite or countable family of balls}$ with $2B_i\Subset\Om$ and
$0<r_i<\ell/8$, and $b\in L^1(X)$ is nonnegative and Borel measurable, such that $E\subset G\cup\bigcup_iB_i$.
The set $H(\mathcal S)\coloneqq G\cup\bigcup_i2B_i$ is called the control region.
Define
\begin{equation}\label{eq:control cost}
\begin{split}
\mathcal C_w(\mathcal S)
&\coloneqq
\int_Xb\,d\mu
+\sum_i\frac{\osc_{B_i\setminus G}w}{r_i}\mu(2B_i).
\end{split}
\end{equation}
The quantity $\mathcal C_w(\mathcal S)$ is called the control cost.
We require $\mathcal S$ to be a finite-cost control, meaning that $\mathcal C_w(\mathcal S)<\infty$.
We also require the existence of a curve family $\Gamma_0(\mathcal S)$ with $\Mod_1(\Gamma_0(\mathcal S))=0$ such that every rectifiable curve $\gamma\colon [0,\ell_{\gamma}]\to \Om$ with $\gamma\notin\Gamma_0(\mathcal S)$, $\ell_\gamma\ge\ell$, $\gamma(0),\gamma(\ell_\gamma)\notin H(\mathcal S)$, and $\gamma\cap G\ne\emptyset$ satisfies
\begin{equation}\label{eq:control curve condition}
|w(\gamma(0))-w(\gamma(\ell_\gamma))|
\le\int_\gamma b\,ds.
\end{equation}
The choice $G=\emptyset$, $\{B_i\}_i=\emptyset$, and $b=0$ is called the empty control.
\end{definition}

The following proposition shows that controls with $\mu$-vanishing control regions
and uniformly bounded control costs yield a quantitative BV estimate.

\begin{proposition}[Controlled BV criterion]\label{prop:controlled BV criterion}
Let $\Om$ be bounded, let $E,Z\subset\Om$ with $\capa_1(Z)=0$, and let $w\in L^1(\Om)$.
Suppose that there exist $\nu\in\mathcal{M}\bigl(\Om,(800C_1C_S^2C_d^6\widetilde c_*)^{-1}\bigr)$, $M\ge1$, and a nonnegative function $h\in L^1(\Om,\nu)$ such that $\nu(\Om)<\infty$ and
\begin{equation}\label{eq:controlled pointwise hypothesis}
\begin{split}
\Lip_{w,\nu}^{\widetilde c_*,M}(x)
&\le h(x)<\infty
\quad\textrm{for every }x\in\Om\setminus(E\cup Z).
\end{split}
\end{equation}
For $\ell_m\coloneqq 1/m$, suppose that there is an $(\ell_m,w)$-control $\mathcal S_m$ of $E$ for every $m\in\N$ such that
\begin{equation}\label{eq:controlled BV control assumptions}
\mu(H(\mathcal S_m))\to0,\quad
\liminf_{m\to\infty}\mathcal C_w(\mathcal S_m)<\infty.
\end{equation}
Then $w\in\BV(\Om)$ and
\begin{equation}\label{eq:controlled BV estimate}
\|Dw\|(\Om)
\le C\int_\Om h\,d\nu
+C\liminf_{m\to\infty}\mathcal C_w(\mathcal S_m),
\end{equation}
where $C$ depends only on the doubling and Poincar\'e constants and on $M$.
\end{proposition}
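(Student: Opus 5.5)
We construct, for each $m$, a discrete convolution approximation $w_m$ of $w$ adapted to the exceptional sets, with upper gradient controlled in $L^1$ by $C\int_\Om h\,d\nu+C\,\mathcal C_w(\mathcal S_m)$ plus a small error. We then pass to the limit using the definition \eqref{eq: def-BV-metric} of the total variation and its lower semicontinuity.

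\textbf{Step 1: good and bad sets at each scale.} Fix $m$ and a small scale $R\le \ell_m/8$. For $x\in\Om\setminus(E\cup Z)$ the hypothesis \eqref{eq:controlled pointwise hypothesis} gives sets $U_r(x)\subset B(x,r)$ with capacitary complement ratio at most $\widetilde c_*$ and
\[
\osc_{U_r(x)}w\le 2r\,(h(x)+\eps)\,\frac{\nu(B(x,Mr))}{\mu(B(x,Mr))}
\]
for all small $r$. Points where the required scale is not yet reached form sets $F_k$ that decrease as $k\to\infty$. Approximating $h$ by lower semicontinuous functions via Theorem~\ref{thm:VitaliCar} and using Lemma~\ref{lem:weak Borel regularity}, we replace the relevant sets by Borel sets. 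We use Lemma~\ref{lem:choice of new R}, together with the constant $(800C_1C_S^2C_d^6\widetilde c_*)^{-1}$ defining the class $\mathcal M$, to show that the set $A$ of points where the capacitary density of the removed parts is large carries $\nu$-mass at most a fixed fraction below $1/2$ of the total. The precise value of $\widetilde c_*$ is tuned exactly so that $C_*\widetilde c_*$ is small.

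\textbf{Step 2: Whitney covering and partition of unity.} Take the covering of Lemma~\ref{lem:Whitney} of $W=\Om\setminus \overline{H(\mathcal S_m)}$, or of $\Om$ with the control balls, at scale $R$. Build a Lipschitz partition of unity $\{\phi_l\}$ subordinate to $\{\tfrac12B_l\}$ with $\Lip\phi_l\le C/r_l$. In each $B_l$ choose, by Lemma~\ref{lem:medians}, a point $y_l\in U_{r_l}(x_l)$ whose value $w(y_l)$ is a $\beta$-median. Set $w_m=\sum_l\phi_l w(y_l)$. This requires $U_{r_l}(x_l)$ to have positive measure, which follows from \eqref{eq:rcapa in small ball} and $\widetilde c_*<c_*/(C_SC_d)$. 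Its upper gradient on $\tfrac12B_l$ is bounded by
\[
\sum_{m'\sim l}\frac{|w(y_l)-w(y_{m'})|}{r_l}.
\]
Neighbouring good sets $U_{r_l}$ and $U_{r_{m'}}$ intersect in the larger ball $B(x_l,Mr_l)$: small relative capacity of both complements forces the complements not to cover, again by \eqref{eq:rcapa in small ball} and the smallness of $\widetilde c_*$. This intersection yields
\[
|w(y_l)-w(y_{m'})|\lesssim r_l\,h\,\frac{\nu(MB_l)}{\mu(MB_l)}.
\]
The bounded overlap in Lemma~\ref{lem:Whitney}\ref{Whitney-item-3} then gives
\[
\int g_{w_m}\,d\mu\le C\int h\,d\nu+(\text{error}).
\]

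\textbf{Step 3: handling $E$, $Z$ and the control.} On control balls $B_i$ we use the median of $w$ on $B_i\setminus G$. The gradient contribution is bounded by $\osc_{B_i\setminus G}w\,\mu(2B_i)/r_i$, which appears in \eqref{eq:control cost}. On $G$ we do not mollify. Instead we use the curve condition \eqref{eq:control curve condition} to show that $b$ serves as a $1$-weak upper gradient across $G$ for curves longer than $\ell_m$. Short curves are harmless since they are handled by the local Lipschitz structure. The set $Z$ has zero capacity, so by \eqref{eq:capa and Hausdorff measure} and Proposition~\ref{prop:localized isometric removability} it is removable and does not affect the gradient.

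Since $\mu(H(\mathcal S_m))\to0$ and the mollification scale tends to $0$, we get $w_m\to w$ in $L^1(\Om)$. This uses Lebesgue points, the median choice, and the fact that $w\in L^1$. Taking $\liminf$ yields \eqref{eq:controlled BV estimate}.

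\textbf{Main obstacle.} The main obstacle is Step 1/2: guaranteeing that the good sets of neighbouring Whitney balls overlap and that the bad set has small $\nu$-mass uniformly in the scale. This needs the quantitative capacity bookkeeping with Lemma~\ref{lem:capacities continuity} and Lemma~\ref{lem:choice of new R}, and I would first try to absorb the bad set into an iterated Whitney argument, halving the $\nu$-mass each time.
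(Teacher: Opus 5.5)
Your proposal has a genuine gap. The discrete-convolution architecture cannot absorb the non-uniform scales, the residual set, or the open part $G$ of the control.

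\textbf{Non-uniform scales.} Your Step~2 bound $|w(y_l)-w(y_{m'})|\lesssim r_l\,h\,\nu(MB_l)/\mu(MB_l)$ needs two things: every Whitney centre must satisfy the oscillation estimate at its own radius, and neighbouring radii must be comparable. But \eqref{eq:controlled pointwise hypothesis} only supplies good sets $U_r(x)$ below an $x$-dependent threshold. The set of points whose threshold lies below a given scale need not be closed and can be dense. So no single Whitney family with good centres covers $\Om$ or $\Om\setminus\overline{H(\mathcal S_m)}$.

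\textbf{Iterating Whitney coverings.} Your proposed fix, iterating Whitney coverings while halving the $\nu$-mass, is exactly what the paper does. It destroys the partition-of-unity structure, however:
\begin{itemize}
\item Balls coming from different thresholds, or from different stages of the iteration, lie in unrelated open sets and have non-comparable radii.
\item The good set of a later, smaller ball can lie entirely inside the small-capacity complement of an earlier good set. Then there is no overlap with which to compare the chosen values $w(y_l)$.
\item Across such interfaces there is no partition of unity with $\Lip\phi_l\le C/r_l$.
\end{itemize}

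\textbf{The residual set.} The iteration also leaves a residual set that is in general nonempty. To dispose of it, the paper first makes every good set $1$-finely open, using \eqref{eq:characterization of fine closure} and \eqref{eq:capa of fine int}. Then the residual set is $1$-thin at every point outside it and has capacitary density below $c_*$ at its own points. Proposition~\ref{prop:localized isometric removability} then shows it is $\mu$-null and isometrically removable. Nothing in your outline plays this role.

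\textbf{What the paper does instead.} It never mollifies. It forms the path density
$\rho=\sum_{j,k,l}\frac{\osc_{U_{j,k,l}}w}{r_{j,k,l}}\ch_{2B_{j,k,l}}+\sum_i\frac{\osc_{B_i\setminus G}w}{r_i}\ch_{2B_i}+b+q$.
It proves $|w(\gamma(0))-w(\gamma(\ell_\gamma))|\le\int_\gamma\rho\,ds$ only for curves of length at least $\ell$ with endpoints outside $H$. The argument chains through the finely open good sets along the curve, using $1$-path openness of $1$-quasiopen sets, and invokes \eqref{eq:control curve condition} when the curve meets $G$.

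\textbf{Where your Step~3 fails.} Condition \eqref{eq:control curve condition} compares values of $w$ itself at the endpoints of long curves. It therefore says nothing about a function that equals $w$ on $G$ and a discrete convolution outside $G$, whose jump across $\partial G$ is uncontrolled.

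Short curves are not harmless either; they are the reason the paper needs an inf-convolution step. The long-curve inequality is turned into Newtonian approximants via $p_m(z)=\inf_n\bigl(w(y_{m,n})+d_{m,n}(z)\bigr)$. Here:
\begin{itemize}
\item $d_{m,n}$ is the $\rho_m$-path distance from compact sets $A_{m,n}\subset\{w\le w(y_{m,n})\}\cap B(x_{m,n},\ell_m)$, with the centres forming a $3\ell_m$-separated net.
\item Measurability of $p_m$ is proved via analytic sets.
\item The approximants are extended across the removable set.
\item $L^1$ convergence comes from a Poincar\'e estimate for $d_{m,n}$ together with median estimates; curves shorter than $\ell_m$ are handled through the $2B_{m,j}$ median comparison.
\end{itemize}
None of these ingredients appears in your proposal, and the partition-of-unity route as stated supplies no substitute.
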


\begin{proof}
\noindent\textbf{Step 1. Preliminary reductions and uniform decomposition.}

 After passing to a subsequence, assume that $\mathcal C_w(\mathcal S_m)$ converges to the liminf in \eqref{eq:controlled BV estimate} and is uniformly bounded.
Set $\varepsilon_m\coloneqq 2^{-m}$, fix $m\in\N$, and abbreviate $\ell\coloneqq \ell_m$, $\varepsilon\coloneqq \varepsilon_m$, and $\mathcal S_m=(G,\{B_i\}_i,b)$.
By replacing $Z$ with a Borel set of the same zero capacity, we may assume that $Z$ is Borel.
We first prove the estimate when $h$ is lower semicontinuous. Set $\eta\coloneqq \varepsilon/(1+\nu(\Om))$.
For each $x\in\Om\setminus(E\cup Z)$, set
\[
r_x^0\coloneqq\min\{1,\dist(x,X\setminus\Om)/(2M),\diam X/8\}.
\]
In the definition \eqref{eq:generalized Lip definition} of the generalized Lipschitz number, we can interpret the set $U_{r}$ to be defined for every $0<r<r_x^0$ at each such $x$.
Outside the sufficiently small radii used in that definition, set $U_{x,r}=B(x,r)$ for the remaining radii below $r_x^0$.
Thus to every $x\in\Om\setminus(E\cup Z)$ and $0<r<r_x^0$, there corresponds a set $U_{x,r}\subset B(x,r)$ with
\begin{equation}\label{eq:1 over j capa}
	\frac{\rcapa_1(B(x,r)\setminus U_{x,r}, B(x,2r))}{\rcapa_1(B(x,r),B(x,2r))}\le 2\widetilde c_*,
\end{equation}
and
\[
\frac{1}{2}\limsup_{r\to 0}\frac{\osc_{U_{x,r}}w}{r}
\frac{\mu(B(x,Mr))}{\nu(B(x,Mr))}
< \Lip_{w,\nu}^{\widetilde c_*,M}(x)+\eta.
\]
Note that
\begin{align*}
	B(x,r)\setminus \fineint U_{x,r}
	& = B(x,r)\cap (X\setminus \fineint U_{x,r})\\
	& = B(x,r)\cap \overline{X\setminus  U_{x,r}}^1\\
	& \subset \overline{B(x,r)\setminus  U_{x,r}}^1\quad\textrm{by }\eqref{eq:characterization of fine closure}.
\end{align*}
Combining this with \eqref{eq:capa of fine int} and \eqref{eq:1 over j capa}, we see that
\[
\frac{\rcapa_1(B(x,r)\setminus \fineint U_{x,r},B(x,2r))}{\rcapa_1(B(x,r),B(x,2r))}\le 2\widetilde c_*,
	\]
and so we can actually assume that every $U_{x,r}$ is $1$-finely open.
Set $V\coloneqq G\cup \bigcup_i B_i$, so that $E\subset V$.
For each $j\in \N$, let $A_j$ consist of points $x\in\Om\setminus(V\cup Z)$ such that $1/j<r_x^0$ and
\begin{equation}\label{eq:Lip estimate}
\frac{1}{2}\sup_{0<r\le 1/j}\frac{\osc_{U_{x,r}}w}{r}
\frac{\mu(B(x,Mr))}{\nu(B(x,Mr))}
<\Lip_{w,\nu}^{\widetilde c_*,M}(x)+\eta
\end{equation}
and also
\begin{equation}\label{eq:h estimate with a}
\Lip_{w,\nu}^{\widetilde c_*,M}(x)
\le h(y)+\eta\quad \textrm{for all }y\in B(x,M/j).
\end{equation}
We have $\Om=V\cup Z\cup \bigcup_{j=1}^{\infty}A_j$.
For $j=1,2,\ldots$, we inductively define 
\[
S_j\coloneqq A_j\setminus \bigcup_{i=1}^{j-1} D_i,\quad
 D_j\coloneqq \left(\Om\setminus(V\cup Z)\right)\cap \overline{S_j}\setminus \bigcup_{i=1}^{j-1}D_i.
\]
Note that we do not know the sets $A_j$ to be measurable but the sets $D_j$ are Borel sets.
Moreover, the sets $D_j$ are disjoint and $D_j\supset A_j\setminus \bigcup_{i=1}^{j-1}D_i$, so that $\Om=V\cup Z\cup\bigcup_{j=1}^{\infty}D_j$.
We may pick open sets $W_j\supset D_j$ such that $W_j\subset \Om$, and 
\begin{equation}\label{eq: choose Wj}
\int_{W_j} (h+2\eta)\,d\nu \le \int_{D_j}(h+2\eta)\,d\nu +2^{-j}\varepsilon.
\end{equation}

\medskip

\noindent\textbf{Step 2. Iterated Whitney construction and isometric removability of the residual set.}

We define sets $D_{j,k}^{\ell}$ and $W_{j,k}^{\ell}$.
For each $j\in\N$, set $D_{j,1}^{\ell}=D_j$ and $W_{j,1}^{\ell}\coloneqq W_j$, and proceed inductively in $k\ge1$.
From Lemma \ref{lem:h nv is also M}, we know that $h\,d\nu\in \mathcal M\bigl(\Om,(400C_1C_S^2C_d^6\widetilde c_*)^{-1}\bigr)$.
Inductively, we do the following.
From Lemma \ref{lem:choice of new R}, we obtain parameters
\[
R_{*,1}=R_{*,1}(\nu,W_{j,k}^{\ell},100C_1C_S^2C_d^6 \widetilde c_*)
\quad\textrm{and}\quad
R_{*,2}=R_{*,2}(h\,d\nu,W_{j,k}^{\ell},100C_1C_S^2C_d^6 \widetilde c_*).
\]
Let $R_{*}\coloneqq \min\{R_{*,1},R_{*,2}\}$.
Since $W_{j,k}^\ell$ is open and $S_j$ is dense in $D_j$, we have that $S_j\cap W_{j,k}^\ell$
is dense in $D_j\cap W_{j,k}^\ell$.
Using Lemma \ref{lem:Whitney} with $A=S_j\cap W_{j,k}^\ell$ and $D=D_j\cap W_{j,k}^\ell$,
take a Whitney-type covering $\{B_{j,k,l}^{\ell}=B(x_{j,k,l}^{\ell},r_{j,k,l}^{\ell})\}_l$ at scale
\[
R_{j,k}^{\ell}\coloneqq \min\{\ell/(j+k),\,
R_*,\,\diam X/100\}.
\]
For each ball $B_{j,k,l}$, there is the corresponding $1$-finely open set $U_{j,k,l}^{\ell}\subset B_{j,k,l}^{\ell}$.
Define the new ``bad'' set
\begin{equation}\label{eq:Djk def}
D_{j,k+1}^{\ell}\coloneqq D_{j,k}^{\ell}\setminus \bigcup_l U_{j,k,l}^{\ell}.
\end{equation}
By induction, $D_{j,k}^\ell \subset D_j\cap W_{j,k}^\ell$.
By Lemma \ref{lem:Whitney}\ref{Whitney-item-1}, we know that 
$ D_{j,k}^\ell
\subset \bigcup_l\frac{1}{4}B_{j,k,l}^\ell$.
Note that $x_{j,k,l}^\ell\in S_j\subset A_j$ for all $k,l>0$.
		
By \eqref{eq:1 over j capa}, we have
\begin{equation}\label{eq:Djk capacity}
\frac{\rcapa_1( D_{j,k+1}^\ell\cap B_{j,k,l}^{\ell}, 2B_{j,k,l}^{\ell})}{\rcapa_1(B_{j,k,l}^{\ell},2B_{j,k,l}^{\ell})}
\le \frac{\rcapa_1(B_{j,k,l}^{\ell}\setminus U_{j,k,l}^{\ell},2B_{j,k,l}^{\ell})}{\rcapa_1(B_{j,k,l}^{\ell},2B_{j,k,l}^{\ell})}
\le 2\widetilde c_*
\quad\textrm{for all }l .
\end{equation}
Consider $y\in W_{j,k}^{\ell}$.
Suppose $y$ is in no ball $\tfrac 12 B_{j,k,l}^{\ell}$.
Let $r_y^{\ell}\coloneqq \min\{R_{j,k}^{\ell}/8,\dist(y,X\setminus W_{j,k}^{\ell})/(64M)\}$.
Then by Lemma \ref{lem:Whitney} \ref{Whitney-item-4}, we have
\[
B(y,r_y^{\ell})\cap D_{j,k}^\ell=\emptyset
\quad\textrm{and so also}\quad
B(y,r_y^{\ell})\cap D_{j,k+1}^\ell=\emptyset.
\]
If we have $y\in \tfrac 12 B_{j,k,l}^{\ell}$ for some $l$, then by Lemma \ref{lem:Whitney} \ref{Whitney-item-4} we have
\[
\frac{r_{j,k,l}^{\ell}}{16} \le r_y^{\ell}\le \frac{r_{j,k,l}^{\ell}}{4},
\]
and so $B(y,r_y^{\ell})\subset B_{j,k,l}^{\ell}\subset 2B_{j,k,l}^{\ell}\subset B(y,40 r_y^{\ell})$.
Here $40r_y^{\ell}\le5R_{j,k}^{\ell}\le\diam X/20<\diam X/4$ if $\diam X<\infty$.
Thus by \eqref{eq: rcapa-compare-in-ball}, we get
\begin{align*}
	\rcapa_1( D_{j,k+1}^\ell\cap B(y,r_y^{\ell}),B(y,2r_y^{\ell}))&
	\le 45C_S \rcapa_1( D_{j,k+1}^\ell\cap B(y,r_y^{\ell}), B(y,40r_y^{\ell}))\\
	&\le 45C_S\rcapa_1( D_{j,k+1}^\ell\cap B_{j,k,l}^{\ell}, 2B_{j,k,l}^{\ell})\\
	&\le 90C_S  \widetilde c_* \rcapa_1(B_{j,k,l}^{\ell},2B_{j,k,l}^{\ell})\quad\textrm{by }\eqref{eq:Djk capacity}\\
    &\le 90C_S^2C_d^6 \widetilde c_*\rcapa_1(B(y,r_y^\ell), B(y,2r_y^\ell))
     \quad
    \textrm{by }\eqref{eq:rcapa over ball-measure}.
	\end{align*}
Note that $\rcapa_1( D_{j,k+1}^\ell\cap B(y,r_y^{\ell}),B(y,2r_y^{\ell}))\le 90C_S^2C_d^6\widetilde c_*\rcapa_1(B(y,r_y^\ell), B(y,2r_y^\ell))$ thus holds for every $y\in W_{j,k}^{\ell}$.
By Lemma \ref{lem:weak Borel regularity}, we can find a Borel set $D_{j,k+1}^{\ell,*}\supset D_{j,k+1}^\ell$,
with $D^{\ell,*}_{j,k+1}\subset W_{j,k}^{\ell}$, such that for every $y\in W_{j,k}^{\ell}$ we have
\begin{equation}\label{eq:small capacity}
\rcapa_1(D^{\ell,*}_{j,k+1}\cap B(y,r_y^{\ell}),B(y,2r_y^{\ell}))
\le 90C_1C_S^2C_d^6\widetilde c_*\rcapa_1(B(y,r_y^\ell), B(y,2r_y^\ell)).
\end{equation}
Thus the capacity hypothesis of Lemma~\ref{lem:choice of new R} holds with $\delta=100C_1C_S^2C_d^6\widetilde c_*$ and $r=R_{j,k}^{\ell}/8\le R_*$.
Since $h\,d\nu\in \mathcal{M}\bigl(\Om,(400C_1C_S^2C_d^6\widetilde c_*)^{-1}\bigr)$, by Lemma \ref{lem:choice of new R}, first for $h\,d\nu$ and then for $\nu$, we have
\begin{equation}\label{eq:first Djk plus one estimate}
\begin{split}
\int_{D^{\ell,*}_{j,k+1}}(h+2\eta)\,d\nu
&\le 2\bigl((400C_1C_S^2C_d^6\widetilde c_*)^{-1}\bigr)
\left(100C_1C_S^2C_d^6\widetilde c_*\right)
\int_{W_{j,k}^{\ell}}h\,d\nu\\
&\quad+2\eta\,(400C_1C_S^2C_d^6\widetilde c_*)^{-1}
\left(100C_1C_S^2C_d^6\widetilde c_*\right)\nu(W_{j,k}^{\ell})\\
&\le \frac12\int_{W_{j,k}^{\ell}}(h+2\eta)\,d\nu.
\end{split}
\end{equation}
By outer regularity, choose an open set $W_{j,k+1}^{\ell}\subset W_{j,k}^{\ell}$ with $W_{j,k+1}^{\ell}\supset D^{\ell,*}_{j,k+1}$ such that
\begin{equation}\label{eq:estimate list}
\begin{split}
\int_{W_{j,k+1}^{\ell}} (h+2\eta) \,d\nu
\le \int_{D^{\ell,*}_{j,k+1}} (h+2\eta) \,d\nu
+\frac14\int_{W_{j,k}^{\ell}} (h+2\eta) \,d\nu.
\end{split}
\end{equation}

Define
\begin{equation}\label{eq:controlled residual set}
\begin{split}
N_{\ell,\varepsilon}
\coloneqq
\Om\setminus\left(
V\cup{Z}\cup\bigcup_{j,k,l}U_{j,k,l}^{\ell}\right)
=\bigcup_j D_j\setminus \bigcup_{j,k,l}U_{j,k,l}^\ell.
\end{split}
\end{equation}
Recall that $ D_{j,1}^\ell=D_j$.
By \eqref{eq:Djk def}, we have
\begin{equation}\label{eq:Aj minus Nj-new}
D_j\setminus \bigcup_{k,l}U_{j,k,l}^\ell
= \bigcap_{k=1}^\infty D_{j,k}^\ell.
\end{equation}

Now fix $x\in N_{\ell,\varepsilon}$.
There exists $j\in\N$ such that $x\in D_j\setminus \bigcup_{k,l}U_{j,k,l}^\ell$.
Hence for every $k\ge1$, by \eqref{eq:Aj minus Nj-new}, we have
\[
x\in D_{j,k}^\ell\subset \bigcup_l \frac{1}{4}B_{j,k,l}^\ell\subset W_{j,k}^\ell .
\]
Then we have $x\in \frac{1}{4}B_{j,k,l}^\ell\subset\frac{1}{2}B_{j,k,l}^\ell$ for some $l$.
Let $r_{x,k}^{\ell}\coloneqq \min\{R_{j,k}^{\ell}/8,\dist(x,X\setminus W_{j,k}^{\ell})/(64M)\}$.
Then $B(x,r_{x,k}^\ell)\subset W_{j,k}^\ell$.
And by Lemma \ref{lem:Whitney} \ref{Whitney-item-4}, we have 
\[
\frac{r_{j,k,l}^{\ell}}{16} \le r_{x,k}^{\ell}\le \frac{r_{j,k,l}^{\ell}}{4},
\]
and so
$B(x,r_{x,k}^\ell)\subset B_{j,k,l}^\ell\subset 2B_{j,k,l}^\ell\subset B(x,40r_{x,k}^\ell)$.
Thus, we get
\begin{equation}\label{eq: exceptional set-capacitary density}
\begin{split}
    \rcapa_1(B(x,r_{x,k}^\ell)\cap N_{\ell,\varepsilon}, B(x,2r_{x,k}^\ell))
    &\le 45C_S\rcapa_1(B(x,r_{x,k}^\ell)\setminus \bigcup_l U_{j,k,l}^\ell,B(x,40r_{x,k}^\ell))\\
    &\le 45C_S\rcapa_1(B_{j,k,l}^\ell\setminus U_{j,k,l}^\ell, 2B_{j,k,l}^\ell)\\
    &\le 90C_S\widetilde c_*\rcapa_1(B_{j,k,l}^\ell,2B_{j,k,l}^\ell) 
    \quad \textrm{by } \eqref{eq:1 over j capa}\\
    &\le 90C_S^2C_d^6\widetilde c_*\rcapa_1(B(x,r_{x,k}^\ell),B(x,2r_{x,k}^\ell))
    \quad \textrm{by }\eqref{eq:rcapa over ball-measure}.
\end{split}
\end{equation}
Since $R_{j,k}^\ell\le\ell/(j+k)\to0$ as $k\to\infty$, we also have $r_{x,k}^\ell\to0$.
It follows that for every $x\in N_{\ell,\varepsilon}$, by \eqref{eq: exceptional set-capacitary density}, we have 
\begin{equation}\label{eq:cap-removablity}
\begin{split}
    \liminf_{r\to0}
\frac{\rcapa_1(N_{\ell,\varepsilon}\cap B(x,r),B(x,2r))}{\rcapa_1(B(x,r),B(x,2r))}
&\le\liminf_{k\to\infty}\frac{\rcapa_1(N_{\ell,\varepsilon}\cap B(x,r_{x,k}^\ell),B(x,2r_{x,k}^\ell))}{\rcapa_1(B(x,r_{x,k}^\ell),B(x,2r_{x,k}^\ell))}\\
&\le
90C_S^2C_d^6\widetilde c_*
<
c_*
\quad\text{by }\eqref{eq:cn definition}.
\end{split}
\end{equation}
On the other hand, the set $V\cup \bigcup_{j,k,l}U_{j,k,l}^{\ell}$ is $1$-finely open and
disjoint from $N_{\ell,\varepsilon}$.
Consequently, $N_{\ell,\varepsilon}$ is $1$-thin at every
$x\in \Om\setminus(N_{\ell,\varepsilon}\cup Z)\subset V\cup \bigcup_{j,k,l}U_{j,k,l}^{\ell}$, and therefore
\[
\lim_{r\to0}
\frac{\rcapa_1(N_{\ell,\varepsilon}\cap B(x,r),B(x,2r))}
{\rcapa_1(B(x,r),B(x,2r))}=0
\quad\text{for every }x\in \Om\setminus(N_{\ell,\varepsilon}\cup Z).
\]

Combining this with \eqref{eq:cap-removablity}, it follows that
\begin{equation}\label{removablility-N}
\liminf_{r\to 0} \frac{\rcapa_1(N_{\ell,\varepsilon}\cap B(x,r),B(x,2r))}{\rcapa_1(B(x,r),B(x,2r))}<c_* \quad \text{for $1$-q.e. } x\in \Om.
\end{equation}
 By Proposition \ref{prop:localized isometric removability}, $N_{\ell,\varepsilon}$ is isometrically removable for $N^{1,1}(\Om)$ and $\mu(N_{\ell,\varepsilon})=0$.

\medskip
\noindent\textbf{Step 3. Construction and $L^1$-estimate of the path density.}

Define the function
\begin{equation}\label{eq:controlled discrete gradient}
g_{\ell,\varepsilon}\coloneqq \sum_{j,k,l}\frac{\osc_{U_{j,k,l}^{\ell}}w}{r_{j,k,l}^{\ell}}\ch_{2B_{j,k,l}^{\ell}}.
\end{equation}
 If $\nu(MB_{j,k,l}^{\ell})=0$, the convention following \eqref{eq:generalized Lip definition} and the finiteness of \eqref{eq:controlled pointwise hypothesis} imply that $\osc_{U_{j,k,l}^{\ell}}w=0$, so the corresponding summand vanishes.
If $\nu(MB_{j,k,l}^{\ell})>0$, we estimate
\begin{equation}\label{eq:gj estimate}
\begin{split}
\frac{\osc_{U_{j,k,l}^\ell}w}{r_{j,k,l}^\ell}\mu(2B_{j,k,l}^\ell)
& \le  2\mu(2B_{j,k,l}^\ell) \frac{\nu(MB_{j,k,l}^\ell)}{\mu(MB_{j,k,l}^\ell)}\left(\Lip_{w,\nu}^{\widetilde c_*,M}(x_{j,k,l}^\ell)+\eta\right)
\quad \textrm{by }\eqref{eq:Lip estimate}\\
& \le 2C_d \int_{MB_{j,k,l}^\ell} (h+2\eta)\,d\nu\quad 
\textrm{by }\eqref{eq:h estimate with a}.
\end{split}
\end{equation}
Here we use the fact that $x_{j,k,l}^\ell\in A_j$ for every $k,l\in \N$.
        
We also have
\begin{equation}\label{eq:D h nu estimate}
\begin{split}
\int_{W_{j,k+1}^\ell} (h+2\eta) \,d\nu
&\le \frac34\int_{W_{j,k}^\ell} (h+2\eta) \,d\nu
\quad\textrm{by \eqref{eq:estimate list} and \eqref{eq:first Djk plus one estimate}}.
\end{split}
\end{equation}
It follows that
\begin{equation}\label{eq:gj estimate with sum}
\begin{split}
\int_{X}\sum_{j,k,l}\frac{\osc_{U_{j,k,l}^\ell}w}{r_{j,k,l}^\ell}\chi_{2B_{j,k,l}^\ell}\,d\mu
&\le \sum_{j,k,l}\frac{\osc_{U_{j,k,l}^\ell}w}{r_{j,k,l}^\ell}\mu(2B_{j,k,l}^\ell)\\
&\le 2C_d\sum_{j,k,l} \int_{MB_{j,k,l}^\ell} (h+2\eta) \,d\nu\quad\textrm{by }\eqref{eq:gj estimate}\\
&\le 2C_dC_M\sum_{j,k} \int_{W_{j,k}^\ell} (h+2\eta) \,d\nu\quad 
\textrm{by Lemma }\ref{lem:Whitney} \ref{Whitney-item-3}\\
& \le 8C_dC_M\sum_j\int_{W_j} (h+2\eta) \,d\nu
\quad \textrm{by }\eqref{eq:D h nu estimate}
\\
&\le 8C_dC_M\left(\int_{\Om} (h+2\eta)\,d\nu+\varepsilon\right)
 \quad \textrm{by }\eqref{eq: choose Wj}.
\end{split}
\end{equation}
Since $2\eta\nu(\Om)\le2\varepsilon$, we obtain
\begin{equation}\label{eq: estimate g_ell}
\int_{X} g_{\ell,\varepsilon}\,d\mu\le C\int_\Om h\,d\nu+C\varepsilon.
\end{equation}

Set $Y_{\ell,\varepsilon}\coloneqq\Om\setminus N_{\ell,\varepsilon}$.
 Since $V\cup\bigcup_{j,k,l}U_{j,k,l}^{\ell}$ is $1$-finely open and $\capa_1(Z)=0$, the set $Y_{\ell,\varepsilon}=V\cup Z\cup\bigcup_{j,k,l}U_{j,k,l}^{\ell}$ is $1$-quasiopen;
recall \eqref{eq:finely and quasiopen}.
For each $s\in\N$, choose an open set $O_{s,\varepsilon}\subset \Om$ such that $\capa_1(O_{s,\varepsilon})<2^{-s}\varepsilon$ and
$Y_{\ell,\varepsilon}\cup O_{s,\varepsilon}$ is open, and set
\[
\widetilde Y_{\ell,\varepsilon}\coloneqq \bigcap_{s=1}^{\infty}(Y_{\ell,\varepsilon}\cup O_{s,\varepsilon}),
\qquad
Z_{\ell,\varepsilon}\coloneqq \widetilde Y_{\ell,\varepsilon}\setminus Y_{\ell,\varepsilon}.
\]
Then $\widetilde Y_{\ell,\varepsilon}$ is a $G_\delta$ subset of the complete separable space $X$, and hence is Polish, see \cite[Theorem 3.11]{SK95}, and $\capa_1(Z_{\ell,\varepsilon})=0$.

All curve families occurring in modulus estimates in this step consist of nonconstant rectifiable curves, parametrized by arc length on $[0,\ell_\gamma]$, where $0<\ell_\gamma<\infty$.
Let $\Gamma_Z$ be the family of curves meeting $Z$, and define
\[
\Gamma_{\ell,\varepsilon}
\coloneqq \Gamma_0(\mathcal S_m)\cup\Gamma_Z
\cup \Gamma_{Z_{\ell, \varepsilon}}
\cup\bigcup_{j,k,l}\{\gamma:\gamma^{-1}(U_{j,k,l}^{\ell})\text{ is not relatively open}\}.
\]
The $1$-path openness of $1$-quasiopen sets (see \cite[Remark 3.5]{S2})
and the capacity-zero properties of $Z$ and $Z_{\ell,\varepsilon}$ show that $\Mod_1(\Gamma_{\ell,\varepsilon})=0$;
see \cite[Proposition 1.48]{BB}.
Choose a nonnegative Borel function $q_{\ell,\varepsilon}$ such that
\[
\int_Xq_{\ell,\varepsilon}\,d\mu<\varepsilon,
\quad
\int_\gamma q_{\ell,\varepsilon}\,ds=\infty\ \textrm{ for every }\gamma\in\Gamma_{\ell,\varepsilon}.
\]
Define
\begin{equation}\label{eq:controlled rho definition}
\rho_{\ell,\varepsilon}\coloneqq g_{\ell,\varepsilon}+\sum_i\frac{\osc_{B_i\setminus G}w}{r_i}\ch_{2B_i}+b+q_{\ell,\varepsilon}.
\end{equation}
This function is Borel and belongs to $L^1(X)$.
By \eqref{eq: estimate g_ell} and the choice of $q_{\ell,\varepsilon}$, we know that
\begin{equation}\label{eq:controlled rho estimate}
\int_X\rho_{\ell,\varepsilon}\,d\mu
\le C\int_\Om h\,d\nu
+C\mathcal C_w(\mathcal S_m)+C\varepsilon.
\end{equation}

Let $\gamma\colon[0,\ell_\gamma]\to\widetilde Y_{\ell,\varepsilon}$ have length at least $\ell$ and endpoints outside $H(\mathcal S_m)$.
If $\gamma\in\Gamma_{\ell,\varepsilon}$, then its $q_{\ell,\varepsilon}$-integral is infinite.
 Hence the desired upper-gradient inequality for the pair $(w,\rho_{\ell,\varepsilon})$ holds for such a curve.
Suppose that $\gamma\notin\Gamma_{\ell,\varepsilon}$.
Then $\gamma$ avoids $Z$ and $Z_{\ell,\varepsilon}$, so $\gamma\subset Y_{\ell,\varepsilon}\setminus Z\subset V\cup\bigcup_{j,k,l}U_{j,k,l}^{\ell}$.
Recall that $V=G\cup \bigcup_i B_i$.
If $\gamma\cap G\ne\emptyset$, the definition \eqref{eq:control curve condition} implies the desired estimate. 

Suppose finally that $\gamma\cap G=\emptyset$.
Then we have $[0,\ell_\gamma]\subset \bigcup_{j,k,l}\gamma^{-1}(U_{j,k,l}^\ell)\cup \bigcup_i\gamma^{-1}(B_i)$,
i.e. the compact interval has an open cover.
A finite subcover contains a finite chain joining the two endpoints, and after deleting loops the sets in the chain can be taken to be distinct. 
It follows by the triangle inequality that
\[
|w(\gamma(0))-w(\gamma(\ell_\gamma))|
\le\sum_{\gamma\cap U_{j,k,l}^{\ell}\ne\emptyset}\osc_{U_{j,k,l}^{\ell}}w
+\sum_{\gamma\cap B_i\ne\emptyset}\osc_{B_i\setminus G}w.
\]
Here $r_{j,k,l}^{\ell}\le\ell/(j+k)\le\ell/2$ and $r_i<\ell/8$, and so
\[
\sum_{\gamma\cap U_{j,k,l}^{\ell}\ne\emptyset}\osc_{U_{j,k,l}^{\ell}}w
+\sum_{\gamma\cap B_i\ne\emptyset}\osc_{B_i\setminus G}w
\le\int_\gamma\left(g_{\ell,\varepsilon}+\sum_i\frac{\osc_{B_i\setminus G}w}{r_i}\ch_{2B_i}\right)\,ds
\le \int_\gamma\rho_{\ell,\varepsilon}\,ds.
\]
Thus every curve $\gamma\colon[0,\ell_\gamma]\to\widetilde Y_{\ell,\varepsilon}$
with length at least $\ell$ and endpoints outside $H(\mathcal S_m)$ satisfies
\begin{equation}\label{eq:controlled long curve estimate}
|w(\gamma(0))-w(\gamma(\ell_\gamma))|
\le\int_\gamma\rho_{\ell,\varepsilon}\,ds.
\end{equation}

 For each $m$, denote the objects just constructed by
\[
N_m\coloneqq N_{\ell_m,\varepsilon_m},\quad
Y_m\coloneqq \Om\setminus N_m,\quad
\widetilde Y_m\coloneqq \widetilde Y_{\ell_m,\varepsilon_m},\quad
H_m\coloneqq H(\mathcal S_m),\quad
\rho_m\coloneqq \rho_{\ell_m,\varepsilon_m}.
\]
Then $N_m$ is isometrically removable, $\capa_1(\widetilde Y_m\setminus Y_m)=0$, $\mu(H_m)\to0$, and by \eqref{eq:controlled rho estimate} we have $\sup_m\int_X\rho_m\,d\mu<\infty$.
Moreover, every rectifiable curve $\gamma\colon[0,\ell_\gamma]\to\widetilde Y_m$ with $\ell_\gamma\ge\ell_m$ and $\gamma(0),\gamma(\ell_\gamma)\notin H_m$ satisfies
\begin{equation}\label{eq:abstract long curve estimate}
|w(\gamma(0))-w(\gamma(\ell_\gamma))|\le\int_\gamma\rho_m\,ds.
\end{equation}

\medskip
\noindent\textbf{Step 4. Construction of the Newtonian approximations.}
\par\nopagebreak

Set the structural constant $A_0\coloneqq 8\lambda+4$ and define
\[
R_m\coloneqq\{z\in\Om:\dist(z,X\setminus\Om)\le A_0\ell_m\}.
\]
Since $\mu(\Om)<\infty$, we have $\mu(R_m)\to0$.
Choose numbers $K_m\to\infty$ sufficiently slowly that
\begin{equation}\label{eq:choose Km}
K_m\bigl(\mu(H_m)+\mu(R_m)\bigr)\to0
\quad \mathrm{and}\quad
\int_{\{|w|>K_m\}}|w|\,d\mu\to0.
\end{equation}
Put $F_m\coloneqq H_m\cup\{|w|>K_m\}$.

For all sufficiently large $m$, choose a maximal collection $\{x_{m,n}\}_n\subset Y_m\setminus H_m$ such that $\dist(x_{m,n},X\setminus\Om)>(A_0-3)\ell_m$ and $d(x_{m,n},x_{m,n'})\ge3\ell_m$ for $n\ne n'$.
Then the balls $B_{m,n}\coloneqq B(x_{m,n},\ell_m)$ are pairwise disjoint, every fixed dilation of these balls has uniformly bounded overlap, and
\[
\Om\setminus(N_m\cup H_m\cup R_m)\subset\bigcup_n3B_{m,n}.
\]
Denote by $I_m$ the set of indices $n$ for which $\mu(F_m\cap B_{m,n})\ge\mu(B_{m,n})/10$.
By disjointness and doubling, we have
\begin{equation}\label{eq:Im measure}
\begin{split}
\mu\left(\bigcup_{n\in I_m}3B_{m,n}\right)
&\le 10C_d^2\mu(F_m)\to0
\quad\textrm{as }m\to\infty.
\end{split}
\end{equation}

Fix $n\notin I_m$ and set $E_{m,n}\coloneqq B_{m,n}\setminus(F_m\cup N_m)$, so that $\mu(E_{m,n})\ge9\mu(B_{m,n})/10$.
By Lemma~\ref{lem:medians}, there exists $y_{m,n}\in E_{m,n}$ such that $w(y_{m,n})$ is a $\beta'$-median of $w$ on $E_{m,n}$ for some $2/5\le\beta'\le3/5$.
Consequently,
\[
\mu\bigl(E_{m,n}\cap\{w\le w(y_{m,n})\}\bigr)\ge\frac9{25}\mu(B_{m,n})
\quad\textrm{and}\quad
\mu\bigl(E_{m,n}\cap\{w\ge w(y_{m,n})\}\bigr)\ge\frac9{25}\mu(B_{m,n}).
\]
By inner regularity, choose a compact set
\[
A_{m,n}\subset E_{m,n}\cap\{w\le w(y_{m,n})\}
\quad\textrm{such that}\quad
\mu(A_{m,n})>\frac13\mu(B_{m,n}).
\]
We denote $p\coloneqq \mu(\{x\in B_{m,n}\colon w(x)\le w(y_{m,n})\})/\mu(B_{m,n})$; $q\coloneqq \mu(\{x\in B_{m,n}\colon w(x)\ge w(y_{m,n})\})/\mu(B_{m,n})$, respectively.
Choose
\[
\beta_{m,n}\in[\max\{1/4,1-q\},\min\{3/4,p\}].
\]
This interval is nonempty because $p,q\ge9/25$ and $p+q\ge1$.
Thus $w(y_{m,n})$ is a $\beta_{m,n}$-median of $w$ on $B_{m,n}$ with $1/4\le\beta_{m,n}\le3/4$.
Moreover, $A_{m,n}\subset Y_m\setminus H_m$, $w\le w(y_{m,n})$ on $A_{m,n}$, and $|w(y_{m,n})|\le K_m$.

For $z\in\widetilde Y_m$, define
\begin{equation}\label{eq:abstract vm definition}
\begin{split}
d_{m,n}(z)&\coloneqq\inf_\gamma\left\{\int_\gamma\rho_m\,ds:\gamma(0)\in A_{m,n},\ \gamma(\ell_\gamma)=z,\ \gamma\subset\widetilde Y_m\right\},\\
p_m(z)&\coloneqq\inf_{n\notin I_m}\bigl(w(y_{m,n})+d_{m,n}(z)\bigr),\\
v_m(z)&\coloneqq T_{K_m}(p_m(z))=\max\{-K_m,\min\{K_m,p_m(z)\}\}.
\end{split}
\end{equation}
Here the infimum of the empty set is $\infty$ and $T_{K_m}(\infty)\coloneqq K_m$.
Since $w(y_{m,n})\ge-K_m$ and $d_{m,n}\ge0$, the function $p_m$ never takes the value $-\infty$.

We first prove that $v_m$ is measurable.
For $L\in\N$, let
\[
\mathcal C_{m,L}\coloneqq\{\gamma\in C([0,1],X):\Lip(\gamma)\le L,\ \gamma([0,1])\subset\widetilde Y_m\},
\]
equipped with the topology of uniform convergence (note that here we do not require curves to be parametrized by arc-length).
Since $\widetilde Y_m$ is a $G_\delta$ subset of the complete separable space $X$, the space $\mathcal C_{m,L}$ is Polish, see \cite[Theorem 3.11, Theorem 4.19]{SK95}.
For $n\notin I_m$ and $a\in\R$, set
\[
\mathcal A_{m,n,L}(a)\coloneqq\left\{\gamma\in\mathcal C_{m,L}:\gamma(0)\in A_{m,n},\ w(y_{m,n})+\int_\gamma\rho_m\,ds<a\right\}.
\]
The evaluation map $e_0(\gamma)\coloneqq\gamma(0)$ is continuous, and $\{\gamma\in \mathcal{C}_{m,L}\colon e_0(\gamma)\in A_{m,n}\}=e_0^{-1}(A_{m,n})$ is closed because $A_{m,n}$ is compact.
By the argument in \cite[Proposition 1.5]{BBM17}, the map $J_m(\gamma)\colon \gamma\longmapsto\int_\gamma\rho_m\,ds$ is Borel on $\mathcal C_{m,L}$, and hence $\mathcal A_{m,n,L}(a)=e_0^{-1}(A_{m,n})\cap J_m^{-1}((-\infty, a- w(y_{m,n})))$ is Borel.
With $e_1(\gamma)\coloneqq\gamma(1)$, the definition in \eqref{eq:abstract vm definition} gives
\[
\{p_m<a\}=\bigcup_{n\notin I_m}\bigcup_{L\in\N}e_1(\mathcal A_{m,n,L}(a)).
\]
Each set on the right-hand side is analytic, being the continuous image of a Borel subset of a Polish space.
Thus $\{p_m<a\}$ is analytic and hence universally measurable by \cite[Theorem 21.10]{SK95}.
Consequently, $p_m$ and $v_m$ are universally measurable, and in particular $\mu$-measurable.
The same argument also shows that, for every $n\notin I_m$,
the function $d_{m,n}$ is $\mu$-measurable.

We next verify the upper-gradient property.
Let $\gamma\subset\widetilde Y_m$ be a rectifiable curve joining $x$ to $z$.
Suppose that $\int_\gamma \rho_m\,ds<\infty$ and $p_m(x)<\infty$, and fix $\delta>0$.
Choose $n\notin I_m$ such that $w(y_{m,n})+d_{m,n}(x)\le p_m(x)+\delta$.
By the definition \eqref{eq:abstract vm definition}, we can choose a rectifiable curve $\eta\subset\widetilde Y_m$ with $\eta(0)\in A_{m,n}$ and $\eta(\ell_\eta)=x$ such that $\int_\eta\rho_m\,ds\le d_{m,n}(x)+\delta$.
Concatenating $\eta$ with $\gamma$, we obtain an admissible curve in the definition of $d_{m,n}(z)$, and hence
\[
\begin{split}
p_m(z)
&\le w(y_{m,n})+d_{m,n}(z)\\
&\le w(y_{m,n})+\int_\eta\rho_m\,ds+\int_\gamma\rho_m\,ds\\
&\le w(y_{m,n})+d_{m,n}(x)+\delta+\int_\gamma\rho_m\,ds\\
&\le p_m(x)+2\delta+\int_\gamma\rho_m\,ds.
\end{split}
\]
Letting $\delta\to0$ and then applying the same argument to the reversed curve yields $|p_m(z)-p_m(x)|\le \int_\gamma \rho_m\,ds$.
If $p_m(x)=\infty$, then $p_m(z)=\infty$ as well, since otherwise the reversed-curve argument would imply $p_m(x)<\infty$.
It follows from the truncation in \eqref{eq:abstract vm definition} that
\[
|v_m(z)-v_m(x)|\le\int_\gamma\rho_m\,ds.
\]
Thus $\rho_m$ is an upper gradient of $v_m$ on $\widetilde Y_m$, and hence on $Y_m$.
Since $|v_m|\le K_m$ and $\mu(\Om)<\infty$, we have $v_m\in N^{1,1}(Y_m)$.
By Proposition~\ref{prop:localized isometric removability}, $v_m$ extends to a function in $N^{1,1}(\Om)$, still denoted by $v_m$, and therefore
\[
\int_\Om g_{v_m}\,d\mu\le\int_X\rho_m\,d\mu.
\]
Since $Y_m$ is $1$-quasiopen, we choose the extension to agree pointwise on $Y_m$ with the function defined by \eqref{eq:abstract vm definition}, and still denote it by $v_m$.

\medskip
\noindent\textbf{Step 5. $L^1$-convergence and conclusion.}
\par\nopagebreak

We first estimate the path distances $d_{m,n}$.
Fix $n\notin I_m$ and $L>0$, and set $d_{m,n}^L\coloneqq\min\{d_{m,n},L\}$ on $Y_m$.
We verify that $\rho_m$ is an upper gradient of $d_{m,n}^L$ on $Y_m$.
Let $\gamma\subset Y_m$ be a rectifiable curve joining $x$ to $z$.
The upper-gradient inequality is immediate if $\int_\gamma\rho_m\,ds=\infty$ or $d_{m,n}^L(x)=d_{m,n}^L(z)$.
Otherwise, reversing $\gamma$ if necessary, we may assume that $d_{m,n}^L(x)<d_{m,n}^L(z)$, which implies $d_{m,n}(x)=d_{m,n}^L(x)<L$.
For every $\delta>0$, by the definition of $d_{m,n}(x)$, there exists a rectifiable curve $\eta\subset\widetilde Y_m$ joining a point of $A_{m,n}$ to $x$ such that $\int_\eta\rho_m\,ds\le d_{m,n}(x)+\delta$.
Concatenating $\eta$ with $\gamma$ we obtain an admissible curve in the definition of $d_{m,n}(z)$, and hence
\[
d_{m,n}^L(z)
\le d_{m,n}(z)
\le \int_\eta\rho_m\,ds+\int_\gamma\rho_m\,ds
\le d_{m,n}^L(x)+\delta+\int_\gamma\rho_m\,ds.
\]
Letting $\delta\to0$, we obtain
\[
|d_{m,n}^L(z)-d_{m,n}^L(x)|
\le \int_\gamma\rho_m\,ds.
\]

On the other hand, the constant curves based at points of $A_{m,n}$ show that $d_{m,n}^L=0$ on $A_{m,n}$.
Since $Y_m$ is $1$-quasiopen, by Proposition \ref{prop:localized isometric removability} we can extend $d_{m,n}^L$ to
$d_{m,n}^L\in N^{1,1}(\Om)$ with $g_{d_{m,n}^L}\le \rho_m$ $\mu$-a.e. in $\Om$.
Since $\mu(A_{m,n})>\mu(B_{m,n})/3$ and $\mu(3B_{m,n})\le C\mu(B_{m,n})$, the zero set of $d_{m,n}^L$ has measure comparable to $\mu(3B_{m,n})$.
Using this fact to control the average $(d_{m,n}^L)_{3B_{m,n}}$ and then applying the Poincar\'e inequality, we obtain
\begin{equation}\label{eq:abstract set distance estimate}
\begin{split}
\int_{3B_{m,n}}d_{m,n}^L\,d\mu
&\le C\int_{3B_{m,n}}|d_{m,n}^L-(d_{m,n}^L)_{3B_{m,n}}|\,d\mu\\
&\le C\ell_m\int_{3\lambda B_{m,n}}\rho_m\,d\mu.
\end{split}
\end{equation}
The choice of $A_0$ ensures that $3\lambda B_{m,n}\Subset\Om$.
Letting $L\to\infty$ in \eqref{eq:abstract set distance estimate} and using monotone convergence, we have
\[
\int_{3B_{m,n}}d_{m,n}\,d\mu\le C\ell_m\int_{3\lambda B_{m,n}}\rho_m\,d\mu.
\]
Since the balls $3\lambda B_{m,n}$ have uniformly bounded overlap, we conclude that
\begin{equation}\label{eq:abstract summed distance estimate}
\begin{split}
\sum_{n\notin I_m}\int_{3B_{m,n}}d_{m,n}\,d\mu
&\le C\ell_m\int_X\rho_m\,d\mu\to0.
\end{split}
\end{equation}

We next control the median errors.
Let $\varphi\in\Lip_c(\Om)$ satisfy $\|w-\varphi\|_{L^1(\Om)}<\delta$.
By the definition of medians and $1/4\le\beta_{m,n}\le3/4$, we know that the set
\[
S_{m,n}\coloneqq B_{m,n}\cap\{|w-\varphi(x_{m,n})|\ge|w(y_{m,n})-\varphi(x_{m,n})|\}
\]
has measure $\mu(S_{m,n})\ge \mu(B_{m,n})/4$, and hence
\begin{equation}\label{eq:median-estimate}
|w(y_{m,n})-\varphi(x_{m,n})|
\le\frac{1}{\mu(S_{m,n})}\int_{S_{m,n}}|w-\varphi(x_{m,n})|\,d\mu
\le 4\vint{B_{m,n}}|w-\varphi(x_{m,n})|\,d\mu.
\end{equation}

Using the disjointness of the balls $B_{m,n}$, the bounded overlap of the balls $3B_{m,n}$,
we obtain
\begin{equation}\label{eq:abstract median convergence}
\begin{split}
&\sum_{n\notin I_m}\int_{3B_{m,n}}|w-w(y_{m,n})|\,d\mu\\
&\quad\le\sum_{n\notin I_m}\left(\int_{3B_{m,n}}|w-\varphi(x_{m,n})|\,d\mu
+C\int_{B_{m,n}}|w-\varphi(x_{m,n})|\,d\mu\right)
\quad \text{by \eqref{eq:median-estimate}}\\
&\quad \le C\sum_{n\notin I_m}\left(\int_{3B_{m,n}}|w-\varphi|\,d\mu+\int_{3B_{m,n}} |\varphi-\varphi(x_{m,n})|\,d\mu\right)\\
&\quad\le C\|w-\varphi\|_{L^1(\Om)}
+C\mu(\Om)\sup_{d(x,y)\le4\ell_m}|\varphi(x)-\varphi(y)|.
\end{split}
\end{equation}
First letting $m\to\infty$ and using the uniform continuity of $\varphi$, and then letting $\delta\to0$
shows that the left-hand side of \eqref{eq:abstract median convergence} tends to zero.

Set
\[
Y_m^\ast\coloneqq Y_m\cap\left(\left(\bigcup_{n\notin I_m}3B_{m,n}\right)\setminus(F_m\cup R_m)\right).
\]
For $z\in3B_{m,n}\cap Y_m^\ast$ and $n\notin I_m$, the definition of $p_m$ gives $p_m(z)\le w(y_{m,n})+d_{m,n}(z)$, and hence
\[
(p_m(z)-w(z))_+\le d_{m,n}(z)+|w(z)-w(y_{m,n})|.
\]
Since the balls $3B_{m,n}$,
$n\notin I_m$, cover $Y_m^\ast$, equations \eqref{eq:abstract summed distance estimate} and \eqref{eq:abstract median convergence} yield
\begin{equation}\label{eq:abstract upper deviation}
\int_{Y_m^\ast}(p_m-w)_+\,d\mu
\le\sum_{n\notin I_m}\int_{3B_{m,n}}d_{m,n}\,d\mu
+\sum_{n\notin I_m}\int_{3B_{m,n}}|w-w(y_{m,n})|\,d\mu\to0.
\end{equation}

To estimate the opposite deviation, fix $z\in Y_m^\ast$, $j\notin I_m$, $x\in A_{m,j}$, and a rectifiable curve $\gamma\subset\widetilde Y_m$ joining $x$ to $z$.
If $\ell_\gamma\ge\ell_m$, then using $x,z\notin H_m$, as well as \eqref{eq:abstract long curve estimate} and the inequality $w(x)\le w(y_{m,j})$, we have
\[
w(y_{m,j})+\int_\gamma\rho_m\,ds\ge w(x)+\int_\gamma\rho_m\,ds\ge w(z).
\]
If $\ell_\gamma<\ell_m$, then $z\in2B_{m,j}$ and $w(y_{m,j})+\int_\gamma\rho_m\,ds\ge w(y_{m,j})$.
Taking the infimum over all source points, curves, and indices gives
\[
p_m(z)\ge\min\left\{w(z),\inf_{\substack{j\notin I_m\\ z\in2B_{m,j}}}w(y_{m,j})\right\}.
\]
Consequently,
\[
(w(z)-p_m(z))_+
\le (w(z)-\inf_{\substack{j\notin I_m\\ z\in2B_{m,j}}}w(y_{m,j}))_+
\le\sum_{j\notin I_m}|w(z)-w(y_{m,j})|\ch_{2B_{m,j}}(z).
\]
Integration and \eqref{eq:abstract median convergence} show that
\begin{equation}\label{eq:abstract lower deviation}
\begin{split}
\int_{Y_m^\ast}(w-p_m)_+\,d\mu
&\le\sum_{j\notin I_m}\int_{2B_{m,j}}|w-w(y_{m,j})|\,d\mu\to0.
\end{split}
\end{equation}
Combining \eqref{eq:abstract upper deviation} and \eqref{eq:abstract lower deviation} we know $\int_{Y_m^\ast}|p_m-w|\,d\mu\to0$.
Since $|w|\le K_m$ on $Y_m^\ast$, by the definition \eqref{eq:abstract vm definition}, we have
\[
\int_{Y_m^\ast}|v_m-w|\,d\mu\le\int_{Y_m^\ast}|p_m-w|\,d\mu\to0.
\]

Let
\[
Q_m\coloneqq F_m\cup R_m\cup\bigcup_{n\in I_m}3B_{m,n}.
\]
The maximality of the centers shows that $\Om\setminus Y_m^\ast$ is contained in $Q_m$ up to the null set $N_m$.
By \eqref{eq:choose Km} and \eqref{eq:Im measure}, it follows that $\mu(Q_m)\to0$ and $K_m\mu(Q_m)\to0$, where we also use
\[
K_m\mu(\{|w|>K_m\})\le\int_{\{|w|>K_m\}}|w|\,d\mu.
\]
Since $w\in L^1(\Om)$, absolute continuity of the integral gives
\[
\int_{Q_m}|v_m-w|\,d\mu\le K_m\mu(Q_m)+\int_{Q_m}|w|\,d\mu\to0.
\]
Thus $v_m\to w$ in $L^1(\Om)$.
By lower semicontinuity of the total variation, now we have
\[
\|Dw\|(\Om)\le\liminf_{m\to\infty}\|Dv_m\|(\Om)\le\liminf_{m\to\infty}\int_X\rho_m\,d\mu.
\]
By \eqref{eq:controlled rho estimate} and letting $\varepsilon_m\to0$,
it follows that \eqref{eq:controlled BV estimate} holds when $h$ is lower semicontinuous.
For a general nonnegative $h\in L^1(\Om,\nu)$, by Theorem~\ref{thm:VitaliCar}, there exist lower semicontinuous functions $h_s\ge h$ with $h_s\to h$ in $L^1(\Om,\nu)$.
\end{proof}
	
By using the empty control at the scales $\ell_m=1/m$, we get the following.

\begin{theorem}\label{thm:main in text}
Let $Z\subset\Om$ satisfy $\capa_1(Z)=0$, let $w\in L^1_{\loc}(\Om)$, and suppose that there exist $\nu\in \mathcal M(\Om,(800C_1C_S^2C_d^6 \widetilde c_*)^{-1})$, $M\ge1$, and a nonnegative function $h\in L^1(\Om,\nu)$ such that $\nu(\Om)<\infty$ and
\[
\Lip_{w,\nu}^{\widetilde c_*,M}(x)\le h(x)<\infty
\quad\textrm{for every }x\in\Om\setminus Z.
\]
Then $w\in D^{\BV}(\Om)$ with $\|Dw\|\le Ch\nu$.
\end{theorem}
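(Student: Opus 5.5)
Theorem~\ref{thm:main in text} will follow from Proposition~\ref{prop:controlled BV criterion} applied with $E=\emptyset$ and the empty control $\mathcal S_m=(\emptyset,\emptyset,0)$ for every $\ell_m=1/m$, together with a localization argument. The statement has two parts: first $w\in D^{\BV}(\Om)$, and then the measure inequality $\|Dw\|\le Ch\nu$. The proposition needs $\Om$ bounded and $w\in L^1(\Om)$, so we work on small balls and then localize the resulting estimate to arbitrary open sets.

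We first check that the empty control is admissible. Since $G=\emptyset$, no curve meets $G$, so the curve condition \eqref{eq:control curve condition} holds vacuously with $\Gamma_0=\emptyset$. The control region $H(\mathcal S_m)$ is empty, so $\mu(H(\mathcal S_m))=0$, and the cost is $\mathcal C_w(\mathcal S_m)=0$. Thus \eqref{eq:controlled BV control assumptions} holds. Moreover $E=\emptyset$, so hypothesis \eqref{eq:controlled pointwise hypothesis} is exactly the assumption on $\Om\setminus Z$.

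Next we localize. Fix an open set $W\subset\Om$ and a ball $B$ with $B\Subset W$, and take $\Om'=B$, which is bounded, with $w\in L^1(B)$ because $w\in L^1_{\loc}(\Om)$. Set $\nu'=\nu\mres B$ and $h'=h|_B$, so $\nu'(B)<\infty$ and $h'\in L^1(B,\nu')$. Two points must be checked.

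\textbf{The class $\mathcal M$ restricts.} For an open $W'\subset B$, the condition defining $\mathcal M(\Om,\cdot)$ for $W'$ involves only $\nu$ on $W'$. Hence $\nu'\in\mathcal M(B,(800C_1C_S^2C_d^6\widetilde c_*)^{-1})$ with the same constant.

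\textbf{The Lipschitz number does not change.} For $x\in B$ and small $r$ we have $B(x,Mr)\subset B$, so $\nu'(B(x,Mr))=\nu(B(x,Mr))$. Since the definition \eqref{eq:generalized Lip definition} takes a $\limsup$ as $r\to0$ and only uses sets $U_r\subset B(x,r)\subset B$, we get $\Lip_{w,\nu'}^{\widetilde c_*,M}(x)=\Lip_{w,\nu}^{\widetilde c_*,M}(x)\le h(x)$ for $x\in B\setminus Z$. Also $\capa_1(Z\cap B)=0$.

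Proposition~\ref{prop:controlled BV criterion} then gives
\[
\|Dw\|(B)\le C\int_B h\,d\nu .
\]

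Finally we pass to general open sets. Covering $\Om$ by countably many such balls shows $\|Dw\|(B)<\infty$ locally, so $w\in\BV_{\loc}(\Om)$ and $\|Dw\|$ is a Radon measure on $\Om$. To get the measure inequality on an arbitrary open $W\subset\Om$, we would rather not rely on disjoint covers. The estimate above also holds for any bounded open $\Om'\Subset W$ in place of $B$, by the same argument, so $\|Dw\|(\Om')\le C\int_{\Om'}h\,d\nu\le C\int_W h\,d\nu$. Exhausting $W$ by an increasing sequence of such $\Om'$ and using inner regularity of $\|Dw\|$ on open sets gives
\[
\|Dw\|(W)\le C\int_W h\,d\nu .
\]
Taking $W=\Om$ yields $\|Dw\|(\Om)\le C\int_\Om h\,d\nu<\infty$, so $w\in D^{\BV}(\Om)$. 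By outer regularity the inequality extends from open sets to all Borel sets, which gives $\|Dw\|\le Ch\nu$.

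The main obstacle is confirming that the proposition really applies on a subdomain. This requires that membership in $\mathcal M$ and the generalized Lipschitz number are both local. The preceding argument indicates they are, since everything depends only on small balls around points inside $\Om'$. The delicate case is points near $\partial\Om'$, but for each fixed $x\in\Om'$ only radii $r\to0$ matter, so they cause no trouble.
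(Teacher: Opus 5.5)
Your proposal is correct and is essentially the paper's proof. You apply Proposition~\ref{prop:controlled BV criterion} with $E=\emptyset$ and the empty $(\ell_m,w)$-controls on bounded open sets $U\Subset\Om$, after checking that $\nu|_U\in\mathcal M(U,(800C_1C_S^2C_d^6\widetilde c_*)^{-1})$ and that the generalized Lipschitz number depends only on small radii. You then conclude, as the paper does, via inner regularity of $\|Dw\|$ on open sets and outer regularity of the Radon measures.
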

	
Recall that $D^{\BV}(\Om)$ denotes the Dirichlet space, that is, the BV space without the requirement
$w\in L^1(\Om)$.

\begin{proof}
Let $U\Subset\Om$ be a bounded open set.
 For the restricted measure, we have $\nu|_U\in\mathcal M(U,(800C_1C_S^2C_d^6\widetilde c_*)^{-1})$, and the generalized Lipschitz number relative to $U$ agrees with the original one at sufficiently small radii.
Apply Proposition~\ref{prop:controlled BV criterion} in $U$ with $E=\emptyset$, the capacity-zero set $Z\cap U$, $\ell_m=1/m$, and the empty $(\ell_m,w)$-controls.
Consequently,
\[
\|Dw\|(U)\le C\int_Uh\,d\nu.
\]
 Inner regularity on open sets and outer regularity of the Radon measures now give $w\in D^{\BV}(\Om)$ and $\|Dw\|\le Ch\nu$.
\end{proof}

For bounded functions, a set of finite codimension-one Hausdorff measure can be controlled by small balls
at a cost proportional to the global oscillation.

\begin{proposition}\label{prop:bounded finite H exceptional set}
Let $\Om$ be bounded, let $E\subset\Om$ satisfy $\mathcal H(E)<\infty$, and let $w\colon\Om\to\R$ be bounded and measurable.
Suppose that $M\ge1$, $\nu\in\mathcal M(\Om,(800C_1C_S^2C_d^6\widetilde c_*)^{-1})$, $\nu(\Om)<\infty$, and $h\in L^1(\Om,\nu)$ is nonnegative.
If
\[
\Lip_{w,\nu}^{\widetilde c_*,M}(x)\le h(x)<\infty
\quad\textrm{for every }x\in\Om\setminus E,
\]
then $w\in D^{\BV}(\Om)$ and
\[
\|Dw\|(\Om)
\le C\int_\Om h\,d\nu
+C\osc_\Om w\,\mathcal H(E),
\]
where $C$ depends only on the doubling and Poincar\'e constants and $M$.
\end{proposition}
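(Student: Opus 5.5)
The plan is to apply Proposition \ref{prop:controlled BV criterion} with $Z=\emptyset$ and the given set $E$. For each $m\in\N$ we build an $(\ell_m,w)$-control $\mathcal S_m$ of $E$ that uses no open barrier: $G=\emptyset$, $b=0$, and only a family of small balls. Because $G=\emptyset$, the curve condition \eqref{eq:control curve condition} holds vacuously, so $\Gamma_0(\mathcal S_m)=\emptyset$ works. The cost then reduces to
\[
\mathcal C_w(\mathcal S_m)=\sum_i\frac{\osc_{B_i}w}{r_i}\mu(2B_i)\le C_d\osc_\Om w\sum_i\frac{\mu(B_i)}{r_i}.
\]
We will therefore choose the balls from a near-optimal cover for $\mathcal H_R(E)$ with a very small $R$. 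Finiteness of the cost comes for free from boundedness of $w$ and $\mathcal H(E)<\infty$.

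\textbf{Choosing the cover.} Fix $m$ and take $R_m\le\min\{\ell_m/16,\,1/m\}$. Then pick a countable cover $\{B(z_i,r_i)\}_i$ of $E$ with $r_i\le R_m$ and
\[
\sum_i\mu(B(z_i,r_i))/r_i\le\mathcal H_{R_m}(E)+1/m\le\mathcal H(E)+1/m .
\]
We may assume every ball meets $E$. The control requires $2B_i\Subset\Om$, but balls centered near $\partial\Om$ may violate this. To handle it, exhaust $E$ by $E_k\coloneqq\{x\in E:\dist(x,X\setminus\Om)>1/k\}$ and let $k\to\infty$ together with $m$. Alternatively, run the argument in open sets $U\Subset\Om$, as in the proof of Theorem \ref{thm:main in text}, and then use inner regularity to pass from $U$ to $\Om$. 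In $U$ the points of $E\setminus U$ are irrelevant, and $\mathcal H(E\cap U)\le\mathcal H(E)$. Since $U$ is at positive distance from $X\setminus\Om$, the condition $2B_i\Subset\Om$ holds once $R_m$ is small.

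\textbf{Checking the hypotheses of Proposition \ref{prop:controlled BV criterion}.} The remaining hypothesis is $\mu(H(\mathcal S_m))\to0$, where $H(\mathcal S_m)=\bigcup_i2B_i$. This follows from
\[
\mu\Bigl(\bigcup_i2B_i\Bigr)\le C_d\sum_i\mu(B_i)\le C_dR_m\sum_i\mu(B_i)/r_i\le C_dR_m(\mathcal H(E)+1),
\]
which tends to $0$. We also have $w\in L^1(U)$, since $w$ is bounded and $U$ is bounded. The restricted measure satisfies $\nu|_U\in\mathcal M(U,\cdot)$, as noted in the proof of Theorem \ref{thm:main in text}. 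The pointwise hypothesis \eqref{eq:controlled pointwise hypothesis} holds on $U\setminus E$, because the Lipschitz number computed relative to $U$ agrees with the original one at small radii. Proposition \ref{prop:controlled BV criterion} then gives
\[
\|Dw\|(U)\le C\int_Uh\,d\nu+C\,C_d\osc_\Om w\,\mathcal H(E).
\]
Taking the supremum over $U\Subset\Om$, using inner regularity of $\|Dw\|$ on open sets, yields $w\in D^{\BV}(\Om)$ with the claimed bound.

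\textbf{Expected obstacle.} The delicate point is the interplay between the boundary of $\Om$ and the requirement $2B_i\Subset\Om$. It must be checked that the constant $C$ does not depend on $U$. This holds because $C$ in Proposition \ref{prop:controlled BV criterion} depends only on the doubling and Poincar\'e constants and on $M$. One also needs $\mathcal H$ covers, defined with balls centered anywhere, to be usable; this is why we insist that each ball meets $E$, which ensures $2B_i\subset\Om$ after the localization. Beyond this, everything is a direct verification of the control axioms.
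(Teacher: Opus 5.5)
Your overall strategy is exactly the paper's: use the control $(\emptyset,\{B_i\}_i,0)$ built from a near-optimal $\mathcal H$-cover, with the cost and $\mu(H(\mathcal S_m))$ estimates you give. The gap is in the step you yourself flag as delicate, the requirement $2B_i\Subset\Om$; neither of your two fixes handles it.

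If you run Proposition \ref{prop:controlled BV criterion} in $U\Subset\Om$, then $U$ plays the role of $\Om$ there. The triple must then be a control of $E\cap U$ with $2B_i\Subset U$, not merely $2B_i\Subset\Om$. Points of $E\cap U$ can accumulate at $\partial U$; for instance $E$ may be dense in $\Om$, as a countable dense set in $\R^n$, $n\ge2$, is, with $\mathcal H(E)=0$. So a uniform small $R_m$ does not give $2B_i\Subset U$, and passing to a smaller or larger $U$ only moves the problem. Your other alternative, covering only $E_k=\{x\in E:\dist(x,X\setminus\Om)>1/k\}$, produces a triple that does not cover $E$. The criterion needs $E\subset G\cup\bigcup_iB_i$, because on $E\setminus E_k$ you have no bound on the Lipschitz number.

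What you need is a cover whose radii shrink near $\partial\Om$ but whose cost is still at most about $\mathcal H(E)+2^{-m}$. This is what the paper means by choosing $r_{m,i}<\min\{\ell_m/8,\dist(z_{m,i},X\setminus\Om)/4\}$. To justify it, split $E$ into the pieces $E\cap S_k$, $k\in\mathbb Z$, using the Borel shells $S_k\coloneqq\{x\in\Om:2^{-k-1}<\dist(x,X\setminus\Om)\le2^{-k}\}$. Cover each $E\cap S_k$ by balls meeting it, with radii at most $\min\{\ell_m/16,2^{-k-3}\}$ and $\sum\mu(B_i)/r_i\le\mathcal H(E\cap S_k)+2^{-|k|-m-2}$. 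Any such ball satisfies $2B_i\subset B(x,3r_i)$ for some $x\in S_k$ with $3r_i<\dist(x,X\setminus\Om)$, hence $2B_i\Subset\Om$ by properness.

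Finally, $\mathcal H$ is a metric outer measure, since balls of radius less than $\dist(A,B)/2$ cannot meet both $A$ and $B$. Hence Borel sets are $\mathcal H$-measurable and $\sum_k\mathcal H(E\cap S_k)\le\mathcal H(E)$, so the total sum is at most $\mathcal H(E)+2^{-m}$. With this cover your remaining estimates go through verbatim in $\Om$ itself, and the localization to $U$ becomes unnecessary.
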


\begin{proof}
Set $L\coloneqq \osc_\Om w$ and $\ell_m\coloneqq 1/m$, $m\in\N$.
By the definition of $\mathcal H$,
for every $m$ there are balls $B_{m,i}=B(z_{m,i},r_{m,i})$ with $2B_{m,i}\Subset\Om$ such that
\[
E\subset\bigcup_iB_{m,i},
\quad 0<r_{m,i}<\ell_m/8,
\quad \sum_i\frac{\mu(B_{m,i})}{r_{m,i}}\le\mathcal H(E)+2^{-m}.
\]
 More precisely, the radii may be chosen smaller than $\min\{\ell_m/8,\dist(z_{m,i},X\setminus\Om)/4\}$.
Use the control $\mathcal S_m=(\emptyset,\{B_{m,i}\}_i,0)$.
Then by $\osc_{B_{m,i}}w\le L$, and doubling, we obtain that
\[
\mu(H(\mathcal S_m))\le\sum_i\mu(2B_{m,i})\le C\ell_m(\mathcal H(E)+2^{-m})\to0.
\]
Moreover,
\[
\mathcal C_w(\mathcal S_m)=\sum_i\frac{\osc_{B_{m,i}}w}{r_{m,i}}\mu(2B_{m,i})\le CL(\mathcal H(E)+2^{-m}).
\]
Since $w$ is bounded and $\Om$ is bounded, $w\in L^1(\Om)$.
Applying Proposition~\ref{prop:controlled BV criterion} with $Z=\emptyset$ gives
\[
\|Dw\|(\Om)\le C\int_\Om h\,d\nu+CL\mathcal H(E).
\]
This is the desired estimate.
\end{proof}

	\begin{proof}[Proof of Theorem \ref{thm:main BV}]
		$\ref{item-1-thm 1.3}\Rightarrow \ref{item-2-thm 1.3}$: This was shown at the end of Subsection \ref{sec:only if}.
		
		$\ref{item-2-thm 1.3}\Rightarrow \ref{item-1-thm 1.3}$: Assume that $\Om$ is bounded; generalization to the unbounded case is then easy.
		By assumption, we have $\Lip_{u^*,\nu}^{\widetilde c_*,M}(x)\le 1$ for some
		$\nu \in \mathcal M(\Om,(800C_1C_S^2C_d^6\widetilde c_*)^{-1})$, $M\ge1$ 
		and for every $x\in \Om$, and so
		by applying Theorem \ref{thm:main in text} with $Z=\emptyset$, $w=u^*$, and $h=1$, we get
		$u\in D^{\BV}(\Om)$ with $|Du|\le C\nu$.
		Since $u\in L^1(\Om)$ by assumption, we get $u\in\BV(\Om)$.
	\end{proof}
	
	\begin{proof}[Proof of Theorem \ref{thm:main Sob}]
		$\ref{item-1-thm 1.4}\Rightarrow \ref{item-2-thm 1.4}$: This was shown at the end of Subsection \ref{sec:only if}.
		
		$\ref{item-2-thm 1.4}\Rightarrow \ref{item-1-thm 1.4}$: As above, assume that $\Om$ is bounded.
		By assumption, we have $\Lip_{u^*,a}^{\widetilde c_*,M}(x)\le 1$ for some
		nonnegative $a\in L^p(\Om)\subset L^1(\Om)$, $M\ge1$,
		and for every $x\in \Om$.
		By Lemmas \ref{lem:measures belonging to M} and \ref{lem:h nv is also M}, we know that
		$a\,d\mu\in \mathcal M(\Om,(800C_1C_S^2C_d^6\widetilde c_*)^{-1})$, and so
		by applying Theorem \ref{thm:main in text} with $Z=\emptyset$, $d\nu\coloneqq a\,d\mu$,
		$w=u^*$, and $h=1$, we get $u\in D^{\BV}(\Om)$ with $d\|Du\|\le Ca\,d\mu$.
		Since $u,a\in L^p(\Om)$, applying Lemma~\ref{lem:BV-Sobolev density} with $Ca$, there exists a representative $v\in N^{1,p}_{\loc}(\Om)$ such that $v=u$ $\mu$-almost everywhere.
		For $p<\infty$, we have $g_v\le Ca$ almost everywhere; for $p=\infty$, the representative is locally Lipschitz and $\Lip_v\le Ca$ almost everywhere.
		Since $v=u$ almost everywhere and $u\in L^p(\Om)$, we have $v\in L^p(\Om)$; together with the global $L^p$ bound for the gradient, it follows that $v\in N^{1,p}(\Om)$.

		If $\Om$ is unbounded, by the same argument on bounded relatively compact open subsets we have $u\in\BV_{\loc}(\Om)$ and $d\|Du\|\le Ca\,d\mu$. Applying Lemma \ref{lem:BV-Sobolev density} again and the preceding argument then implies $v\in N^{1,p}(\Om)$.
	\end{proof}
	
\subsection{Federer's characterization of sets of finite perimeter}\label{subsec:perimeter}

The ordinary boundary and the ordinary Lipschitz number share the same defect: both are too sensitive to small oscillations and irregular representatives.
For instance, if $F\subset\R^n$ is measurable and $\mathcal H^{n-1}(\partial F)<\infty$, then $F$ is a set of finite perimeter, but the converse is false in general.
Federer's theorem replaces the topological boundary by the measure-theoretic boundary.
For a measurable set $F\subset\R^n$, this is the set $\partial^*F$ of points $x\in\R^n$ where
\begin{equation}\label{eq:measure theoretic boundary}
\limsup_{r\to 0}\frac{\mathcal L^n(B(x,r)\cap F)}{\mathcal L^n(B(x,r))}>0
\quad\textrm{and}\quad
\limsup_{r\to 0}\frac{\mathcal L^n(B(x,r)\setminus F)}{\mathcal L^n(B(x,r))}>0;
\end{equation}
recall Subsection \ref{sec:BV functions}.
Then $F$ has finite perimeter if and only if $\mathcal H^{n-1}(\partial^*F)<\infty$, see Federer \cite[Section 4.5.11]{Fed} and Evans--Gariepy \cite{EvGa}.
Several variants of this principle are known in $\R^n$: the zero density threshold in \eqref{eq:measure theoretic boundary} can be replaced by a positive constant \cite[Section 6]{EB}, the upper densities can be replaced by lower densities \cite[Theorem 1.1]{L-Fed-new}, and a fine boundary can be used instead of the measure-theoretic boundary \cite[Theorem 1.1]{L-Fed}.

Metric counterparts of this theory were developed in \cite{A1}, and a Federer-type characterization was established in \cite[Theorem 1.1]{FedChar}.
Thus, under the standing doubling and Poincar\'e assumptions, finite perimeter admits a characterization in terms of a suitable codimension-one boundary also in metric measure spaces.

There is also a more analytic characterization: a set of finite perimeter $F\subset\R^n$ can be characterized by the existence of a sequence $\{u_j\}_{j=1}^{\infty}$ in $\liploc(\R^n)$ such that $u_j\to\ch_F$ in $L_{\loc}^1(\R^n)$ and
\[
\liminf_{j\to\infty}\int_{\R^n}\Lip_{u_j}\,d\mathcal L^n<\infty;
\]
see e.g.\ \cite[Theorem 3.9 and Remark 3.10]{AFP}.
Our generalized Lipschitz numbers connect this approximation point of view with Federer-style boundary criteria.
The bounded exceptional-set result, Proposition \ref{prop:bounded finite H exceptional set}, makes it possible to derive the following characterization 
(Corollary \ref{cor:finite perimeter char}) directly from the generalized Lipschitz-number construction.

Just as we consider the representative $u^*$ throughout the paper, for a measurable set $F\subset X$ we need to consider a suitable pointwise representative, which we choose to be $F^*\coloneqq \{\ch_F^*=1\}$.
Since $\ch_F^*$ is Borel, $F^*$ is Borel, and the Lebesgue differentiation theorem gives $\ch_{F^*}=\ch_F$ $\mu$-almost everywhere.
We define a boundary $\partial_X F$ as the set of points $x\in X$ where
\[
\limsup_{r\to 0}
\frac{\rcapa_1(B(x,r)\cap F^*,B(x,2r))}{\rcapa_1(B(x,r),B(x,2r))}\ge \widetilde c_*
\quad\textrm{and}\quad
\limsup_{r\to 0}\frac{\rcapa_1(B(x,r)\setminus F^*,B(x,2r))}{\rcapa_1(B(x,r),B(x,2r))}\ge \widetilde c_*.
\]
As usual, $\Om\subset X$ is an open set.
\begin{corollary}\label{cor:finite perimeter char}
Let $F\subset X$ be measurable.
Then $P(F,\Om)<\infty$ if and only if $\mathcal H(\partial_XF\cap\Om)<\infty$.
\end{corollary}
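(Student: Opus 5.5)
We argue the two implications separately. For the ``if'' direction, we apply Proposition \ref{prop:bounded finite H exceptional set} to $w\coloneqq\ch_{F^*}$, after first reducing to bounded $\Om$ by exhausting with bounded open subsets; this suffices because the perimeter bound obtained will be uniform. Take $E\coloneqq\partial_XF\cap\Om$, $\nu\coloneqq\mu|_\Om$ and $h\equiv0$. Lemma \ref{lem:measures belonging to M} gives $\mu\in\mathcal M(\Om,C_{\mathcal M})$, which lies inside the required class by \eqref{eq:cn definition}.

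The pointwise hypothesis to check is $\Lip_{w,\mu}^{\widetilde c_*,M}(x)=0$ for $x\in\Om\setminus\partial_XF$. At such $x$, one of the two limsups in the definition of $\partial_XF$ is $<\widetilde c_*$. Say
\[
\limsup_{r\to0}\rcapa_1(B(x,r)\cap F^*,B(x,2r))/\rcapa_1(B(x,r),B(x,2r))<\widetilde c_*.
\]
Then we choose $U_r\coloneqq B(x,r)\setminus F^*$. The capacitary condition in \eqref{eq:generalized Lip definition} holds with $\delta=\widetilde c_*$, and $w\equiv0$ on $U_r$, so $\osc_{U_r}w=0$. The other case is symmetric, with $U_r\coloneqq B(x,r)\cap F^*$. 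Hence the Lipschitz number vanishes, and Proposition \ref{prop:bounded finite H exceptional set} gives
\[
P(F,\Om)=\|Dw\|(\Om)\le C\,\mathcal H(\partial_XF\cap\Om),
\]
since $\osc w\le1$ and $w=\ch_F$ $\mu$-a.e. Note that the hypothesis there needs $h(x)<\infty$ only off $E$, which is exactly what we have.

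For the ``only if'' direction, suppose $P(F,\Om)<\infty$. We will show that $\partial_XF\cap\Om\subset\partial^*F\cap\Om$ up to an $\mathcal H$-null set. Then \eqref{eq:relation-Hausdorff-perimeter} gives $\mathcal H(\partial_XF\cap\Om)\le\alpha^{-1}P(F,\Om)<\infty$. It suffices to show that every $x\in I_F\cap\Om$, and symmetrically every $x\in O_F\cap\Om$, fails one of the two limsup conditions outside an $\mathcal H$-null set.

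This is the main obstacle, because measure density does not control capacity density. Consider $x\in O_F$. We want $\rcapa_1(B(x,r)\cap F^*,B(x,2r))$ to be small relative to $\rcapa_1(B(x,r),B(x,2r))$. Here we use \eqref{eq:pointwise-properties} applied to $\ch_F$, which is available for $\mathcal H$-a.e. point of $\Om$. At such points one has $F^*\subset\{\ch_F^\vee=1\}$ and, away from $\partial^*F$, $F^*\cap(O_F\setminus N)=\emptyset$. Consequently $B(x,r)\cap F^*\subset(\partial^*F\cup I_F\cup N)\cap B(x,r)$, and $F^*$ coincides with $I_F$ up to $\partial^*F\cup N$.

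We then estimate the capacity of this set by Maz'ya-type reasoning, in the spirit of Proposition \ref{prop:necessity}. Take a median $M=0$ of $\ch_F$ in $B(x,r)$, which is valid for small $r$ since $x\in O_F$. The proof of Proposition \ref{prop:necessity}, used with $\delta=\widetilde c_*$, produces $t_+\in[0,1]$ and a set $V_+$ with
\[
\rcapa_1(B(x,r)\setminus V_+,B(x,2r))\le\tfrac{\widetilde c_*}{2}\rcapa_1(B(x,r),B(x,2r)),
\]
together with the bound $t_+\lesssim\widetilde c_*^{-1}rP(F,B(x,2\lambda r))/\mu(B(x,r))$. At $\mathcal H$-a.e. $x\in O_F$ we have $rP(F,B(x,2\lambda r))/\mu(B(x,r))\to0$, by \eqref{eq: P(E,A)} and a standard density argument: $P(F,\cdot)$ is concentrated on $\partial^*F$, so its codimension-one upper density vanishes $\mathcal H$-a.e. off $\partial^*F$. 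Hence $t_+<1$ for small $r$. Since $\ch_F^\vee\ge1$ on $F^*\setminus N$, this forces $F^*\cap B(x,r)\subset B(x,r)\setminus V_+$ up to $N$. Therefore the first limsup is at most $\widetilde c_*/2<\widetilde c_*$ and $x\notin\partial_XF$.

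The case $x\in I_F$ is symmetric, using $(\ch_F-1)_-$ and $X\setminus F^*$. The exceptional sets $N$ and the density-failure set are $\mathcal H$-null, so combining the two cases completes the argument.
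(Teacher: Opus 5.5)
Your ``if'' direction is the paper's argument: Proposition~\ref{prop:bounded finite H exceptional set} with $w=\ch_{F^*}$, $\nu=\mu$ and $h=0$. Your ``only if'' direction is correct but takes a genuinely different route.

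The paper uses Proposition~\ref{prop:necessity} only through its statement. It obtains sets $U_r$ whose complements have small relative capacity and on which $\osc_{U_r}\ch_F^*<\gamma$. This works outside the set $S_0$ where the normalized perimeter density exceeds a small threshold, and a covering argument gives $\mathcal H(S_0)\le C\,P(F,\Om)$. The paper then uses the $\mathcal H$-a.e.\ gap $\ch_F^*\in\{1\}\cup[0,1-\gamma]$ from \eqref{eq:pointwise-properties} to put each $U_r$ on one side of $F^*$. Finally, a chain-of-radii argument based on \eqref{eq:rcapa in small ball} shows that the chosen side does not change with $r$.

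You instead reopen the proof of Proposition~\ref{prop:necessity} and fix the median as $0$ at points of $O_F$, or $1$ at points of $I_F$. Since $\ch_F^\vee$ is $\{0,1\}$-valued, the threshold $t_+$ is either $0$ or $1$. The Maz'ya bound excludes $t_+=1$ once the perimeter density is small. The inclusion $F^*\subset\{\ch_F^\vee=1\}=X\setminus O_F$ then finishes the argument. This inclusion holds at every point, so your appeal to \eqref{eq:pointwise-properties} is not needed.

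Fixing the side through the partition $X=I_F\cup O_F\cup\partial^*F$ removes the need for both the $\gamma$-gap and the scale-consistency argument. It also gives a sharper, genuinely Federer-type conclusion: $\partial_XF\cap\Om\subset\partial^*F$ up to an $\mathcal H$-null set, and hence $\mathcal H(\partial_XF\cap\Om)\le\alpha^{-1}P(F,\Om)$ by \eqref{eq:relation-Hausdorff-perimeter}.

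The cost is that you rely on the internals of Proposition~\ref{prop:necessity} rather than its statement. You also need the vanishing of the perimeter density at $\mathcal H$-a.e.\ point off $\partial^*F$. This follows from $P(F,\Om\setminus\partial^*F)=0$, outer regularity, and the same covering argument the paper uses for $S_0$. The paper's route instead shows that the corollary follows directly from the generalized Lipschitz number bound, which is the point it wants to illustrate.
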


\begin{proof}
$\Rightarrow$: We assume $P(F,\Om)<\infty$.

Let $0<\gamma\le1/2$ be the constant in \eqref{eq:pointwise-properties}.
For $\mathcal{H}$-a.e. $y\in \Om$, we know $\ch_F^*(y)=1$ or $\ch_F^*(y)\le1-\gamma$.
We denote the $\mathcal{H}$ negligible set as $N$.

By Proposition \ref{prop:necessity}, we have $\Lip_{\ch_F^*,\|D\ch_F\|}^{\widetilde c_*/2,2\lambda}(x)\le 2C\widetilde c_*^{-1}$ for all $x\in\Om$.
Given $x\in\Om$, for all sufficiently small $r>0$, we find a set $U_r\subset B(x,r)$ such that
\begin{equation}\label{eq:capa 2cn estimate}
\frac{\rcapa_1(B(x,r)\setminus U_r,B(x,2r))}{\rcapa_1(B(x,r),B(x,2r))}\le \frac{2}{3}\widetilde c_*
\end{equation}
and
\begin{equation}\label{eq:perimeter oscillation}
\frac{\osc_{U_r} \ch_{F}^*}{2r}
\frac{\mu(B(x,2\lambda r))}{\|D\ch_F\|(B(x,2\lambda r))}
\le 3C\widetilde c_*^{-1}.
\end{equation}
We may take $U_r$ Borel, as in the construction in the proof of Proposition \ref{prop:necessity}.
After modifying $U_r$ on a $\mathcal{H}$-negligible set, we may assume that $U_r\cap N=\emptyset$.

Set
\[
a\coloneqq\frac{\gamma\widetilde c_*}{48C},
\qquad
S_0\coloneqq\left\{x\in\Om:\limsup_{t\to0}
\frac{\|D\ch_F\|(B(x,t))}{\mu(B(x,t))/t}>a\right\}.
\]
For every $R>0$ and $x\in S_0$, there exists $B(x,t)$ with $t<R/5$ and $\|D\ch_F\|(B(x,t))>a\mu(B(x,t))/t$.
By a standard $5$-covering argument, we may find a countable pairwise disjoint $\{B(x_i,t_i)\}_i$ such that $S_0\subset \bigcup_i B(x_i,5t_i)$.
Hence, by doubling and disjointness,
\begin{align*}
\mathcal H_R(S_0)
&\le\sum_i\frac{\mu(B(x_i,5t_i))}{5t_i}
\le C_d^3\sum_i\frac{\mu(B(x_i,t_i))}{t_i}\\
&\le\frac{C_d^3}{a}\sum_i\|D\ch_F\|(B(x_i,t_i))
\le\frac{C_d^3}{a}P(F,\Om).
\end{align*}
Letting $R\to0$ gives $\mathcal H(S_0)\le C_d^3a^{-1}P(F,\Om)$.
With $S\coloneqq N\cup S_0$, we therefore have $\mathcal H(S)\le C_d^3a^{-1}P(F,\Om)<\infty$.

Fix $x\in\Om\setminus S$.
There is $R_x>0$, small enough that $B(x,2\lambda r)\Subset\Om$ and $r<\tfrac18\diam X$, such that \eqref{eq:capa 2cn estimate}, \eqref{eq:perimeter oscillation}, and
\[
\frac{\|D\ch_F\|(B(x,2\lambda r))}{\mu(B(x,2\lambda r))/(2\lambda r)}
\le2a=\frac{\gamma\widetilde c_*}{24C}
\]
hold for all $0<r\le R_x$.
Combining this with \eqref{eq:perimeter oscillation} yields
\[
\osc_{U_r}\ch_F^*
\le\frac{6C}{\widetilde c_*}
r\frac{\|D\ch_F\|(B(x,2\lambda r))}{\mu(B(x,2\lambda r))}
\le\frac{\gamma}{8\lambda}<\gamma.
\]
Since $U_r\cap N=\emptyset$, for each such $r$, we have either $U_r\subset F^*$ or $U_r\cap F^*=\emptyset$.
It remains to show that the same one of these two alternatives holds for all $0<r\le R_x$.

In fact,
\begin{align*}
\frac{\mu(B(x,r)\setminus U_r)}{\mu(B(x,r))}
&\le C_SC_d\frac{\rcapa_1(B(x,r)\setminus U_r,B(x,2r))}{\rcapa_1(B(x,r),B(x,2r))}
\quad\textrm{by \eqref{eq:rcapa in small ball}}
\\
&\le \frac{2\widetilde c_*}{3}C_SC_d
\quad\textrm{by }\eqref{eq:capa 2cn estimate}\\
&< (1+C_d)^{-1}
\quad\textrm{by }\eqref{eq:cn definition},
\end{align*}

For $r/2\le s\le r\le R_x$, by doubling we know $\mu(B(x,r))\le C_d\mu(B(x,s))$, and consequently
\begin{align*}
\mu(U_s\cap U_r)
&\ge\mu(B(x,s))-\mu(B(x,s)\setminus U_s)
-\mu(B(x,s)\setminus U_r)\\
&\ge\bigl(1-\tfrac23C_SC_d\widetilde c_*(1+C_d)\bigr)\mu(B(x,s))>0.
\end{align*}
Thus $U_s$ and $U_r$ intersect, so they must select the same side of $F^*$.
Any two radii in $(0,R_x]$ can be joined by a finite chain of radii with consecutive ratios between $1/2$ and $1$, and hence we have either $U_r\subset F^*$ or $U_r\cap F^*=\emptyset$. 

Thus by \eqref{eq:capa 2cn estimate}, in both cases we have $x\notin \partial_X F$.
We conclude that $\partial_X F\cap \Om\subset S$, and so $\mathcal H(\partial_X F\cap \Om)<\infty$.

$\Leftarrow$: Assume that $\mathcal H(\partial_XF\cap\Om)<\infty$.
	Suppose that $\Om$ is bounded; then the unbounded case follows from an increasing relatively compact exhaustion of $\Om$ and the lower semicontinuity of perimeter.
Set $E\coloneqq \partial_XF\cap\Om$.
	For every $x\in\Om\setminus E$, we have
		\[
		\limsup_{r\to 0}\frac{\rcapa_1(B(x,r)\cap F^*,B(x,2r))}{\rcapa_1(B(x,r),B(x,2r))}<\widetilde{c_*}
		\quad\textrm{or}\quad \limsup_{r\to 0}\frac{\rcapa_1(B(x,r)\setminus F^*,B(x,2r))}{\rcapa_1(B(x,r),B(x,2r))}<\widetilde{c_*}.
		\]
	In the first case choose $U_r\coloneqq B(x,r)\setminus F^*$, and in the second case choose $U_r\coloneqq B(x,r)\cap F^*$.
	Since $\ch_{F^*}$ is constant on $U_r$, by these choices we have
	\[
	\Lip_{\ch_{F^*},\mu}^{\widetilde c_*,1}(x)=0
	\quad\textrm{for every }x\in\Om\setminus E.
	\]
	By Lemma \ref{lem:measures belonging to M} and \eqref{eq:cn definition}, $\mu\in \mathcal M(\Om,(800C_1C_S^2C_d^6\widetilde c_*)^{-1})$.
	Applying Proposition \ref{prop:bounded finite H exceptional set} with $w=\ch_{F^*}$, $\nu=\mu$, $M=1$, and $h=0$ yields
	\[
	\|D\ch_{F^*}\|(\Om)
	\le C\mathcal H(\partial_XF\cap\Om)<\infty.
	\]
	Since $\ch_{F^*}=\ch_F$ almost everywhere, this proves $P(F,\Om)<\infty$.
\end{proof}

\begin{remark}
In the usual Federer's characterization, one uses the measure-theoretic boundary that is defined by means of measure densities, rather than capacitary densities.
Thus it is natural to ask:
\begin{itemize}
\item Can one obtain analogs of Theorems \ref{thm:main BV} and \ref{thm:main Sob} via using a version of generalized Lipschitz numbers that is defined by using the measure $\mu$ in place of the variation capacity $\rcapa_1$?
\end{itemize}
\end{remark}

\endgroup

	\begingroup
\emergencystretch=3em
\section{Fine differentiability}\label{sec:fine differentiability}

Recall from \eqref{eq:def-approx-limit} the definition of approximate limit.
In the Euclidean setting, it is a classical fact that Sobolev and BV functions are
approximately differentiable almost everywhere.
More precisely, if $u\in \BV_{\loc}(\R^n)$, then for a.e. $x\in\R^n$ there exists a vector $\nabla u(x)\in\R^n$ such that
\[
\aplim_{y\to x}
\frac{|u(y)-u^*(x)-\langle \nabla u(x),y-x\rangle|}{|y-x|}=0;
\]
see e.g. Ambrosio--Fusco--Pallara \cite[Theorem 3.83]{AFP}. 
For Sobolev functions this vector is the weak gradient, while for BV functions it is the density of the absolutely continuous part of $Du$.
Haj{\l}asz--Mal\'y \cite[Theorem 1.1]{HaMa} obtained an analogous approximate differentiability result for the Hardy--Littlewood maximal function.

The picture can be strengthened from measure density to the $1$-fine topology.
In the Euclidean setting, for every $u\in \BV_{\loc}(\R^n)$, and hence every Sobolev function, $u^*$ is $1$-finely differentiable a.e.; see \cite[Theorem 3.6]{L-FQ}. 
We can extend this result into metric spaces.

We now recall the metric first-order differentiable structure
on which the definition of fine differentiability is based.
Cheeger \cite[Theorem 4.38]{Che} showed that complete doubling metric measure spaces supporting a Poincar\'e inequality admit a measurable differentiable structure: there exists a countable collection $\{(X_\alpha,\xi^\alpha)\}$ of measurable sets $X_\alpha$ and Lipschitz coordinate maps $\xi^\alpha=(\xi^\alpha_1,\ldots,\xi^\alpha_{N_\alpha})\colon X\to\R^{N_\alpha}$ with $\mu(X\setminus\bigcup_\alpha X_\alpha)=0$, such that every Lipschitz function $f$ has a differential $d^\alpha f(x)\in\R^{N_\alpha}$ satisfying
\[
\lim_{y\to x}
\frac{|f(y)-f(x)-d^\alpha f(x)\cdot(\xi^\alpha(y)-\xi^\alpha(x))|}{d(y,x)}=0
\]
for a.e. $x\in X_\alpha$.
A function $w$ is approximately differentiable at $x\in X_\alpha$ with respect to this structure if there exists $L^\alpha w(x)\in\R^{N_\alpha}$ such that
\[
\aplim_{y\to x}
\frac{|w(y)-w(x)-L^\alpha w(x)\cdot(\xi^\alpha(y)-\xi^\alpha(x))|}{d(y,x)}=0,
\]
as in \cite[Definition 2.9]{DCIKS}.
In a doubling metric measure space, the approximate differential is unique at $\mu$-a.e. point by Keith \cite[Remark 3.6]{Kei-P}.

By Durand-Cartagena--Ihnatsyeva--Korte--Szuma\'nska \cite[Lemma 2.10]{DCIKS}, after redefining the given measurable differentiable structure on a set of measure zero if necessary, one obtains a structure with respect to which the approximate differential is unique at every point of each chart.
We use this representative throughout the section.
In particular, at every point of a fixed chart, no nonzero chart-linear function has approximate differential zero.
We identify a coefficient vector $v=(v_i)_{i=1}^{N_\alpha}$ with the covector $\sum_i v_i\,d\xi_i^\alpha$.
The notation $|v|_{C,x}$ refers to its intrinsic Cheeger cotangent norm, whereas $|v|$ denotes the Euclidean norm of its coefficients.


\begin{definition}\label{def:fine differentiability}
Let $(X,d,\mu)$ be equipped with a measurable differentiable structure $\{(X_\alpha,\xi^\alpha)\}_\alpha$, where $\xi^\alpha\colon X\to\R^{N_\alpha}$ are the coordinate maps.
We say that a function $w\colon \Om\to[-\infty,\infty]$ is $1$-finely differentiable at $x\in \Om\cap X_\alpha$ with respect to $(X_\alpha,\xi^\alpha)$ if $w(x)\in\R$ and there exists $v\in\R^{N_\alpha}$ such that
\[
\underset{y\to x}{\finelim}
\frac{|w(y)-w(x)-\langle v,\xi^\alpha(y)-\xi^\alpha(x)\rangle|}{d(y,x)}=0.
\]
We call $v$ the fine differential of $w$ at $x$ with respect to $(X_\alpha,\xi^\alpha)$, and denote it by $d^\alpha_{\textrm{fine}}w(x)$.
\end{definition}

By \eqref{eq:rcapa in small ball}, fine differentiability is stronger than approximate differentiability.
Hence if such fine differential $v$ exists, then the uniqueness convention above gives its uniqueness, and $v$ agrees with the approximate differential.
In the Euclidean case, with the identity coordinate map $\xi(x)=x$, we write $\nabla_{\textrm{fine}}w(x)$ for this vector.


\begin{definition}[Fine-scale geometric condition]\label{def:FG}
Let $x\in\Om$.
We say that the geometric condition $(\mathrm{FG})_x$ holds if at least one of the following two alternatives is satisfied:
\begin{enumerate}
\item\label{item-1-FG}
There exist $s>1$, $R_x>0$ and $C_x\ge1$ such that for every $0<r<R\le R_x$,
\begin{equation}
\label{eq: reverse doubling ineq.-s>1}
\frac{\mu(B(x,r))}{\mu(B(x,R))}\le C_x\left(\frac{r}{R}\right)^s.
\end{equation}
\item\label{item-2-FG}
There exist $R_x>0$ and $\delta_x^*>0$ such that for every $0<r<R_x$ and every $y\in B(x,r)$,
\begin{equation}
\label{eq:FG-singleton-capacity}
\frac{\rcapa_1(\{y\},B(x,2r))}{\rcapa_1(B(x,r),B(x,2r))}
\ge\delta_x^*.
\end{equation}
\end{enumerate}
In both alternatives, we take $R_x$ sufficiently small that $B(x,2R_x)\Subset\Om$ and $R_x<\tfrac18\diam X$.
\end{definition}
\begin{lemma}\label{lem:fine-neighborhood-selection}
Let $x\in\Om$ satisfy $(\mathrm{FG})_x$ from Definition~\ref{def:FG}, and let $R'\colon\Om\setminus\{x\}\to[0,\infty]$.
Suppose that $F_m\coloneqq\{y\in\Om\setminus\{x\}:R'(y)\ge1/m\}$ is $1$-thin at $x$ for every $m\in\N$.
Then there is a $1$-finely open set $U\subset\Om$ containing $x$ such that $R'(y)\to0$ as $U\ni y\to x$.
\end{lemma}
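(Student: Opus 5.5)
The plan is to build one set $A\subset\Om\setminus\{x\}$ that is $1$-thin at $x$ and contains, near $x$, every point where $R'$ is not small. I would then take
\[
U\coloneqq \Om\setminus\overline{A}^1 .
\]
This set is $1$-finely open, because $\Om$ is open and $\overline{A}^1$ is $1$-finely closed. It also contains $x$: by \eqref{eq:characterization of fine closure} we have $\overline{A}^1=A\cup b_1A$, and $x\notin A$ and $x\notin b_1A$. The proof splits according to the two alternatives of $(\mathrm{FG})_x$.

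Under alternative \ref{item-2-FG} the argument is immediate. Every $y\in B(x,r)$ with $r<R_x$ gives capacity ratio at least $\delta_x^*$ by \eqref{eq:FG-singleton-capacity}. By monotonicity of capacity, any set that is $1$-thin at $x$ must therefore miss $B(x,r)$ for all small $r$. Hence for each $m$ there is $\rho_m>0$ with $F_m\cap B(x,\rho_m)=\emptyset$, that is, $R'<1/m$ on $B(x,\rho_m)\setminus\{x\}$. So $R'(y)\to0$ as $y\to x$ in the ordinary topology, and we may take $U=B(x,R_x)$, which is open and hence $1$-finely open.

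Under alternative \ref{item-1-FG} I would choose radii $r_0>r_1>\dots\to0$, all below $R_x$, as follows. Since $F_{m+1}$ is $1$-thin at $x$, pick $r_m$ so that for all $0<r\le r_m$,
\[
\frac{\rcapa_1(F_{m+1}\cap B(x,r),B(x,2r))}{\rcapa_1(B(x,r),B(x,2r))}\le 2^{-m-1}.
\]
Then set
\[
A\coloneqq\bigcup_{m\ge1}\bigl(F_m\cap B(x,r_{m-1})\bigr).
\]
The convergence of $R'$ on $U$ is then direct. If $y\in U\cap B(x,r_{m-1})$, then $y\notin A$, so $y\notin F_m$ and $R'(y)<1/m$. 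Hence $R'(y)\to0$ as $U\ni y\to x$.

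The main step is to show that $A$ is $1$-thin at $x$. Fix $r_k\le r<r_{k-1}$. Then
\[
A\cap B(x,r)\subset \bigl(F_k\cap B(x,r)\bigr)\cup\bigcup_{m>k}\bigl(F_m\cap B(x,r_{m-1})\bigr).
\]
By the choice of $r_{k-1}$, the first piece has capacity ratio at most $2^{-k}$. For the tail I would use countable subadditivity together with monotonicity of $\rcapa_1$ in the outer set, which applies since $B(x,2r_{m-1})\subset B(x,2r)$:
\[
\rcapa_1\bigl(F_m\cap B(x,r_{m-1}),B(x,2r)\bigr)\le \rcapa_1\bigl(F_m\cap B(x,r_{m-1}),B(x,2r_{m-1})\bigr)\le 2^{-m}\rcapa_1\bigl(B(x,r_{m-1}),B(x,2r_{m-1})\bigr).
\]
It remains to compare the ball capacities at scales $\rho=r_{m-1}\le r$. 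By \eqref{eq:rcapa over ball-measure}, $\rcapa_1(B(x,\rho),B(x,2\rho))$ is comparable to $\mu(B(x,\rho))/\rho$, and \eqref{eq: reverse doubling ineq.-s>1} gives
\[
\frac{\mu(B(x,\rho))}{\rho}\le C_x\Bigl(\frac{\rho}{r}\Bigr)^{s-1}\frac{\mu(B(x,r))}{r}\le C_x\frac{\mu(B(x,r))}{r}.
\]
The total ratio is therefore at most $2^{-k}+C\,C_x\sum_{m>k}2^{-m}\to0$ as $k\to\infty$, so $A$ is $1$-thin at $x$. This comparison of ball capacities across scales is exactly where $(\mathrm{FG})_x$ enters, and it is the step I expect to need the most care with the constants.
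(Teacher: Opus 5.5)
Your proof is correct and takes essentially the same route as the paper. Under the first alternative of $(\mathrm{FG})_x$, you form a diagonal union of the sets $F_m$ over shrinking scales, show it is $1$-thin at $x$ by subadditivity together with the reverse-doubling comparison of ball capacities, and take $U=\Om\setminus\overline{A}^1$; under the second alternative you use the emptiness argument. The differences are cosmetic: you take $U=B(x,R_x)$ instead of $U=\Om$ in the second alternative, and you use full balls $B(x,r_{m-1})$ where the paper uses annuli. The full balls work because the $F_m$ increase in $m$, so the pieces with $m\le k$ lie in $F_k$; you use this when writing $A\cap B(x,r)\subset (F_k\cap B(x,r))\cup\cdots$, and it is worth stating explicitly.
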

\begin{proof}
Suppose first that Definition~\ref{def:FG} \ref{item-1-FG} holds.
Choose $\rho_m\downarrow0$ with $\rho_{m+1}<\rho_m/2$, $B(x,2\rho_1)\Subset\Om$, and $\rho_1<\min\{R_x,\tfrac18\diam X\}$, such that
\[
\sup_{0<r\le\rho_m}
\frac{\rcapa_1(F_m\cap B(x,r),B(x,2r))}{\rcapa_1(B(x,r),B(x,2r))}
\le2^{-m}.
\]
Set
\[
E\coloneqq\bigcup_{m=1}^{\infty}
\bigl(F_m\cap(B(x,\rho_m)\setminus B(x,\rho_{m+1}))\bigr).
\]
If $\rho_{k+1}\le r<\rho_k$, we have $\rho_m\le r$ for $m>k$.
By subadditivity, and \eqref{eq:rcapa over ball-measure}, we obtain
\[
\begin{split}
&\,\frac{\rcapa_1(E\cap B(x,r),B(x,2r))}{\rcapa_1(B(x,r),B(x,2r))}\\
&\le \frac{\rcapa_1(F_k\cap B(x,r), B(x,2r))}{\rcapa_1(B(x,r), B(x,2r))}+\sum_{m>k}\frac{\rcapa_1(F_m\cap B(x,\rho_m),B(x,2\rho_m))}{\rcapa_1(B(x,r),B(x,2r))}\\
&\le2^{-k}
+C\sum_{m>k}2^{-m}
\frac{\mu(B(x,\rho_m))/\rho_m}{\mu(B(x,r))/r}\\
&\le2^{-k}+CC_x\sum_{m>k}2^{-m}
\left(\frac{\rho_m}{r}\right)^{s-1}
\le C'2^{-k}.
\end{split}
\]
Thus $E$ is $1$-thin at $x$.
By \eqref{eq:characterization of fine closure}, the set $U\coloneqq\Om\setminus\overline E^{\,1}$ is $1$-finely open and contains $x$.
For $y\in U\cap(B(x,\rho_m)\setminus B(x,\rho_{m+1}))$, we have $R'(y)<1/m$, which proves the claim.

Suppose instead that Definition~\ref{def:FG} \ref{item-2-FG} holds.
For each $m$, thinness and \eqref{eq:FG-singleton-capacity} imply $F_m\cap B(x,r)=\emptyset$ for every sufficiently small $r$.
Consequently $R'(y)\to0$ in the ordinary topology, and we may take $U=\Om$.
\end{proof}

We next record the metric version of \cite[Lemma 3.2]{L-FD} under the geometric condition of Definition~\ref{def:FG}.

\begin{lemma}\label{lem:fine diff}
Let $(X,d,\mu)$ be equipped with a measurable differentiable structure $\{(X_\alpha,\xi^\alpha)\}_\alpha$, and suppose $w\colon \Om\to [-\infty,\infty]$, $x\in \Om\cap X_\alpha$, and $w(x)\in\R$.
Assume that $(\mathrm{FG})_x$ from Definition~\ref{def:FG} holds.
Then the following are equivalent:
\begin{enumerate}
\item \label{item-fine diff-1}
The function $w$ is $1$-finely differentiable at $x$ with respect to $(X_\alpha,\xi^\alpha)$.
\item \label{item-fine diff-2}
There exists $v\in\R^{N_\alpha}$ and a $1$-finely open set $U\ni x$ such that
\[
\lim_{U\cap\Om\ni y\to x}
\frac{|w(y)-w(x)-\langle v,\xi^\alpha(y)-\xi^\alpha(x)\rangle|}{d(y,x)}=0.
\]
\item \label{item-fine diff-3}
There are sets $U_k\subset \Om$, vectors $v_k\in\R^{N_\alpha}$, and positive numbers $\beta_k\to 0$ such that
\[
\limsup_{r\to 0}
\frac{\rcapa_1(B(x,r)\setminus U_k,B(x,2r))}{\rcapa_1(B(x,r),B(x,2r))}< \beta_k
\]
and
\begin{equation}
\label{lem-item-3-2}
\limsup_{U_k\ni y\to x}\frac{|w(y)-w(x)-\langle v_k,\xi^\alpha(y)-\xi^\alpha(x)\rangle|}{d(y,x)}< \beta_k
\end{equation}
for all $k\in\N$.
\end{enumerate}
\end{lemma}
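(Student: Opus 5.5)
The natural order is \ref{item-fine diff-1}$\Rightarrow$\ref{item-fine diff-2}$\Rightarrow$\ref{item-fine diff-3}$\Rightarrow$\ref{item-fine diff-1}. The first two implications are essentially definitional; the real content is in the last, where Lemma~\ref{lem:fine-neighborhood-selection} produces a single fine neighborhood and $(\mathrm{FG})_x$ enters.

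For \ref{item-fine diff-1}$\Rightarrow$\ref{item-fine diff-2}, I unwind the definition of a fine limit through punctured fine neighborhoods. For every $\eps>0$ there is a $1$-finely open $U_\eps\ni x$ on which the quotient
\[
Q(y)\coloneqq\frac{|w(y)-w(x)-\langle v,\xi^\alpha(y)-\xi^\alpha(x)\rangle|}{d(y,x)}
\]
is less than $\eps$ for $y\ne x$. A single $U$ is not yet available, so rather than glue the $U_\eps$ by hand I apply Lemma~\ref{lem:fine-neighborhood-selection} with $R'\coloneqq Q$ on $\Om\setminus\{x\}$. The set $F_m=\{Q\ge 1/m\}$ is disjoint from $U_{1/m}\setminus\{x\}$. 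Hence $F_m\subset X\setminus U_{1/m}$, which is $1$-thin at $x$ because $U_{1/m}$ is finely open. The lemma then gives a finely open $U\ni x$ with $Q(y)\to0$ as $U\ni y\to x$, which is exactly \ref{item-fine diff-2}.

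For \ref{item-fine diff-2}$\Rightarrow$\ref{item-fine diff-3}, I take $U_k\coloneqq U\cap\Om$, $v_k\coloneqq v$ and $\beta_k\coloneqq1/k$. Since $U$ is finely open and contains $x$, the set $X\setminus U$ is $1$-thin at $x$. Moreover, $B(x,r)\subset\Om$ for small $r$. So the capacity ratio of $B(x,r)\setminus U_k$ tends to $0<\beta_k$, and the limsup in \eqref{lem-item-3-2} equals $0<\beta_k$.

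For \ref{item-fine diff-3}$\Rightarrow$\ref{item-fine diff-1}, I would proceed in three steps.

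\emph{Step 1: the $v_k$ converge.} For $k,k'$ large the ratio bounds give
\[
\frac{\rcapa_1(B(x,r)\setminus(U_k\cap U_{k'}),B(x,2r))}{\rcapa_1(B(x,r),B(x,2r))}<\beta_k+\beta_{k'}
\]
for small $r$. By \eqref{eq:rcapa in small ball}, $U_k\cap U_{k'}$ therefore has lower measure density at least $1-C_SC_d(\beta_k+\beta_{k'})$, which is positive. On $U_k\cap U_{k'}$ near $x$ we have $|\langle v_k-v_{k'},\xi^\alpha(y)-\xi^\alpha(x)\rangle|\le(\beta_k+\beta_{k'})d(y,x)$. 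I would use this, together with the uniqueness of approximate differentials at every chart point (the convention adopted from \cite{DCIKS}), to conclude that $(v_k)$ is Cauchy. The intended mechanism is a quantitative form of "no nonzero chart-linear function vanishes to first order on a set of positive density"; this may require a compactness argument on the unit sphere of $\R^{N_\alpha}$, applied at the fixed point $x$. Let $v\coloneqq\lim_k v_k$.

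\emph{Step 2: control the limsup with the limit vector.} Since $\xi^\alpha$ is Lipschitz, $Q$ computed with $v$ satisfies $\limsup_{U_k\ni y\to x}Q(y)\le\beta_k+\Lip(\xi^\alpha)|v-v_k|\eqqcolon\beta_k'$, and $\beta_k'\to0$.

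\emph{Step 3: get a genuine fine limit.} Set $F_m\coloneqq\{Q\ge1/m\}$ and pick $k$ with $\beta_k'<1/m$. Then, near $x$, $F_m\subset B(x,r)\setminus U_k$, so the ratio $\limsup_{r\to0}\rcapa_1(F_m\cap B(x,r),B(x,2r))/\rcapa_1(B(x,r),B(x,2r))$ is at most $\beta_k$. Taking $k\to\infty$ along values with $\beta_k'<1/m$ makes it $0$, so $F_m$ is $1$-thin at $x$ for every $m$. Lemma~\ref{lem:fine-neighborhood-selection} with $R'=Q$ then yields a finely open $U\ni x$ with $Q\to0$ along $U$, which is the fine limit required in \ref{item-fine diff-1}.

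The main obstacle is Step 1, the convergence of $v_k$. Approximate uniqueness is a qualitative statement, while here I need a quantitative bound $|v_k-v_{k'}|\lesssim\beta_k+\beta_{k'}$ on sets whose density is only bounded below. If this fails, my fallback is to avoid Step 1 altogether by testing directly with a single vector: fix $k_0$ and use the small-density information together with uniqueness of the approximate differential of $w$ along the intersected sets.
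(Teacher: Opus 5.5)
Your outer structure is sound. The implication \ref{item-fine diff-1}$\Rightarrow$\ref{item-fine diff-2} via Lemma~\ref{lem:fine-neighborhood-selection} with $R'=Q$ is correct, as are \ref{item-fine diff-2}$\Rightarrow$\ref{item-fine diff-3} and your Steps~2 and~3. The paper runs the cycle as \ref{item-fine diff-2}$\Rightarrow$\ref{item-fine diff-1}$\Rightarrow$\ref{item-fine diff-3}$\Rightarrow$\ref{item-fine diff-2}, with the same selection lemma doing the gluing.

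The gap is Step~1, which carries the real content of the lemma, and the mechanism you sketch for it does not match what the hypotheses provide. The uniqueness convention from \cite{DCIKS} is qualitative. It only rules out a nonzero chart-linear function whose first-order error is small off a set of measure density \emph{zero} at $x$. For a fixed pair $k,k'$ you only know that $U_k\cap U_{k'}$ has lower density at least $1-C_SC_d(\beta_k+\beta_{k'})<1$. A ``positive density'' form of uniqueness is not part of the convention, nor a consequence of it at a given point. Your fallback fails for the same reason: with $k_0$ fixed, the exceptional capacitary density stays of order $\beta_{k_0}$ and never becomes zero, so uniqueness can never be invoked.

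The missing idea is that no a priori quantitative bound is needed, because both indices can be sent to infinity inside a contradiction argument. Suppose $(v_k)$ is not Cauchy. Pick $\eps_0>0$ and $i_j,l_j\to\infty$ with $|v_{i_j}-v_{l_j}|\ge\eps_0$. Set $a_j\coloneqq(v_{i_j}-v_{l_j})/|v_{i_j}-v_{l_j}|$ and pass to a subsequence with $a_j\to a$, $|a|=1$. With $E_j\coloneqq U_{i_j}\cap U_{l_j}$, your two-sided estimate divided by $|v_{i_j}-v_{l_j}|$ gives
\[
\limsup_{E_j\ni y\to x}\frac{|\langle a,\xi^\alpha(y)-\xi^\alpha(x)\rangle|}{d(y,x)}\le\frac{\beta_{i_j}+\beta_{l_j}}{\eps_0}+\Lip(\xi^\alpha)|a-a_j|\longrightarrow0,
\]
while the capacitary density of $X\setminus E_j$ at $x$ is less than $\beta_{i_j}+\beta_{l_j}\to0$. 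Fix $t>0$ and set $A_t\coloneqq\{y\in\Om\setminus\{x\}\colon|\langle a,\xi^\alpha(y)-\xi^\alpha(x)\rangle|\ge t\,d(y,x)\}$. Once the right-hand side above is below $t$, we have $A_t\cap B(x,r)\subset B(x,r)\setminus E_j$ for small $r$. Letting $j\to\infty$ shows that $A_t$ is $1$-thin at $x$, hence of measure density zero by \eqref{eq:rcapa in small ball}.

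So $y\mapsto\langle a,\xi^\alpha(y)\rangle$ has approximate differential $0$ at $x$, while it trivially has differential $a\ne0$. This contradicts uniqueness at every chart point. The same compactness argument shows a posteriori that $|v_k-v_{k'}|\le C_x(\beta_k+\beta_{k'})$ once $\beta_k+\beta_{k'}$ is below an $x$-dependent threshold. Your quantitative bound is therefore an output of this argument, not an input to it. With $v_k\to v$ established, your Steps~2 and~3 finish the proof as written.
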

\begin{proof}
Write $\xi=\xi^\alpha$, $N=N_\alpha$, and $L=\Lip(\xi)$.
For $E\subset X$, set
\[
\Theta_x(E)\coloneqq
\limsup_{r\to0}\frac{\rcapa_1(E\cap B(x,r),B(x,2r))}{\rcapa_1(B(x,r),B(x,2r))}.
\]
The implication \ref{item-fine diff-2}$\Rightarrow$\ref{item-fine diff-1} follows from the definition of the fine limit.
For \ref{item-fine diff-1}$\Rightarrow$\ref{item-fine diff-3}, set $\beta_k=1/k$ and intersect each fine neighborhood supplied by the limit with a sufficiently small metric ball contained in $\Om$.
This gives the required sets $U_k$ with $v_k=d^\alpha_{\mathrm{fine}}w(x)$.

It remains to prove \ref{item-fine diff-3}$\Rightarrow$\ref{item-fine diff-2}.
We first show that $\{v_k\}$ is Cauchy.
Otherwise there are $\varepsilon_0>0$ and indices $i_j,l_j\to\infty$ such that
\begin{equation}
\label{eq: estimate-vij-vlj}
|v_{i_j}-v_{l_j}|\ge\varepsilon_0.
\end{equation}
After passing to a subsequence, the vectors $a_j\coloneqq(v_{i_j}-v_{l_j})/|v_{i_j}-v_{l_j}|$ converge to $a\in \mathbb S^{N-1}$.
Set $E_j\coloneqq U_{i_j}\cap U_{l_j}$.
Then
\begin{equation}
\label{eq: sequence-thin-set}
\Theta_x(X\setminus E_j)
\le\Theta_x(X\setminus U_{i_j})+\Theta_x(X\setminus U_{l_j})
<\beta_{i_j}+\beta_{l_j}\to0.
\end{equation}
We obtain 
\begin{equation}
\begin{split}
\label{eq: estimate-affine}
\limsup_{E_j\ni y\to x}
\frac{|\langle a,\xi(y)-\xi(x)\rangle|}{d(y,x)}
&\le\limsup_{E_j\ni y\to x}
\frac{|\langle a_j,\xi(y)-\xi(x)\rangle|}{d(y,x)}+L|a-a_j|\\
&\le\frac{\beta_{i_j}+\beta_{l_j}}{\varepsilon_0}+L|a-a_j|\quad \textrm{by \eqref{lem-item-3-2}
and \eqref{eq: estimate-vij-vlj}}\\
&\to0.
\end{split}
\end{equation}
For every $t>0$, combining this with \eqref{eq: sequence-thin-set}, the set
\[
A_t\coloneqq\left\{y\in\Om\setminus\{x\}:
\frac{|\langle a,\xi(y)-\xi(x)\rangle|}{d(y,x)}\ge t\right\}
\]has $\Theta_x(A_t)=0$.
By \eqref{eq:rcapa in small ball}, each $A_t$ has zero measure density at $x$.
Hence the chart-linear function $y\mapsto\langle a,\xi(y)\rangle$ has approximate differential zero at $x$.
It also has the exact differential $a$, contradicting the fixed uniqueness convention.
Thus $v_k\to v\in\R^N$.

For $y\ne x$, define
\[
R(y)\coloneqq
\frac{|w(y)-w(x)-\langle v,\xi(y)-\xi(x)\rangle|}{d(y,x)}.
\]
Fix $t>0$ and $\eta>0$, and choose $k$ such that $\Theta_x(X\setminus U_k)<\eta$ and $\beta_k+L|v_k-v|<t$.
Then $\limsup_{U_k\ni y\to x}R(y)<t$, so $\Theta_x(\{R\ge t\})\le\Theta_x(X\setminus U_k)<\eta$.
Letting $\eta\to0$, it follows that every positive level set of $R$ is $1$-thin at $x$.
By Lemma \ref{lem:fine-neighborhood-selection} we can find a $1$-finely open set required in \ref{item-fine diff-2}.
\end{proof}

\begin{lemma}\label{lem:zero-BV-blowup}
Let $f\in\BV_{\loc}(\Om)$, let $x\in\Om$, and let $e$ be a pointwise function such that $e(x)=0$ and $|e(y)|\le |f|^\vee(y)$ for $\mathcal{H}$-a.e. $y\in\Om$.
Suppose that for some fixed $\lambda'\ge2$, we have
\begin{equation}
\label{eq:zero-normalized-BV-blowup}
\delta_x(r)
\coloneqq
\frac{\|Df\|(B(x,\lambda' r))}{\mu(B(x,r))}
+\frac{1}{r}\vint{B(x,\lambda' r)}|f|\,d\mu
\to 0
\quad\textrm{as }r\to0.
\end{equation}
Assume also that the geometric condition $(\mathrm{FG})_x$ in Definition~\ref{def:FG} holds.
Then
$e$ is $1$-finely differentiable at $x$ with respect to every Cheeger chart containing $x$, and
\[
d^\alpha_{\mathrm{fine}}e(x)=0.
\]
\end{lemma}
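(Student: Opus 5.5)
We reduce to criterion \ref{item-fine diff-3} of Lemma \ref{lem:fine diff}, with $v_k=0$ and $w=e$; its hypotheses hold since $e(x)=0\in\R$ and $(\mathrm{FG})_x$ is assumed. Since $|e|\le|f|^\vee$ outside an $\mathcal H$-null set $N$, which has zero $1$-capacity by \eqref{eq:capa and Hausdorff measure}, it suffices to bound the relative capacity of the superlevel sets of $|f|^\vee$. Fix $t>0$ and set $A_t\coloneqq\{y\in\Om\setminus N: |f|^\vee(y)\ge t\,d(y,x)\}$. Then
\[
\{y\neq x:|e(y)|\ge t\,d(y,x)\}\subset A_t\cup N .
\]
The goal is to show $\Theta_x(A_t)=0$ for every $t>0$, with $\Theta_x$ as in the proof of Lemma \ref{lem:fine diff}. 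Granting this, we take $U_k\coloneqq \Om\setminus(A_{\beta_k/2}\cup N)$ with $\beta_k=1/k$, which gives \ref{item-fine diff-3}. Lemma \ref{lem:fine diff} then yields fine differentiability with $d^\alpha_{\mathrm{fine}}e(x)=0$, where uniqueness comes from the convention fixed earlier.

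To estimate $\Theta_x(A_t)$, we split into dyadic annuli. For small $r$ and $r/2\le d(y,x)<r$ we have $A_t\cap(B(x,r)\setminus B(x,r/2))\subset\{|f|^\vee\ge tr/2\}\cap B(x,r)$. We then apply a capacitary weak-type estimate to the nonnegative BV function $|f|$ localized with a Lipschitz cutoff $\phi$ supported in $B(x,\lambda' r)$ (we use $\lambda'\ge2$ here). The function $\phi|f|\cdot(2/(tr))$ is essentially admissible for $\rcapa_{\BV}(\{|f|^\vee\ge tr/2\}\cap B(x,r),B(x,2r))$, after a truncation at level $1$ and the relation $(\phi|f|)^\vee=\phi|f|^\vee$ outside an $\mathcal H$-null set. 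Combining with \eqref{eq:BV capa and 1-capa-relative} gives
\[
\rcapa_1(\{|f|^\vee\ge tr/2\}\cap B(x,r),B(x,2r))\le \frac{2C_1}{tr}\Bigl(\|Df\|(B(x,\lambda'r))+\frac{C}{r}\int_{B(x,\lambda'r)}|f|\,d\mu\Bigr).
\]
Dividing by $\rcapa_1(B(x,r),B(x,2r))\simeq\mu(B(x,r))/r$ (from \eqref{eq:rcapa over ball-measure}), the quotient is bounded by $Ct^{-1}\delta_x(r)$, where doubling absorbs the difference between $\mu(B(x,\lambda'r))$ and $\mu(B(x,r))$.

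The main obstacle is summing over annuli $B(x,2^{-i}r)\setminus B(x,2^{-i-1}r)$, $i\ge0$, to control $A_t\cap B(x,r)$ and not just one annulus. By subadditivity, \eqref{eq: rcapa-compare-in-ball} and the previous bound,
\[
\frac{\rcapa_1(A_t\cap B(x,r),B(x,2r))}{\rcapa_1(B(x,r),B(x,2r))}\le \frac{C}{t}\sum_{i\ge0}\sup_{s\le 2^{-i}r}\delta_x(s)\,\frac{\mu(B(x,2^{-i}r))/(2^{-i}r)}{\mu(B(x,r))/r}.
\]
Under alternative \ref{item-1-FG} of $(\mathrm{FG})_x$, the ratio is at most $C_x2^{-i(s-1)}$. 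The series is therefore dominated by a geometric series times $\sup_{s\le r}\delta_x(s)\to0$, exactly as in the proof of Lemma \ref{lem:fine-neighborhood-selection}.

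Under alternative \ref{item-2-FG}, summation is unnecessary. For $y\in A_t\cap B(x,r)$, taking $\rho=d(y,x)$, the singleton bound \eqref{eq:FG-singleton-capacity} at scale about $2\rho$ is compared with the single-annulus estimate at scale $2\rho$, which is at most $Ct^{-1}\delta_x(2\rho)$. This is $<\delta_x^*$ for small $\rho$, a contradiction. Hence $A_t\cap B(x,r)=\emptyset$ for small $r$, and $\Theta_x(A_t)=0$ trivially.

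In both cases $\Theta_x(A_t)=0$ for every $t>0$, which completes the argument.
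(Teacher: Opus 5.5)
Your architecture matches the paper's proof. You reduce to Lemma~\ref{lem:fine diff}\ref{item-fine diff-3} with $v_k=0$, split into dyadic annuli, and use a truncated, cut-off multiple of $|f|$ as a $\rcapa_{\BV}$ test function together with the Leibniz rule and \eqref{eq:BV capa and 1-capa-relative}. You then sum geometrically under \ref{item-1-FG} and argue by singleton contradiction under \ref{item-2-FG}. Proving $\Theta_x(A_t)=0$ for each fixed $t$ only repackages the paper's choice $\sup_{r<R_k}\delta_x(r)\le c_0\beta_k^2$.

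\textbf{The gap is in the key capacity estimate, which is not justified as written.} Admissibility for $\rcapa_{\BV}(A,W)$ requires $u^\wedge\ge 1$ $\mathcal H$-a.e.\ on $A$, not $u^\vee\ge1$. Your identity $(\phi|f|)^\vee=\phi|f|^\vee$ only gives $\psi^\vee\ge1$ on $\{|f|^\vee\ge tr/2\}\cap B(x,r)$, where $\psi=\min\{1,2\phi|f|/(tr)\}$. On the jump set of $|f|$ one can have $\psi^\wedge<1$, even $\psi^\wedge=0$ where $|f|^\wedge=0<|f|^\vee$. This set is not $\mathcal H$-negligible in general. Under \ref{item-2-FG}, a single such point already carries capacity comparable to $\rcapa_1(B(x,r),B(x,2r))$, and these are exactly the points your singleton argument must rule out. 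So $\psi$ is not admissible.

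The missing ingredient is Lemma~\ref{lem:wedge and vee capacity}. Using the strong-boundary density and the fine-closure invariance \eqref{eq:capa of fine int}, it gives
$\rcapa_1(\{|f|^\vee>s\}\cap B(x,r),B(x,2r))=\rcapa_1(\{|f|^\wedge>s\}\cap B(x,r),B(x,2r))$.
Apply it with $s=tr/4$, noting $\{|f|^\vee\ge tr/2\}\subset\{|f|^\vee>tr/4\}$. Then test with $\min\{1,4\phi|f|/(tr)\}$, which has lower approximate limit $1$ on $\{|f|^\wedge>tr/4\}\cap B(x,r)$. This is what the paper does with its sets $F_{k,j}$.

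\textbf{Two smaller fixes.}
\begin{enumerate}
\item For admissibility the cutoff must equal $1$ on $B(x,r)$ and vanish outside $B(x,2r)$, not merely outside $B(x,\lambda' r)$.
\item $A_t$ must be restricted to $y\ne x$. As written, $x\in A_t$ because $|f|^\vee(x)\ge 0=t\,d(x,x)$. Under \ref{item-2-FG}, \eqref{eq:FG-singleton-capacity} with $y=x$ then gives $\Theta_x(A_t)\ge\delta_x^*$, so the bound $\Theta_x(X\setminus U_k)<\beta_k$ fails. The paper's annuli exclude $x$ automatically.
\end{enumerate}
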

\begin{proof}
After modifying $e$ in an $\mathcal{H}$-negligible set, we may assume $|e(y)|\le|f|^\vee(y)$ for every $y\in\Om$
(note that $|f|^{\vee}(x)=0$ by \eqref{eq:zero-normalized-BV-blowup}).
Let $\beta_k\coloneqq 2^{-k}$.
Choose $c_0>0$, independent of $k$ but allowed to depend on the local data in $(\mathrm{FG})_x$ from Definition~\ref{def:FG}, which will be fixed below.
By \eqref{eq:zero-normalized-BV-blowup}, for every $k\in\N$ there exists $0<R_k<\min\{R_x,\tfrac18\diam X\}$ such that $B(x,2\lambda R_k)\Subset\Om$ and
\begin{equation}
\label{eq: delta-r-beta-k}
\sup_{0<r<R_k}\delta_x(r)\le c_0\beta_k^2.
\end{equation}
For this fixed $k$, put $r_{k,j}\coloneqq 2^{-j}R_k$ and define
\[
A_{k,j}
\coloneqq
B(x,r_{k,j})\setminus B(x,r_{k,j+1}),
\]
\[
E_{k,j}
\coloneqq
\left\{y\in A_{k,j}\colon
|e(y)|>\frac{\beta_k}{2}d(x,y)\right\}.
\]
Let $\eta_{k,j}$ be a Lipschitz cutoff satisfying $0\le \eta_{k,j}\le 1$, $\eta_{k,j}=1$ on $B(x,r_{k,j})$, $\eta_{k,j}=0$ on $X\setminus B(x,3r_{k,j}/2)$, and $g_{\eta_{k,j}}\le 2/r_{k,j}.$ Define
\[
\varphi_{k,j}\coloneqq
\eta_{k,j}\min\left\{1,\frac{4|f|}{\beta_k r_{k,j}}\right\}.
\]
Then $\varphi_{k,j}\in\BV(X)$ and $\supp \varphi_{k,j}\Subset B(x,2r_{k,j})$.
Put 
\[
F_{k,j}\coloneqq\left\{y\in B(x,r_{k,j}):|f|^\wedge(y)>\frac{\beta_k r_{k,j}}{4}\right\}.
\]
For $y\in E_{k,j}$, we know $|f|^\vee(y)\ge|e(y)|>\beta_k r_{k,j}/4$.
By Lemma \ref{lem:wedge and vee capacity} applied to $|f|$, we have
\[
\rcapa_1(E_{k,j},B(x,2r_{k,j}))
\le\rcapa_1(F_{k,j},B(x,2r_{k,j})).
\]
Moreover, $\varphi_{k,j}^\wedge=1$ on $F_{k,j}$, so this function is admissible for the
relative BV capacity of $F_{k,j}$.
By the Leibniz rule, see \cite[Theorem 1.2]{L-SL}, we have
\[
\begin{split}
\|D\varphi_{k,j}\|(B(x,2r_{k,j}))
&\le \frac{4}{\beta_k r_{k,j}}\left(\int_{B(x,2r_{k,j})}\eta_{k,j}\,d\|Df\|+\int_{B(x,2r_{k,j})} g_{\eta_{k,j}} |f|\,d\mu\right)\\
&\le \frac{C}{\beta_k r_{k,j}}\|Df\|(B(x,2r_{k,j}))+\frac{C}{\beta_k r_{k,j}^2}\int_{B(x,2r_{k,j})}|f|\,d\mu.
\end{split}
\]
By \eqref{eq:BV capa and 1-capa-relative}, we have
\[
\begin{split}
\rcapa_1(E_{k,j},B(x,2r_{k,j}))
&\le C\rcapa_{\BV}(F_{k,j},B(x,2r_{k,j}))\\
&\le \frac{C}{\beta_k r_{k,j}}\|Df\|(B(x,2r_{k,j}))
+\frac{C}{\beta_k r_{k,j}^2}\int_{B(x,2r_{k,j})}|f|\,d\mu.
\end{split}
\]Using \eqref{eq: delta-r-beta-k} and \eqref{eq:rcapa over ball-measure}, we obtain
\begin{equation}
\label{eq:single-bad-annulus-capacity}
\frac{\rcapa_1(E_{k,j},B(x,2r_{k,j}))}{\rcapa_1(B(x,r_{k,j}),B(x,2r_{k,j}))}
\le\frac{C}{\beta_k}\delta_x(r_{k,j})
\le Cc_0\beta_k.
\end{equation}

Set
\[
E_k\coloneqq\bigcup_{j=0}^\infty E_{k,j},
\quad
U_k\coloneqq\Omega\setminus E_k.
\]

Suppose first that Definition~\ref{def:FG} \ref{item-1-FG} holds.
Thus for $j\ge1$, we have
\[
\begin{split}
\frac{\rcapa_1(E_k\cap B(x,r_{k,j}),B(x,2r_{k,j}))}{\rcapa_1(B(x,r_{k,j}),B(x,2r_{k,j}))}
&\le\sum_{m\ge j}\frac{\rcapa_1(E_{k,m},B(x,2r_{k,j}))}{\rcapa_1(B(x,r_{k,j}),B(x,2r_{k,j}))}\\
&\le\sum_{m\ge j}\frac{\rcapa_1(E_{k,m},B(x,2r_{k,m}))}{\rcapa_1(B(x,r_{k,j}),B(x,2r_{k,j}))}
\quad \textrm{by }\eqref{eq: rcapa-compare-in-ball}\\
&\le \sum_{m\ge j} Cc_0\beta_k \frac{\mu(B(x,r_{k,m}))/r_{k,m}}{\mu(B(x,r_{k,j}))/r_{k,j}}
\quad \textrm{by }\eqref{eq:single-bad-annulus-capacity}\textrm{ and }\eqref{eq:rcapa over ball-measure}\\
&\le Cc_0 \beta_k
\quad \textrm{by }\eqref{eq: reverse doubling ineq.-s>1}
\textrm{ and the fact }s>1.
\end{split}
\] 
For $r_{k,j+1}\le r<r_{k,j}$, by \eqref{eq: rcapa-compare-in-ball} and the fact that $E_k\cap B(x,r)\subset E_k\cap B(x,r_{k,j})$, we can get the same estimate for every sufficiently small radius (with a new constant still denoted as $C$).
Choosing $c_0$ so that $Cc_0<1$, we obtain
\begin{equation}
\label{eq:conical-bad-set-capacity}
\limsup_{r\to0}
\frac{\rcapa_1(B(x,r)\setminus U_k,B(x,2r))}{\rcapa_1(B(x,r),B(x,2r))}
<\beta_k.
\end{equation}

Suppose next that Definition~\ref{def:FG} \ref{item-2-FG} holds.
Choose $c_0$ sufficiently small such that $Cc_0<\delta_x^*$.
We claim that $E_{k,j}=\emptyset$ for every $j$.
Indeed, if $y\in E_{k,j}$, then by \eqref{eq:FG-singleton-capacity} and \eqref{eq:single-bad-annulus-capacity},
\[
\begin{split}
\delta_x^*
&\le\frac{\rcapa_1(\{y\},B(x,2r_{k,j}))}{\rcapa_1(B(x,r_{k,j}),B(x,2r_{k,j}))}\\
&\le\frac{\rcapa_1(E_{k,j},B(x,2r_{k,j}))}{\rcapa_1(B(x,r_{k,j}),B(x,2r_{k,j}))}
\le Cc_0\beta_k
\le Cc_0,
\end{split}
\]
which is a contradiction.
Thus $E_k=\emptyset$ in this case.

Consequently, in both cases we
have \eqref{eq:conical-bad-set-capacity}.
If $y\in U_k\cap B(x,R_k)\setminus \{x\}$, then $y$ belongs to some $A_{k,j}$ but not to $E_{k,j}$.
Thus,
\[
\frac{|e(y)-e(x)|}{d(x,y)}
\le\frac{\beta_k}{2}
<\beta_k.
\]
Together with \eqref{eq:conical-bad-set-capacity}, applying Lemma \ref{lem:fine diff} \ref{item-fine diff-3} with the vectors $v_k=0$, it follows that
\[
d^\alpha_{\mathrm{fine}}e(x)=0.
\]\end{proof}

We have the following discrete convolution construction.
\begin{lemma}\label{lem:localized-discrete-convolution}
Let $u\in\BV(\Om)$, let $W\subset\Om$ be open, and let $\kappa>0$.
Then there exists $h\in\BV(\Om)$ such that
\begin{equation}
\label{eq:localized-discrete-convolution-modification}
\begin{split}
&h=u \quad\textrm{on }\Om\setminus W,\\
&h|_W\in N^{1,1}(W)\cap\Lip_{\loc}(W),\\
&\|h-u\|_{L^1(W)}\le\kappa,
\quad
\int_Wg_h\,d\mu\le C\|Du\|(W),
\end{split}
\end{equation}
where $C$ only depends on the doubling and Poincar\'e constants, and
\begin{equation}
\label{eq:variation-supported-in-modification-set}
\|D(h-u)\|(\Om\setminus W)=0.
\end{equation}
Moreover, suppose that $(\mathrm{FG})_x$ from Definition~\ref{def:FG} holds for $\mu$-a.e. $x\in\Om\setminus W$.
Set $e_0\coloneqq h-u$, and define $e=h^*-u^*$ wherever both representatives are finite, with $e=0$ elsewhere. 
Then $d^\alpha_{\mathrm{fine}}e(x)=0$ for $\mu$-a.e. $x\in\Om\setminus W$ with respect to every Cheeger chart containing $x$.
\end{lemma}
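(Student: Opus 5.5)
The plan is to build $h$ by a Whitney-type discrete convolution in $W$, and then to obtain the fine differentiability of $e$ from Lemma \ref{lem:zero-BV-blowup} applied with $f=e_0=h-u$.

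\textbf{Construction of $h$.} Take a Whitney covering $\{B_l=B(x_l,r_l)\}$ of $W$, for instance from Lemma \ref{lem:Whitney} with $A=D=W$ and $M=2\lambda$. Choose the scale so that also $r_l\le \kappa'$ for a small $\kappa'$ determined by $\kappa$: take the radii comparable to $\min\{\kappa',\dist(x_l,X\setminus W)\}$. Let $\{\phi_l\}$ be a Lipschitz partition of unity subordinate to $\{2B_l\}$ with $\Lip\phi_l\le C/r_l$, and set
\[
h\coloneqq \sum_l u_{B_l}\phi_l \ \textrm{ in } W,\qquad h\coloneqq u \ \textrm{ in } \Om\setminus W.
\]
On each ball, $|h-u_{B_l}|\le\sum_{m\sim l}|u_{B_m}-u_{B_l}|$ and $g_h\le (C/r_l)\sum_{m\sim l}|u_{B_m}-u_{B_l}|$. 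The BV--Poincar\'e inequality \eqref{eq: BV-Poincare} together with bounded overlap gives
\[
\int_W g_h\,d\mu\le C\|Du\|(W) \quad\text{and}\quad \|h-u\|_{L^1(W)}\le C\sup_l r_l\,\|Du\|(W)\le\kappa.
\]
Local Lipschitzness is clear from the construction. The standard argument for discrete convolutions (as in the BV extension/approximation literature, e.g. \cite{KKST}) shows that gluing $h$ to $u$ across $\partial W$ costs no variation on $\Om\setminus W$. This gives $h\in\BV(\Om)$ and \eqref{eq:variation-supported-in-modification-set}: near $\partial W$ the averages $u_{B_l}$ approximate $u$ at the shrinking Whitney scale, so $\|D(h-u)\|(\Om\setminus W)=0$. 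I would prove this by testing with cutoffs $\eta_\epsilon$ supported at distance $\epsilon$ from $\partial W$, and showing that $\int |h-u|\,g_{\eta_\epsilon}\,d\mu\to 0$ via the Poincar\'e estimate on the Whitney balls near $\partial W$. This gluing step is the main obstacle.

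\textbf{Fine differentiability of $e$.} Set $f=e_0\in\BV(\Om)$. Then $f=0$ a.e. on $\Om\setminus W$, and $\|Df\|\mres(\Om\setminus W)=0$. At $\mu$-a.e. $x\in\Om\setminus W$, the point $x$ is a density point of $\Om\setminus W$ and, in addition, satisfies
\[
\lim_{r\to0}\frac{r\,\|Df\|(B(x,\lambda' r))}{\mu(B(x,r))}=0,
\qquad
\lim_{r\to0}\frac1r\vint{B(x,\lambda' r)}|f|\,d\mu=0 .
\]
The first limit is the standard fact that $\|Df\|(B(x,r))/\mu(B(x,r))\to 0$ for $\mu$-a.e. $x$ in a set of zero $\|Df\|$-measure. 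The second, an $L^1$-differentiability of $f$ with zero derivative at a.e. point where $f=0$, would be derived from the Poincar\'e-type estimate
\[
\frac1r\vint{B}|f|\le C\frac{\|Df\|(\lambda B)}{\mu(B)}+\frac1r|f_B| ,
\]
using the density of the zero set of $f$ together with a Maz'ya-type bound (Lemma \ref{lem:BV-Mazya}).

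However, \eqref{eq:zero-normalized-BV-blowup} requires the first term without the factor $r$, i.e. $\|Df\|(B(x,\lambda'r))/\mu(B(x,r))\to0$, which I expect is stronger than what the a.e. argument above gives. I would handle this by a direct covering argument. I would test how fast $\|Df\|(B(x,r))$ decays near $\Om\setminus W$, using $\|Df\|\le C\|Du\|\mres W$ and the fact that $W$ has density zero at $x$. Failing that, I would reexamine the exact normalization needed in Lemma \ref{lem:zero-BV-blowup}.

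\textbf{Conclusion.} The pointwise condition on $e$ is routine. Outside an $\mathcal H$-null set, $h^*$ and $u^*$ are finite and satisfy \eqref{eq:pointwise-properties}, so $|e|\le |f|^\vee$ $\mathcal H$-a.e. (modulo the convex-combination bounds). Moreover, $e(x)=0$ at Lebesgue points $x\in\Om\setminus W$ where $f$ vanishes. Lemma \ref{lem:zero-BV-blowup} then gives $d^\alpha_{\mathrm{fine}}e(x)=0$.
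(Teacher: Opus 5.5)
Your route is the paper's. For the construction, the paper simply cites the localized discrete convolution of \cite[Corollary 3.6]{LS17}, which is what you build. The gluing step you call the main obstacle does close along the lines you sketch. The Whitney estimate $\int_{B_l}|h-u|\,d\mu\le Cr_l\|Du\|(\tau B_l)$, combined with $r_l\le C\epsilon$ for balls meeting $\{\dist(\cdot,X\setminus W)<2\epsilon\}$ and bounded overlap, gives $\epsilon^{-1}\int_{W\cap\{\dist(\cdot,X\setminus W)<2\epsilon\}}|h-u|\,d\mu\le C\|Du\|(W\cap\{\dist(\cdot,X\setminus W)<C\epsilon\})\to0$. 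Lower semicontinuity applied to the cut-off functions $\eta_\epsilon(h-u)$ then yields $\|D(h-u)\|(\Om)\le\|D(h-u)\|(W)$. For the second part, the paper, like you, verifies \eqref{eq:zero-normalized-BV-blowup} for $f=e_0$ at density points of $\Om\setminus W$ and applies Lemma \ref{lem:zero-BV-blowup}.

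The step you leave open is not a gap, but the fallback you propose for it would not work. The standard fact you quote is exactly the first term of \eqref{eq:zero-normalized-BV-blowup}. Since $\|De_0\|(\Om\setminus W)=0$, the Radon--Nikodym differentiation theorem gives $\|De_0\|(B(x,r))/\mu(B(x,r))\to0$ for $\mu$-a.e. $x\in\Om\setminus W$: the absolutely continuous density vanishes a.e. there, and the singular part has zero $\mu$-density a.e. Doubling turns this into $\|De_0\|(B(x,\lambda'r))/\mu(B(x,r))\to0$ for each fixed $\lambda'$. No factor $r$ appears. In fact, the weaker version with the factor $r$ that you displayed would not suffice, because the second term is deduced from the first. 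At a density point of $\Om\setminus W$, the zero set of $e_0$ fills at least half of $B=B(x,r)$ for small $r$, so $|(e_0)_B|\le2\vint{B}|e_0-(e_0)_B|\,d\mu$. Then \eqref{eq: BV-Poincare} gives $\frac1r\vint{B}|e_0|\,d\mu\le C\|De_0\|(\lambda B)/\mu(B)\to0$, and no Maz'ya inequality is needed. Drop the covering argument based on $\|De_0\|\le C\|Du\|\mres W$. Read as a measure inequality it is false in general, since the discrete convolution replaces jump parts of $\|Du\|$ by absolutely continuous mass spread over Whitney balls. Finally, the bound $|e|\le|e_0|^\vee$ comes most cleanly from $|h^*(y)-u^*(y)|\le\limsup_{r\to0}\vint{B(y,r)}|h-u|\,d\mu$ together with \eqref{eq:pointwise-properties} applied to the single BV function $|e_0|$. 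Using the convex-combination bounds for $h$ and $u$ separately does not subtract cleanly.
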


\begin{proof}
The existence of $h$ satisfying \eqref{eq:localized-discrete-convolution-modification} and \eqref{eq:variation-supported-in-modification-set} is precisely the localized discrete-convolution construction from \cite[Corollary 3.6]{LS17}.

It remains to prove the last assertion.
By \eqref{eq:variation-supported-in-modification-set}, $\|De_0\|(\Om\setminus W)=0$.
By the Radon--Nikodym differentiation theorem, see e.g. \cite[p. 82]{HKSTbook},
for every fixed dilation factor $\lambda'\ge1$, we have
\begin{equation}\label{eq:density zero}
\frac{\|De_0\|(B(x,\lambda' r))}{\mu(B(x,r))}
\to 0
\end{equation}
for $\mu$-a.e. $x\in\Om\setminus W$.

We may also require that $x$ be a density point of $\Om\setminus W$.
For sufficiently small $r$, the zero set of $e_0$ occupies at least half of $B(x,r)$.
We then have
\[
\left|\,\vint{B(x,r)} e_0\,d\mu\right|\le2\vint{B(x,r)}|e_0-(e_0)_{B(x,r)}|\,d\mu,
\quad
\vint{B(x,r)}|e_0|\,d\mu
\le3\vint{B(x,r)}|e_0-(e_0)_{B(x,r)}|\,d\mu.
\]
By the BV--Poincar\'e inequality \eqref{eq: BV-Poincare}, we have
\[
\frac{1}{r}\vint{B(x,r)}|e_0|\,d\mu
\le
C\frac{\|De_0\|(B(x,\lambda r))}{\mu(B(x,r))}
\to 0.
\]
Applying the same estimate with the radius $\lambda' r$, we obtain for any $\lambda'\ge 1$
\begin{equation}\label{eq:integral average of e zero}
\frac{1}{r}\vint{B(x,\lambda' r)}|e_0|\,d\mu\to0.
\end{equation}
Together with \eqref{eq:density zero},
this proves \eqref{eq:zero-normalized-BV-blowup} for $f=e_0$.

Outside an $\mathcal H$-negligible set, the representatives are finite by \eqref{eq:pointwise-properties},
and using again \eqref{eq:pointwise-properties} we know
\[
|h^*(y)-u^*(y)|
\le\limsup_{r\to0}\vint{B(y,r)}|h-u|\,d\mu
\le|e_0|^\vee(y).
\]
We may also choose $x$ outside the exceptional set above.
Since $|e_0|^\vee(x)=0$ by \eqref{eq:integral average of e zero}, we have $e(x)=0$.
Applying Lemma \ref{lem:zero-BV-blowup} yields $d^\alpha_{\mathrm{fine}}e(x)=0$ for $\mu$-a.e. $x\in(\Om\setminus W)\cap X_\alpha$.
\end{proof}

\begin{proposition}[Fine differentiability of Newtonian functions]\label{thm: fine-diff of N11}
Let $\Om\subset X$ be open and $u\in N^{1,1}_{\loc}(\Om)$.
Let $\{(X_\alpha,\xi^\alpha)\}_\alpha$, where $\xi^\alpha=(\xi^\alpha_1,\ldots,\xi^\alpha_{N_\alpha})$, be a Cheeger measurable differentiable structure.
Assume also that the fine-scale geometric condition $(\mathrm{FG})_x$ in Definition~\ref{def:FG} holds for $\mu$-a.e. $x\in\Om$.
Then for $\mu$-a.e. $x\in\Omega\cap X_\alpha$, $u$ is $1$-finely differentiable at $x$ with respect to the chart $(X_\alpha,\xi^\alpha)$, and
\[
d^\alpha_{\mathrm{fine}}u(x)=d^\alpha u(x).
\]
\end{proposition}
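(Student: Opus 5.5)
Our approach is a Lusin-type decomposition of $u$ into a Lipschitz part and a small error, as in Lemma \ref{lem:localized-discrete-convolution}. The Lipschitz part is differentiable in Cheeger's sense, hence finely differentiable. The error is handled by Lemma \ref{lem:zero-BV-blowup}. Since the statement is local, we may assume $u\in N^{1,1}(\Om)$ with $\Om$ bounded. Since $1$-q.e. point is a Lebesgue point by \eqref{eq:Lebesgue points}, we may also take $u=u^*$ outside a set of zero $1$-capacity; such a set is $\mathcal H$-negligible and does not affect fine limits.

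\textbf{Step 1: the Lipschitz part.} Fix $\kappa>0$ and apply a Lusin-type approximation (e.g.\ via the maximal function $\mathcal M g_u$ and the pointwise estimate $|u(y)-u(z)|\le Cd(y,z)(\mathcal M g_u(y)+\mathcal M g_u(z))$). This gives an open set $W\subset\Om$ with $\mu(W)<\kappa$ such that $u|_{\Om\setminus W}$ is Lipschitz; let $f$ be a McShane extension to $X$. Cheeger's theorem shows that $f$ is differentiable at a.e.\ $x\in X_\alpha$, with an honest limit in $y$. An honest limit is in particular a fine limit, so $d^\alpha_{\mathrm{fine}}f(x)=d^\alpha f(x)$ at such points.

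\textbf{Step 2: the error.} Put $e_0=u-f\in N^{1,1}_{\loc}(\Om)\subset\BV_{\loc}(\Om)$, which vanishes on $\Om\setminus W$. Its minimal weak upper gradient is zero a.e.\ on $\Om\setminus W$, so $\|De_0\|(\Om\setminus W)=0$. At a.e.\ $x\in\Om\setminus W$ that is also a density point of $\Om\setminus W$, the argument in the proof of Lemma \ref{lem:localized-discrete-convolution} applies verbatim: Radon--Nikodym differentiation together with the BV--Poincar\'e inequality gives \eqref{eq:zero-normalized-BV-blowup} for $e_0$. Next let $e=u^*-f$. Since $u^*-f^*$ satisfies $|u^*-f|\le|e_0|^\vee$ $\mathcal H$-a.e. (using \eqref{eq:pointwise-properties} and the continuity of $f$), we get $e(x)=0$. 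Together with $(\mathrm{FG})_x$, Lemma \ref{lem:zero-BV-blowup} then gives $d^\alpha_{\mathrm{fine}}e(x)=0$.

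\textbf{Step 3: combining.} The sum of two functions that are finely differentiable at $x$ is finely differentiable at $x$. The reason is that a finite intersection of fine neighbourhoods is a fine neighbourhood, or equivalently one can use Lemma \ref{lem:fine diff}\ref{item-fine diff-3} with intersected sets. Hence $u$ is finely differentiable at a.e.\ $x\in(\Om\setminus W)\cap X_\alpha$, with $d^\alpha_{\mathrm{fine}}u(x)=d^\alpha f(x)$.

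Fine differentiability implies approximate differentiability by \eqref{eq:rcapa in small ball}, and approximate differentials are unique under the convention fixed after Definition \ref{def:fine differentiability}. Therefore $d^\alpha f(x)=d^\alpha u(x)$. Letting $\kappa=1/k\to0$ and taking the union of the good sets exhausts $\Om\cap X_\alpha$ up to a null set.

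\textbf{Main obstacle.} The delicate point is the representative issue in Step 2. We need $e(x)=0$ and the $\mathcal H$-a.e.\ bound $|e|\le|e_0|^\vee$ for the pointwise $u$ itself, not only for a.e.\ equivalence classes. This requires that $u$ agree with $u^*$ outside a $\capa_1$-null set, which is where \eqref{eq:Lebesgue points} and \eqref{eq:capa and Hausdorff measure} enter. A second point to check is that the exceptional $\mathcal H$-null sets are harmless in Lemma \ref{lem:zero-BV-blowup}; this holds because $\mathcal H$-null sets have zero $1$-capacity and are therefore $1$-thin everywhere.
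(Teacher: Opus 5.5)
Your proof is correct, but it takes a different route from the paper's. The paper does not use a Lusin decomposition. Instead it:
\begin{enumerate}
\item takes the Sobolev differential from \cite{ES24} and sets $d^\alpha u\coloneqq\pi_{C,1}^{-1}(d_1u)$, so that $|d^\alpha u|_C=g_u$;
\item refines the chart into pieces $Y_{\alpha m}$ on which the coefficients satisfy $\sum_i|a_i|\le m g_u$;
\item at a density point of $Y_{\alpha m}$ that is a Lebesgue point of $u$, $g_u$ and the coefficients, subtracts the chart-linear map $\ell_x$;
\item for $v=u-\ell_x$, shows that the averages of $g_v$ over $B(x,r)$ tend to zero, and a telescoping Poincar\'e argument then shows that $r^{-1}$ times the average of $|v|$ tends to zero;
\item applies Lemma~\ref{lem:zero-BV-blowup} to $v$ itself.
\end{enumerate}

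You instead split $u=f+e_0$ by a Lusin approximation and get the Lipschitz part from Cheeger's theorem. You then dispose of $e_0$ exactly as in Lemma~\ref{lem:localized-discrete-convolution}. This is the mechanism the paper reserves for the BV case (Proposition~\ref{thm:BV-finely-diff}), with a Lipschitz approximant in place of the discrete convolution and Cheeger's theorem in place of the $N^{1,1}$ result. Your route avoids the cotangent-bundle machinery entirely: no $\pi_{C,1}$, no constant $C_\alpha$, and no Lebesgue points of coefficient functions. It needs only Cheeger's theorem and the Lusin property of $N^{1,1}$ functions. For that property, apply the pointwise estimate after a cutoff so that the balls stay inside the domain; for $p=1$ only the weak-type bound on $\mathcal M g_u$ is available, and that suffices.

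What your version does not deliver is the identification of the fine differential with the Sobolev differential $\pi_{C,1}^{-1}(d_1u)$. You identify it with the approximate differential, via uniqueness. That matches how the introduction describes $d^\alpha u$, but not the definition used in the paper's proof. It is the latter that gives $|d^\alpha_{\mathrm{fine}}u(x)|_{C,x}=g_u(x)$, which the paper relies on in \eqref{eq:norm identification} and Proposition~\ref{prop:a new Sobo char}.

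If you want this stronger form, one extra line suffices:
\begin{itemize}
\item By linearity of $d_1$ and $|d_1w|=g_w$, you have $|d_1u-d_1f|=g_{u-f}=0$ a.e.\ on $\{u=f\}\supset\Om\setminus W$.
\item For cut-off Lipschitz $f$, \cite[Theorem~1.10]{ES24} gives $\pi_{C,1}(d_Cf)=d_1f$.
\item Hence $\pi_{C,1}^{-1}(d_1u)=d^\alpha f$ a.e.\ on $\Om\setminus W$.
\end{itemize}
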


\begin{proof}
Fix a chart $(X_\alpha,\xi^\alpha)$.
Let $T_C^*X$ denote the Cheeger cotangent bundle associated with the fixed Cheeger differentiable structure.
From Lemma \ref{lem:BV-Sobolev density}
we have $\Lip_f\le Cg_f$ almost everywhere for Lipschitz $f$.
Thus by \cite[Theorem 1.10]{ES24} applied with $\omega_x(t)=Ct$, $\pi_{C,1}\colon T_C^*X\to T_1^*X$ is an isometric bijection, and satisfies $\pi_{C,1}(d_Cf)=d_1f$ for every bounded Lipschitz function $f$.
By \cite[Theorem 1.7]{ES24} with $p=1$, every $u\in N^{1,1}_{\mathrm{loc}}(\Omega)$ has a $1$-weak Sobolev differential $d_1u$, and $|d_1u|=g_u$ $\mu$-a.e. Therefore, for $\mu$-a.e. $x\in \Om\cap X_\alpha$, we define
\[
d^\alpha u(x)\coloneqq
\pi_{C,1}^{-1}(d_1u(x))
\in (T_C^*X)_x
\simeq (\mathbb R^{N_\alpha})^*.
\]It satisfies
\begin{equation}
\label{eq: N11 diff-three derivative}
|d^\alpha u(x)|_C
=|d_1u(x)|=g_u(x)
\quad \mu\text{-a.e. }x\in \Om\cap X_\alpha.
\end{equation}
These results apply locally by multiplication with compactly supported Lipschitz cutoffs; see \cite[Remark 1.2]{ES24}.
The same localization permits the use of the possibly unbounded coordinate functions.
In particular, $d^\alpha u\in L^1_{\mathrm{loc}}(\Om\cap X_\alpha;T_C^*X)$.
We may also write $d^\alpha u=\sum_{i=1}^{N_\alpha}a_i\,d\xi_i^\alpha$, with measurable coefficient functions $a_i$.

Define
\[
C_\alpha(x)\coloneqq
\sup_{\zeta\ne0}\frac{|\zeta|_{\ell^1}}{|\zeta|_{C,x}},
\]
where $|\zeta|_{\ell^1}\coloneqq \sum_{i=1}^{N_\alpha}|c_i|$ if $\zeta=\sum_{i=1}^{N_\alpha}c_i\,d\xi_i^\alpha$.

The supremum can be taken over nonzero rational coefficient vectors, so $C_\alpha$ is measurable.
After ignoring a measure-zero set, $|\cdot|_{C,x}$ is a norm on the finite-dimensional Cheeger cotangent fiber, so $C_\alpha(x)<\infty$ $\mu$-a.e. Set
\[
Y_{\alpha m}\coloneqq \{x\in X_\alpha:\ C_\alpha(x)\le m\}.
\]
Then $X_\alpha=\bigcup_{m=1}^{\infty}Y_{\alpha m}\cup N$ where $\mu(N)=0$.
On each fixed piece $Y=Y_{\alpha m}$, by \eqref{eq: N11 diff-three derivative}, we have
\begin{equation}
\label{eq: estimate ai-gu}
\sum_{i=1}^{N_\alpha}|a_i|
\le m|d^\alpha u|_{C}=m g_u
\qquad \mu\text{-a.e. on }Y.
\end{equation}
In particular, every $a_i\in L^1_{\mathrm{loc}}(\Om\cap Y)$.

Extend each $a_i\ch_Y$ by zero outside $Y$.
Choose $x\in\Om\cap Y$ to be a density point of $Y$ and a Lebesgue point of $u$, $g_u$, and these extended coefficient functions, with $u(x)=u^*(x)$.
We also require $(\mathrm{FG})_x$ from Definition~\ref{def:FG} and the pointwise identities above at $x$.
These conditions hold for $\mu$-a.e. $x\in\Om\cap Y$.
Choose $r_0>0$ so small that $B(x,2\lambda r_0)\Subset \Omega$.
Define
\[
\ell_x(y)=u(x)+
\sum_{i=1}^{N_\alpha}
a_i(x)\bigl(\xi_i^\alpha(y)-\xi_i^\alpha(x)\bigr).
\]
Set $v\coloneqq u-\ell_x$.
Then $v(x)=0$.
By linearity of the Sobolev-Cheeger differential,
\[
d^\alpha v
=
d^\alpha u-d^\alpha u(x)
=
\sum_{i=1}^{N_\alpha}
(a_i-a_i(x))\,d\xi_i^\alpha
\quad \mu\text{-a.e. on }Y.
\]
Since each coordinate function $\xi_i^\alpha$ is Lipschitz, we have $|d\xi_i^\alpha|_{C}\le \Lip(\xi_i^\alpha)$ $\mu$-a.e. Hence,
\[
g_v=|d^\alpha v|_{C}
\le C
\sum_{i=1}^{N_\alpha}|a_i-a_i(x)|
\quad \mu\text{-a.e. on Y.}
\]
For $\mu$-a.e. $y\in\Om\setminus Y$, since $g_u+g_{\ell_{x}}$ is a $1$-weak upper gradient of $v$, we have
\[
g_v(y)\le g_u(y)+\sum_{i=1}^{N_\alpha}|a_i(x)|g_{\xi_i^\alpha}(y)
\le g_u(y)+Cm g_u(x).
\]
Consequently,
\[
\begin{split}
\vint{B(x,r)}g_v\,d\mu
&\le\frac{C}{\mu(B(x,r))}\sum_{i=1}^{N_\alpha}
\int_{B(x,r)\cap Y}|a_i(y)-a_i(x)|\,d\mu(y)\\
&\quad+\frac{1}{\mu(B(x,r))}\int_{B(x,r)\setminus Y}g_u(y)\,d\mu(y)
+Cm g_u(x)\frac{\mu(B(x,r)\setminus Y)}{\mu(B(x,r))}.
\end{split}
\]
Recall that $x$ is a density point of $Y$, and a Lebesgue point of $u$, and using also the fact that
\[
\frac{1}{\mu(B(x,r))}\int_{B(x,r)\setminus Y}g_u(y)\,d\mu(y)\le \vint{B(x,r)}|g_u(y)-g_u(x)|\,d\mu(y)
+g_u(x)\frac{\mu(B(x,r)\setminus Y)}{\mu(B(x,r))}\to0,
\]
we have
\begin{equation}
\label{eq: N11 diff-gv}
\lim_{r\to0}\,\vint{B(x,r)}g_v\,d\mu=0.
\end{equation}
Let $r_i=2^{-i}r$.
Since $x$ is a Lebesgue point of $u$ and $\ell_x$ is continuous, then $v_{B(x,r_i)}\to v(x)=0$.
By the $1$-Poincar\'e inequality we have
\[
\begin{split}
\frac{1}{r}\vint{B(x,r)}|v|\,d\mu
&\le \frac{1}{r}\vint{B(x,r)} |v-v_{B(x,r)}|\,d\mu+\frac{1}{r}|v_{B(x,r)}|\\
&\le C\vint{B(x,\lambda r)}g_v\,d\mu+\frac{1}{r}\sum_{i=0}^{\infty}|v_{B(x,r_i)}-v_{B(x,r_{i+1})}|\\
&\le C\vint{B(x,\lambda r)}g_v\,d\mu+C\sum_{i=0}^{\infty} \frac{r_i}{r}\vint{B(x,\lambda r_i)}g_v\,d\mu\\
&\le C\sup_{0<t\le\lambda r}\vint{B(x,t)}g_v\,d\mu.
\end{split}
\]
Thus by \eqref{eq: N11 diff-gv} it follows that
\begin{equation}
\label{eq: N11 diff-remaining part}
\lim_{r\to0}\frac{1}{r}\vint{B(x,r)}|v|\,d\mu=0.
\end{equation}
Since $d\|Dv\|\le g_v\,d\mu$, doubling gives
\[
\frac{\|Dv\|(B(x,\lambda r))}{\mu(B(x,r))}
\le C\vint{B(x,\lambda r)}g_v\,d\mu\to0.
\]
Applying Lemma \ref{lem:zero-BV-blowup} with $f=v$ and $e=v^*$,
at the chosen point, we have $v^*(x)=0$, and $|v^*|\le|v|^\vee$ $\mathcal H$-a.e. by \eqref{eq:pointwise-properties}.
Thus $d_{\mathrm{fine}}^\alpha v^*(x)=0$.

Since $\ell_x(y)-\ell_x(x)=\langle d^\alpha u(x),\xi^\alpha(y)-\xi^\alpha(x)\rangle$ and $v^*=u^*-\ell_x$, we conclude that $u^*$ is $1$-finely differentiable at $x$ and $d_{\mathrm{fine}}^\alpha u^*(x)=d^\alpha u(x)$.
The same expansion holds for $u$, since $u(x)=u^*(x)$ for $1$-q.e. $x$; recall \eqref{eq:Lebesgue points}.
The chosen conditions hold at $\mu$-a.e. point of every refined chart piece, and hence at $\mu$-a.e. point of $\Om\cap X_\alpha$.
\end{proof}

\begin{proposition}[Fine differentiability of BV functions]
\label{thm:BV-finely-diff}
Let $\Om\subset X$ be open and let $u\in\BV_\loc(\Om)$.
Let $\{(X_\alpha,\xi^\alpha)\}_\alpha$, where $\xi^\alpha=(\xi^\alpha_1,\ldots,\xi^\alpha_{N_\alpha})$, be a Cheeger measurable differentiable structure.
Assume also that the fine-scale geometric condition $(\mathrm{FG})_x$ in Definition~\ref{def:FG} holds for $\mu$-a.e. $x\in\Om$.
Then, for $\mu$-a.e. $x\in\Om\cap X_\alpha$, the representative $u^*$ is $1$-finely differentiable at $x$ with respect to $(X_\alpha,\xi^\alpha)$, and
\[
d^\alpha_{\mathrm{fine}}u^*(x)=d^\alpha u(x).
\]
\end{proposition}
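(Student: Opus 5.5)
We reduce to the Newtonian case, Proposition \ref{thm: fine-diff of N11}, by means of the discrete convolution of Lemma \ref{lem:localized-discrete-convolution}. The working definition of $d^\alpha u(x)$ for BV functions is the approximate Cheeger differential of $u^*$; by \cite[Corollary 4.1]{DCIKS} it exists $\mu$-a.e., and by the uniqueness convention at every chart point it is determined by any approximating function that agrees with $u^*$ up to a term with zero approximate differential. Since the claim is local, we may assume $u\in\BV(\Om)$, working on $V\Subset\Om$ and exhausting.

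First we handle the singular part. Write $\|Du\|=a\,\mu+\|Du\|^s$ with $\|Du\|^s\perp\mu$. Fix $\varepsilon>0$ and choose an open set $W\subset\Om$ with $\mu(W)<\varepsilon$ and $\|Du\|^s(\Om\setminus W)=0$ up to an arbitrarily small error. The natural choice is to take $W$ to contain a $\mu$-null Borel set carrying $\|Du\|^s$; this is possible by outer regularity of $\mu$. Apply Lemma \ref{lem:localized-discrete-convolution} with this $W$ and some $\kappa>0$ to get $h$. Then $e\coloneqq h^*-u^*$ satisfies $d^\alpha_{\mathrm{fine}}e(x)=0$ for $\mu$-a.e. $x\in\Om\setminus W$, using that $(\mathrm{FG})_x$ holds $\mu$-a.e.

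The key step, which I expect to be the main obstacle, is to show that $h\in N^{1,1}_{\loc}(\Om)$, or at least that near $\mu$-a.e. point of $\Om\setminus W$ it can be replaced by a Newtonian function. On $W$ we know $h$ is locally Lipschitz, and by \eqref{eq:variation-supported-in-modification-set} we have $\|Dh\|\mres(\Om\setminus W)=\|Du\|\mres(\Om\setminus W)=a\,\mu\mres(\Om\setminus W)$, while $\|Dh\|\mres W\le g_h\,\mu$. Hence $\|Dh\|\ll\mu$ with an $L^1$ density. I would then aim to invoke Lemma \ref{lem:BV-Sobolev density} with $p=1$ and density $a\ch_{\Om\setminus W}+g_h\ch_W\in L^1$, which yields a representative $\tilde h\in N^{1,1}_{\loc}(\Om)$. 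The subtlety is that \eqref{eq:variation-supported-in-modification-set} should exclude any singular mass of $h$ on $\partial W$; I would handle this by choosing $W$ so that $\|Du\|(\partial W)=0$ for the relevant level, and rely on the construction of \cite{LS17}.

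Now apply Proposition \ref{thm: fine-diff of N11} to $\tilde h$. For $\mu$-a.e. $x\in\Om\cap X_\alpha$, the function $\tilde h^*=h^*$ is $1$-finely differentiable with differential $d^\alpha\tilde h(x)$. Combining this with $u^*=h^*-e$ and $d^\alpha_{\mathrm{fine}}e(x)=0$, and noting that a sum of two fine expansions is again a fine expansion since the union of two $1$-thin sets is $1$-thin, we get that $u^*$ is $1$-finely differentiable at $\mu$-a.e. $x\in(\Om\setminus W)\cap X_\alpha$. Its fine differential equals $d^\alpha\tilde h(x)$.

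Fine differentiability implies approximate differentiability by \eqref{eq:rcapa in small ball}. The uniqueness convention then gives $d^\alpha_{\mathrm{fine}}u^*(x)=d^\alpha u(x)$. Finally, take $W=W_k$ with $\mu(W_k)<2^{-k}$. The union of the good sets covers $\Om\cap X_\alpha$ up to the null set $\bigcap_k W_k$, which proves the claim for $\mu$-a.e. $x$.
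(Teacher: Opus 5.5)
Your proposal follows the paper's proof essentially step by step: a $\mu$-null Borel set carrying the singular part inside an open $W$ of small measure, the localized discrete convolution $h$, absolute continuity of $\|Dh\|$ together with Lemma \ref{lem:BV-Sobolev density} to get a Newtonian representative, Proposition \ref{thm: fine-diff of N11} combined with Lemma \ref{lem:localized-discrete-convolution} for the error term, and uniqueness of the approximate differential via \cite[Corollary 4.1]{DCIKS}. Your worry about singular mass on $\partial W$, and the proposed choice $\|Du\|(\partial W)=0$, are unnecessary: since $W$ is open, $\partial W\cap\Om\subset\Om\setminus W$ is already covered by \eqref{eq:variation-supported-in-modification-set}, and taking the null carrier inside $W$ makes $\|Du\|\mres(\Om\setminus W)\ll\mu$ exactly, so no ``small error'' arises.
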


\begin{proof}
By an exhaustion of $\Om$ by relatively compact open subsets, it suffices to prove the assertion when $u\in\BV(\Om)$.
Fix $\varepsilon\in(0,1)$.
Since $\|D^su\|\perp\mu$, there is a Borel set $N\subset\Om$ with $\mu(N)=0$ and $\|D^su\|(\Om\setminus N)=0$.
By outer regularity, choose an open set $W$ such that
\begin{equation}
\label{eq:small-singular-neighborhood}
N\subset W\subset\Om,
\quad
\mu(W)<\varepsilon.
\end{equation}
By Lemma \ref{lem:localized-discrete-convolution}, there exists $h\in\BV(\Om)$ such that
\begin{equation}
\label{eq:localized-discrete-convolution-modification-h}
h=u\quad\text{on }\Om\setminus W,
\quad
h|_W\in N^{1,1}(W)\cap\liploc(W),
\end{equation}and
\begin{equation}
\label{eq:variation-supported-in-modification-set-h}
\|D(h-u)\|(\Om\setminus W)=0.
\end{equation}
For every Borel $A\subset\Om\setminus W$, we have $\|Dh\|(A)=\|Du\|(A)$.
Hence $\|Dh\|\mres(\Om\setminus W)\ll\mu$, while \eqref{eq:localized-discrete-convolution-modification-h} implies $\|Dh\|\mres W\ll\mu$.
Thus $\|Dh\|=a\mu$ for some nonnegative $a\in L^1(\Om)$.
Applying Lemma~\ref{lem:BV-Sobolev density} with $p=1$, we obtain $v\in N^{1,1}_{\loc}(\Om)$ such that $v=h$ $\mu$-a.e. and $v$ has a $1$-weak upper gradient $g\le Ca$ $\mu$-a.e.
Since $h\in L^1(\Om)$ and $g\in L^1(\Om)$, we have $v\in N^{1,1}(\Om)$.
By the $1$-q.e. Lebesgue-point property \eqref{eq:Lebesgue points}, we have
\[
v(x)=\lim_{r\to0}\vint{B(x,r)}v\,d\mu
=\lim_{r\to0}\vint{B(x,r)}h\,d\mu=h^*(x)
\quad\text{for }1\text{-q.e. }x\in\Om.
\]
Thus $h^*\in N^{1,1}(\Om)$ and $g_{h^*}\le g\le Ca$ $\mu$-a.e.
Set $e_0\coloneqq h-u$ and let $e$ be defined as in Lemma \ref{lem:localized-discrete-convolution}.
By Lemma \ref{lem:localized-discrete-convolution}, it follows that $d^\alpha_{\mathrm{fine}}e(x)=0$ for $\mu$-a.e.
$x\in(\Om\setminus W)\cap X_\alpha$.
By Proposition \ref{thm: fine-diff of N11}, we know $h^*$ is finely differentiable almost everywhere.
We may exclude the capacity-zero set where either $h^*$ or $u^*$ is not finite.
At points where these two conclusions hold, intersecting the corresponding fine neighborhoods and using $u^*=h^*-e$, it follows that
\[
d^\alpha_{\mathrm{fine}}u^*(x)=d^\alpha_{\mathrm{fine}}h^*(x).
\]
By \cite[Corollary 4.1]{DCIKS}, $u^*$ is approximately differentiable $\mu$-a.e.
Then by the uniqueness of the approximate differential, we have $d^\alpha_{\mathrm{fine}}u^*(x)=d^\alpha u(x)$.
Letting $\varepsilon\to 0$ proves the claim.
\end{proof}

\begin{proof}[Proof of Theorem \ref{thm:BV-finely-diff-main theorem}]
We verify the geometric condition $(\mathrm{FG})_x$ in Definition~\ref{def:FG} at every $x\in\Om$.
If $Q>1$, by Ahlfors regularity we have \eqref{eq: reverse doubling ineq.-s>1} with $s=Q$ and $C_x=C_A^2$.
Suppose that $Q=1$.
Fix $x\in\Om$ and choose $R_x>0$ so that $B(x,2R_x)\Subset\Om$ and $R_x<\tfrac18\diam X$.
For $0<r<R_x$ and $y\in B(x,r)$, by quasiconvexity, there exists a rectifiable curve from $y$ to a point outside $B(x,2r)$.
After removing loops and stopping at its first exit, we obtain an injective rectifiable arc $\gamma$ with endpoints $y$ and $z\in X\setminus B(x,2r)$.
By Ahlfors $1$-regularity and the $5$-covering theorem, it follows that $\mathcal H^1_d(A)\le CC_A\mu(A)$ for Borel sets $A$, where $\mathcal{H}_d^1$ is ordinary one-dimensional Hausdorff measure.
Thus every admissible function $v$ for $\rcapa_1(\{y\},B(x,2r))$ and every upper gradient $g$ of $v$ satisfy
\[
1\le|v(y)-v(z)|\le\int_\gamma g\,ds
\le CC_A\int_Xg\,d\mu.
\]Taking the infimum and using \eqref{eq:rcapa over ball-measure}, we obtain
\[
\rcapa_1(\{y\},B(x,2r))\ge(CC_A)^{-1},
\qquad
\rcapa_1(B(x,r),B(x,2r))\le C_dC_A.
\]Hence \eqref{eq:FG-singleton-capacity} holds with $\delta_x^*=(CC_dC_A^2)^{-1}$.
In both cases $(\mathrm{FG})_x$ from Definition~\ref{def:FG} holds at every $x\in\Om$, so the theorem follows from Proposition \ref{thm:BV-finely-diff}.
\end{proof}

\begin{remark}
The definition of $p$-thinness when $1<p<\infty$ has quite a different form compared to the definition of $1$-thinness, and thus in the case $u\in N^{1,p}_{\loc}(\Om)$ for $1<p<\infty$, we still only have differentiability with respect to the $1$-fine topology, and not the $p$-fine topology.
\end{remark}

At any chart point, for $v\in \R^{N_{\alpha}}$ we use the pointwise realization
\[
|v|_{C,x}\coloneqq\Lip_{\langle v,\xi^\alpha\rangle}(x).
\]
It is a norm by the fixed approximate-uniqueness convention, and it agrees almost everywhere with the intrinsic cotangent norm, see \cite[Section 1D]{ES24}.
If $w$ is finely differentiable at a point $x$ satisfying $(\mathrm{FG})_x$ from Definition~\ref{def:FG}, by Lemma \ref{lem:fine diff} \ref{item-fine diff-2} and the choice $U_r=U\cap B(x,r)$ we have
\begin{equation}
\label{eq:Lip and fine der}
\Lip_w^0(x)\le|d^\alpha_{\mathrm{fine}}w(x)|_{C,x}.
\end{equation}
In fact, the equality in \eqref{eq:Lip and fine der} holds for $\mu$-a.e. point.

\begin{lemma}\label{lem:chart-linear-oscillation}
For each chart $(X_\alpha,\xi^\alpha)$, the following holds for
$\mu$-a.e.\ $x\in X_\alpha$.
For every $v\in\R^{N_\alpha}$, set $L_v(y)\coloneqq
\langle v,\xi^\alpha(y)-\xi^\alpha(x)\rangle$.
Then, for every family of sets $E_r\subset B(x,r)$, defined for
all sufficiently small $r>0$ and satisfying
\[
\lim_{r\to0}
\frac{\mu^*(B(x,r)\setminus E_r)}{\mu(B(x,r))}=0,
\]
we have
\begin{equation}
\label{eq:Ur and U oscillation}
\lim_{r\to0}\frac{\osc_{E_r}L_v}{2r}=|v|_{C,x}.
\end{equation}
Here $\mu^*$ denotes the outer measure when $E_r$ is not measurable.
\end{lemma}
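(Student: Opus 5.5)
The plan is to prove the two inequalities in \eqref{eq:Ur and U oscillation} separately, first for $v$ in a countable dense set $\mathcal D\subset\R^{N_\alpha}$ (say rational vectors), and then to pass to all $v$. The passage to all $v$ should be routine. For $v,v'\in\R^{N_\alpha}$ we have $|L_v-L_{v'}|\le L|v-v'|\,d(\cdot,x)$ with $L=\Lip(\xi^\alpha)$, so
\[
\bigl|\osc_{E_r}L_v-\osc_{E_r}L_{v'}\bigr|\le 2L|v-v'|r .
\]
Moreover $v\mapsto|v|_{C,x}=\Lip_{L_v}(x)$ is $L$-Lipschitz in $v$. Hence it suffices to find a $\mu$-null set outside of which \eqref{eq:Ur and U oscillation} holds for all $v\in\mathcal D$ and all admissible families $\{E_r\}$. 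Note that the null set must not depend on the family $\{E_r\}$, only on $v$; this dictates the form of the argument below.

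The upper bound holds at every point. Since $E_r\subset B(x,r)$ and $L_v(x)=0$,
\[
\osc_{E_r}L_v\le\osc_{B(x,r)}L_v\le 2\sup_{B(x,r)}|L_v|,
\]
and therefore
\[
\limsup_{r\to0}\frac{\osc_{E_r}L_v}{2r}\le\limsup_{r\to0}\sup_{y\in B(x,r)}\frac{|L_v(y)-L_v(x)|}{r}=\Lip_{L_v}(x)=|v|_{C,x}.
\]

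The real content is the lower bound $\liminf_{r\to0}\osc_{E_r}L_v/(2r)\ge|v|_{C,x}$. Fix $v\in\mathcal D$ and $\varepsilon>0$. I would aim to show the following for $\mu$-a.e. $x\in X_\alpha$: there are $c=c(x,v,\varepsilon)>0$ and $r_0>0$ such that for all $0<r<r_0$,
\begin{equation*}
\mu\bigl(\{y\in B(x,r):L_v(y)\ge(|v|_{C,x}-\varepsilon)r\}\bigr)\ge c\,\mu(B(x,r)),
\end{equation*}
and the same holds for $\{L_v\le-(|v|_{C,x}-\varepsilon)r\}$. Once this is known, any $E_r$ with $\mu^*(B(x,r)\setminus E_r)=o(\mu(B(x,r)))$ meets both sets for small $r$. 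This gives $\osc_{E_r}L_v\ge2(|v|_{C,x}-\varepsilon)r$, and the lower bound follows.

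To prove the two-sided density estimate I would argue by contradiction via blow-ups. Suppose it fails along $r_k\to0$. Rescale $(X,d/r_k,\mu/\mu(B(x,r_k)),x)$ and pass to a pointed measured Gromov--Hausdorff tangent $(X_\infty,d_\infty,\mu_\infty,x_\infty)$; this is possible by doubling and properness, and the tangent is again doubling with a Poincar\'e inequality. By Arzel\`a--Ascoli, $L_v/r_k$ converges to a $\Lip(L_v)$-Lipschitz limit $u_\infty$ with $u_\infty(x_\infty)=0$. The failure of the density bound passes to the limit and says that $\{u_\infty\ge|v|_{C,x}-\varepsilon\}$ (or its negative counterpart) has $\mu_\infty$-measure zero in the unit ball. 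By continuity of $u_\infty$ and the fact that $\mu_\infty$ charges every ball, this forces $u_\infty<|v|_{C,x}-\varepsilon/2$ on the open unit ball, roughly. For the contradiction I would invoke Cheeger's blow-up theorem, \cite[Section 10]{Che}: at $\mu$-a.e. $x$ (for the countably many $v\in\mathcal D$ simultaneously), every such blow-up of $L_v$ is a generalized linear function with $\Lip u_\infty\equiv|v|_{C,x}$ everywhere. The null set here depends only on $v$, which is what we need.

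The main obstacle is the final step: showing that a generalized linear function $u_\infty$ with $u_\infty(x_\infty)=0$ and constant pointwise Lipschitz constant $\lambda_0=|v|_{C,x}$ attains values at least $\lambda_0-\varepsilon/2$ and at most $-(\lambda_0-\varepsilon/2)$ in the unit ball. I would first try to use the gradient-curve property of generalized linear functions from Cheeger's work: through $x_\infty$ there is a curve $\gamma$ parametrized by arc length with $(u_\infty\circ\gamma)'=\lambda_0$. Following $\gamma$ forward for time close to $1$ reaches values near $\lambda_0$, and following it backward reaches values near $-\lambda_0$, while $\gamma$ stays inside the unit ball. If only a weaker statement is available, namely $\Lip u_\infty(x_\infty)=\lambda_0$ at the base point, I would instead exploit the self-similarity of blow-ups, since blow-ups of blow-ups are again blow-ups at a.e. point, together with the fact that $\lambda_0$ is the Lipschitz constant at every point. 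The aim would be to produce near-extremal values on both sides in a ball of radius comparable to $1$. The symmetric statement follows by applying the argument to $-u_\infty$, which is the blow-up of $L_{-v}$.
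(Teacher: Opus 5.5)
Your reductions are fine. The estimate $|\osc_{E_r}L_v-\osc_{E_r}L_{v'}|\le 2\Lip(\xi^\alpha)|v-v'|r$ lets you work with countably many $v$ and keeps the null set independent of $\{E_r\}$. The upper bound follows immediately from $\osc_{E_r}L_v\le 2\sup_{B(x,r)}|L_v|$. A two-sided positive-density estimate does force every admissible $E_r$ to meet both level sets.

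The gap is that the lower bound, which is the whole content of the lemma, is never proved. You need that at a.e.\ $x$ the function $L_v$ reaches both $\ge(|v|_{C,x}-\varepsilon)r$ and $\le-(|v|_{C,x}-\varepsilon)r$ inside $B(x,r)$ at \emph{every} small scale. By contrast, $|v|_{C,x}=\Lip_{L_v}(x)$ is only a $\limsup$ of $\sup_{B(x,r)}|L_v|/r$, so it controls neither all scales nor both signs. Your blow-up argument moves this difficulty to the tangent and then leaves it open: you invoke the gradient-curve property through the base point only tentatively (``I would first try''), without a precise statement or proof. The fallback cannot work as described. Indeed, $u(t)=-|t|$ on $\R$ has $\Lip_u\equiv1$ at every point and is its own blow-up at $0$, so all the self-similarity you mention is present. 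Yet $u\le u(0)$, so it never reaches values near $+1$ in $(-1,1)$. Any argument along these lines must use genuinely more structure than constancy of the pointwise Lipschitz constant.

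The paper supplies this ingredient directly, without tangents, via the curvewise characterization of minimal upper gradients in \cite{ES24}. For Lipschitz $f$ one has $g_f=\Lip_f$ a.e. Moreover, for a.e.\ $x$ with $g_f(x)>0$ and every $0<q<g_f(x)$, there is an absolutely continuous curve $\gamma$ with $\gamma(t)=x$ at an interior parameter $t$, $\sigma=|\gamma'|(t)>0$, and $|(f\circ\gamma)'(t)|>q\sigma$. The points $\gamma(t\pm r/((1+\eta)\sigma))$ lie in $B(x,r)$ for all small $r$. Since the derivative at an interior parameter is two-sided, they give $\liminf_{r\to0}\osc_{B(x,r)}f/(2r)\ge q/(1+\eta)$. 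Applying this to $f=\langle v,\xi^\alpha\rangle$ with rational $v$ and approximating, exactly as in your first step, gives $\lim_{r\to0}\osc_{B(x,r)}L_v/(2r)=|v|_{C,x}$ for all $v$.

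The passage from balls to $E_r$ is then elementary. By doubling, every $y\in B(x,(1-\eta)r)$ lies within $\varepsilon r$ of $E_r$ once $r$ is small, so $\osc_{E_r}L_v\ge\osc_{B(x,(1-\eta)r)}L_v-2\Lip(L_v)\varepsilon r$. The same Lipschitz-plus-doubling argument also yields your density estimate from the ball statement, using $\sup_{B(x,r)}|L_v|\le(|v|_{C,x}+o(1))r$ to split the oscillation into both signs.

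If you want to keep the blow-up route, you need a precise theorem guaranteeing the following: at a.e.\ $x$, there is a curve through (or arbitrarily close to) the base point of every blow-up of $L_v$ along which it grows at rate $|v|_{C,x}$. That statement is essentially the lemma itself transplanted to the tangent.
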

\begin{proof}
First let $f$ be a Lipschitz function, localized by a Lipschitz cutoff if necessary.
By \cite[Theorems 1.7 and 1.10]{ES24} and the $1$-Poincar\'e inequality, $g_f=\Lip_f$ almost everywhere.
For $\mu$-a.e. $x$ with $g_f(x)>0$ and every $0<q<g_f(x)$, by \cite[Theorem 1.1 and Corollary 1.3]{ES24}, there exists an absolutely continuous curve $\gamma$ and an interior parameter $t$ with
\[
\gamma(t)=x,
\qquad
\sigma\coloneqq|\gamma'|(t)>0,
\qquad
|(f\circ\gamma)'(t)|>q\sigma.
\]
For fixed $\eta>0$ and all sufficiently small $r$, the points $\gamma(t\pm r/((1+\eta)\sigma))$ lie in $B(x,r)$.
The two-sided derivative along the curve therefore yields
\[
\liminf_{r\to0}\frac{\osc_{B(x,r)}f}{2r}\ge\frac{q}{1+\eta}.
\]
Let $\eta\to0$ and $q\uparrow g_f(x)$.
The reverse inequality follows from $\osc_{B(x,r)}f\le2\sup_{B(x,r)}|f-f(x)|$. Hence
\[
\lim_{r\to0}\frac{\osc_{B(x,r)}f}{2r}=\Lip_f(x)=g_f(x)
\]
for almost every $x$, including points where $g_f(x)=0$.
Apply this to the countable family $f=\langle v,\xi^\alpha\rangle$ with rational $v$ and the union of their exceptional sets is $\mu$-null.
Fix $x\in X_\alpha$ outside this null set. For any $v\in \R^{N_\alpha}$, choose $v_j\in \Q^{N_\alpha}$ with $v_j\to v$. It follows that
\begin{equation*}
\begin{split}
\left\lvert\frac{\osc_{B(x,r)}L_v}{2r}-\lvert v\rvert_{C,x}\right\rvert
&\le \left\lvert\frac{\osc_{B(x,r)}L_v}{2r}-\frac{\osc_{B(x,r)}L_{v_j}}{2r}\right\rvert\\
&\quad +\left\lvert\frac{\osc_{B(x,r)}L_{v_j}}{2r}-\lvert v_j\rvert_{C,x}\right\rvert
+\bigl\lvert\lvert v_j\rvert_{C,x}-\lvert v\rvert_{C,x}\bigr\rvert\\
&\le 2\Lip(\xi^\alpha)\lvert v-v_j\rvert
+\left\lvert\frac{\osc_{B(x,r)}L_{v_j}}{2r}-\lvert v_j\rvert_{C,x}\right\rvert.
\end{split}
\end{equation*}
Since $\osc_{B(x,r)}L_{v_j}/(2r)\to\lvert v_j\rvert_{C,x}$ as $r\to0$ for every $j$, first taking the limit superior as $r\to0$ and then letting $j\to\infty$, it follows that
\[
\lim_{r\to0}\frac{\osc_{B(x,r)}L_v}{2r}=\lvert v\rvert_{C,x}.
\]
Thus, for $\mu$-a.e. $x\in X_\alpha$, \eqref{eq:Ur and U oscillation} holds for $E_r=B(x,r)$ and every $v\in\R^{N_\alpha}$.

Fix such a point $x\in X_\alpha$ and $0<\eta<1$.
For every fixed $0<\varepsilon<\eta$, by doubling, there exists $c(\varepsilon)>0$ such that
\[
\mu(B(y,\varepsilon r))\ge c(\varepsilon)\mu(B(x,r))
\quad\text{for }y\in B(x,(1-\eta)r).
\]
For sufficiently small $r$, no such ball can be contained in $B(x,r)\setminus E_r$.
Thus every $y\in B(x,(1-\eta)r)$ has a point of $E_r$ within distance $\varepsilon r$, and
\[
\osc_{E_r}L_v
\ge\osc_{B(x,(1-\eta)r)}L_v-2\Lip(L_v)\varepsilon r.
\]
Then it follows that
\[
\liminf_{r\to0}\frac{\osc_{E_r}L_v}{2r}
\ge(1-\eta)|v|_{C,x}-\Lip(L_v)\varepsilon.
\]
Let $\varepsilon\to0$ and then $\eta\to0$.
The reverse inequality follows from $E_r\subset B(x,r)$, proving \eqref{eq:Ur and U oscillation}.
\end{proof}

For $1\le p<\infty$, the $1$-Poincar\'e inequality also implies a $p$-Poincar\'e inequality.
The identifications in \cite[Theorems 1.7 and 1.10]{ES24} therefore identify the minimal $p$-weak upper gradient with the same Cheeger norm of the Sobolev differential.
For $p=\infty$, we use the locally Lipschitz representative of $u$ and so $g_{u,\infty}=\Lip_u$.
At $\mu$-a.e. $x\in\Om\cap X_\alpha$, by the Cheeger differentiability and the definition of $|\cdot|_{C,x}$, we have $|d^\alpha u(x)|_{C,x}=\Lip_u(x)=g_{u,\infty}(x)$.
Consequently, for $u\in N^{1,p}_{\loc}(\Om)$ in each of these cases,
\begin{equation}\label{eq:norm identification}
|d^\alpha u(x)|_{C,x}=g_u(x)
\quad\text{for }\mu\text{-a.e. }x\in\Om\cap X_\alpha.
\end{equation}

A natural question related to Theorem \ref{thm:main Sob} is the relationship between the weight $a$ and the minimal upper gradient $g_{u^*}$.
The next proposition makes precise the sense in which $g_{u^*}$ is the smallest admissible weight.
\begin{proposition}\label{prop:a new Sobo char}
Suppose that $X$ is Ahlfors $Q$-regular with $Q\ge1$ and $\Om$ is bounded.
These assumptions imply $(\mathrm{FG})_x$ from Definition~\ref{def:FG} at every $x\in\Om$, as verified in the proof of Theorem~\ref{thm:BV-finely-diff-main theorem}.
Let $u\in L^p(\Om)$, where $1\le p\le\infty$.
Then the following are equivalent:
\begin{enumerate}
\item\label{item-1-a new Sobo char}
$u^*\in N^{1,p}(\Om)$.
\item\label{item-2-a new Sobo char}
There are a nonnegative $a\in L^p(\Om)$ and $M\ge1$ such that $\Lip_{u^*,a}^{0,M}(x)\le1$ for $\mu$-a.e. $x\in\Om$,
and, for every $0<\delta<1$, $\Lip_{u^*,a}^{\delta,M}(x)\le C\delta^{-1}$ for every $x\in\Om$,
where $C$ depends only on the doubling and Poincar\'e constants.
\end{enumerate}
For every $\varepsilon>0$, we may choose $a=g_{u^*}+\varepsilon$.
Every weight $a$ in \ref{item-2-a new Sobo char} satisfies $a\ge g_{u^*}$ $\mu$-almost everywhere.
If $u\in N^{1,p}(\Om)$, then $g_{u^*}=g_u$ almost everywhere, so these conclusions hold with $g_u$ in place of $g_{u^*}$.
\end{proposition}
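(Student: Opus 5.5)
For \ref{item-1-a new Sobo char}$\Rightarrow$\ref{item-2-a new Sobo char} I will take $a=g_{u^*}+\varepsilon$ and $M=2\lambda$. Since $X$ is Ahlfors regular and $\Om$ bounded, we have $\mu(\Om)<\infty$, so $a\in L^p(\Om)$.

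The bound $\Lip_{u^*,a}^{\delta,2\lambda}\le C\delta^{-1}$ at every point comes directly from Proposition~\ref{prop:necessity}. I apply it to $u^*\in N^{1,p}(\Om)\subset \BV_{\loc}(\Om)$, for which $d\|Du^*\|\le g_{u^*}\,d\mu$. Enlarging the weight from $\|Du\|$ to $a\,d\mu$ only decreases the quotient. The convention for vanishing $\nu$ does not arise, since $a\ge\varepsilon>0$.

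For the $\delta=0$ statement I use Proposition~\ref{thm: fine-diff of N11}. For $\mu$-a.e. $x\in\Om\cap X_\alpha$, $u^*$ is $1$-finely differentiable with $d^\alpha_{\mathrm{fine}}u^*(x)=d^\alpha u^*(x)$. Then \eqref{eq:Lip and fine der} and \eqref{eq:norm identification} give $\Lip^{0}_{u^*}(x)\le g_{u^*}(x)$, where the fine neighbourhood $U$ provides the sets $U_r=U\cap B(x,r)$. At Lebesgue points of $a$ I can then multiply by $\mu(B(x,Mr))/\int_{B(x,Mr)}a\,d\mu\to 1/a(x)$. This gives $\Lip^{0,M}_{u^*,a}(x)\le g_{u^*}(x)/(g_{u^*}(x)+\varepsilon)\le1$. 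For $p=\infty$ the argument is the same, using the locally Lipschitz representative and the Cheeger differential.

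For \ref{item-2-a new Sobo char}$\Rightarrow$\ref{item-1-a new Sobo char} I will first obtain integrability. Taking $\delta=\widetilde c_*$ and rescaling $a$ by $C\widetilde c_*^{-1}$, Theorem~\ref{thm:main Sob} gives $v\in N^{1,p}(\Om)$ with $v=u$ a.e. The remaining task is to show that $u^*$ itself is this Newtonian function. By \eqref{eq:Lebesgue points}, $v=v^*=u^*$ holds $1$-q.e., and hence also $p$-q.e. by \eqref{eq:capa vs rcapa}-type comparisons (capacity zero for $p=1$ implies capacity zero for $p\ge 1$ in the sense needed). Since Newtonian functions are stable under modification on sets of zero $p$-capacity, $u^*\in N^{1,p}(\Om)$. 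For $p=\infty$ I will instead use Lemma~\ref{lem:BV-Sobolev density} to get continuity and then check pointwise agreement.

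The minimality claim $a\ge g_{u^*}$ a.e. is the main point. Fix $x\in\Om\cap X_\alpha$ where several conditions hold simultaneously:
\begin{itemize}
\item $u^*$ is finely differentiable with $|d^\alpha u^*(x)|_{C,x}=g_{u^*}(x)$, by Proposition~\ref{thm: fine-diff of N11} and \eqref{eq:norm identification};
\item Lemma~\ref{lem:chart-linear-oscillation} applies at $x$;
\item $x$ is a Lebesgue point of $a$;
\item $\Lip^{0,M}_{u^*,a}(x)\le1$.
\end{itemize}
Take sets $U_r$ with relative capacity of $B(x,r)\setminus U_r$ tending to $0$ and with $\limsup\osc_{U_r}u^*\,\mu(B(x,Mr))/(2r\,a(B(x,Mr)))\le 1+\eta$. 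Intersect them with $U\cap B(x,r)$, where $U$ is the fine neighbourhood from Lemma~\ref{lem:fine diff}\ref{item-fine diff-2}. The resulting sets $E_r$ still have vanishing complementary capacity density, hence vanishing measure density by \eqref{eq:rcapa in small ball}. On $E_r$ we have $u^*=u^*(x)+L_v+o(r)$ with $v=d^\alpha u^*(x)$, so by Lemma~\ref{lem:chart-linear-oscillation}
\[
\liminf_{r\to0}\frac{\osc_{E_r}u^*}{2r}\ge|v|_{C,x}=g_{u^*}(x).
\]
Combining this with $\vint{B(x,Mr)}a\,d\mu\to a(x)$ gives $g_{u^*}(x)\le(1+\eta)a(x)$, and letting $\eta\to0$ gives $a\ge g_{u^*}$.

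I expect the main obstacle to be ensuring that the infimizing sets in the definition of $\Lip^0$ can be intersected with the fine neighbourhood while keeping the controlled oscillation. This works because oscillation is monotone under shrinking the set. The last assertion, $g_{u^*}=g_u$ when $u\in N^{1,p}(\Om)$, follows because $u=u^*$ $1$-q.e., and hence $p$-q.e., so the two functions have the same minimal weak upper gradient.
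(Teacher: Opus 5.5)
Most of your argument is the paper's own. The $\delta>0$ bound comes from Proposition~\ref{prop:necessity} together with monotonicity of $\Lip_{u^*,\nu}^{\delta,M}$ in $\nu$. The $\delta=0$ bound comes from Proposition~\ref{thm: fine-diff of N11} and \eqref{eq:Lip and fine der} at Lebesgue points of $a$. The minimality argument intersects the admissible $U_r$ with the fine neighbourhood $U$ and applies Lemma~\ref{lem:chart-linear-oscillation}. There is, however, a genuine error in \ref{item-2-a new Sobo char}$\Rightarrow$\ref{item-1-a new Sobo char} for $1<p<\infty$: the step upgrading $v=u^*$ from $1$-q.e.\ to $p$-q.e. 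You reuse the same step for the final claim $g_{u^*}=g_u$.

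The capacity comparison you invoke runs the other way. The inequality $\capa_p(A)^{1/p}\le C''\capa_q(A)^{1/q}$ for $p<q$, recorded before \eqref{eq:pq quasicontinuity}, shows that sets of zero $q$-capacity have zero $p$-capacity. So $p$-polar sets with $p>1$ are $1$-polar, not conversely; \eqref{eq:capa vs rcapa} compares capacities with the same exponent and does not help. For instance, in $\R^2$ a point has zero $1$-capacity but positive $3$-capacity. Changing an $N^{1,3}$ function at one point takes it out of $N^{1,3}$. Hence $1$-q.e.\ agreement with $v$ does not give $u^*\in N^{1,p}(\Om)$.

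What you need instead is the $p$-quasi-everywhere Lebesgue point theorem for Newtonian functions. For $v\in N^{1,p}(\Om)$ with $1<p<\infty$, $p$-q.e.\ point is a Lebesgue point of $v$; the paper uses \cite[Theorem~9.2.8]{HKSTbook}, applied locally with Lipschitz cutoffs and a countable cover. This gives directly $u^*(x)=\lim_{r\to0}\vint{B(x,r)}v\,d\mu=v(x)$ for $p$-q.e.\ $x\in\Om$, after which your modification argument goes through. The same theorem applied to $u\in N^{1,p}(\Om)$ gives $u=u^*$ $p$-q.e., hence $g_{u^*}=g_u$ a.e. For $p=1$, where \eqref{eq:Lebesgue points} suffices, and for $p=\infty$, your argument is fine as written.
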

\begin{proof}
$\ref{item-1-a new Sobo char}\Rightarrow\ref{item-2-a new Sobo char}$: 
By Proposition \ref{prop:necessity} applied to $u^*$, we have $\Lip_{u^*, g_{u^*}}^{\delta, 2\lambda}(x)\le C\delta^{-1}$ for every $x\in \Om$.
By Proposition \ref{thm: fine-diff of N11}, \eqref{eq:Lip and fine der}, and the norm identification above,
\[
\Lip_{u^*}^0(x)\le|d^\alpha_{\mathrm{fine}}u^*(x)|_{C,x}=g_{u^*}(x)
\quad\text{for }\mu\text{-a.e. }x\in\Om.
\]
At Lebesgue points of $g_{u^*}$, the choice $a=g_{u^*}+\varepsilon$ therefore gives
$\Lip_{u^*,a}^{0,M}\le1$. Since $\mu(\Om)<\infty$, we have $g_{u^*}+\varepsilon\in L^p(\Om)$.

$\ref{item-2-a new Sobo char}\Rightarrow\ref{item-1-a new Sobo char}$: Using Theorem \ref{thm:main in text} applied to $u^*$ with $Z=\emptyset$, $d\nu=a\,d\mu$, and $h=C\widetilde c_*^{-1}$, together with Lemma \ref{lem:measures belonging to M},
since $u=u^*$ almost everywhere, we get $u\in\BV(\Om)$ and $d\|Du\|\le C'a\,d\mu$.
Applying Lemma~\ref{lem:BV-Sobolev density} with $C'a$ in place of $a$, we obtain a representative $v\in N^{1,p}_{\loc}(\Om)$ with $v=u$ $\mu$-a.e. and $v$ has a $1$-weak upper gradient $g\le Ca$ $\mu$-a.e.
Since $u\in L^p(\Om)$, we have $v\in N^{1,p}(\Om)$.
For $p<\infty$, we know
\[
u^*(x)=\lim_{r\to0}\vint{B(x,r)}u\,d\mu
=\lim_{r\to0}\vint{B(x,r)}v\,d\mu=v(x)
\quad\text{for }p\text{-q.e. }x\in\Om;
\]
for $p=1$, this is given by \eqref{eq:Lebesgue points};
for $1<p<\infty$, apply \cite[Theorem~9.2.8]{HKSTbook} locally using Lipschitz cutoffs and a countable cover of $\Om$.
Thus $u^*\in N^{1,p}(\Om)$.
For $p=\infty$, the representative $v$ is locally Lipschitz, so continuity and $v=u$ $\mu$-a.e. imply $u^*(x)=v(x)$ for every $x\in\Om$, yielding the same conclusion.

To prove minimality, fix $\kappa>0$.
By Lemma \ref{lem:chart-linear-oscillation}, for $\mu$-a.e. $x\in \Om\cap X_{\alpha}$, $x$ is a Lebesgue point of $a$, and for every $v$ and every family $E_r$, we have that $u^*$ is finely differentiable, and $\Lip_{u^*,a}^{0,M}(x)\le1$.
Let $U\ni x$ be a fine neighborhood on which $u^*$ has its first-order expansion, and choose an admissible family $U_r\subset B(x,r)$ such that
\begin{equation}
\label{eq:one plus kappa}
\limsup_{r\to0}\frac{\osc_{U_r}u^*}{2r}
\left(\vint{B(x,Mr)}(a+\kappa)\,d\mu\right)^{-1}
\le1+\kappa.
\end{equation}
Set $E_r\coloneqq U_r\cap U$.
The complement of $U_r$ has vanishing relative capacity and the complement of $U$ is thin at $x$.
Hence by \eqref{eq:rcapa in small ball} we have $\mu^*(B(x,r)\setminus E_r)/\mu(B(x,r))\to0$.
Then by Lemma \ref{lem:chart-linear-oscillation}, it follows that
\[
\lim_{r\to0}\frac{\osc_{E_r}u^*}{2r}
=|d^\alpha_{\mathrm{fine}}u^*(x)|_{C,x}=g_{u^*}(x).
\]
By \eqref{eq:one plus kappa}, we have 
\[
1+\kappa\ge
\limsup_{r\to0}\frac{\osc_{E_r}u^*}{2r}
\left(\vint{B(x,Mr)}(a+\kappa)\,d\mu\right)^{-1}
=\frac{g_{u^*}(x)}{a(x)+\kappa}.
\]
Letting $\kappa\to 0$, we conclude $g_{u^*}\le a$ almost everywhere. 
The fact that $g_{u^*}=g_u$ follows from $u\in N^{1,p}(\Om)$.
\end{proof}

\begin{proposition}\label{prop:equality}
Suppose that $X$ is Ahlfors $Q$-regular with $Q\ge1$.
These assumptions imply $(\mathrm{FG})_x$ from Definition~\ref{def:FG} at every $x\in\Om$, as verified in the proof of Theorem~\ref{thm:BV-finely-diff-main theorem}.
For $\mu$-a.e. $x\in\Om\cap X_\alpha$, every function $w\colon\Om\to[-\infty,\infty]$ that is $1$-finely differentiable at $x$ satisfies
\[
\Lip_w^0(x)=|d^\alpha_{\mathrm{fine}}w(x)|_{C,x}.
\]
\end{proposition}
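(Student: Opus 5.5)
The plan is to fix the full-measure set of points $x\in\Om\cap X_\alpha$ supplied by Lemma~\ref{lem:chart-linear-oscillation}, and to show that at every such $x$ both inequalities hold for every $w$ that is $1$-finely differentiable at $x$. The point is that this set does not depend on $w$. The inequality $\Lip_w^0(x)\le|d^\alpha_{\mathrm{fine}}w(x)|_{C,x}$ is already recorded in \eqref{eq:Lip and fine der}, because $(\mathrm{FG})_x$ holds everywhere under Ahlfors regularity. So only the reverse inequality $\Lip_w^0(x)\ge|v|_{C,x}$ needs proof, where $v\coloneqq d^\alpha_{\mathrm{fine}}w(x)$.

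For the reverse inequality, take any admissible family $U_r\subset B(x,r)$ in the definition \eqref{eq:generalized Lip definition} with $\delta=0$, so that $B(x,r)\setminus U_r$ has vanishing relative capacitary density at $x$. By Lemma~\ref{lem:fine diff}\ref{item-fine diff-2}, choose a $1$-finely open $U\ni x$ on which the remainder
\[
R(y)=\frac{|w(y)-w(x)-L_v(y)|}{d(y,x)}
\]
tends to zero. Set $E_r\coloneqq U_r\cap U$. The set $B(x,r)\setminus E_r$ is contained in $(B(x,r)\setminus U_r)\cup(B(x,r)\setminus U)$. The first piece has relative capacity $o(\rcapa_1(B(x,r),B(x,2r)))$ by admissibility. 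The second piece has the same bound because $X\setminus U$ is $1$-thin at $x$. By \eqref{eq:rcapa in small ball} together with subadditivity, we then get $\mu^*(B(x,r)\setminus E_r)/\mu(B(x,r))\to0$, using outer measure here since $U_r$ need not be measurable.

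Now Lemma~\ref{lem:chart-linear-oscillation} gives $\osc_{E_r}L_v/(2r)\to|v|_{C,x}$. On $E_r$ we have $|w-w(x)-L_v|\le \sup_{E_r}R\cdot r=o(r)$, so $\osc_{U_r}w\ge\osc_{E_r}w\ge\osc_{E_r}L_v-o(r)$. This yields
\[
\limsup_{r\to0}\frac{\osc_{U_r}w}{2r}\ge|v|_{C,x}.
\]
Since $M=1$ and $\nu=\mu$, the normalizing factor equals $1$. Taking the infimum over admissible families gives $\Lip_w^0(x)\ge|v|_{C,x}$. The same argument works for any $M$.

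The main obstacle is the uniformity over $w$. The exceptional null set must be chosen before $w$, and Lemma~\ref{lem:chart-linear-oscillation} provides exactly this: its conclusion holds for all $v$ and all families $E_r$ at a fixed good point. The only other subtlety is the case $|w(y)|=\infty$ for some $y\in U_r$. In that case $\osc_{U_r}w=\infty$ and the lower bound is trivial, so no special treatment is needed.
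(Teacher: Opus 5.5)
Your proposal is correct and follows the paper's proof essentially step for step. You fix the $w$-independent good set from Lemma~\ref{lem:chart-linear-oscillation}, take the upper bound from \eqref{eq:Lip and fine der}, intersect an admissible $U_r$ with the fine neighborhood $U$ of Lemma~\ref{lem:fine diff}\ref{item-fine diff-2}, pass to vanishing outer measure density via \eqref{eq:rcapa in small ball}, and apply the lemma; the only difference is that you spell out the estimate $\osc_{E_r}w\ge\osc_{E_r}L_v-o(r)$, which the paper leaves implicit when it writes $\lim_{r\to0}\osc_{U_r\cap U}w/(2r)=|d^\alpha_{\mathrm{fine}}w(x)|_{C,x}$.
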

\begin{proof}
The upper bound is \eqref{eq:Lip and fine der}.
Next we prove the opposite direction.
For $\mu$-a.e. $x\in\Om\cap X_\alpha$, the conclusion of Lemma \ref{lem:chart-linear-oscillation} holds for every $v$ and every family $E_r$.
Fix such a point $x$ and a function $w$ that is $1$-finely differentiable at $x$.
For any admissible family $U_r$ in the definition of $\Lip_w^0(x)$, intersect $U_r$ with the fine neighborhood $U$ from Lemma \ref{lem:fine diff} \ref{item-fine diff-2}.
Combining \eqref{eq:rcapa in small ball} and Lemma \ref{lem:chart-linear-oscillation}, it follows that
\[
\limsup_{r\to0}\frac{\osc_{U_r}w}{2r}
\ge\lim_{r\to0}\frac{\osc_{U_r\cap U}w}{2r}
=|d^\alpha_{\mathrm{fine}}w(x)|_{C,x}.
\]
Taking the infimum over the admissible families proves the lower bound.
\end{proof}

For $1\le p<\infty$, we write $D^p(\Om)$ for the Dirichlet space of measurable functions $u\colon\Om\to[-\infty,\infty]$, finite $\mu$-almost everywhere, that admit a $p$-weak upper gradient in $L^p(\Om)$.
For $p=\infty$, we use the convention that $D^\infty(\Om)$ consists of locally Lipschitz functions admitting an upper gradient in $L^\infty(\Om)$.
Under our standing assumptions, $D^p(\Om)\subset N^{1,p}_{\loc}(\Om)$ for $1\le p\le\infty$.
When the exponent needs to be specified, we denote the minimal $p$-weak upper gradient by $g_{u,p}$ for $1\le p<\infty$.

We can now formulate a characterization in terms of fine differentiability.


\begin{corollary}
Suppose that $X$ is Ahlfors $Q$-regular with $Q\ge1$.
These assumptions imply $(\mathrm{FG})_x$ from Definition~\ref{def:FG} at every $x\in\Om$, as verified in the proof of Theorem~\ref{thm:BV-finely-diff-main theorem}.
Let $u\colon\Om\to[-\infty,\infty]$ be measurable, and let $1\le p\le\infty$.
\begin{enumerate}
\item\label{item-1-Cor. finely diff}
If $u\in D^p(\Om)$,  then $d^\alpha_{\mathrm{fine}}u(x)$ and $d^\alpha_{\mathrm{fine}}u^*(x)$ exist for $\mu$-a.e. $x\in\Om\cap X_\alpha$, and
\[
|d^\alpha_{\mathrm{fine}}u(x)|_{C,x}=|d^\alpha_{\mathrm{fine}}u^*(x)|_{C,x}=g_u(x)
\quad\text{for }\mu\text{-a.e. }x\in\Om\cap X_\alpha.
\]
\item\label{item-2-Cor. finely diff}
If there exists a set $E\subset\Om$ with $\mathcal H(E)=0$ such that every $x\in\Om\setminus E$ belongs to a chart $X_\alpha$ in which $d^\alpha_{\mathrm{fine}}u(x)$ exists, and the intrinsic norm of this differential belongs to $L^p(\Om)$,
then $u\in L^1_{\loc}(\Om)$ and $u^*\in D^p(\Om)$.
\end{enumerate}
\end{corollary}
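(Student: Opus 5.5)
For part \ref{item-1-Cor. finely diff} I would reduce to the results already proved in this section. Since $D^p(\Om)\subset N^{1,p}_{\loc}(\Om)\subset N^{1,1}_{\loc}(\Om)$, and $(\mathrm{FG})_x$ holds at every $x\in\Om$ under Ahlfors regularity, Proposition \ref{thm: fine-diff of N11} applies. It gives, for $\mu$-a.e. $x\in\Om\cap X_\alpha$, that both $u$ and $u^*$ are $1$-finely differentiable at $x$, with $d^\alpha_{\mathrm{fine}}u(x)=d^\alpha_{\mathrm{fine}}u^*(x)=d^\alpha u(x)$; the statement for $u$ uses $u=u^*$ $1$-q.e. by \eqref{eq:Lebesgue points}. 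The norm identity then follows from \eqref{eq:norm identification}, $|d^\alpha u(x)|_{C,x}=g_u(x)$ a.e. Here $g_u$ is the minimal $p$-weak upper gradient. For $1<p<\infty$ I must check that $g_{u,p}=g_{u,1}$ a.e.; this holds under the $1$-Poincar\'e inequality, and \eqref{eq:norm identification} was already stated for all $p$ with the same Cheeger norm. For $p=\infty$ I take $u$ locally Lipschitz, and the identity is again \eqref{eq:norm identification}.

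For part \ref{item-2-Cor. finely diff} I would use the BV criterion with an exceptional set of zero $\mathcal H$-measure, and then upgrade to Sobolev regularity via the density bound. Set $a(x)\coloneqq|d^\alpha_{\mathrm{fine}}u(x)|_{C,x}$ on $\Om\setminus E$ and $a=0$ on $E$, so that $a\in L^p(\Om)$. By \eqref{eq:Lip and fine der}, $\Lip_u^{0}(x)\le a(x)$ for every $x\in\Om\setminus E$. Since $\Lip^{\widetilde c_*,M}\le \Lip^{0,M}$, and by Lebesgue differentiation $\vint{B(x,Mr)}(a+\kappa)\,d\mu\to a(x)+\kappa$ at Lebesgue points, I would like to conclude $\Lip^{\widetilde c_*,1}_{u,a+\kappa}\le 1$. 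The difficulty is that this holds only at Lebesgue points of $a$, not at every point.

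To avoid that problem I instead take $h$ and the measure $\nu=\mu$ directly: $\Lip^{\widetilde c_*,1}_{u,\mu}(x)=\Lip^{\widetilde c_*}_u(x)\le a(x)\eqqcolon h(x)$ at every $x\in\Om\setminus E$. On bounded $U\Subset\Om$ we have $h\in L^1(U,\mu)$, and $\mu\in\mathcal M$ by Lemma \ref{lem:measures belonging to M}. The set $E$ has $\capa_1(E)=0$ by \eqref{eq:capa and Hausdorff measure}, so Theorem \ref{thm:main in text} with $Z=E$ and $w=u$ applies. It requires $w\in L^1_{\loc}$, however, which is part of what must be proved.

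\textbf{The main obstacle} is therefore establishing $u\in L^1_{\loc}$. I would first truncate: apply the argument to $u_K=\max\{-K,\min\{K,u\}\}$. This is still finely differentiable off $E$, except at level-set points where only $\Lip^0_{u_K}\le a$ holds; that bound is all we need. Theorem \ref{thm:main in text} then gives $\|Du_K\|\le Ca\,\mu$ uniformly in $K$. From the BV--Poincar\'e inequality and the Sobolev-type embedding in Lemma \ref{lem:BV-Sobolev density}, I get local $L^1$ bounds on $u_K-(u_K)_B$. To show the medians stay bounded, I use that $u$ is finite off $E$, which has measure zero. Letting $K\to\infty$ yields $u\in L^1_{\loc}$ and $d\|Du\|\le Ca\,d\mu$.

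Lemma \ref{lem:BV-Sobolev density} then gives $v\in N^{1,p}_{\loc}$ with $v=u$ a.e. and an upper gradient bounded by $Ca\in L^p$. As in Proposition \ref{prop:a new Sobo char}, $v=u^*$ $p$-q.e. (everywhere if $p=\infty$), so $u^*\in D^p(\Om)$.
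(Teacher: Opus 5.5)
Your proposal is correct and follows essentially the same route as the paper. Part (1) goes through Proposition \ref{thm: fine-diff of N11} and \eqref{eq:norm identification}. Part (2) goes through truncation, then Theorem \ref{thm:main in text} with $Z=E$ and $\nu=\mu$ on bounded subdomains, then a median/BV--Poincar\'e argument for local integrability, and finally Lemma \ref{lem:BV-Sobolev density} together with the Lebesgue-point identification of $u^*$.

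There is one minor difference. You take $h=|d^\alpha_{\mathrm{fine}}u|_{C,x}$, which dominates $\Lip_u^0$ everywhere off $E$ by \eqref{eq:Lip and fine der} and lies in $L^p$ by hypothesis. The paper instead takes $h=\Lip_u^0$ and needs Proposition \ref{prop:equality} to get $h\in L^p(\Om)$, so your choice slightly simplifies that step. Your appeal to Lemma \ref{lem:BV-Sobolev density} for the local $L^1$ bounds is unnecessary, since BV--Poincar\'e alone suffices.
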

\begin{proof}
Claim \ref{item-1-Cor. finely diff} follows locally from Proposition \ref{thm: fine-diff of N11} and the norm identification \eqref{eq:norm identification}, since $u\in D^p(\Om)$ implies
$u\in N^{1,p}_{\loc}(\Om)\subset N^{1,1}_{\loc}(\Om)$.
For Claim \ref{item-2-Cor. finely diff}, let $E$ be the $\mathcal H$-negligible exceptional set and set $h=\Lip_u^0$ on $\Om\setminus E$, with $h=0$ on $E$.
The global hypothesis ensures that every $x\in\Om\setminus E$ lies in a chart where $u$ is finely differentiable, and by \eqref{eq:capa and Hausdorff measure} we have $\capa_1(E)=0$.
By \eqref{eq:Lip and fine der}, we know $h$ is finite on $\Om\setminus E$ and by Proposition \ref{prop:equality} we have $h\in L^p(\Om)$.

We first justify local integrability, which is needed to use Theorem \ref{thm:main in text} on $u$.
Let $u_k\coloneqq\max\{-k,\min\{k,u\}\}$.
On every bounded open $V\Subset\Om$, using truncation, we get
\[
\Lip_{u_k,\mu}^{\widetilde c_*,1}\le\Lip_u^0=h
\quad\text{on }V\setminus E.
\]
Applying Theorem \ref{thm:main in text} with $Z=E\cap V$ and $\nu=\mu|_V$, we then have $\|Du_k\|\le Ch\mu$ in $V$, where $C$ is independent of $k$ and $V$.
Fix a ball $B$ with $2\lambda B\Subset\Om$.
Since $u$ is finite almost everywhere, choose $A<\infty$ such that $|u|\le A$ on a subset of $B$ of measure at least $\mu(B)/2$.
On this subset $|u_k|\le A$ for every $k$, and by the BV--Poincar\'e inequality we have
\[
\vint{B}|u_k|\,d\mu
\le A+3\vint{B}|u_k-(u_k)_B|\,d\mu
\le A+Cr_B\frac{\int_{\lambda B}h\,d\mu}{\mu(\lambda B)}.
\]
Letting $|u_k|\to |u|$ yields $u\in L^1(B)$ and then $u_k\to u$ in $L^1(B)$.
By the lower semicontinuity, we have $u\in\BV_{\loc}(\Om)$ and $\|Du\|\le Ch\mu$.

Applying Lemma~\ref{lem:BV-Sobolev density} with $a=Ch$, we obtain a representative $v\in N^{1,p}_{\loc}(\Om)$ such that $v=u$ $\mu$-a.e.
If $p<\infty$, then $v$ has a $p$-weak upper gradient $g\in L^p(\Om)$ with $g\le Ch$ $\mu$-a.e.
By the same Lebesgue-point argument as in the proof of Proposition~\ref{prop:a new Sobo char}, we have $u^*=v$ $p$-q.e. in $\Om$.
Consequently, $g$ is also a $p$-weak upper gradient of $u^*$.
If $p=\infty$, then $v$ is locally Lipschitz. Since $v=u$ $\mu$-a.e., we have $u^*=v$ everywhere in $\Om$; moreover, $\Lip_v$ is an upper gradient of $u^*$ satisfying $\Lip_v\le Ch$ $\mu$-a.e.
Since $h\in L^p(\Om)$, we have $u^*\in D^p(\Om)$.
\end{proof}

In an Ahlfors $1$-regular space, for all $y\in B(x,r)$ and $B(x,2r)\Subset \Om$, we have $\rcapa_1(\{y\},B(x,2r))\ge \delta \rcapa_1(B(x,r),B(x,2r))$.
Thus fine differentiability coincides with ordinary chartwise differentiability.
In particular, $\capa_1(\{x\})=2$ in $\R$.
The function $u(t)=|t|$ on $(-1,1)$ shows that ``$\mu$-a.e.'' cannot be replaced by ``$\mathcal H$-a.e.'' in Claim \ref{item-1-Cor. finely diff}.
The Cantor--Vitali function $u\colon[0,1]\to[0,1]$ shows that ``$\mathcal H$-a.e.'' cannot be replaced by ``$\mu$-a.e.'' in Claim \ref{item-2-Cor. finely diff}.

\endgroup

\begingroup
\emergencystretch=3em
\section{Quasicontinuity and Sobolev regularity}\label{sec: lipf}

As before, $\Om\subset X$ is an open set and let $1\le p<\infty$.

Consider the usual Lipschitz number
\[
\lip_w(x)\coloneqq \liminf_{r\to 0}\sup_{y\in B(x,r)}\frac{|w(y)-w(x)|}{r}.
\]
As mentioned in the Introduction,
in the Euclidean setting, Balogh--Cs\"ornyei \cite[Theorem 1.2]{BC} showed that if
$w\colon \Om\to \R$ satisfies the following:
\begin{itemize}
\item $w$ is continuous;
\item $\lip_w<\infty$ outside a set of $\sigma$-finite $\mathcal H^{n-1}$-measure; and
\item $\lip_w\in L^p(\Omega)$,
\end{itemize}
then $w\in W^{1,p}_{\loc}(\Omega)$.
Generalizations of this result to metric measure spaces have been shown by Wildrick and Z\"urcher \cite{WZ, Zu}.

These hypotheses are not necessary in general: when $n\ge2$ and $1\le p\le n$, a Sobolev function $u\in W^{1,p}_{\loc}(\Om)$ may be discontinuous and have $\lip_{u^*}(x)=\infty$ for every $x\in \Om$.
Precisely because Sobolev functions are not generally continuous, one often uses the fact that they are nonetheless quasicontinuous, and so it was asked in \cite[Remark 3.15]{Zu} whether it would be enough to assume quasicontinuity in the above type of theorem.
The next proposition gives a related criterion for the capacity-relaxed upper Lipschitz number $\Lip_u^0$.

\begin{proposition}\label{prop:three lip conditions}
Suppose that $X$ is Ahlfors $Q$-regular with $Q\ge1$, $\Om$ is bounded, and $u\colon \Om\to [-\infty,\infty]$ is measurable such that:
\begin{enumerate}
\item\label{prop:three lip conditions-item-1}
$u$ is $p$-quasicontinuous in $\Omega$;
\item\label{prop:three lip conditions-item-2}
$\Lip_{u}^{0}<\infty$ outside a set of $\sigma$-finite $\mathcal H$-measure; and
\item\label{prop:three lip conditions-item-3}
$\Lip_{u}^{0}\in L^p(\Omega)$.

\end{enumerate}
Then $u\in D^p(\Om)$.

Conversely, if $u\in D^p(\Om)$, then
\ref{prop:three lip conditions-item-1} and \ref{prop:three lip conditions-item-3} are satisfied.
\end{proposition}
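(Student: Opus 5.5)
The converse direction is the easy one. If $u\in D^p(\Om)$, then $u\in N^{1,p}_{\loc}(\Om)$, so $u$ is $p$-quasicontinuous by \eqref{eq:quasicontinuous}; a covering of $\Om$ by relatively compact pieces makes this global. By part \ref{item-1-Cor. finely diff} of the preceding corollary, $u$ is $1$-finely differentiable at $\mu$-a.e.\ $x\in\Om\cap X_\alpha$, with $|d^\alpha_{\mathrm{fine}}u(x)|_{C,x}=g_u(x)$. Then \eqref{eq:Lip and fine der} (or Proposition \ref{prop:equality}) gives $\Lip_u^0\le g_u$ $\mu$-a.e., and since $\Lip_u^0$ is Borel up to the usual measurability issues, we get $\Lip_u^0\in L^p(\Om)$. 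If $p=\infty$ were allowed, we would use the Lipschitz representative instead, but the statement has $1\le p<\infty$.

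For the main direction, let $E_0$ be the $\sigma$-finite exceptional set, $E_0=\bigcup_k E^{(k)}$ with $\mathcal H(E^{(k)})<\infty$, and set $h\coloneqq\Lip_u^0$ on $\Om\setminus E_0$. The plan is to reduce to Proposition \ref{prop:bounded finite H exceptional set}, applied to truncations $u_K\coloneqq\max\{-K,\min\{K,u\}\}$ with $\nu=\mu$ and $M=1$. We have $\Lip^{\widetilde c_*}_{u_K,\mu}\le\Lip^0_{u_K}\le h$, because an admissible family for $\delta=0$ is admissible for $\delta=\widetilde c_*$, truncation does not increase oscillation, and for $\nu=\mu$, $M=1$ the measure factor equals $1$.

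The difficulty is that Proposition \ref{prop:bounded finite H exceptional set} produces a cost $\osc_\Om u_K\cdot\mathcal H(E)$, which does not vanish. Here I would use quasicontinuity to replace the crude ball control by a better $(\ell,w)$-control in Proposition \ref{prop:controlled BV criterion}. Fix $\varepsilon>0$ and take an open $G$ with $\capa_p(G)<\varepsilon$ such that $u|_{\Om\setminus G}$ is continuous. Near points of $E^{(k)}\setminus G$, continuity on $\Om\setminus G$ makes $\osc_{B_i\setminus G}u_K$ small on sufficiently small balls. So we cover $E^{(k)}$ (for a single $k$ first) by $G$ together with balls $B_i$ of radius $<\ell/8$ on which $\osc_{B_i\setminus G}u_K\le\varepsilon$. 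This gives ball cost at most $C\varepsilon(\mathcal H(E^{(k)})+1)$.

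The barrier $G$ still needs a function $b$ satisfying \eqref{eq:control curve condition}. I would take $b=2K\rho_G/\ell$-type densities built from a capacitary test function $\phi$ for $G$ with $\|\phi\|_{N^{1,p}}^p<\varepsilon$. A curve meeting $G$ with endpoints outside $H(\mathcal S)$ must then cross the region where $\phi$ goes from $1$ to $0$, so $\int_\gamma 2Kg_\phi\ge 2K\ge|u_K(\gamma(0))-u_K(\gamma(\ell_\gamma))|$. To make the endpoint condition meaningful, $H(\mathcal S)$ should contain $G$ enlarged to $\{\phi>0\}$. The cost $2K\int g_\phi\,d\mu$ is small when $p=1$; for $p>1$, I would use that $\Om$ is bounded and Hölder's inequality to get $\int g_\phi\le C\varepsilon^{1/p}$, and also $\mu(H)\to0$ via $\capa_p\to0$.

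Taking $\varepsilon=\varepsilon_m\to0$ yields $\|Du_K\|(\Om)\le C\int_\Om h\,d\mu$. This controls one piece $E^{(k)}$ at a time. For the full $\sigma$-finite union, I would split $\varepsilon 2^{-k}$ budgets among the $E^{(k)}$, letting the balls for $E^{(k)}$ have oscillation $\le\varepsilon 2^{-k}/(\mathcal H(E^{(k)})+1)$. Then $\|Du_K\|\le Ch\mu$ uniformly in $K$. Arguing as in the proof of the preceding corollary (BV–Poincaré on balls, with $u$ finite a.e.), we get $u\in\BV_{\loc}$ and $\|Du\|\le Ch\mu$. Lemma \ref{lem:BV-Sobolev density} then gives a representative $v\in N^{1,p}_{\loc}$ with $g_v\le Ch$.

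Finally, both $u$ and $v$ are $p$-quasicontinuous and agree $\mu$-a.e., so by \cite[Proposition~5.23]{BB} they agree $p$-q.e. Hence $u\in D^p(\Om)$. I expect the main obstacle to be the barrier part of the control: checking \eqref{eq:control curve condition} for curves that meet $G$ while keeping $\mu(H(\mathcal S_m))\to0$ and the cost finite.
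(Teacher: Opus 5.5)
Your overall architecture is the paper's. You truncate, build vanishing-cost $(\ell_m,u_K)$-controls from quasicontinuity, apply Proposition \ref{prop:controlled BV criterion}, pass $K\to\infty$ by the BV--Poincar\'e median argument, and finish with Lemma \ref{lem:BV-Sobolev density} and uniqueness of quasicontinuous representatives. The paper packages this as Lemma \ref{lem:quasicontinuous control} and Proposition \ref{prop:quasicontinuous sigma finite H exceptional set}.

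\textbf{The gap is the barrier, the step you yourself flag.} In Definition \ref{def:admissible control} the control region is fixed as $H(\mathcal S)=G\cup\bigcup_i2B_i$, so you cannot enlarge it to $\{\phi>0\}$. With the actual $H(\mathcal S)$, take a curve of length $\ge\ell$ that meets $G$, has endpoints outside $H(\mathcal S)$, but stays inside $\{\phi\ge1/2\}$. Along it, $\int_\gamma g_\phi\,ds$ can be arbitrarily small. Hence $b=2Kg_\phi$ does not satisfy \eqref{eq:control curve condition}, and your triple is not an $(\ell,u_K)$-control. Enlarging the region would not rescue it without further work. First, $\mu(\{\phi>0\})$ is not controlled by $\capa_p(G)$; you would need to replace $\phi$ by something like $(2\phi-1)_+$. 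Second, you would have to re-verify that the proof of Proposition \ref{prop:controlled BV criterion} uses $H$ only through measurability, $\mu(H_m)\to0$, and the endpoint restriction. You have done neither.

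\textbf{The missing idea is to use the length condition $\ell_\gamma\ge\ell$}, which is why a control may depend on $\ell$. Take $\eta\in N^{1,1}(X)$ with $0\le\eta\le1$ and $\eta=1$ on $G$. Require $\int_X(\eta+g_\eta)\,d\mu$ to be much smaller than $\varepsilon/\bigl((1+L)(1+\ell^{-1})\bigr)$, where $L=\osc_\Om u_K$. Set $b=L(2g_\eta+\tfrac2\ell\eta)$.
\begin{itemize}
\item If $\eta\ge1/2$ along all of $\gamma$, then $\tfrac2\ell\int_\gamma\eta\,ds\ge1$.
\item Otherwise a subcurve runs from $G$, where $\eta=1$, to a point where $\eta<1/2$, so $2\int_\gamma g_\eta\,ds\ge1$.
\end{itemize}
In both cases $\int_\gamma b\,ds\ge L$, with no condition on $\eta$ at the endpoints. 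Also $\mu(G)\le\int\eta\,d\mu$ and $\int b\,d\mu$ are small.

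Working with $1$-quasicontinuity via \eqref{eq:pq quasicontinuity}, as the paper does, also removes your H\"older step. As written, that step is incomplete: $g_\phi\in L^p(X)$ need not lie in $L^1(X)$, so $b$ would at least have to be cut off to $\Om$.

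Two smaller points:
\begin{itemize}
\item Making $\osc_{B_i\setminus G}u_K$ small on an $\mathcal H$-efficient cover needs uniform continuity on compact sets $K_{j+1}\setminus G$, as in the paper's compact exhaustion. Pointwise continuity on $\Om\setminus G$ is not enough.
\item In the converse, the upper bound from \eqref{eq:Lip and fine der} does not give measurability of $\Lip_u^0$. You need the a.e. equality $\Lip_u^0=g_{u,1}$ from Proposition \ref{prop:equality}, which you only mention parenthetically.
\end{itemize}
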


The following Lemma is similar in spirit to \cite[Lemma 3.4]{LaZh2} and \cite[Proposition 3.2]{Zu}, with continuity replaced by $1$-quasicontinuity.

\begin{lemma}\label{lem:quasicontinuous control}
Let $\Om$ be bounded, let $E\subset\Om$ have $\sigma$-finite $\mathcal H$-measure, and let $v\colon\Om\to\R$ be bounded and $1$-quasicontinuous.
For every $0<\ell\le1$ and every $\varepsilon>0$, there is an $(\ell,v)$-control $\mathcal S=(G,\{B_i\}_i,b)$ of $E$ such that, with $H=G\cup\bigcup_i2B_i$,
\begin{equation}
\label{eq:quasicontinuous control conclusion}
\begin{split}
\mu(H)&<\varepsilon,\\
\mathcal{C}_v(\mathcal{S})
&=\int_Xb\,d\mu+\sum_i\frac{\osc_{B_i\setminus G}v}{r_i}\mu(2B_i)<\varepsilon.
\end{split}
\end{equation}
\end{lemma}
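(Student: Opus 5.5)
We split the construction into an open part and a ball part. The open part $G$ carries the quasicontinuity exceptional set, controlled by a barrier $b$. The ball part covers $E\setminus G$ by small balls on which $v$ has small oscillation off $G$. Write $E=\bigcup_k E_k$ with $\mathcal H(E_k)<\infty$ and $L\coloneqq\osc_\Om v<\infty$.

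\textbf{Step 1: the open part $G$ and the barrier $b$.} By $1$-quasicontinuity, for a small $\tau>0$ (to be fixed in terms of $\varepsilon,\ell,L$) we choose an open $G\subset\Om$ with $\capa_1(G)<\tau$ such that $v|_{\Om\setminus G}$ is continuous. Take $\psi\in N^{1,1}(X)$ with $\psi\ge1$ on $G$ and $\|\psi\|_{N^{1,1}(X)}<\tau$, and put $b\coloneqq L\,g$ with $g$ an upper gradient of $\psi$ having $\int g<\tau$. We also shrink $G$ if needed so that $\mu(G)\le\int\psi<\tau$.

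Now let $\gamma$ have $\gamma\cap G\neq\emptyset$ and endpoints outside $H\supset G$. We want $\psi$ to be at most $0$ at the endpoints while being at least $1$ on $G$; then $\int_\gamma g\ge1$, and $|v(\gamma(0))-v(\gamma(\ell_\gamma))|\le L\le\int_\gamma b$, which is \eqref{eq:control curve condition}. To arrange this, replace $\psi$ by $(\psi-\psi\ch_{\ldots})$, more precisely by $\min\{1,\psi\}$, and enlarge $G$ to $G'\coloneqq\{\psi>1/2\}$, made open by outer capacity. With the barrier $2\min\{1,\psi\}$, the endpoints lie outside $G'$ and so have barrier value at most $1$, while the value on $G$ is $2$. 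The exceptional family $\Gamma_0$ consists of curves on which $\psi$ fails the upper gradient inequality, and it has $\Mod_1$ zero. This gives $\int b\,d\mu\le 2L\tau$.

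\textbf{Step 2: the ball part.} On $\Om\setminus G$ the function $v$ is continuous. For each $k$ and each $x\in E_k\setminus G$ there is $\rho_x<\ell/8$ with $\osc_{B(x,\rho_x)\setminus G}v<\sigma$, where $\sigma$ is small, and with $2B(x,\rho_x)\Subset\Om$. Using the definition of $\mathcal H$ at a scale $R_k$ below these radii is awkward, because $\rho_x$ is not uniform. So we split further: $E_k\setminus G=\bigcup_j E_{k,j}$ with $E_{k,j}\coloneqq\{\rho_x\ge 1/j\}$, an increasing union. We cover the residual $(E_k\setminus G)\setminus E_{k,j_k}$ by balls of total cost at most $L\cdot C\,\mathcal H(\text{residual})$.

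This needs $\mathcal H$ of the residual to tend to $0$, which requires measurability or an outer-measure continuity argument. I expect this to be the main obstacle. First I would try replacing the continuity radius with a uniform-continuity statement: since $\Om\setminus G$ need not be compact, apply quasicontinuity on the compact closure of a slightly smaller domain. Alternatively, use the increasing-set continuity of $\mathcal H_R$ restricted to a $\sigma$-finite set via Borel hulls. For $E_{k,j_k}$ we cover by balls of radii below $\min\{1/j_k,\ell/8\}$ with $\sum\mu(B_i)/r_i\le\mathcal H(E_k)+1$. These contribute cost at most $C_d\sigma(\mathcal H(E_k)+1)$, and measure at most $C(\ell/j_k)$-small multiples, taking radii tiny.

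\textbf{Step 3: summing.} Choose $\sigma_k\coloneqq\varepsilon 2^{-k}/(C(\mathcal H(E_k)+1))$ and choose the residual levels with cost at most $\varepsilon2^{-k}$. Since $\mu(2B_i)\le C r_i\cdot\mu(B_i)/r_i$, the measure of the union of balls is bounded by the radius times the cost-like sums, and it becomes small by taking radii small. Together with $\mu(G)$ and $\int b$ being $O(\tau)$, this yields \eqref{eq:quasicontinuous control conclusion}. The finite-cost requirement and $E\subset G\cup\bigcup B_i$ hold by construction.
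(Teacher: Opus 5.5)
\textbf{Gap in Step 1.} Your barrier does not produce an $(\ell,v)$-control. Once you enlarge the open part to $G'=\{\psi>1/2\}$ (plus a small open set that makes it open), the curve condition \eqref{eq:control curve condition} must hold for every curve that meets $G'$ and has endpoints outside $H\supset G'$. You only check curves that meet $G$.

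Suppose $\gamma$ meets $G'$ only at points where $\psi\le 1/2+t$, or only in the open set you added, where $\psi$ may be near $0$. Then the upper gradient inequality gives merely $\int_\gamma b\,ds\ge 2tL$, or nothing at all. Meanwhile, for a long curve, $|v(\gamma(0))-v(\gamma(\ell_\gamma))|$ can be of order $L$.

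Keeping $G$ as the open part does not help either. The endpoints are then only known to lie outside $G$, where $\psi$ may be close to $1$. No choice of threshold forces a Sobolev function to jump by a fixed amount across the boundary of the open part.

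The missing idea is the length restriction $\ell_\gamma\ge\ell$ in Definition~\ref{def:admissible control}, which your argument never uses. The paper keeps $G$ itself and takes a Borel $\eta\in N^{1,1}(X)$ with $0\le\eta\le1$, $\eta=1$ on $G$, and $\int_X(\eta+g_\eta)\,d\mu<\varepsilon/(10(1+L)(1+\ell^{-1}))$. It then sets $b=L(2g_\eta+\tfrac{2}{\ell}\eta)$. There are two cases:
\begin{itemize}
\item If $\eta\ge1/2$ on all of $\gamma$, then $\tfrac2\ell\int_\gamma\eta\,ds\ge\ell_\gamma/\ell\ge1$.
\item Otherwise $\eta$ drops from $1$ at a point of $G$ to below $1/2$, so $2\int_\gamma g_\eta\,ds\ge1$.
\end{itemize}
In both cases $\int_\gamma b\,ds\ge L$. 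The price is that $\|\eta\|_{L^1}$ must be chosen small relative to $\ell$.

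\textbf{Gap in Step 2.} This step is left open at exactly the point you flag. The residual covering also has a second nonuniformity you do not address: the requirement $2B_i\Subset\Om$ forces radii that depend on $\dist(\cdot,X\setminus\Om)$.

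Your first suggested remedy is the right one, and it removes the need for any limiting argument on $\mathcal H$ of residual sets. Since $X$ is proper, exhaust $\Om$ by compact sets $K_j\Subset\operatorname{int}K_{j+1}$. Because $G$ is relatively open in $\Om$, each $K_{j+1}\setminus G$ is compact, so $v$ is uniformly continuous on it. Each layer $(E_i\cap K_j)\setminus G$ is then covered directly from the definition of $\mathcal H$, at a uniform small scale, by balls with:
\begin{itemize}
\item $2B\subset K_{j+1}$ and radii below $\min\{\ell/8,s_{i,j}\}$;
\item $\osc_{B\setminus G}v\le a_{i,j}$;
\item $\sum\mu(B)/r\le\mathcal H(E_i)+1$.
\end{itemize}
The weights are chosen so that $\sum_{i,j}(a_{i,j}+s_{i,j})(\mathcal H(E_i)+1)<\varepsilon/(10C_d)$. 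The layers overlap, but the summable weights make this harmless: both the ball cost and the measure of the union of the doubled balls stay below $\varepsilon/5$.
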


\begin{proof}
Set $L\coloneqq \osc_\Om v$.
Since $X$ is proper, choose compact sets $K_j\Subset\operatorname{int}K_{j+1}\Subset\Om$ increasing to $\Om$, and write $E=\bigcup_iE_i$ with $\mathcal H(E_i)<\infty$.
Choose positive numbers $a_{i,j}$ and $s_{i,j}$ so small that
\begin{equation}
\label{eq:quasicontinuous control weights}
\sum_{i,j=1}^{\infty}(a_{i,j}+s_{i,j})(\mathcal H(E_i)+1)<\frac{\varepsilon}{10C_d}.
\end{equation}
By $1$-quasicontinuity, there is a relatively open set $G\subset\Om$ of arbitrarily small $1$-capacity such that $v$ is continuous on $\Om\setminus G$.
Choose a Borel function $\eta\in N^{1,1}(X)$ and a nonnegative Borel $1$-weak upper gradient $g_\eta$ such that $0\le\eta\le1$, $\eta=1$ on $G$, and
\[
\int_X(\eta+g_\eta)\,d\mu<\frac{\varepsilon}{10(1+L)(1+\ell^{-1})}.
\]
In particular, $\mu(G)\le\int_X\eta\,d\mu<\varepsilon/10$.

For each $j$, the function $v$ is uniformly continuous on the compact set $K_{j+1}\setminus G$.
Hence, by the definition of $\mathcal H$, we can cover $(E_i\cap K_j)\setminus G$ by balls $B_{i,j,q}=B(z_{i,j,q},r_{i,j,q})$ such that $2B_{i,j,q}\subset K_{j+1}$, $0<r_{i,j,q}<\min\{\ell/8,s_{i,j}\}$, $\osc_{B_{i,j,q}\setminus G}v\le a_{i,j}$, and
\[
\sum_q\frac{\mu(B_{i,j,q})}{r_{i,j,q}}\le\mathcal H(E_i\cap K_j)+2^{-i-j}.
\]
Then $E\setminus G\subset \bigcup_{i,j,q}B_{i,j,q}$.
By doubling and \eqref{eq:quasicontinuous control weights}, we have
\[
\begin{split}
\sum_{i,j,q}\frac{\osc_{B_{i,j,q}\setminus G}v}{r_{i,j,q}}\mu(2B_{i,j,q})
&\le \sum_{i,j}a_{i,j}\sum_q\frac{\mu(2B_{i,j,q})}{r_{i,j,q}}\\
&\le C_d\sum_{i,j}a_{i,j}(\mathcal{H}(E_i)+1)
\le \frac{\varepsilon}{5},
\end{split}
\]
and
\[
\mu\left(\bigcup_{i,j,q}2B_{i,j,q}\right)\le C_d\sum_{i,j}s_{i,j}\left(\mathcal H(E_i)+1\right)<\frac{\varepsilon}{5}.
\]
Hence, $\mu(H)\le \mu(G)+\mu\big(\bigcup_{i,j,q}2B_{i,j,q}\big)<\varepsilon$.

Define
\[
b\coloneqq L\left(2g_\eta+\frac{2}{\ell}\eta\right).
\]
When $L=0$, take $b=0$.

Our choice of $\eta$ gives $\int_Xb\,d\mu<\varepsilon/5$.
Reindex the balls $\{B_{i,j,q}\}$ as $B_i$;
then \eqref{eq:quasicontinuous control conclusion} follows.
Outside a curve family of $1$-modulus zero, the upper-gradient inequality for $\eta$ holds on every subcurve.
Let $\gamma$ be such a curve with $\ell_\gamma\ge\ell$, endpoints outside $H$, and $\gamma\cap G\ne\emptyset$.
If $\eta\ge1/2$ everywhere on $\gamma$, then $(2/\ell)\int_\gamma\eta\,ds\ge1$.
Otherwise, a subcurve joins a point of $G$, where $\eta=1$, to a point where $\eta<1/2$, and hence $2\int_\gamma g_\eta\,ds\ge1$.
In either case,
\[
\int_\gamma b\,ds\ge L\ge|v(\gamma(0))-v(\gamma(\ell_\gamma))|.
\]
Thus $\mathcal S=(G,\{B_i\}_i,b)$ is an $(\ell,v)$-control.
\end{proof}

Combining the vanishing-cost controls from Lemma \ref{lem:quasicontinuous control} with the controlled BV criterion yields the required exceptional-set result for quasicontinuous functions.

\begin{proposition}\label{prop:quasicontinuous sigma finite H exceptional set}
Let $\Om$ be bounded, let $E\subset\Om$ have $\sigma$-finite $\mathcal H$-measure, and let $w\colon\Om\to[-\infty,\infty]$ be $1$-quasicontinuous.
Suppose that $M\ge1$, $\nu\in\mathcal M(\Om,(800C_1C_S^2C_d^6\widetilde c_*)^{-1})$, $\nu(\Om)<\infty$, and $h\in L^1(\Om,\nu)$ is nonnegative.
If
\[
\Lip_{w,\nu}^{\widetilde c_*,M}(x)\le h(x)<\infty\quad\textrm{for every }x\in\Om\setminus E,
\]
then $w\in D^{\BV}(\Om)$ and $\|Dw\|\le Ch\nu$, where $C$ depends only on doubling and Poincar\'e constants and on $M$.
\end{proposition}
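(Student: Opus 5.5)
The plan is to reduce to bounded truncations and then apply Proposition~\ref{prop:controlled BV criterion} with the controls supplied by Lemma~\ref{lem:quasicontinuous control}. For $k\in\N$ let $w_k\coloneqq\max\{-k,\min\{k,w\}\}$. Truncation is $1$-Lipschitz, so $w_k$ is bounded, real-valued and $1$-quasicontinuous. Moreover $\osc_{U_r}w_k\le\osc_{U_r}w$ for every admissible family $U_r$; the convention $\osc=\infty$ when $w$ takes an infinite value only helps here. Hence $\Lip_{w_k,\nu}^{\widetilde c_*,M}\le\Lip_{w,\nu}^{\widetilde c_*,M}\le h$ on $\Om\setminus E$. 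Since $\Om$ is bounded, $w_k\in L^1(\Om)$.

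Fix $k$. For each $m$, apply Lemma~\ref{lem:quasicontinuous control} to $v=w_k$, $\ell=\ell_m=1/m$ and $\varepsilon=1/m$. This gives an $(\ell_m,w_k)$-control $\mathcal S_m$ of $E$ with $\mu(H(\mathcal S_m))<1/m\to0$ and $\mathcal C_{w_k}(\mathcal S_m)<1/m$, so that $\liminf_m\mathcal C_{w_k}(\mathcal S_m)=0$. We then apply Proposition~\ref{prop:controlled BV criterion} with $Z=\emptyset$ to get
\[
\|Dw_k\|(\Om)\le C\int_\Om h\,d\nu .
\]
To obtain the measure bound $\|Dw_k\|\le Ch\nu$, we repeat this on every bounded open $U\subset\Om$, as in the proof of Theorem~\ref{thm:main in text}. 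The restriction $\nu|_U$ stays in the class $\mathcal M$, $w_k|_U$ is still $1$-quasicontinuous, and $E\cap U$ is still $\sigma$-finite. This gives $\|Dw_k\|(U)\le C\int_Uh\,d\nu$. Inner and outer regularity then yield $\|Dw_k\|\le Ch\nu$ with $C$ independent of $k$.

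The main obstacle is passing to the limit $k\to\infty$. Here we need $w\in L^1_{\loc}(\Om)$, or at least that $w$ is finite $\mu$-a.e. Quasicontinuity gives that $w$ is finite outside an open set $G$ of small capacity, and hence of small measure by~\eqref{eq:rcapa over ball-measure}. Letting the capacity tend to zero shows that $w$ is finite $\mu$-a.e. We then argue as in the corollary preceding Section~\ref{sec: lipf}. Fix a ball $B$ with $2\lambda B\Subset\Om$, and choose $A$ with $|w|\le A$ on a subset of $B$ of measure at least $\mu(B)/2$. The BV--Poincar\'e inequality~\eqref{eq: BV-Poincare} then gives a uniform bound on $\vint{B}|w_k|\,d\mu$ in terms of $A$ and $r_B\|Dw_k\|(\lambda B)\le Cr_B\int_{\lambda B}h\,d\nu$. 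By monotone convergence $w\in L^1(B)$ and $w_k\to w$ in $L^1(B)$.

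Finally, lower semicontinuity of the total variation on open sets gives $\|Dw\|(U)\le\liminf_k\|Dw_k\|(U)\le C\int_Uh\,d\nu$ for every open $U\subset\Om$. Together with $w\in L^1_{\loc}(\Om)$, this shows $w\in D^{\BV}(\Om)$ with $\|Dw\|\le Ch\nu$.
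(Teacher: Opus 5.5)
Your proposal is correct and follows essentially the same route as the paper. You truncate to $w_k$, build controls with Lemma~\ref{lem:quasicontinuous control}, apply Proposition~\ref{prop:controlled BV criterion} with $Z=\emptyset$ and localize, and then let $k\to\infty$ using a.e.\ finiteness of $w$, a BV--Poincar\'e bound on balls, monotone convergence and lower semicontinuity. One minor point: the small measure of $G$ follows directly from $\mu(G)\le\capa_1(G)$, which is immediate from the definition of the Sobolev capacity, rather than from \eqref{eq:rcapa over ball-measure}.
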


\begin{proof}
First, $1$-quasicontinuity implies that $w$ is measurable and finite almost everywhere.

For $k\in\N$, set $w_k\coloneqq T_kw=\max\{-k,\min\{k,w\}\}$.
The function $w_k$ is bounded and $1$-quasicontinuous, and truncation decreases oscillation, so
\[
\Lip_{w_k,\nu}^{\widetilde c_*,M}\le\Lip_{w,\nu}^{\widetilde c_*,M}\le h
\quad\textrm{on }\Om\setminus E.
\]
For $\ell_m=1/m$, by Lemma \ref{lem:quasicontinuous control}, there exists an $(\ell_m,w_k)$-control $\mathcal S_{k,m}$ with $\mu(H(\mathcal S_{k,m}))<2^{-m}$ and $\mathcal C_{w_k}(\mathcal S_{k,m})<2^{-m}$.
Since $w_k$ is bounded, it belongs to $L^1(\Om)$.
Applying Proposition~\ref{prop:controlled BV criterion} with $Z=\emptyset$, it follows that
\[
\|Dw_k\|(\Om)\le C\int_\Om h\,d\nu
\]
with a constant independent of $k$.
For every open $U\Subset\Om$, 
applying the same argument in $U$ and using regularity of the measures, we have $\|Dw_k\|\le Ch\nu$.

It remains to let $k\to\infty$.
Since $w$ is finite almost everywhere, $w_k\to w$ almost everywhere.
Fix a ball $B=B(x,r)$ with $B(x,2\lambda r)\Subset\Om$.
Choose $A_B<\infty$ such that $\mu(B\cap\{|w|\le A_B\})>\mu(B)/2$.
If $m_k$ is a median of $w_k$ on $B$, then $|m_k|\le A_B$ for every $k\ge A_B$.
By the median form of the BV--Poincar\'e inequality and the uniform variation estimate therefore we have
\[
\int_B|w_k|\,d\mu
\le A_B\mu(B)+Cr\|Dw_k\|(B(x,\lambda r))
\le A_B\mu(B)+CrC\int_{B(x,\lambda r)}h\,d\nu.
\]
Thus the truncations are uniformly bounded in $\BV(B)$.
Since $|w_k|\nearrow|w|$ almost everywhere, we get $w\in L^1_{\loc}(\Om)$ and $w_k\to w$ in $L^1_{\loc}(\Om)$.
By lower semicontinuity, it follows that for every open $U\Subset\Om$,
\[
\|Dw\|(U)\le\liminf_{k\to\infty}\|Dw_k\|(U)\le C\int_Uh\,d\nu.
\]
Combining this with the exhaustion of $\Om$ by such open sets yields $w\in D^{\BV}(\Om)$ and $\|Dw\|\le Ch\nu$.
\end{proof}

\begin{proof}[Proof of Proposition \ref{prop:three lip conditions}]
To prove the first claim, denote by $E$ the subset of $\Om$ where $\Lip_u^{0}=\infty$.
Since $u$ is $p$-quasicontinuous, it is also $1$-quasicontinuous by \eqref{eq:pq quasicontinuity}, and thus also measurable.
Hence for every $x\in \Om\setminus E $, $\Lip_{u,\mu}^{\widetilde c_*,1}(x)\le\Lip_u^0(x)$ and $E$ has $\sigma$-finite $\mathcal H$-measure.
By Lemma \ref{lem:measures belonging to M} and \eqref{eq:cn definition}, we know $\mu\in \mathcal M(\Om,(800C_1C_S^2C_d^6\widetilde c_*)^{-1})$, and $\mu(\Om)<\infty$.
Applying Proposition \ref{prop:quasicontinuous sigma finite H exceptional set} with $w=u$, $\nu=\mu$, $M=1$, and $h=\Lip^0_{u}$, we have $u\in D^{\BV}(\Om)$ with $\|Du\|\le C\Lip^0_u\,\mu$.
Since $\Lip_u^0\in L^p(\Om)$, applying Lemma~\ref{lem:BV-Sobolev density} with $a=C\Lip_u^0$, there exists a representative $v\in N^{1,p}_{\loc}(\Om)$ with a $p$-weak upper gradient $g\le C\Lip_u^0$ almost everywhere.
By \eqref{eq:quasicontinuous}, $v$ is $p$-quasicontinuous, and $u$ is $p$-quasicontinuous by assumption.
Since $u=v$ $\mu$-a.e., by \cite[Proposition~5.23]{BB} we have $u=v$ $p$-q.e. in $\Om$ and  $g\in L^p(\Om)$ is also a $p$-weak upper gradient of $u$, and hence $u\in D^p(\Om)$.

For the converse claim, 
we have $u\in N^{1,p}_{\loc}(\Om)\subset N^{1,1}_{\loc}(\Om)$,
and \eqref{eq:quasicontinuous} and \eqref{eq:pq quasicontinuity} give $p$-quasicontinuity and $1$-quasicontinuity.
Under the Ahlfors $Q$-regularity assumption, condition $(\mathrm{FG})_x$ from Definition~\ref{def:FG} holds at every $x\in\Om$, as verified in Section~\ref{sec:fine differentiability}.
Proposition~\ref{thm: fine-diff of N11} therefore implies $1$-fine differentiability almost everywhere.
For $\mu$-a.e. $x\in\Om\cap X_\alpha$, by Proposition~\ref{prop:equality} we have $\Lip_u^0(x)=|d^\alpha_{\textrm{fine}}u(x)|_{C,x}=g_{u,1}(x)\le g_{u,p}(x).$
In particular, $\Lip_u^0$ is measurable and belongs to $L^p(\Om)$.
\end{proof}

The next example shows that condition \ref{prop:three lip conditions-item-2} is not necessary, but cannot simply be omitted from the sufficient conditions.

\begin{example}\label{ex:Cantor function}
		Let $g\in L^1(\R)$ be a nonnegative, lower semicontinuous function on the real line with 
		$g=\infty$ in an uncountable set $A$ (with $\mathcal L^1(A)=0$, of course).
		Let 
		\[
		u(x):=\int_{-\infty}^x g(t)\,dt,\quad x\in\R.
		\]
		Now $u\in D^1(\R)$, and $u$ (as well as $u^*$) is continuous and $\Lip_{u^*}^{0}=g$ a.e.,
		so that $\Lip_{u^*}^{0}\in L^1(\R)$. But
		$\Lip_{u^*}^{0}=\infty$ in $A$, which is not a set of $\sigma$-finite $\mathcal H^0$-measure.
		
		On the other hand, the Cantor--Vitali function $u\colon [0,1]\to [0,1]$ is a continuous function
		for which $\Lip_{u^*}^0=0$ a.e. but $u\notin W_{\loc}^{1,1}((0,1))$.
	\end{example}

This example demonstrates that the quantity $\Lip_{u^*}^{0}$ is insufficient for characterizing Sobolev functions.
This is the main motivation for considering the function $a$ and the generalized Lipschitz number $\Lip_{u^*,a}^{\delta}$, or $\Lip_{u^*,\nu}^{\delta,M}$.

\endgroup

\end{document}